\tikzset{cross/.style={cross out, draw=black, minimum size=2.5*(#1-\pgflinewidth), inner sep=2pt, outer sep=0.5pt},
	cross/.default={1pt}}
\setlist[enumerate]{itemsep=.3mm}
\setlist[itemize]{itemsep=.3mm}
\newcommand{\Z}{\mathbb Z}
\newcommand{\E}{\mathbb E}
\renewcommand{\P}{\mathbb P}
\newcommand{\Pcp}{\mathbb{P}_{\textsc{cp}}}
\newcommand{\Pgw}{\mathbb{P}_{\textsc{gw}}}
\newcommand{\Bin}{\textbf{Bin}}
\newcommand{\ep}{\varepsilon}
\renewcommand{\deg}{\text{deg}}
\newcommand{\one}{\textnormal{\textbf{1}}}
\newcommand{\zero}{\textnormal{\textbf{0}}}
\newcommand{\cp}{\textsf{\small{\textbf{CP}}}}
\newcommand{\gw}{\textsf{\footnotesize{\textbf{GW}}}}
\newcommand{\GW}{\textsf{\footnotesize{\textbf{GW}}}}
\newcommand{\gwc}{\textsf{\footnotesize{\textbf{GWC}}}}
\newcommand{\egw}{\textsf{\footnotesize{\textbf{EGW}}}}
\newcommand{\subrg}{\textsc{rg}}
\newcommand{\subcp}{\textsc{cp}}
\newcommand{\subgw}{\textsc{gw}}
\newcommand{\A}{\mathcal A}
\newtheorem{theorem}{Theorem}[section]
\newtheorem{lemma}[theorem]{Lemma}
\newtheorem{proposition}[theorem]{Proposition}
\newtheorem{corollary}[theorem]{Corollary}
\newtheorem{claim}[theorem]{Claim}
\newtheorem{thm}{Theorem}[section]
\newtheorem{conj}[thm]{Conjecture}
\newtheorem{cor}[thm]{Corollary}
\numberwithin{equation}{section}
\theoremstyle{definition}
\newtheorem{definition}[theorem]{Definition}
\newtheorem{remark}[theorem]{Remark}
\newtheorem*{remark-non}{Remark}
\newcommand{\G}{\mathcal G}
\newcommand{\whp}{\textsf{whp}}
\newcommand{\T}{\mathcal T}
\newcommand{\boundellipse}[3]
{(#1) ellipse (#2 and #3)
}
\begin{document}
	\title{\textbf{Critical value asymptotics for the contact process on random graphs}}
		\author{Danny Nam}
	\address{Department of Mathematics, Princeton University, Princeton, New Jersey 08544, USA}
	\email{dhnam@princeton.edu}
	\thanks{DN is supported by a Samsung scholarship}

	\author{Oanh Nguyen}
	\email{onguyen@princeton.edu}
	
	\author{Allan Sly}
	\email{asly@princeton.edu}
	\thanks{AS is supported by NSF grant DMS-1352013, Simons Investigator grant and a MacArthur Fellowship}
 	\maketitle
	
	\begin{abstract} 
		Recent progress in the study of the contact process \cite{BNNASurvival} has verified that the extinction-survival threshold $\lambda_1$ on a Galton-Watson tree is strictly positive if and only if the offspring distribution $\xi$ has an exponential tail.
		In this paper, we derive the first-order asymptotics of $\lambda_1$ for the contact process on Galton-Watson trees and its corresponding analog for random graphs.  In particular, if $\xi$ is appropriately concentrated around its mean, we demonstrate that $\lambda_1(\xi) \sim 1/\mathbb{E} \xi$ as $\mathbb{E}\xi\rightarrow \infty$, which matches with the known asymptotics on the $d$-regular trees. The same result for the short-long survival threshold on the Erd\H{o}s-R\'enyi and other random graphs are shown as well.
		
\end{abstract}
\setcounter{tocdepth}{1}
\tableofcontents

\section{Introduction}
The contact process is a model of the spread of disease. For a graph $G = (V, E)$, the contact process on $G$ with infection rate $\lambda$ and recovery rate $1$ is a continuous-time Markov chain, in which a vertex $v$ is either infected ($X_t(v)=1$) or healthy ($X_t(v)=0$). The process evolves according to the following rules.
\begin{itemize}	
	\item Each infected vertex infects each of its neighbors independently at rate $\lambda$ and is healed at rate $1$.
	
	\item Infection and recovery events in the process happen independently.
\end{itemize}

The contact process was first introduced by a work of Harris \cite{harris74} in which he studied the process on the lattice $\Z^{d}$. Among other things, he studied the phase diagrams of the contact process which since then has attracted intensive research. For an infinite rooted graph $G$, there are three phases that are of particular interest:
\begin{itemize}
	\item (\textit{Extinction}) the infection becomes extinct in finite time almost surely;
	
	\item (\textit{Weak survival}) the infection survives forever with positive probability, but the root is infected only finitely many times almost surely;
	
	\item (\textit{Strong survival}) the infection survives forever and the root gets infected infinitely many times with positive probability.
\end{itemize}
Denote the extinction-survival threshold by
$$\lambda_1(G) = \inf\{\lambda: (X_t) \text{ survives with positive probability}\}$$
and the weak-strong survival threshold by
$$\lambda_2(G) = \inf\{\lambda: (X_t) \text{ survives strongly}\}.$$

For the lattice, when the origin is initially infected, it is well known that there is no weak survival phase, that is, $0<\lambda_1(\Z^{d}) = \lambda_2(\Z^{d})<\infty$ (see \cite{harris74}, Bezuidenhout-Grimmett \cite{bezuidenhout1990critical}, and also the books of Liggett \cite{liggett:sis}, \cite{liggett:ips} and the references therein). On the other hand, for the infinite $d$-regular tree $\mathbb T_{d}$ with $d\ge 3$, we have that the contact process with the root initially infected has two distinct phase transitions with $0<\lambda_1(\mathbb T_{d})<\lambda_2(\mathbb{T}_d)<\infty$, by a series of beautiful work by Pemantle \cite{p92} (for $d\ge 4$), Liggett \cite{l96} (for $d=3$), and Stacey \cite{s96} (for a shorter proof that works for all $d\ge 3$). Moreover, we have from \cite{p92}  that
\begin{equation}\label{phase:d:tree}
\frac{1}{d-1}\le \lambda_1(\mathbb T_{d})<\frac{1}{d-2} \quad\text{and}\quad \frac{1}{2\sqrt{d-1}}\le \lambda_2(\mathbb T_{d}). 
\end{equation}
In particular, the first-order asymptotics of $\lambda_1(\mathbb{T}_d)$ is $1/d$ as $d$ becomes large.

Much less is known about the contact process on random trees. First of all, for a Galton-Watson tree $\T$ with offspring distribution $\xi$, it is not difficult to see that $\lambda_1(\T)$ and $\lambda_2(\T)$ are constants which are the same for a.e. $\T\sim \gw(\xi)$ conditioned on $|\T|=\infty$, and hence the constants $\lambda_1^{\textsc{gw}}(\xi)$ and $\lambda_2^{\textsc{gw}}(\xi)$ are well-defined. Huang and Durrett \cite{hd18} proved that on  $\T\sim \GW(\xi)$ with the root initially infected, $\lambda_2^{\textsc{gw}}(\xi)=0$ if the offspring distribution $\xi$ is subexponential, i.e., $\E e^{c\xi}=\infty$ for all $c>0$. So in this case, there is only the strong survival phase.

By contrast, if the offspring distribution $\xi$  has an exponential tail, i.e., $\E e^{c\xi} <\infty$ for some $c>0$, Bhamidi and the authors \cite{BNNASurvival} showed that there is an extinction phase: $\lambda_1^{\textsc{gw}}(\xi)\geq \lambda_0(\xi)$  for some constant $\lambda_0(\xi)>0$. Our first main result derives the first-order asymptotics on $\lambda_1^{\textsc{gw}}(\xi)$ for $\xi$ \textit{concentrated} around its mean, which turns out to have the same form as (\ref{phase:d:tree}).

\begin{thm}  \label{thm:phasetransition:tree}
	Let $\{\xi_k\}$ be a sequence of nonnegative integer-valued random variables with $\E \xi_k=:d_k \to \infty$ as $k\rightarrow \infty$. Assume that there exists a collection of positive constants $\mathfrak{c}=\{c_\delta  \}_{\delta \in (0,1]} $ for which 
	\begin{equation}\label{eq:concentration condition}
	\begin{split}
	\P(\xi_k \geq (1+\delta)d_k  ) &\leq \exp(- c_\delta d_k) ~~~~\textnormal{for all }\delta\in(0,1);\\
	\P(\xi_k \geq (1+a)d_k ) &\leq \exp(-c_1ad_k)~~~~\textnormal{for all }a\geq 1.
	\end{split}
	\end{equation}
	for all large enough $k$. Consider the contact process on the Galton-Watson tree $\mathcal T_k\sim \gw (\xi_k)$ with the root initially infected. Then, the extinction-survival threshold $\lambda_1^{\textsc{gw}}(\xi_k)$ satisfies
	\begin{equation}\label{eq:thm1 concl}
	\lim_{k\to\infty}\lambda_1^{\textsc{gw}} (\xi_k) d_k = 1.
	\end{equation}
\end{thm}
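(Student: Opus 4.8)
The plan is to prove the two matching one-sided estimates $\limsup_{k}\lambda_1^{\textsc{gw}}(\xi_k)\,d_k\le 1$ and $\liminf_{k}\lambda_1^{\textsc{gw}}(\xi_k)\,d_k\ge 1$. For the first, fix $\ep>0$, set $\lambda=(1+\ep)/d_k$, root $\T_k$ at the initially infected vertex, and couple the contact process with its ``deletion'' variant in which a site is removed permanently at its first recovery; by the usual graphical-representation monotonicity the ever-infected set of the deletion process is contained in that of the true process. On a tree the deletion process explored outward from the root is exactly a Galton--Watson process: a site with $c\sim\xi_k$ children and active period $T\sim\Exp(1)$ infects $\Bin(c,1-e^{-\lambda T})$ of them, and since the child-counts, the active periods and the infection clocks on disjoint edges are independent, the offspring numbers are i.i.d.\ across sites. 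Its offspring mean is $d_k\,\E[1-e^{-\lambda T}]=d_k\lambda/(1+\lambda)=(1+\ep)/(1+(1+\ep)/d_k)\to 1+\ep>1$, so for large $k$ this Galton--Watson process is supercritical and survives with positive probability; on that event $\T_k$ is infinite and the deletion process---hence the true process---survives. Using that $\lambda_1(\T)$ is a.e.\ constant on $\{|\T|=\infty\}$, this gives $\lambda_1^{\textsc{gw}}(\xi_k)\le(1+\ep)/d_k$ for large $k$, and $\ep\downarrow 0$ finishes this direction; note that only $d_k\to\infty$ is used here.

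For the lower bound, fix $\ep>0$ and $\lambda=(1-\ep)/d_k$, and let $Z_n$ be the number of vertices at distance $n$ from the root that are ever infected. The infection survives if and only if $Z_n\ge1$ for all $n$ (a contact process confined to finitely many vertices dies out), so it suffices to show $\E Z_n\to0$ as $n\to\infty$ for each fixed large $k$; then $\P(Z_n\ge1)\to0$ forces extinction for a.e.\ infinite tree and hence $\lambda_1^{\textsc{gw}}(\xi_k)\ge(1-\ep)/d_k$. Write $\E Z_n=\E_{\T}\sum_{\dist(v,\mathrm{root})=n}\Pcp(v\text{ ever infected}\mid\T)$ and bound each summand by decomposing an infection path reaching $v$ along the unique tree-path $v_0,\dots,v_n=v$: tracking the rightmost infected site along this path, the probability that the infection ever crosses the edge $(v_i,v_{i+1})$ is conditionally at most a birth--death ratio of the form $\lambda/(1-c\lambda^2\deg(v_i))$, the factor $(1-c\lambda^2\deg(v_i))^{-1}$ accounting for how the bush hanging at $v_i$ prolongs its infection---quantified by the contact process on the star centered at $v_i$, whose survival time is $1+O(\lambda^2\deg(v_i))$ when $\lambda^2\deg(v_i)$ is small. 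On the part of the tree where every degree is at most $(1+\delta)d_k$ one has $\lambda^2\deg(v_i)\le(1+\delta)(1-\ep)^2/d_k\to0$, so the per-edge factor is $\lambda(1+o(1))$, and summing the resulting product over the Galton--Watson tree (whose expected level-$n$ population is $d_k^n$) gives $\E Z_n\le\big((1+o(1))\lambda d_k\big)^n\le\big((1+o(1))(1-\ep)\big)^n\to0$.

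The main obstacle is the sparse set of vertices of degree exceeding $(1+\delta)d_k$, at which the birth--death comparison degrades and---once $\lambda^2\deg\gtrsim1$, i.e.\ $\deg\gtrsim d_k^2$---breaks down entirely, such a star being able to sustain the infection for a long time and re-seed it in many directions. This is exactly where the concentration hypothesis \eqref{eq:concentration condition} enters: it forces $\P(\deg(v)>(1+\delta)d_k)\le e^{-c_\delta d_k}$ and $\P(\deg(v)\ge m d_k)\le e^{-c_1(m-1)d_k}$, so a dangerous vertex of degree $\gtrsim d_k^2$ has density only $e^{-\Theta(d_k^2)}$. I expect the proof to run a multi-scale/renormalization argument in the spirit of \cite{BNNASurvival}: bound the persistence of the infection seeded at a degree-$m$ vertex by $e^{O(\lambda^2 m)}$ (thus $e^{O(1)}$ for $m\lesssim d_k^2$, and in general dominated by the decay of $\P(\deg(v)\ge m)$), show that between successive high-degree vertices the infection stays in the subcritical regime analyzed above, and conclude that the induced macroscopic process on the high-degree vertices is itself subcritical because they lie too far apart relative to the reach of a subcritical infection---this last step being the quantitative use of \eqref{eq:concentration condition}. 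That some such hypothesis is genuinely needed is clear from \cite{hd18}, where for subexponential $\xi$ one has even $\lambda_2^{\textsc{gw}}=0$.
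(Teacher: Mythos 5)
Your first direction is fine: the coupling with the ``infect-before-first-recovery'' exploration is exactly the paper's argument (Lemma \ref{lem:super:2ndineq}), giving a Galton--Watson process of branching rate $\lambda d_k/(1+\lambda)$ and hence $\lambda_1^{\textsc{gw}}(\xi_k)\le 1/(d_k-1)$, which settles $\limsup_k\lambda_1^{\textsc{gw}}(\xi_k)d_k\le 1$.

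The lower bound, however, has a genuine gap, and it sits precisely where the paper's main work is. Your first-moment framing ($\E Z_n\to0$) is legitimate, but the claimed per-edge factorization with factor $\lambda/(1-c\lambda^2\deg(v_i))$ is not justified: the ``prolongation'' at a path vertex $v_i$ is not governed by the star at $v_i$ but by the full subtrees hanging off $v_i$, which can repeatedly re-seed $v_i$. The correct local quantity is the expected excursion time $S(T_j)$ of the contact process on each sibling subtree (see Propositions \ref{prop:Srecursion atypical}--\ref{prop:S vs R} and the recursions \eqref{eq:recursion eq typical}, \eqref{eq:Mrecursion atypical}, \eqref{eq:Mrecursion typical}); at $\lambda=(1-\ep)/d_k$ these are of order $1/\ep$, not $1+O(\lambda^2\deg)$, and --- crucially --- as functions of the Galton--Watson randomness they have only \emph{polynomial} tails ($t^{-\sqrt d}$, Theorem \ref{thm:tail bound of S}). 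Consequently you cannot take expectations in a product of per-vertex factors (that would require exponential moments of $S$ and of the degree at scale $\lambda$), nor can you use the additive bound on the event that $\lambda^2\sum_i S(T_i)$ is small without controlling how often it is large among the $\sim d_k^n$ vertices at level $n$. Handling this requires showing that the tail of $S$ (and of the expected number of deep infections) does not deteriorate as the depth grows --- an induction over levels combining two different recursions in three regimes of $t$, which is the content of Theorems \ref{thm:tail bound of S} and \ref{thm:M tail bd}. Your proposal defers exactly this step to ``a multi-scale/renormalization argument in the spirit of \cite{BNNASurvival}'', but the method of \cite{BNNASurvival} only closes for $\lambda$ small compared to $1/\E\xi$ (it needs exponential-moment control that fails near criticality), so the deferral does not close the argument; moreover the danger is not confined to vertices of degree $\gtrsim d_k^2$ --- heavy realizations of subtree excursion times at typical-degree vertices are equally problematic and are only beaten by the quantitative $t^{-\sqrt d}$ tail through a union over levels. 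As written, the proposal proves the easy inequality and sketches a heuristic for the hard one without the ingredient that makes it work.
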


\begin{remark}
	The concentration condition \eqref{eq:concentration condition} we impose resembles the form of large deviation estimates. Notice that the family of Poisson distributions $\{\textnormal{Pois}(d) \}_{d>0}$ satisfies (\ref{eq:concentration condition}). This fact allows us to deduce an analogous result for Erd\H{o}s-R\'enyi random graphs in Corollary \ref{cor:phasetransition:ER}.
	
\end{remark}

\begin{remark}
	The conclusion (\ref{eq:thm1 concl}) is not always true without assuming \eqref{eq:concentration condition}. In fact, for any given $d>0$, we can construct $\xi$ with $\E \xi =d$ and arbitrarily small $\lambda_1^{\textsc{gw}} (\xi)>0$ (compared to $1/d$), by truncating a heavy-tailed distribution at a large enough degree. Understanding the size of $\lambda_1^{\textsc{gw}}(\xi)$ in full generality seems to be a challenging problem.
\end{remark}

In \cite{BNNASurvival}, the proof of $\lambda_1^{\textsc{gw}}(\xi)>0$ introduced a new method recursive analysis on Galton-Watson trees that controlled the expected survival times, but the  quantitative lower bounds on $\lambda_1$ they deduced were far from being sharp. Our main contribution is to introduce an alternative tree recursion, and develop techniques to control the tail probabilities of the survival time over the level of Galton-Watson trees in addition to its expectation. A detailed outline is given in Section \ref{subsec:idea1}.

A naturally related object is the contact process on a random graph with a given degree distribution. Let $\mu$ be a degree distribution and  $G_n\sim \mathcal{G}(n,\mu)$ be a random graph with degree distribution $\mu$, assuming the giant component condition \eqref{eq:condition on mu} (for details, see Section \ref{subsubsec:rgprelim}). Consider the contact process on $G_n\sim \mathcal{G}(n,\mu)$ where all vertices are initially infected. In \cite{BNNASurvival}, it was shown that if  $\E_{D\sim \mu} e^{cD}<\infty$ for some constant $c>0$, then there exist constants $0<\underline{\lambda}(\mu) \le\overline{\lambda}(\mu) <\infty$ such that the survival time $T_{\lambda,n}$  of the process satisfies the following:
\begin{enumerate}
	\item [(1)] For all $\lambda <\underline{\lambda}$,  $T_{\lambda,n}\leq n^{1+ o(1)}$ \textsf{whp};
	
	\item [(2)] For all $\lambda> \overline{\lambda}$, $T_{\lambda,n}\geq e^{\Theta(n)}$ \textsf{whp}.
\end{enumerate}

On the other hand if $\mu$ has a subexponential tail, they proved that \textsf{whp} there is no short survival phase. Based on this result, we formally define the short- and long-survival thresholds $\lambda_c^-(\mu)$, $\lambda_c^+(\mu)$ as follows.
\begin{equation}\label{eq:def:rgthres}
\begin{split}
\lambda_c^-(\mu) &:= \lim_{\alpha\to \infty}  \sup \Big\{\lambda: \;\lim_{n\to \infty}\P (T_{\lambda,n} \leq n^\alpha) = 1 \Big\};\\
\lambda_c^+(\mu) &:= \lim_{\beta \to 0} 
\;\inf \left\{\lambda: \;\; \lim_{n\rightarrow \infty}\P(T_{\lambda,n} \geq e^{\beta n} )=1 \right\}.
\end{split}
\end{equation}

\noindent The second result of the paper verifies the first-order asymptotics for $\lambda_c^-(\mu)$ and $\lambda_c^+(\mu)$ which have the same form as Theorem \ref{thm:phasetransition:tree}.
\begin{thm} \label{thm:phasetransition:graph}
	Let $(\mu_k)$ be a sequence of degree distributions with  the size-biased distribution $\widetilde{ \mu}_k$ (see (\ref{eq:def:sizebiased}) for its precise definition). Suppose that the mean $d_k$ of $\widetilde{ \mu}_k$ tends to infinity as $k\rightarrow \infty$. Moreover, assume that $(\widetilde{ \mu}_k)$ satisfies  the concentration condition (\ref{eq:concentration condition}) for fixed positive constants  $\mathfrak{c}=\{c_\delta\}_{\delta\in(0,1]}$.  Then, the short- and long-survival thresholds $\lambda_{c}^-(\mu_k)$, $\lambda_c^+(\mu_k)$ of the contact process on $G_n\sim \mathcal{G}(n,\mu_k)$ satisfy
	\begin{equation}\label{key}
	\lim_{k\to\infty} \lambda_c^-(\mu_k) d_k=\lim_{k\to\infty} \lambda_c^+(\mu_k) d_k = 1. \nonumber
	\end{equation}
\end{thm}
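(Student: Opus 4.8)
The plan is to derive Theorem~\ref{thm:phasetransition:graph} from Theorem~\ref{thm:phasetransition:tree} via the standard comparison between the contact process on $G_n\sim\mathcal G(n,\mu_k)$ and the contact process on the Galton--Watson tree that describes the local geometry of $G_n$ --- which, up to the harmless size-biasing shift and the choice of offspring law at the root, is $\mathcal T_k\sim\gw(\widetilde\mu_k)$ and has offspring mean $d_k(1+o(1))$, so that Theorem~\ref{thm:phasetransition:tree} applies and gives $\lambda_1^{\textsc{gw}}(\widetilde\mu_k)\,d_k\to1$ for its extinction threshold. Since $\lambda_c^-(\mu_k)\le\lambda_c^+(\mu_k)$ by \eqref{eq:def:rgthres}, it suffices to prove
\[
\liminf_{k\to\infty}\lambda_c^-(\mu_k)\,d_k\ \ge\ 1
\qquad\text{and}\qquad
\limsup_{k\to\infty}\lambda_c^+(\mu_k)\,d_k\ \le\ 1 .
\]
The concentration hypothesis \eqref{eq:concentration condition} on $\widetilde\mu_k$ is precisely what makes $\mathcal G(n,\mu_k)$ resemble $\gw(\widetilde\mu_k)$ with bounds uniform in $k$: under \eqref{eq:condition on mu}, all but $o(n)$ vertices of $G_n$ have tree-like neighborhoods out to radius $c\log n$, the empirical degree profile concentrates, and $G_n$ has linear edge-expansion, all \whp.

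For the first bound, fix $\epsilon\in(0,1)$ and take $\lambda\le(1-\epsilon)/d_k$. By Theorem~\ref{thm:phasetransition:tree}, for all large $k$ we have $\lambda<\lambda_1^{\textsc{gw}}(\widetilde\mu_k)$, so the contact process on $\mathcal T_k$ from a single infection is subcritical; the proof of Theorem~\ref{thm:phasetransition:tree}, which controls the \emph{tail} of the survival time, should furnish this with a rate uniform in $k$ --- an exponential- or stretched-exponential-type bound on the probability that the infection survives to time $t$, or ever reaches depth $r$, with constants depending only on $\epsilon$ and $\mathfrak c$. Coupling the contact process on $G_n$ (started from all vertices infected) to these tree processes in the neighborhood of each vertex, valid simultaneously for all but $o(n)$ vertices out to radius $c\log n$ with the few atypical (e.g.\ high-degree) neighborhoods controlled through \eqref{eq:concentration condition}, and then taking a union bound over vertices and over a discretization of time, shows that every infection is extinguished by time $n^{1+o(1)}$ \whp. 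This is exactly the scheme behind part~(1) of the dichotomy of \cite{BNNASurvival}, now run with $(1-\epsilon)/d_k$ in place of its non-sharp threshold, and it yields $\lambda_c^-(\mu_k)\ge(1-\epsilon)/d_k$ for all large $k$.

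For the second bound, fix $\epsilon>0$ and take $\lambda\ge(1+\epsilon)/d_k$. By Theorem~\ref{thm:phasetransition:tree}, for all large $k$ we have $\lambda>\lambda_1^{\textsc{gw}}(\widetilde\mu_k)$, so the contact process on $\mathcal T_k$ survives forever with positive probability; again the proof of Theorem~\ref{thm:phasetransition:tree} should supply this with a quantitative margin uniform in $k$ (for instance a $k$-independent lower bound on the probability that a single infected vertex of $\mathcal T_k$ keeps a nonempty infection within its first few generations for a long time). Starting the contact process on $G_n$ from all vertices infected, this local supercriticality forces a typical infected vertex to re-seed infections in its tree-like neighborhood at a rate bounded below, so the infected density relaxes to and fluctuates around a positive level, and a collapse to the empty configuration requires an event whose probability is exponentially small in $n$ by the linear edge-expansion of $G_n$. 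This is the scheme behind part~(2) of \cite{BNNASurvival}, now run with $(1+\epsilon)/d_k$ in place of its non-sharp threshold, and it yields $T_{\lambda,n}\ge e^{\Theta(n)}$ \whp, hence $\lambda_c^+(\mu_k)\le(1+\epsilon)/d_k$ for all large $k$.

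Letting $\epsilon\downarrow0$ and combining with $\lambda_c^-(\mu_k)\le\lambda_c^+(\mu_k)$ gives both limits. The main obstacle is not the comparison between $G_n$ and its local-limit tree, which is by now routine and was already carried out (with non-sharp constants) in \cite{BNNASurvival}, but extracting the two \emph{quantitative} tree inputs above: Theorem~\ref{thm:phasetransition:tree} as stated is only a threshold asymptotic, whereas the transfer to $G_n$ requires genuine tail control on the extinction time when $\lambda d_k\le 1-\epsilon$ and a genuine lower bound on local growth when $\lambda d_k\ge 1+\epsilon$, both with constants depending only on $\epsilon$ and $\mathfrak c$. These are precisely the kind of estimates produced by the new tree recursion controlling tail probabilities of the survival time announced in Section~\ref{subsec:idea1}; once they are in hand, the two transfer steps follow the blueprint of \cite{BNNASurvival}.
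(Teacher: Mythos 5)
Your overall skeleton (subcritical tree input transferred locally for the lower bound, supercritical tree input spread globally for the upper bound) matches the paper's blueprint, but both of the ``quantitative tree inputs'' you defer are exactly where the paper's new work lies, and the specific estimates you propose to use in their place do not suffice.

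For the lower bound, your plan is to bound, for a single vertex, the probability that the infection survives long or ever reaches depth $r$, and then union bound over vertices and times inside tree-like neighborhoods of radius $c\log n$. This is precisely the approach the paper rules out in Section \ref{subsubsec:idea2-1}: at $\lambda=(1-\ep)/d_k$ the probability that the infection from the root reaches depth $l$ decays only like $\big((1-\ep)d_k/(d_k+1)\big)^{l}$, i.e.\ at rate of order $\ep$ per level, so beating a union bound over $n$ vertices would require depth $l\gg \log n/\ep$, far beyond the tree-like radius $\asymp \log n/\log d_k$ available in $\mathcal{G}(n,\mu_k)$. (In \cite{BNNASurvival} this was fine only because $\lambda$ could be taken arbitrarily small.) The paper's fix is structurally different: it first takes the CP-expectation, defining the expected total infections at depth-$l$ leaves $M^l$ and the expected excursion time $S$, proves tail bounds of the form $t^{-\sqrt{d}}(\log t)^{-2}$ for these \emph{expected} quantities over the Galton--Watson randomness (Theorems \ref{thm:tail bound of S} and \ref{thm:M tail bd}, via two different tree recursions), extends them to unicyclic blocks (Proposition \ref{prop:SMunicyclic}, needed because $o(n)$ neighborhoods contain a cycle), and then runs a block decomposition of $\cp^\lambda(G_n)$ (Definition \ref{def:decomposition}) in which, with high probability over the graph, every block satisfies $\bar{M}^{l_n}(B_v)\le (\log n)^{-1}$ and $R(B_v)\le n^{2/\sqrt{d}}$, so each excursion spawns $o(1)$ new block copies in expectation and Markov's inequality gives $\textbf{R}(G_n)\le n^2$. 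None of this is recoverable from the tail estimate you invoke, so the lower-bound transfer as you describe it would fail.

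For the upper bound, your claim that one can ``run the scheme behind part (2) of \cite{BNNASurvival} with $(1+\ep)/d_k$ in place of its non-sharp threshold'' is exactly what Section \ref{subsec:idea3} explains cannot be done: that scheme pushes infections along an embedded expander with per-step success probability of order $(c(1-e^{-\lambda}))^{l}$ and genuinely needs $\lambda$ large. The paper instead proves the stronger Theorem \ref{thm:supercritical:general}, valid for \emph{any} $\mu$ with the giant component condition: $\lambda_c^+(\mu)\le \lambda_1^{\textsc{gw}}(\widetilde{\mu})\le 1/(d-1)$. Its engine is the exponential growth of the number of infected vertices at depth $kL_0$ on a supercritical Galton--Watson tree (``good trees'', Lemmas \ref{lm:good:tree} and \ref{lm:good:tree:1}, whose proof needs the thin-vertex/infection-path argument, not just positive survival probability), combined with a preprocessing of high-degree vertices, the notion of $\kappa$-good vertices, and a doubling iteration (Lemma \ref{lm:supercritical:iteration}) showing that $\kappa$ infected good vertices produce $2\kappa$ infected good vertices later except with probability $e^{-\kappa}$; ``linear edge-expansion plus a density that fluctuates around a positive level'' is not an argument that delivers the exponential survival bound here. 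Note also that once Theorem \ref{thm:supercritical:general} is available, the upper bound of Theorem \ref{thm:phasetransition:graph} follows directly from $\lambda_1^{\textsc{gw}}(\widetilde{\mu}_k)\le 1/(d_k-1)$, with no use of Theorem \ref{thm:phasetransition:tree} or of the concentration condition, which is a simpler and more general route than the one you outline.
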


A proof of $\lambda_c^-(\mu)>0$ for $\mu$ with an exponential tail was given in \cite{BNNASurvival}, which relied on estimating the probability of having an infection deep inside Galton-Watson trees which are local weak limits of the random graphs---see Section \ref{subsubsec:rgprelim} for details. However, we will see in Section \ref{subsec:idea2} that controlling such an event is insufficient for Theorem \ref{thm:phasetransition:graph} since $\lambda$ is not small enough. To overcome this issue, we first take the expectation of the event over the level of the contact process, and then study its tail probability over the level of Galton-Watson trees. This new approach turns out to provide a substantial improvement from \cite{BNNASurvival}. On the other hand, we will soon see the generalized result on $\lambda_c^+$ in Theorem \ref{thm:supercritical:general} below, and it requires a different approach in spreading infections and an improved structural analysis of random graphs than in \cite{BNNASurvival}, which we overview in Section \ref{subsec:idea3}.

We can deduce an analog of Theorem \ref{thm:phasetransition:graph} for the Erd\H{o}s-R\'enyi random graph, arguably one of the most well-known models of random graphs, based on the contiguity between $\mathcal{G}_{\textsc{er}}(n,d/n)$ and $\mathcal{G}(n,\textnormal{Pois}(d))$ (see Section \ref{subsubsec:rgprelim} for details).
\begin{cor} \label{cor:phasetransition:ER}
	Consider the contact process on the random graph $G_n\sim \G_{\textsc{er}}(n, d/n)$ with all vertices initially infected. Then, the short- and long- survival thresholds of the process, defined analogously as (\ref{eq:def:rgthres}), satisfy
	\begin{equation}\label{key}
	 \lim_{d\to\infty}\lambda_\textsc{er}^- (d) d=\lim_{d\to\infty}\lambda_\textsc{er}^+ (d) d = 1. \nonumber
	\end{equation}
\end{cor}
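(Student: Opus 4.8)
The plan is to deduce the corollary directly from Theorem~\ref{thm:phasetransition:graph} by checking that the Erd\H{o}s--R\'enyi model fits into its framework, and then transferring the conclusion through contiguity. First I would recall that $\mathcal G_{\textsc{er}}(n,d/n)$ and the configuration-type model $\mathcal G(n,\textnormal{Pois}(d))$ are contiguous (in the sense made precise in Section~\ref{subsubsec:rgprelim}), so that any event holding \whp{} under one holds \whp{} under the other. Since the survival-time thresholds $\lambda_c^-,\lambda_c^+$ in \eqref{eq:def:rgthres} are defined purely through \whp{} statements about $T_{\lambda,n}$, contiguity gives $\lambda_\textsc{er}^-(d)=\lambda_c^-(\textnormal{Pois}(d))$ and $\lambda_\textsc{er}^+(d)=\lambda_c^+(\textnormal{Pois}(d))$ for every fixed $d$; here one must be slightly careful that the limits in $\alpha\to\infty$ and $\beta\to 0$ commute with the contiguity transfer, which is immediate since contiguity preserves each event in the defining families.

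Next I would verify the hypotheses of Theorem~\ref{thm:phasetransition:graph} for the sequence $\mu_k=\textnormal{Pois}(d_k)$ with $d_k\to\infty$. The size-biased distribution $\widetilde\mu_k$ of $\textnormal{Pois}(d_k)$ is $1+\textnormal{Pois}(d_k)$, which has mean $d_k+1\to\infty$, so the mean-divergence hypothesis holds (and replacing $d_k$ by $d_k+1$ does not affect the first-order asymptotics). It remains to check the concentration condition \eqref{eq:concentration condition} for $\widetilde\mu_k$ with constants $\mathfrak c$ not depending on $k$; this is exactly the content of the first remark after Theorem~\ref{thm:phasetransition:tree}, and follows from the standard Chernoff bound for Poisson random variables, $\P(\textnormal{Pois}(m)\geq (1+t)m)\leq \exp(-m[(1+t)\log(1+t)-t])$, which for $t=\delta\in(0,1)$ gives a rate $c_\delta m$ and for $t=a\geq 1$ gives a rate bounded below by $c_1 a m$ with a universal $c_1$; the extra $+1$ in $\widetilde\mu_k=1+\textnormal{Pois}(d_k)$ is absorbed by enlarging the constants slightly.

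Having checked the hypotheses, Theorem~\ref{thm:phasetransition:graph} applied to $(\mu_k)=(\textnormal{Pois}(d_k))$ yields $\lambda_c^-(\textnormal{Pois}(d_k))(d_k+1)\to 1$ and $\lambda_c^+(\textnormal{Pois}(d_k))(d_k+1)\to 1$ along any sequence $d_k\to\infty$; since this holds for every divergent sequence, it holds as a genuine limit in the continuous parameter $d\to\infty$, and $d_k+1\sim d_k$ lets us replace $d_k+1$ by $d_k$. Combined with the contiguity identities $\lambda_\textsc{er}^{\pm}(d)=\lambda_c^{\pm}(\textnormal{Pois}(d))$, this gives $\lambda_\textsc{er}^-(d)\,d\to 1$ and $\lambda_\textsc{er}^+(d)\,d\to 1$ as $d\to\infty$, which is the claim. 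The only genuinely delicate point is the contiguity transfer of the thresholds: one must confirm that $\mathcal G_{\textsc{er}}(n,d/n)$ is contiguous to $\mathcal G(n,\textnormal{Pois}(d))$ \emph{for fixed $d$} (so that the degree sequence concentrates and the standard contiguity results apply) and that the event $\{T_{\lambda,n}\leq n^\alpha\}$, respectively $\{T_{\lambda,n}\geq e^{\beta n}\}$, is measurable with respect to the graph in a way compatible with contiguity — both of which are routine given the setup in Section~\ref{subsubsec:rgprelim}, so I expect no substantive obstacle beyond bookkeeping.
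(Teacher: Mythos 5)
Your proposal is correct and takes essentially the same route as the paper: verify that $\textnormal{Pois}(d)$ satisfies the concentration condition \eqref{eq:concentration condition} with constants uniform in $d$ (via the Poisson Chernoff bound), apply Theorem~\ref{thm:phasetransition:graph}, and transfer the \textsf{whp} statements defining the thresholds to $\mathcal{G}_{\textsc{er}}(n,d/n)$ through the contiguity with $\mathcal{G}(n,\textnormal{Pois}(d))$. One minor slip: under the paper's convention \eqref{eq:def:sizebiased} the size-biased distribution of $\textnormal{Pois}(d)$ is $\textnormal{Pois}(d)$ itself (as noted right after \eqref{eq:def:sizebiased}), not $1+\textnormal{Pois}(d)$; this is harmless for your argument (you correctly absorb the spurious $+1$), and in fact it removes the $d_k+1$ versus $d_k$ bookkeeping entirely.
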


As mentioned in \cite{BNNASurvival}, we actually expect the transition from  polynomial- to  exponential-time survival is sharp and happens at the extinction-survival threshold of the corresponding Galton-Watson tree. Namely, 

\begin{conj}\label{conjecture} 
	Let $\mu$ be a degree distribution satisfying the giant component condition (\ref{eq:condition on mu}), and let $\widetilde{\mu}$ be its size-biased distribution (definition given in (\ref{eq:def:sizebiased})). Recalling (\ref{eq:def:rgthres}), we have
	\begin{equation} 
	\lambda_c^-( \mu) = \lambda_c^+ (\mu)= \lambda_1^{\textsc{gw}}( \widetilde{\mu}).\nonumber
	\end{equation}
\end{conj}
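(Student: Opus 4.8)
The plan is to sandwich the random-graph thresholds by the tree threshold: assuming $\mu$ (equivalently its size-biased law $\widetilde\mu$) has an exponential tail---the subexponential case being trivial, since then $\lambda_c^-(\mu)=\lambda_c^+(\mu)=\lambda_1^{\textsc{gw}}(\widetilde\mu)=0$---we aim to prove
$$\lambda_c^+(\mu)\le\lambda_1^{\textsc{gw}}(\widetilde\mu)\le\lambda_c^-(\mu).$$
The reverse inequality $\lambda_c^-(\mu)\le\lambda_c^+(\mu)$ is automatic from \eqref{eq:def:rgthres} and the monotonicity of $T_{\lambda,n}$ in $\lambda$ (couple graphical representations): if for every small $\beta$ one has $T_{\lambda,n}\ge e^{\beta n}$ \whp, then $T_{\lambda,n}>n^\alpha$ \whp{} for every $\alpha$ and all large $n$. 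Hence it suffices to prove \emph{(i)} $\lambda>\lambda_1^{\textsc{gw}}(\widetilde\mu)\Rightarrow T_{\lambda,n}\ge e^{\Theta(n)}$ \whp, and \emph{(ii)} $\lambda<\lambda_1^{\textsc{gw}}(\widetilde\mu)\Rightarrow T_{\lambda,n}\le n^{O(1)}$ \whp. Throughout we use that $G_n\sim\mathcal G(n,\mu)$ converges in the Benjamini--Schramm sense to the Galton--Watson tree governing its local geometry, whose extinction--survival threshold is exactly $\lambda_1^{\textsc{gw}}(\widetilde\mu)$, and that the exponential tail forces $\Delta(G_n)=O(\log n)$ \whp.

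\emph{Supercritical direction (i).} Fix $\lambda>\lambda_1^{\textsc{gw}}(\widetilde\mu)$, so the contact process on the local-limit tree survives with some probability $\rho=\rho(\lambda)>0$. We reduce global exponential survival to a single \emph{localized survival} statement: there exist $c>0$, a polynomial $p$, and a radius $r=c\log n$ such that, for a vertex $v$ whose ball $B(v,r)$ is tree-like, the contact process confined to $B(v,r)$ and started from $v$ infected is still infected at time $p(n)$ with probability at least $n^{-1/2}$. Granting this, tile $G_n$ with $\Theta(n)$ such balls (a positive density of vertices have tree-like $r$-balls by local convergence), run the process in disjoint space--time blocks, and use the expansion of $G_n$ to transmit infection from a surviving ball to a neighbouring ball within time $O(\log n)$ with probability bounded below; a comparison with supercritical oriented percolation on a set of $\Theta(n)$ sites then forces $T_{\lambda,n}\ge e^{\Theta(n)}$. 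The localized statement follows because $B(v,r)$ contains a depth-$\lfloor r/2\rfloor$ truncation of the Galton--Watson tree: on the infinite tree the process infects the root at a positive density of times with probability $\ge\rho$, and since $\lambda$ is fixed, the infection cannot reach depth $r/2$ before time exponential in $r$ (hence polynomial in $n$), so a constant fraction of the tree-survival event persists under the truncation to the finite ball. This is the ``$\lambda>\overline\lambda$'' mechanism of \cite{BNNASurvival} and of the supercritical estimates of the present paper, pushed down to the sharp threshold by feeding in the exact survival criterion on the tree rather than a sufficient condition.

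\emph{Subcritical direction (ii).} Fix $\lambda<\lambda_1^{\textsc{gw}}(\widetilde\mu)$. Step~1 (structure): show that \whp{} every ball $B(v,r)$ of radius $r=c_1\log n$ has cyclomatic number at most a fixed constant $K$ and maximum degree $O(\log n)$; this is a standard first/second-moment computation on $\mathcal G(n,\mu)$ using the exponential tail. Step~2 (quantitative extinction on near-trees): using the tree-recursion technology developed for Theorem~\ref{thm:phasetransition:tree}, which controls the \emph{tail} of the survival time over the Galton--Watson measure and not merely its expectation, show that for $\lambda<\lambda_1^{\textsc{gw}}(\widetilde\mu)$ there is $C=C(\lambda)<\infty$ so that on a depth-$L$ truncated Galton--Watson tree---and, more robustly, on any graph obtained from it by adding at most $K$ edges with all degrees $O(L)$---the contact process from all-infected dies out by time $C^L$ with probability $\ge 1-e^{-L}$. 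Since $|B(v,r)|\le e^{O(r)}=n^{O(1)}$, choosing $L\asymp r$ makes $C^L=n^{O(1)}$. Step~3 (gluing): cover $G_n$ by $O(n)$ balls of radius $r$; if $T_{\lambda,n}$ exceeds $n^{O(1)}$ there must be a space--time infection path far longer than the local extinction time, and a union bound over the starting ball, combined with a multiscale argument over radii $r,2r,4r,\dots$ that uses self-duality and the local near-tree structure at each scale to bound the chance the infection survives long enough to cross to the next scale, shows this has probability $o(1)$.

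\emph{Main obstacle.} The crux is Step~2 of (ii), uniformly as $\lambda\uparrow\lambda_1^{\textsc{gw}}(\widetilde\mu)$. The recursion behind Theorem~\ref{thm:phasetransition:tree} is operated in the near-critical window $\lambda d_k=1+o(1)$, whereas here one needs a \emph{polynomial}-in-size (not merely finite) bound on the survival time on truncated Galton--Watson trees for \emph{every} $\lambda$ strictly below the true threshold---equivalently, ``fast extinction below criticality'' on general Galton--Watson trees, a statement classically available only for regular trees. This seems to demand either a contraction estimate for the tree recursion whose constants degrade continuously up to $\lambda_1^{\textsc{gw}}(\widetilde\mu)$, or a genuinely different argument (a subadditive or percolation-type comparison) valid throughout the extinction phase. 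Subsidiary difficulties are the stability of this estimate under the $O(1)$ excess edges and $O(\log n)$-degree vertices carried by $G_n$, and making Step~3 rigorous despite the non-local, non-monotone nature of space--time infection paths, for which the multiscale comparison must be set up with care.
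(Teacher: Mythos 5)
Your proposal does not prove the statement, and it is important to be clear that the paper does not either: this is stated as a \emph{conjecture}, and the paper establishes only one inequality, namely $\lambda_c^+(\mu)\le\lambda_1^{\textsc{gw}}(\widetilde\mu)$ (Theorem \ref{thm:supercritical:general}, proved via the good-tree/good-vertex expansion scheme of Section \ref{sec:supercrit}), together with the first-order asymptotics of Theorems \ref{thm:phasetransition:tree} and \ref{thm:phasetransition:graph}, which make the three thresholds agree only to leading order as $d\to\infty$ under the concentration hypothesis \eqref{eq:concentration condition}. Full equality is known only for random regular graphs (Mourrat--Valesin, Lalley--Su). The decisive gap in your plan is exactly what you flag as the ``main obstacle'': Step 2 of your subcritical direction requires polynomial-time extinction on truncated Galton--Watson trees (robust under $O(1)$ excess edges) for \emph{every} fixed $\lambda<\lambda_1^{\textsc{gw}}(\widetilde\mu)$, uniformly up to the threshold. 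The recursion machinery of Theorem \ref{thm:tail bound of S} cannot be ``fed the exact survival criterion'': it operates only for $\lambda=(1-\ep)d^{-1}$ with $d\ge d_0(\ep,\mathfrak c)$, i.e.\ for distributions of large mean degree in the near-$1/d$ window, and yields nothing for a fixed $\mu$ at a fixed subcritical $\lambda$ near $\lambda_1^{\textsc{gw}}(\widetilde\mu)$. Since neither the paper nor your sketch supplies such a ``fast extinction throughout the subcritical phase'' statement (or the multiscale gluing built on it), the inequality $\lambda_1^{\textsc{gw}}(\widetilde\mu)\le\lambda_c^-(\mu)$ remains open, and with it the conjecture.

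There is also a concrete error in your supercritical sketch. You justify the truncation to $B(v,r)$, $r=c\log n$, by asserting that at fixed $\lambda$ ``the infection cannot reach depth $r/2$ before time exponential in $r$.'' This is false: the contact process on a supercritical tree spreads with linear speed, so depth $\Theta(\log n)$ is reached within time $O(\log n)$ with non-negligible probability, and indeed the exponential growth of infections along the tree is precisely the driving mechanism the paper exploits (Lemmas \ref{lm:good:tree:1} and \ref{lm:good:tree}). The correct route for the direction $\lambda>\lambda_1^{\textsc{gw}}(\widetilde\mu)\Rightarrow T_{\lambda,n}\ge e^{\Theta(n)}$ is the one carried out in Section \ref{sec:supercrit}: preprocessing to bounded degrees, the notion of $(s_0,L_0,k,\theta,\ep)$-good trees and $\kappa$-good vertices, and the doubling estimate of Lemma \ref{lm:supercritical:iteration}, rather than a fixed-radius block construction with a crude confinement bound. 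So of your two directions, one is already in the paper (by a different and necessarily more delicate argument than your tiling), and the other is genuinely missing.
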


The special case of random regular graphs $(\mu\equiv d)$ was established by  Mourrat-Valesin \cite{mv16} and Lalley-Su \cite{ls17} who showed that  the short-long survival threshold for random $d$-regular graph occurs exactly at $\lambda_1(\mathbb T_{d})$.

The next result establishes one inequality of the conjecture, by showing that the intensity which gives a supercritical contact process on a Galton-Watson tree implies an exponential time survival on the corresponding random graph. 

\begin{thm} \label{thm:supercritical:general}
	Let $\mu$ be a degree distribution satisfying the giant component condition \eqref{eq:condition on mu}, and let $\widetilde{\mu}$ be its size-biased distribution with $d:=\E_{D\sim\widetilde{\mu}}D$. Recalling (\ref{eq:def:rgthres}), we have
	\begin{equation}\label{eq:thm:super:ER}	
	\lambda_c^+( \mu) \le \lambda_1^{\textsc{gw}}(\widetilde{ \mu})\le \frac{1}{d-1}. 
	\end{equation}
\end{thm}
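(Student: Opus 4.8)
The plan is to prove the two bounds in \eqref{eq:thm:super:ER} separately; the right-hand one is soft. For $\lambda_1^{\textsc{gw}}(\widetilde\mu)\le\frac1{d-1}$ I would bound the contact process from below by a Galton--Watson process. Fix $\lambda>\frac1{d-1}$ and run the process on $\T\sim\gw(\widetilde\mu)$ from the root; in the graphical construction call a vertex \emph{good} if it is ever infected by its parent (the root good by convention), and for a good vertex $v$ with $\zeta_v$ children let $N_v$ be the number of children $v$ infects before its first recovery mark after the instant it was infected. By the lack of memory this recovery clock is an $\Exp(1)$ independent of the (rate-$\lambda$, independent across child-edges) transmission clocks, so the generation sizes $Z_n:=\#\{\text{good vertices at depth }n\}$ form a Galton--Watson process whose offspring has mean $d\cdot\E_{H\sim\Exp(1)}[1-e^{-\lambda H}]=\frac{d\lambda}{1+\lambda}>1$. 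Being supercritical it survives with positive probability, and on that event there is an infinite ray of good vertices (by König's lemma); since each vertex on the ray stays infected until after it infects the next one, the infection persists for all time. Letting $\lambda\downarrow\frac1{d-1}$ gives the bound, which is non-vacuous since the giant component condition forces $d>1$.

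For $\lambda_c^+(\mu)\le\lambda_1^{\textsc{gw}}(\widetilde\mu)$ I fix $\lambda>\lambda_1^{\textsc{gw}}(\widetilde\mu)$, so that the contact process on the local weak limit tree of $\mathcal G(n,\mu)$ (see Section~\ref{subsubsec:rgprelim}; its extinction--survival threshold is $\lambda_1^{\textsc{gw}}(\widetilde\mu)$, the root's offspring law being irrelevant to this threshold) survives from the root with probability at least $3p_0>0$. I would first prove a \emph{local amplification lemma}: for every target $M$ there are finite $R,T$ such that, for any vertex $u$ of $G_n$ whose depth-$R$ ball is isomorphic to the corresponding ball of the limit tree, the contact process on $G_n$ started from $u$ alone has, with probability $\ge p_0$, at least $M$ vertices of $B_R(u)$ simultaneously infected at some instant before time $T$. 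This holds because on the infinite limit tree the infected set is a.s.\ unbounded on the survival event (a bounded infected set is absorbed a.s.), so $\ge M$ vertices are infected at some finite time with probability $\ge 3p_0$; enlarging $T$, and then $R$ so that the crude first-moment bound $\P(\text{depth }R\text{ reached by time }T)\le c_\mu(d\lambda T)^R/R!\to 0$ (which uses only first moments, hence no degree-tail hypothesis) keeps the infection inside $B_R$ up to $T$ with high probability, this event transfers to the process on $G_n$ by monotonicity of the contact process together with a coupling of the process on the isolated ball $B_R(u)$ with the limit-tree process, valid until the infection reaches $\partial B_R(u)$.

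The decisive step is then the passage from this local fact to exponential survival on $G_n$. I would show that \whp\ $G_n$ contains a family $\mathcal V$ of $\Theta(n)$ vertices with pairwise disjoint tree-like depth-$R$ balls that is moreover \emph{well connected}, in the sense that from the boundary of each such ball the infection can in $O(1)$ further steps reach many other balls of $\mathcal V$. From the all-infected start every ball is active; since disjoint balls carry independent parts of the graphical construction, each active ball independently amplifies to $\ge M$ infected vertices within time $T$ with probability $\ge p_0$, and — taking $M$ large and using the connectivity of $\mathcal V$ — these then light up, on average, strictly more than one further ball of $\mathcal V$ within an additional $O(1)$ time. This dominates a supercritical oriented-percolation (equivalently branching) dynamics on $\mathcal V$, whence \whp\ a positive density of $\mathcal V$ stays active for $\exp(\Omega(|\mathcal V|))=e^{\Omega(n)}$ time, so $T_{\lambda,n}\ge e^{\beta n}$ \whp\ for some $\beta=\beta(\lambda)>0$. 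Hence $\lambda_c^+(\mu)\le\lambda$, and letting $\lambda\downarrow\lambda_1^{\textsc{gw}}(\widetilde\mu)$ finishes the proof.

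The main obstacle is this last step. One must extract from the configuration model a linear-sized collection of well-separated tree-like balls \emph{together} with enough connectivity among them, controlling that revealing one ball alters the conditional law of the rest and that the ``ball-interaction graph'' is itself random with possibly unbounded degrees; one must make the amplification-and-spreading quantitative enough that the effective offspring of active balls genuinely exceeds $1$ — which forces $M$, hence $R$, to be large and requires a uniform-in-$n$ lower bound on the chance that the $M$ infected vertices light up several fresh balls rather than dying out locally; and one must upgrade ``active for a long time'' to the sharp $e^{\beta n}$ bound through a Peierls/block argument that controls the dependencies. This is exactly the improved structural analysis of random graphs and the new infection-spreading scheme signposted in Section~\ref{subsec:idea3}; by comparison, the Galton--Watson domination, the amplification lemma, and the existence of many disjoint tree-like balls are routine.
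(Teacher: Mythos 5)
Your proof of the right-hand inequality $\lambda_1^{\textsc{gw}}(\widetilde\mu)\le \tfrac1{d-1}$ is correct and is essentially the paper's own argument (domination by a Galton--Watson process with offspring mean $\tfrac{\lambda d}{1+\lambda}>1$). The problem is the left-hand inequality: what you present there is a plan whose decisive step is exactly the part you defer, and that step is the actual content of the theorem. Saying that $G_n$ \whp\ contains $\Theta(n)$ disjoint tree-like balls that are ``well connected'' and that the resulting ball dynamics ``dominates a supercritical oriented percolation'' names the desired conclusion but supplies no mechanism. The paper's proof consists precisely of the machinery you are missing: a degree-truncation/preprocessing step so that exploration estimates hold uniformly; a notion of \emph{good vertex} defined through expansion of the $l_1(\kappa)$-neighborhood; two structural events ($\mathcal B$ and $\mathcal C$ in Section 6.4) proved \whp\ via the cut-off line algorithm, Chernoff and Azuma bounds, and crucially holding \emph{uniformly over all} $\binom{n}{\kappa}$ choices of $\kappa$ infected good vertices with failure probability $e^{-\Theta(\kappa\log(n/\kappa))}$; and an iteration lemma showing that $\kappa$ infected good vertices produce $\ge\kappa\log\frac n\kappa$ infected $2\kappa$-good vertices at a later time, up to an $e^{-\kappa}$ error, which is what permits applying the step $e^{\delta n/2}$ times to get survival for time $e^{\Theta(n)}$. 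Without the uniform-over-sets concentration and the ``a fixed fraction of any newly infected set is good'' statement (event $\mathcal C$), your claim that each active ball lights up on average more than one fresh ball is unsubstantiated, and the passage from ``effective offspring $>1$'' to a survival time that is exponential in $n$ (rather than merely long) has no proof.

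There is also a quantitative flaw in the amplification lemma as you state it. You ask for $M$ vertices simultaneously infected \emph{somewhere} in $B_R(u)$, with $R$ chosen large enough (via the bound $(d\lambda T)^R/R!$) to confine the infection to $B_R(u)$ up to time $T$. That confinement forces $R\gtrsim d\lambda T$, while for $\lambda$ just above $\lambda_1^{\textsc{gw}}(\widetilde\mu)$ the growth rate of the infected set is small, so $M$ is at most $e^{cT}$ with $c$ small; the subsequent cost for those vertices to reach the boundary and exit the ball is of order $(\lambda/(1+\lambda))^{R}$ per vertex, and $e^{cT}(\lambda/(1+\lambda))^{d\lambda T}$ need not exceed $1$ near criticality. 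This is exactly why the paper's notion of a good tree (Definition 6.5) counts infections \emph{at the bottom level} $T_{kL_0}$ at time $ks_0$, using only the process inside the truncated tree, so that the exponentially many infected vertices are already positioned on the boundary and no additional exit cost is paid; your formulation would need to be reshaped along those lines. A smaller point: your assertion that on the survival event the tree process a.s.\ has unboundedly many simultaneous infections (``a bounded infected set is absorbed a.s.'') needs an argument when $\widetilde\mu$ has unbounded support, since there is then no uniform lower bound on the one-step extinction probability from configurations of size $\le M$.
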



In the case of $\lambda>\lambda_1^{\textsc{gw}}(\widetilde{ \mu})$, one may ask if the survival time of the contact process on $G_n\sim \mathcal{G}(n,\mu)$ is still exponentially long, even with a single initial infection. The following theorem gives an affirmative answer to this question.
\begin{thm}\label{thm:super:onevertex}
	For all $\lambda>\lambda_1^{\textsc{gw}}(\widetilde{ \mu})$, \textsf{whp} in $G_n\sim \mathcal{G}(n,\mu)$, the contact process on $G_n$ starting from a single infection at a site chosen uniformly at random survives until time $e^{\Theta(n)}$ with positive probability. Moreover, the same holds for the  Erd\H{o}s-R\'enyi random graph	$G_n\sim \G_{\textsc{er}}(n,d/n)$, when $\lambda>\lambda_1^{\textsc{gw}}(\textnormal{Pois}(d))$.
\end{thm}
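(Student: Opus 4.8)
The plan is to bootstrap from Theorem~\ref{thm:supercritical:general}. Its proof shows that, from the all-infected state, the contact process on $G_n\sim\mathcal G(n,\mu)$ keeps a linear-sized infected set for time $e^{\Theta(n)}$ \whp, and we need to reach this regime starting from a single infection at a uniformly random vertex $v$. I would argue on two scales. On the logarithmic scale, a single infection in the tree-like neighborhood of $v$ behaves like the supercritical contact process on the Galton--Watson tree describing the local structure of $G_n$ around $v$, and with probability bounded away from $0$ it does not die out and ``escapes'' the neighborhood of $v$ while depositing a modest number of infected vertices deep in the graph; this is precisely where the hypothesis $\lambda>\lambda_1^{\textsc{gw}}(\widetilde\mu)$ enters (together with the identification of the local weak limit recalled in Section~\ref{subsubsec:rgprelim}). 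On the macroscopic scale, such a spread-out seed, living in the bulk of $G_n$, ignites the self-sustaining mechanism behind Theorem~\ref{thm:supercritical:general}, which pushes the number of infected vertices up to a positive density and holds it there for exponential time. The Erd\H{o}s--R\'enyi case is handled the same way: the local weak limit of $\G_{\textsc{er}}(n,d/n)$ around a uniform vertex is the rooted Galton--Watson tree with $\textnormal{Pois}(d)$ offspring (so the hypothesis reads $\lambda>\lambda_1^{\textsc{gw}}(\textnormal{Pois}(d))$), and the global structural input is imported through the contiguity of $\G_{\textsc{er}}(n,d/n)$ and $\mathcal G(n,\textnormal{Pois}(d))$ from Section~\ref{subsubsec:rgprelim} and Corollary~\ref{cor:phasetransition:ER}.

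For the escape step, let $v$ be uniform in $V(G_n)$ and put $R_n:=\epsilon\log n$ with $\epsilon$ small. With probability $1-o(1)$ over $(G_n,v)$, the ball $B(v,C_2R_n)$ is a tree and, under the exploration coupling, coincides with the depth-$C_2R_n$ truncation of the relevant Galton--Watson local limit; since the contact process propagates at finite speed, by time $T_1:=C_1R_n$ the infection started at $\{v\}$ stays inside $B(v,C_2R_n)$ \whp, so on this event I may analyze it on the limiting tree. Because $\lambda>\lambda_1^{\textsc{gw}}(\widetilde\mu)$, the contact process on that tree from the root survives globally with probability at least $2p>0$; and by the linear-speed (``spreading'') estimates for the supercritical contact process on Galton--Watson trees---the analogue of the classical bounds on regular trees---conditionally on survival it reaches depth $R_n$ by time $T_1$ with probability $1-o(1)$ while keeping at least $K_0$ infected vertices near the frontier, $K_0$ a large constant fixed below. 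Transferring back along the coupling, for $n$ large, with probability at least $p$ the contact process on $G_n$ from $\{v\}$ has, by time $T_1=O(\log n)$, produced an infected set $W$ that reaches distance $R_n$ from $v$ and has $|W|\ge K_0$.

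For the ignition step, I would reuse the proof of Theorem~\ref{thm:supercritical:general} rather than only its statement (see the overview in Section~\ref{subsec:idea3}). That proof exhibits, \whp, a sub-structure of $G_n$ on which, whenever the current number of infected vertices lies between a large constant and $\delta_0 n$, it has a uniform positive drift; hence, from any configuration with at least $K_0$ suitably spread-out infected vertices ($K_0$ chosen large enough), the infection reaches $\delta_0 n$ infected vertices before dying out with probability at least $p_1>0$, uniformly in $n$, and, once it does, the same proof yields survival until time $e^{\Theta(n)}$ \whp. The seed $W$ from the escape step lies at distance $R_n$ from its origin, hence is ``in the bulk'' and spread out in the required sense; applying the drift/ignition estimate to $W$ and using the Markov property at time $T_1$, the contact process on $G_n$ from $\{v\}$ survives to time $e^{\Theta(n)}$ with probability at least $p\,p_1>0$, uniformly in $n$. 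A Fubini argument then gives that \whp\ in $G_n$ the contact process from a uniformly chosen vertex survives to $e^{\Theta(n)}$ with probability bounded away from $0$, which is the claim; the Erd\H{o}s--R\'enyi assertion follows verbatim via the contiguity cited above.

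The main obstacle is the ignition step. One must show that an infection which has merely escaped the $O(\log n)$-neighborhood of $v$ with only a bounded number of infected vertices continues to expand through the bulk of $G_n$ all the way to linear size with probability uniformly bounded below; this forces one to re-extract from the proof of Theorem~\ref{thm:supercritical:general} the sharper fact that, from a suitably spread-out seed of large constant size, the infected count has a positive drift below its equilibrium density, rather than merely the conclusion ``from full infection, survival is exponentially long''. Pinning down the precise spread-out condition needed for the tree-generated seed $W$, and verifying that $W$ satisfies it, is the technical heart of the argument; a secondary point, close to standard, is to state and prove the linear-speed spreading bound on Galton--Watson trees with exponential tails in the form used in the escape step.
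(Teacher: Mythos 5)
Your two–scale outline (local escape using supercriticality of the Galton--Watson limit, then a global doubling mechanism) matches the paper's in spirit, but the ignition step contains a genuine gap. The sharper input you propose to ``re-extract'' from the proof of Theorem \ref{thm:supercritical:general} --- a uniform positive drift starting from a spread-out seed of large \emph{constant} size $K_0$ --- is not in that proof and cannot be obtained from its machinery. The doubling mechanism (Lemma \ref{lm:supercritical:iteration}, built on the graph events $\mathcal B$ and $\mathcal C$ of Lemmas \ref{lm:graphproperty} and \ref{lm:graphproperty:2}) is formulated, and works, only for seed sizes $\kappa\in[(\log n)^{C},\delta n]$: the Azuma estimates behind \eqref{eq:event:C} and \eqref{eq:x:nd} compare a deviation of order $\kappa\log\frac{n}{\kappa}$ (or $a\ge p_0^{-1}\kappa\log\frac{n}{\kappa}+(\log n)^{C_1}$) against martingale increments of polylogarithmic size $j_1^{O(l_1(\kappa))}$, so they are vacuous when $\kappa=O(1)$; and the \textsf{whp}-over-the-graph statements are proved by union bounds over all $\binom{n}{\kappa}$ seeds, which requires per-seed failure probabilities like $\exp(-\Theta(\kappa\log\frac{n}{\kappa}))$ that a bounded seed cannot deliver. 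Consequently, a constant-size infected set $W$ at distance $\epsilon\log n$ from $v$ does not suffice to ignite the global mechanism, and the ``technical heart'' you defer is precisely the part that is missing. Your escape step also leans on an unproved ``linear-speed spreading, conditional on survival, with probability $1-o(1)$'' statement for Galton--Watson trees, which the paper neither proves nor needs.

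The paper bridges the scales differently: instead of escaping to a bounded frontier, it shows (Lemma \ref{lm:good:tree}, applied with $k=\Theta(\log\log n)$, resting only on the expected-growth Lemma \ref{lm:good:tree:1} inside truncated trees) that with probability bounded below the $C_2\log\log n$-neighborhood of the uniformly chosen vertex contains a good tree, on which the contact process produces at least $\theta^{C_2\log\log n/L_0}=(\log n)^{\Theta(1)}$ infected vertices at some time; the event $\mathcal C$ then converts a positive fraction of these into $\kappa$-good vertices with $\kappa=(\log n)^{\Theta(1)}$, which is exactly the threshold at which Lemma \ref{lm:supercritical:iteration} takes over and doubles $\kappa$ up to $\delta n$, after which $e^{\delta n/2}$ further applications give survival for time $e^{\Theta(n)}$. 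To salvage your outline you should replace the constant target $K_0$ by a polylogarithmic one produced inside a radius-$\Theta(\log\log n)$ ball, as in the paper, rather than attempting a constant-seed drift estimate; the Erd\H{o}s--R\'enyi reduction via contiguity is fine as you state it.
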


\section{Preliminaries}\label{subsec:prelim}
In this section, we set up the notations and review preliminary  concepts on the contact process and random graphs.
\subsection{The contact process and its graphical representation}\label{subsubsec:cpprelim}

Let $G=(V,E)$ be a graph (finite or infinite) and $A\subset V$. The configuration space of infections is $\{0,1 \}^V$, and  $\one_A\in \{0,1\}^V$ is the configuration of which the vertices in $A$ are infected and the others are healthy.  We denote the contact process with initial infections at $A$ by
$$(X_t) \sim \cp^\lambda(G; \one_A), $$
where $\lambda$ is the intensity of infection and $\one_A\in \{0,1\}^V$ describes the initial condition. We sometimes write $\cp^\lambda(G)$ when the initial condition is irrelevant in the context. Also, $\zero, \one$ denote all-healthy, all-infected configurations, respectively, and we write $\one_v$ for $\one_{\{v\}}$. The transition rule of the continuous-time Markov chain $(X_t)\sim \cp^\lambda(G;\one_A)$ can be defined as follows:

\begin{itemize}
	\item $X_t$ becomes $X_t - \one_v$ with rate $1$ for each $v$ such that $X_t(v)=1$.
	
	\item $X_t$ becomes $X_t + \one_u$ with rate $\lambda N_t(u)$ for each $u$ with $X_t(u)=0$, where $N_t(u):=\sum_{v \sim u}X_t(v)$ denotes the number of infected neighbors of $u$ at time $t$.
\end{itemize}

The dynamics of the contact process can be interpreted by the \textit{graphical representation} which provides a convenient coupling of the process. We briefly discuss this notion following Chapter 3, Section 6 of \cite{liggett:ips}. Let $\{N_v(t)\}_{v\in V}$ (resp., $\{N_{\vec{uv}} (t) \}_{\vec{uv} \in \vec{E}}$) be the family of independent Poisson processes with rate 1 (resp., rate $\lambda$), where $\vec{E}= 2E:= \{\vec{uv}, \vec{vu}: (uv)\in E \}$ is the set of directed edges. We set $\{N_v(t)\}_{v\in V}$ to be independent of $\{N_{\vec{uv}}(t)\}_{\vec{uv}\in\vec{E}}$ as well. Note that all  event times of the Poisson processes are distinct almost surely. We generate the graphical representation of $\cp^\lambda(G;\one_A)$ as follows:

\begin{enumerate}
	\item [1.] Initially, we have the empty space-time domain $V\times \mathbb{R}_+$.
	
	\item [2.] For each $v\in V$, place symbol $\times$ at $(v,t)$, for each event time $t$ of $N_v$. The symbol $\times$ describes the time when $v$ gets recovered.
	
	\item [3.] For each $\vec{uv}\in \vec{E}$, place an arrow from $(u,t)$ to $(v,t)$, for each event time $t$ of $N_{\vec{uv}}$. The arrow indicates that the infection is passed from $u$ to $v$ at time $t$ if $X_t(u)=1$.    
\end{enumerate}

\begin{figure}
	\centering
	\includegraphics[width=58mm]{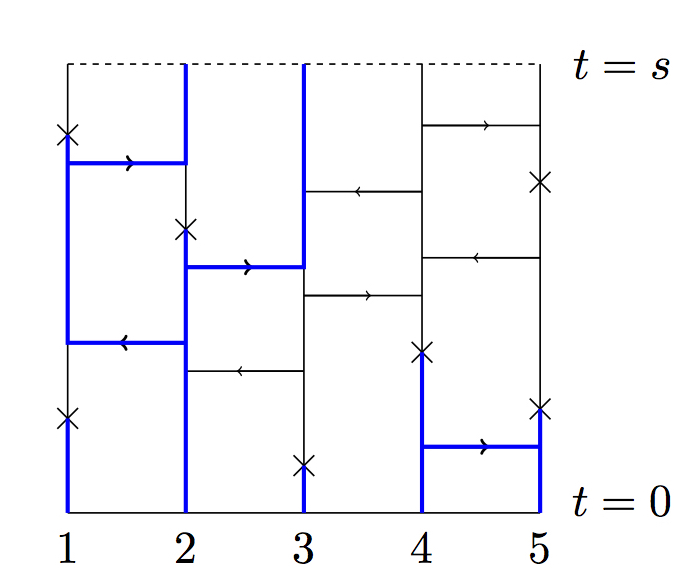}
	\caption{An instance of the graphical representation of the contact process on $V=\{1,\ldots,5\}$ with initial configuration $\one_V$. Note that $X_s = \one_{\{2,3\}}$. \label{fig1}}
\end{figure}

Therefore, as described in Figure \ref{fig1}, we can read off the diagram starting from the bottom horizontal line and obtain $(X_t)$. Construction of the graphical representation will play an important role in Sections \ref{subsec:unic prop small t2} and \ref{subsec:pf of thm2} when we introduce a \textit{decomposition} of the contact process.

\subsection{Random graphs and their limiting structure}\label{subsubsec:rgprelim}

Let $\mu$ be a probability distribution on $\mathbb{N}$. The random graph $G_n \sim \mathcal{G}(n,\mu)$ with degree distribution $\mu$ is defined as follows:

\begin{itemize}
	\item Let $d_i \sim $ i.i.d.$\;\mu$ for $i=1,\ldots, n$, conditioned on $\sum_{i=1}^n d_i \equiv 0$ mod $2$. The numbers $d_i$ refer to the number of \textit{half-edges} attached to vertex $i$.
	
	\item Generate the graph $G_n$ by pairing all half-edges uniformly at random. 
\end{itemize}
The resulting graph $G_n$ is also called the \textit{configuration model}. One may also be interested in the \textit{uniform model} $G^{\textsf{u}}_n \sim \mathcal{G}^{\textsf{u}}(n,\mu)$, which picks a uniformly random simple graph among all simple graphs with degree sequence $\{d_i\}_{i\in[n]} \sim $ i.i.d.$\;\mu$. It is well-known that if $\mu$ has a finite second moment, then the two laws $\mathcal{G}(n,\mu)$ and $\mathcal{G}^{\textsf{u}}(n,\mu)$ are \textit{contiguous}, in the sense that for any subset $A_n$ of graphs with $n$ vertices,
\begin{equation*}
\P_{G_n\sim \mathcal{G}(n,\mu) } \left(G_n \in A_n \right) \rightarrow 0
\quad
\textnormal{implies}
\quad
\P_{G^{\textsf{u}}_n\sim \mathcal{G}^{\textsf{u}}(n,\mu) } \left(G^{\textsf{u}}_n \in A_n \right) \rightarrow 0.
\end{equation*}
For details, we refer the reader to Chapter 7 of \cite{vanderhofstad17} or to \cite{j09}. We remark that when $\mu=\textnormal{Pois}(d)$, the random graph $G_n\sim \mathcal{G}(n,\mu)$ is contiguous to the Erd\H{o}s-R\'enyi random graph $G_n^{\textsc{er}}\sim \mathcal{G}_{\textsc{er}} (n,d/n)$ as shown in \cite{k06}, Theorem 1.1.

Furthermore, it is also well-known that the random graph $G_n \sim \mathcal{G}(n,\mu)$ is \textit{locally tree-like}, and the local neighborhoods converge \textit{locally weakly} to Galton-Watson trees. To explain this precisely, let us denote the law of Galton-Watson tree with offspring distribution $\mu$ by $\gw(\mu)$, and let $\gw(\mu)^l$ be the law of $\gw(\mu)$ truncated at depth $l$, that is, the vertices with distance $>l$ from the root are removed. Further, let $\widetilde \mu$ denote the \textit{size-biased} distribution of $\mu$, defined by
\begin{equation}\label{eq:def:sizebiased}
\widetilde{\mu}(k-1) := \frac{k\mu(k)}{\sum_{k'=1}^\infty k'\mu(k') }, \quad k=1,2,\ldots .
\end{equation}
Note that if $\mu=$Pois$(d)$, then $\widetilde{\mu}= \mu$. Lastly, define $\gw(\mu, \widetilde{\mu})^l$ to be the Galton-Watson process truncated at depth $l$, such that the root has offspring distribution $\mu$ while all other vertices have offspring distribution $\widetilde{\mu}$. Then the following lemma shows the convergence of local neighborhoods of $G_n$.

\begin{lemma}[\cite{DemboMontanari2010}, Section 2.1]\label{lem:lwc} Suppose that $\mu$ has a finite mean. Let $l>0$ and let $v$ denote the vertex in $G_n\sim \mathcal{G}(n,\mu)$ chosen uniformly at random. Then for any rooted tree $(T, x )$ of depth $l$, we have
	\begin{equation*}
	\lim_{n\rightarrow \infty}\P((N(v,l),v) \cong (T,x)  ) =
	\P_{(\mathcal{T},\rho )\sim \gw(\mu,\widetilde{\mu})^l} ((\mathcal{T},\rho) \cong (T,x)  ),
	\end{equation*} 
	where $N(v,l)$ is the $l$-neighborhood of $v$ in $G_n$ and $\cong$ denotes the  isomorphism of rooted graphs. We say that $G_n$ converges locally weakly to $\gw(\mu,\widetilde{\mu})$.
\end{lemma}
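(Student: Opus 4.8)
The plan is to analyze the breadth-first exploration of the ball $N(v,l)$ in the configuration model $G_n\sim\mathcal G(n,\mu)$, revealing the half-edge pairings one at a time, and to show that for a fixed finite rooted tree $(T,x)$ of depth $l$ the probability that the exploration produces a copy of $(T,x)$ converges to $\P_{(\mathcal T,\rho)\sim\gw(\mu,\widetilde\mu)^l}((\mathcal T,\rho)\cong(T,x))$.

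First I would dispose of the parity conditioning $\sum_i d_i\equiv0\pmod 2$: this event has probability bounded away from $0$, and conditioning on it perturbs the joint law of any fixed finite subcollection of $\{d_i\}$ by a factor $1+o(1)$ (it is vacuous in the degenerate case that $\mu$ is supported on the even integers), so it affects none of the limits below; in particular the uniformly chosen root $v$ satisfies $\P(\deg(v)=\deg_T(x))\to\mu(\deg_T(x))$.

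Next I would run the exploration. By the law of large numbers --- the one place the finite-mean hypothesis enters --- the number of half-edges $\sum_i d_i=2m_n$ satisfies $2m_n/n\to\E_{D\sim\mu}D$ and $|\{i:d_i=k\}|/n\to\mu(k)$ for every fixed $k$. After $j$ pairings have been revealed, exposing a partial rooted tree on a set $S$ of $O(1)$ vertices together with some boundary half-edges, one continues the BFS only until a copy of $(T,x)$ is confirmed or ruled out, which means pairing every half-edge incident to a vertex at depth $\le l-1$ (to discover its children) and every half-edge incident to a discovered depth-$l$ vertex (to verify none creates a back- or sideways edge). When the next boundary half-edge is matched to a uniformly random free half-edge, it lands on one of the $O(|S|)$ already-exposed half-edges with probability $O(|S|/n)=o(1)$, and otherwise lands on a previously unseen vertex whose degree equals $k$ with conditional probability
\[
\P(\text{new vertex has degree }k)\;=\;\frac{k\mu(k)n+o(n)}{n\,\E_{D\sim\mu}D+o(n)}\;\longrightarrow\;\frac{k\mu(k)}{\E_{D\sim\mu}D}\;=\;\widetilde\mu(k-1),
\]
after which that vertex contributes $k-1$ new boundary half-edges. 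Since the preceding $O(1)$ steps removed only $O(1)$ half-edges from a pool of size $\Theta(n)$, these conditional laws are asymptotically independent of the history, so in the limit the explored rooted tree is generated by exactly the two-stage Galton--Watson rule --- the root with offspring law $\mu$, every other vertex with offspring law $\widetilde\mu$, and no collisions --- whence $\P((N(v,l),v)\cong(T,x))\to\P_{(\mathcal T,\rho)\sim\gw(\mu,\widetilde\mu)^l}((\mathcal T,\rho)\cong(T,x))$.

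I expect the genuine obstacle to be the uniform control of this exploration, rather than any individual estimate. One must show that the number of half-edges the procedure reveals is bounded in probability by a constant depending only on $(T,x)$, which is delicate precisely because $\mu$ is assumed to have only a finite first moment: the size-biased law $\widetilde\mu$ may have infinite mean, so a discovered vertex --- in particular one at depth $l$, all of whose residual half-edges must be shown to reach fresh vertices, an event of probability $1-o(1)$ --- can have atypically large degree. I would handle this by first discarding the event that some discovered vertex has degree exceeding a large threshold $M$, which is $o(1)$ as $M\to\infty$ uniformly in $n$, running all the collision estimates on the resulting bounded exploration with constants depending on $M$, and then letting $M\to\infty$; formally this is a dominated-convergence argument with dominating function the constant $1$, which is $\widetilde\mu$-integrable. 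With that in place the per-step limits above combine to the claimed equality, which is the local weak convergence of \cite{DemboMontanari2010}.
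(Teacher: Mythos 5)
Your argument is correct: it is the standard breadth-first exploration coupling for the configuration model, which is exactly the proof given in the cited reference \cite{DemboMontanari2010}, Section 2.1; the paper itself does not reprove this lemma but simply quotes it. You also correctly identify and handle the one delicate point, namely that $\widetilde\mu$ need not have finite mean, via truncation of discovered degrees at a level $M$ followed by $M\to\infty$, so nothing further is needed.
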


\noindent We remark that the same holds for a fixed vertex $v\in G_n$. Moreover, we also stress that the condition for $\gw(\mu, \widetilde{\mu})$ to be supercritical is equivalent to the condition for $\mathcal{G}(n,\mu)$ to have the unique giant component \textsf{whp} (see e.g., \cite{mr95}, or \cite{durrett:rgd}, Section 3 for details), which can be addressed as 
\begin{equation}\label{eq:condition on mu}
\E_{D\sim\mu} D(D-2) >0,
\end{equation}

\subsection{Notations}
For a tree $T$ and a depth $l$, we denote by $T_{l}$ the set of vertices of $T$ at depth $l$. We use $T^{l}$ and $T_{\le l}$ to denote the set of vertices of depth at most $l$. In particular, $\T^l\sim\gw(\xi)^l$ denotes the Galton-Watson tree generated up to depth $l$, while the infinite Galton-Watson tree is denoted by $\T\sim\gw(\xi)$.

Throughout the paper, we often work with the contact process defined on a (fixed) graph generated at random.  To distinguish between the two randomness of different nature, we introduce the following notations: 
\begin{itemize}
	\item $\Pcp$ and $\E_{\textsc{cp}}$ denote the probability and the expectation, respectively, with respect to the randomness from  contact processes
	
	\item $\Pgw$ and $\P_\textsc{rg}$ denote the probability   with respect to the randomness from the underlying graph, when the graph is a Galton-Watson tree and  a random graph $\mathcal{G}(n,\mu)$, respectively. We write $\E_\textsc{gw}$ and $\E_\textsc{rg}$ similarly for expectations.
	
	\item $\P$ and $\E$ denote the probability and expectation, respectively, with respect to the combined randomness over both the process and the graph. That is, for instance, $\E[\cdot] = \E_\textsc{gw} [\E_\textsc{cp} [\cdot] ]$, if the underlying graph is a Galton-Watson tree.
\end{itemize}

\section{Main concepts and ideas}\label{sec:proofoutline} 
Let us start by emphasizing that even though we borrow some ideas and notations from \cite{BNNASurvival}, this manuscript is self-contained and so the reader does not have to be familiar with \cite{BNNASurvival} to read this manuscript. In this section, we briefly introduce the primary notions and discuss the main ideas in the paper. We also address the organization of the rest of the article in Section \ref{subsec:organ}.
 
\subsection{The root-added process and the lower bound of Theorem \ref{thm:phasetransition:tree}}\label{subsec:idea1}

In \cite{BNNASurvival}, Bhamidi and the authors  studied the  \textit{root-added contact process}  to prove $\lambda_1>0$ on $\gw(\xi)$ with $\xi$ having an exponential tail.  This notion continues to play a huge role in the current work as well, and hence we begin with explaining its definition and the concept of \textit{excursion time}.

\begin{definition}[Root-added contact process, \cite{BNNASurvival}]\label{def:rcp}
	Let $T$ be a (finite or infinite) tree rooted at $\rho$. Let $T^+$ be the tree that has a parent vertex $\rho^+$ of $\rho$ which is connected only with $\rho$.  The \textit{root-added contact process} on $T$ is the continuous-time Markov chain on the state space $\{0,1\}^T$, defined as the contact process on $T^+$ with  $\rho^+$ set to be infected permanently (hence we exclude $\rho^+$ from the state space). That is, $\rho^+$ is infected initially, and it does not have a recovery clock attached to itself. Let $\cp^\lambda_{\rho^+}(T;x_0)$ denote the root-added contact process on $T$ with initial condition $x_0 \in \{0,1 \}^T $. Note that the root-added contact process no longer has an absorbing state.
\end{definition}

\begin{definition}[Survival and excursion times]\label{def:excursion and survival time}
	Let $T$ be a (finite or infinite) tree rooted at $\rho$. The \textit{survival time} and \textit{excursion time} on $T$, denoted by $\textbf{R}(T)$ and $\textbf{S}(T)$, respectively, are defined as follows:
	\begin{itemize}
		\item $\textbf{R}(T)$ is the first time when the contact process $\cp^\lambda (T;\one_\rho)$ is all-healthy (i.e., when the process terminates). We also denote the \textit{expected survival time} by $R(T) =\E_{\textsc{cp}} \textbf{R}(T)$.
		
		\item $\textbf{S}(T)$ is the first time when the root-added contact process $\cp^\lambda_{\rho^+} (T; \one_\rho)$ becomes all-healthy on $T$. We also denote the \textit{expected excursion time} by $S(T) = \E_{\textsc{cp}} \textbf{S}(T)$. 
	\end{itemize}
	Note that the quantities $R(T)$ and $S(T)$ are fixed numbers for each tree $(T,\rho)$ and satisfy $R(T)\leq S(T)$, which can be seen through the coupling via the graphical representation.
\end{definition}


The previous work \cite{BNNASurvival} established $\lambda_1 >0$ by a recursive inequality on the depth of the tree that showed $\E \textbf{S}(\mathcal{T}) <\infty$  for $\mathcal{T}\sim\gw(\xi)$ with $\xi $ having an exponential tail decay.  However, the argument had limitations since it could only deal with a small enough $\lambda$. In order to push its applicability to near-criticality, in Section \ref{sec:subcritical trees} we introduce another recursive inequality  based on fundamental properties of the contact process. Using the two different recursions, we can bound the tail probabilities of the \textit{expected} excursion time, namely, $\P_{\textsc{gw}}(S(\mathcal{T}) \geq t)$, and then the bound easily implies $\E_\textsc{gw} S(\mathcal{T}) <\infty$. This is a substantial improvement from \cite{BNNASurvival} where we could only control its expectation $\E_\textsc{gw} S(\mathcal{T})$ for small enough $\lambda$.

\subsection{Deep infections, unicyclic neighbors and the lower bound of Theorem \ref{thm:phasetransition:graph}}\label{subsec:idea2}

To establish Theorem \ref{thm:phasetransition:graph}, we attempt to generalize Theorem \ref{thm:phasetransition:tree} based on the fact that the local neighborhoods of $\mathcal{G}(n,\mu)$ look like Galton-Watson trees. There are two major obstacles on carrying out this idea.

\begin{enumerate}
	\item[1.] For a vertex $v$ in $G\sim\mathcal{G}(n,\mu)$, its local neighborhood $N(v,l)$ contains a lot of cycles if $l \geq c_\mu \log n$ for some constant $c_\mu$. 
	
	\item[2.] Even for small $l$, there are $o(n)$  vertices that contain a cycle in $N(v,l)$.
\end{enumerate}  

\subsubsection{Deep infections}\label{subsubsec:idea2-1}

To overcome the first issue, we  show that the probability of having a deep infection of depth   $\geq c_\mu \log n$ inside $\cp^\lambda(\mathcal{T},\one_\rho)$ is very small for a Galton-Watson tree $\mathcal{T}$. This leads to the consideration of  the \textit{total  infections at leaves} of (finite) trees defined as follows.

\begin{definition}[Total infections at leaves]\label{def:tit}
	Let $T$ be a finite tree rooted at $\rho$, set $l := \max\{\textnormal{dist}(\rho, v): v\in T \}$ be the depth of the tree and $$\mathcal{L}:=\{v\in T: \textnormal{dist}(\rho,v)=l \}  $$
	be the collection of depth-$l$ leaves of $T$. Suppose that $l\geq 1$ and consider the root-added contact process $(X_t)\sim \cp^\lambda_{\rho^+}(T;\one_\rho)$. For $v\in \mathcal{L}$, define the \textit{total infections at} $v$, by 
	\begin{equation*}
	\textbf{M}^l_v(T):=\,\textnormal{the number of infections at } v \textnormal{ in }(X_t) \textnormal{ during time } t\in[0,\textbf{S}(T)],
	\end{equation*}
	where  $\textbf{S}(T)$ is the excursion time of $(X_t)$. In other words, we count the number of times $t$ such that $X_t(v)=1$ and $X_{t-}(v)=0$ for $t\leq \textbf{S}(T)$. Then,
	we define  the \textit{total infections at depth-$l$ leaves} (\textit{during a single excursion}) by
	\begin{equation*}
	\textbf{M}^l(T) = \sum_{v \in \mathcal{L}} \textbf{M}^l_v(T).
	\end{equation*}
	For $l'>l$, we set $\textbf{M}^{l'}(T)\equiv 0$.
	
	We also denote the \textit{expected total  infections at depth-$l$ leaves} by $M^l(T) = \E_{\textsc{cp}} \textbf{M}^l(T)$. Also, as above, we write $M^{l'}(T)=0$ for $l'>l$. Moreover, if the tree depth is $0$  (that is, $T$ is a single vertex), we set $\textbf{M}^0(T) \equiv 1$.
\end{definition}

The previous work \cite{BNNASurvival} derived an exponential decay of $\textbf{M}^l(\mathcal{T}^l)$ in $l$ for $\mathcal{T}^l\sim \gw(\widetilde{\mu})^l$ to deal with the same issue, but as before it had to require $\lambda$ to be small enough. However, unfortunately, the decay of $\textbf{M}^l(\mathcal{T}^l)$ is insufficient for our purpose if $\lambda$ is close to $\lambda_1$, due to the reason we explain below. 

If $ \lambda = (1-\ep)d^{-1}$ with $d=\E_{D\sim\widetilde{\mu}} D$, then for an infected vertex $v$ in $\mathcal{T}^l \sim \gw(\widetilde{\mu})^l$, the expected number of offsprings of $v$ that get infected before $v$ becomes healthy is
$$\frac{\lambda d}{\lambda+1}  \approx \frac{(1-\ep)d}{d+1}, \quad \textnormal{hence}\quad  \E[\textbf{M}^l(\mathcal{T}^l)] \geq  \left(\frac{(1-\ep)d}{d+1}\right)^l,$$
and intuitively, the latter quantity essentially corresponds to $\P (\textbf{M}^l(\mathcal{T}^l)\geq 1)$. To apply a union bound over all vertices in $G_n\sim \mathcal{G}(n,\mu)$, we need this probability to be of order $o(n^{-1})$. That is, we roughly require
$$ l \gg \frac{\log n}{\log\{(1+\ep)(1+d^{-1}) \} }. $$
This is much larger than our budget $c_\mu \log n$ which is approximately $c \log_d n$. Thus, investigating the tail probability of $\textbf{M}^l(\mathcal{T}^l)$ is not enough for our purpose. However, in \cite{BNNASurvival}, this approach was sufficient since we could set $\lambda$ as small as we wanted.

In Section \ref{sec:numberofhits}, we instead focus on studying $\Pgw( M^l(\mathcal{T}^l) \geq t)$, which turns out to have a much better bound than the tail of $\textbf{M}^l(\mathcal{T}^l)$. Similarly as explained in Section \ref{subsec:idea1}, we derive two different recursive inequalities on $M^l(T)$ for a deterministic tree $T$, and prove its tail bound for the case of Galton-Watson trees.

\subsubsection{Unicyclic neighbors}\label{subsubsec:idea2-2}

Another major issue is to deal with the neighborhoods $N(v, l)$ in $G_n\sim \mathcal{G}(n,\mu)$ containing a cycle. We rely on idea as \cite{BNNASurvival}, by observing that if $\mu$ satisfies (\ref{eq:concentration condition}), then there exists $\gamma(\mathfrak{c})>0$ such that \textsf{whp, }$N(v, \gamma \log_d n)$ contains at most one cycle for all $v$ with $d$  as in the previous subsection (see Lemma \ref{lem:1cyc}). Therefore, we study $S(T')$ and $M^l(T')$ as above (precise definitions are given in Section \ref{subsec:unic}), for certain unicyclic graphs $T'$ which are closely related to Galton-Watson trees. To this end, we appropriately cover $T'$ by trees and deduce information on $S(T')$ and $M^l(T')$ from the results we obtained on trees. However, formalizing this idea requires a heavy technical work and it is presented in Appendix \ref{sec:unicyclic}.  Similar ideas are applied to studying the contact process on $G_n$. Roughly speaking, we decompose $G_n$ by its local neighborhoods $\{N(v,l)\}_{v\in G_n}$, and derive results on $\cp^\lambda(G_n)$ by using what we know on $N(v,l)$.

\subsection{The proof of Theorems \ref{thm:supercritical:general} and \ref{thm:super:onevertex}}\label{subsec:idea3}
The previous work \cite{BNNASurvival} settled that $\lambda_c^+(\mu)<\infty$, which was based on a challenging structural analysis on the configuration model. Roughly speaking, they showed the existence of an \textit{embedded expander}, a subset of large degree vertices in the random graph, on which it is easy to send infections from one vertex to another.
Upon establishing its existence, spreading infections on the embedded expander could then be done by a relatively straight-forward way, which was to infect a site at distance $l$ with probability $(c(1-e^{-\lambda}))^l$, since we could choose $\lambda$ to be large. One of the main difficulties in establishing the much improved bound $\lambda_c^+(\mu) \leq \lambda_1^{\textsc{gw}}(\widetilde{ \mu})$ is that we need to develop a more efficient way of sending infections from a vertex to another.

The key observation for such improvement is that if $\lambda>\lambda_1^{\textsc{gw}}(\widetilde{ \mu})$, the expected number of infections on $\mathcal{T}\sim \gw(\widetilde{ \mu})$ grows exponentially in time (Lemmas \ref{lm:good:tree} and \ref{lm:good:tree:1}). We use this property as our driving force of passing infections on the random graph, which is possible since the local neighborhoods look like Galton-Watson trees. This new method turns out to be substantially better than  the aforementioned approach. 

However, since we now need to reveal the neighborhoods to check if the infections spread well, the structural analysis on the random graph becomes even more involved than the previous proof in \cite{BNNASurvival}. We carry out by introducing an appropriate notion of \textit{good vertices}, which roughly refer to the sites that are capable of propagating enough infections around them, and showing that any set of $\delta n$ infected \textit{good vertices} causes $\ge 2\delta n$ \textit{good vertices} to be infected at a later time with high probability except for an exponentially small error.

\subsection{Organization of the article}\label{subsec:organ}

Sections \ref{sec:subcritical trees}--\ref{sec:shortsurvival randomgraphs} are  devoted to the derivation of the lower bounds of Theorems \ref{thm:phasetransition:tree} and \ref{thm:phasetransition:graph}. In Section \ref{sec:subcritical trees}, we introduce the basic form of the recursion argument on Galton-Watson trees and establish the lower bound of Theorem \ref{thm:phasetransition:tree}. In Section \ref{sec:numberofhits}, we extend the recursion criterion to the study of deep infections.  Section \ref{sec:shortsurvival randomgraphs} then concludes the proof of the lower bound of Theorem \ref{thm:phasetransition:graph}, while the technical works needed to study the unicyclic graphs are deferred to Appendix \ref{sec:unicyclic}.  Finally, we finish the proof of Theorems \ref{thm:phasetransition:tree} and \ref{thm:phasetransition:graph} by settling their upper bounds in Section \ref{sec:supercrit}.

\section{Survival and excursion  times on trees}\label{sec:subcritical trees}

In this section, we introduce primary recursive argument on the expected excursion time which are used throughout the paper. In Section \ref{subsec:deterministic recursion survival time}, we review some ideas developed in \cite{BNNASurvival}, and derive another recursive inequality on excursion times. In Sections \ref{subsec:tail estimate survival time} and  \ref{subsec:proof of recursive tail estim}, we prove a tail probability estimate and establish Theorem \ref{thm:phasetransition:tree} as its  application.

\subsection{Deterministic recursions on trees}\label{subsec:deterministic recursion survival time}

Let $T$ be a finite tree rooted at $\rho$ and
recall the definition of the root-added contact process $\cp^\lambda_{\rho^+}(T^+;x_0)$ (Definition \ref{def:rcp}). Let $R(T)$ and $S(T)$ be expected survival and recursion time as in Definition \ref{def:excursion and survival time}.

In $(T,\rho)$, let $D=\deg(\rho)$ and $v_1,\ldots, v_D$ be the children of $\rho$. Further, let $T_1,\ldots, T_D$ be the subtrees from each child of $\rho$, rooted at $v_1,\ldots, v_D$, respectively. In \cite{BNNASurvival}, we proved the following recursion on the excursion times.

\begin{proposition}[\cite{BNNASurvival}, Lemma 3.3]\label{prop:Srecursion atypical}
	Let $T$ and $T_1,\ldots, T_D$ be as above. Then, the expected excursion times $S(T)$ and $S(T_1),\ldots, S(T_D)$ satisfy
	\begin{equation}\label{eq:recursion eq atypical}
	S(T) \leq \prod_{i=1}^D (1+\lambda S(T_i)).
	\end{equation}
\end{proposition}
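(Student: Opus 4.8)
The plan is to set up a coupling via the graphical representation that decomposes a single excursion on $T$ into a bounded number of "sub-excursions" on each branch $T_i$, and then to take expectations. First I would fix the graphical representation of the root-added process $\cp^\lambda_{\rho^+}(T;\one_\rho)$ together with the independent Poisson clocks. The idea is that the infection on $T$ during $[0,\mathbf{S}(T)]$ reaches the subtree $T_i$ only through the edge $\rho v_i$. Each time an arrow fires from $\rho$ to $v_i$ while $\rho$ is infected, this "seeds" an infection into $T_i$; from that seed the infection inside $T_i$ survives, in the worst case, for a duration stochastically dominated by an independent copy of the excursion time $\mathbf{S}(T_i)$ (worst case because during that period $\rho$ may also be re-infected, but treating $\rho$ as permanently infected for the branch $T_i$ only makes the branch process larger, which is exactly what the root-added process on $T_i$ does).

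The key steps, in order, would be: (i) Argue that the whole excursion on $T$ is "alive" only as long as at least one of $\rho$ itself or some branch $T_i$ still carries an infection; more precisely, I would run $\rho$ forward and note that $\rho$ alternates between infected intervals (started by the initial infection or by back-infections from the $v_i$) and healthy intervals. (ii) For each branch $T_i$, bound the number of seeding events: whenever $\rho$ is infected, $\rho{\to}v_i$ arrows arrive at rate $\lambda$, and each infected interval of $\rho$ has expected length $1$ (an $\mathrm{Exp}(1)$ recovery clock), so the expected number of seedings per infected interval of $\rho$ is $\lambda$; coupling each seeding to an independent copy of $\mathbf{S}(T_i)$ gives expected contribution $\lambda S(T_i)$ per infected interval. (iii) Combine multiplicatively: the process survives as long as the "last surviving object" survives, and the branching/renewal structure over the $D$ children together with $\rho$'s own contribution yields the product $\prod_{i=1}^D(1+\lambda S(T_i))$ — the $1$ accounts for $\rho$'s own recovery clock / the empty branch contribution, and the $\lambda S(T_i)$ for the seeded branch excursions. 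The cleanest way to make (iii) rigorous is an induction or a direct first-passage decomposition: condition on the first time $\rho$ recovers, use the strong Markov property, and observe that the remaining excursion is dominated by a sum over branches of (number of seeds) $\times$ (i.i.d. branch excursions), then take expectations and solve the resulting inequality, which telescopes into the product.

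The main obstacle I expect is making the domination in step (ii)–(iii) precise without circular dependence: the back-infections from $T_i$ into $\rho$ prolong $\rho$'s infected periods, which in turn seed more infections into $T_j$ for $j \neq i$, so the branches are not independent under the true dynamics. The resolution is to break this feedback by the standard monotonicity of the contact process: replace the true process by one in which, each time $\rho$ is seeded from below, we restart $\rho$ with a fresh $\mathrm{Exp}(1)$ clock and treat its effect on each branch as an independent fresh excursion of that branch — this only adds infections, so it dominates the truth, and now the branch contributions genuinely factorize. One then checks that the expected total "mass" of excursion time in this dominating process satisfies $S(T) \le \prod_{i=1}^D (1+\lambda S(T_i))$ by a clean generating-function / renewal computation, where the factor $1+\lambda S(T_i)$ is exactly $\mathbb{E}[\text{number of excursions of } T_i \text{ triggered during one infected interval of } \rho] $ adjusted for the geometric number of such intervals. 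I would lean on Definition \ref{def:excursion and survival time}'s coupling remark ($R(T)\le S(T)$ via the graphical representation) to justify all the stochastic dominations cleanly.
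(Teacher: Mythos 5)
Your overall strategy (dominate the true dynamics by a process in which the root's recoveries are suppressed while the branches are active, then do a renewal computation) is the same opening move as the paper's proof, which introduces the modified chain $(X^\sharp_t)$ whose recoveries at $\rho$ only count when $X^\sharp_t=\one_\rho$. However, the heart of the proposition is the multiplicative form of the bound, and that is exactly the step your sketch does not actually supply. Your step (ii)--(iii) counts seeds additively: $\lambda$ expected arrows per $\mathrm{Exp}(1)$ infected interval of $\rho$, each worth $S(T_i)$ in expectation, and then you assert that this ``combines multiplicatively'' because the excursion lasts as long as the last surviving object. That reasoning, if pushed through, yields a linear recursion of the type $S(T)\le a+bS(T)$ and hence a ratio bound (this is essentially the paper's \emph{other} recursion, (\ref{eq:recursion eq typical})), not the product $\prod_i(1+\lambda S(T_i))$; expectations of maxima do not factor, and ``telescoping into the product'' is asserted rather than derived. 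Moreover your identification of $1+\lambda S(T_i)$ as the expected number of excursions of $T_i$ triggered during one infected interval of $\rho$ is not correct: in the paper this factor is $\pi_i(\zero)^{-1}$, the reciprocal of the stationary all-healthy probability of the root-added chain on $T_i$, via the renewal identity $\pi_i(\zero)=\lambda^{-1}/(S(T_i)+\lambda^{-1})$.

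The missing idea is the product-chain computation. Once $\rho$ is treated as permanently infected during an excursion, the $D$ branches evolve as \emph{independent} root-added processes, so one may compare with the product chain $\cp^\otimes(T;\one_{v_i})$; a geometric-trials argument gives $S(T)\le S^\sharp(T)=1+\lambda D S^\otimes$, and then the crucial identity
\begin{equation*}
1+\lambda D S^\otimes \;=\; \prod_{i=1}^D\bigl(1+\lambda S(T_i)\bigr)
\end{equation*}
follows because the stationary distribution of the product chain is the product of the stationary distributions, so its all-healthy probability factorizes. This is where the self-referential feedback you correctly identify (re-seedings of $T_j$ while $T_i$ is still busy prolong the excursion, and the number of seeds scales with the excursion length itself) gets resolved \emph{exactly}, rather than by the per-interval seed count you propose. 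Your suggested fix (restarting $\rho$ with fresh clocks at each back-infection and declaring that branch contributions ``genuinely factorize'') does not break this self-reference, and the ``clean generating-function / renewal computation'' you defer to is precisely the content that needs to be proved. As written, the proposal has a genuine gap at the multiplicative step.
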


Even though the proof can be found in \cite{BNNASurvival}, we briefly explain it again, mainly because the ideas will be revisited in Proposition \ref{prop:Mrecursion atypcial}. For a detailed proof, we refer to \cite{BNNASurvival}.

\begin{proof}
	Consider $\cp^\lambda_{\rho} (T_i;\one_{v_i})$ (the subscript $\rho$ indicates that it serves as the added parent above $v_i$), the root-added contact process on each $T_i$, and their product
	\begin{equation}\label{eq:def:productchain}
	\cp^{\otimes}(T;\one_{v_i}) :=
	\left(\otimes_{\substack{j=1\\j\neq i}}^D \cp^\lambda_{\rho} (T_j;\zero ) \right)
	\otimes
	\cp^\lambda_{\rho} (T_i;\one_{v_i}),
	\end{equation}
	for each $i\in[D]$. Let $\textbf{S}_i^\otimes$ denote the  excursion time of this process, that is, the first return time to the all-healthy state $\otimes_{j=1}^D \zero_{T_j}$, and let $S_i^\otimes = \E_{\textsc{cp}} \textbf{S}_i^\otimes$. Further, define the average of $S_i^\otimes$ by
	\begin{equation}\label{eq:excursiontime of product}
	S^\otimes = \frac{1}{D} \sum_{i=1}^D S_i^\otimes.
	\end{equation}
	Then, we can control $S(T)$ by $S^\otimes$, based on the following modification of the process $\cp^\lambda_{\rho^+} (T;\one_\rho)$ introduced in \cite{BNNASurvival}, Lemma 3.3.
	\begin{itemize}
		\item Consider the process $(X^\sharp_t)$ on $T$ that follows the same transition rule as $\cp^\lambda_{\rho^+}(T;\one_\rho)$, except for the recoveries at root $\rho$.
		
		\item An independent rate-$1$ Poisson clock is associated with $\rho$, and the recovery at $\rho$ is only valid if $X^\sharp_t  = \one_\rho$ when the clock rings at time $t$.
	\end{itemize}
	In other words, $(X^\sharp_t)$ is generated by ignoring the recoveries of $(X_t) \sim \cp^\lambda_{\rho^+}(T;\one_\rho)$ at $\rho$ if there is another infected vertex at the time of recovery. Let us denote the expected excursion time of this process by $S^\sharp(T)$. Recalling the coupling argument using the graphical representation (Section \ref{subsubsec:cpprelim}), we know that $S^\sharp(T) \geq S(T)$. Moreover, an excursion of $(X^\sharp_t)$ can be described as follows.
	\begin{enumerate}
		\item [1.] Initially $X^\sharp_0 =\one_\rho$, and we terminate if $\rho$ gets healed before infecting any of its children. Otherwise, suppose that the first child to receive an infection from $\rho$ is $v_i$. 
		
		\item [2.] Since $\rho$ stays infected until everyone else is healthy, it is the same as running an excursion of $\cp^\otimes(T;\one_{v_i})$. When the excursion is finished, we go back to Step 1.
	\end{enumerate}
	
	The probability that we terminate at Step 1 is $(1+\lambda D)^{-1}$. So an excursion of $(X^\sharp_t)$ is a series of excursions of $\{\cp^\otimes(T;\one_{v_i}) \}_i$, until we stop when having a successful coin toss of probability $(1+\lambda D)^{-1}$ after each of the excursion.  Furthermore, note that the expected waiting time to see either a recovery at $\rho$ or an infection at a child is $(1+\lambda D)^{-1}$. Therefore, we obtain that
	\begin{equation}\label{eq:Sbound geometric trials}
	S(T) \leq S^\sharp(T) = \sum_{k=0}^\infty \left(\frac{1}{1+\lambda D}  \right)\left(\frac{\lambda D}{1+\lambda D}\right)^k \left[\frac{k+1}{1+\lambda D} + kS^\otimes  \right] = 1+\lambda D S^\otimes.
	\end{equation}
	
	The final step is to consider the stationary distributions of $\cp^\lambda_{\rho}(T_i)$ and their product. Let $\pi_i$ be the stationary distribution  of $\cp^\lambda(T_i)$. Then, $\pi_i(\zero)$ corresponds to the fraction of time that $\cp^\lambda(T_i)$ is at state $\zero$, and hence
	\begin{equation}\label{eq:stationary dist1}
	\pi_i(\zero) = \frac{\lambda^{-1}}{S(T_i) + \lambda^{-1}} = \frac{1}{1+\lambda S(T_i)}.
	\end{equation}
	Similarly, the stationary distribution $\pi^\otimes$ of $\cp^\otimes(T)$ satisfies 
	\begin{equation}\label{eq:stationary dist prop}
	\pi^\otimes(\zero) = \frac{(\lambda D)^{-1}}{(\lambda D)^{-1}+S^\otimes} = \frac{1}{1+\lambda D S^\otimes}.
	\end{equation}
	Since $\pi^\otimes= \otimes_{i=1}^D \pi_i$, this implies
	\begin{equation}\label{eq:Sotimes product}
	1+\lambda D S^\otimes = \prod_{i=1}^D (1+\lambda S(T_i)).
	\end{equation}
	Therefore, we plug  this into (\ref{eq:Sbound geometric trials}) and obtain the conclusion.
\end{proof}


Unfortunately, having (\ref{eq:recursion eq atypical}) is insufficient for our purpose. One can see this by taking expectation on each side of (\ref{eq:recursion eq atypical}) over $T\sim \gw(\xi)$. In order to yield a meaningful recursion, $\lambda$ should be small in terms of the exponential moment of $\xi$, which has nothing to do with $1/\E \xi$ in general (for details, see the proof of \cite{BNNASurvival}, Lemma 3.3). Therefore,  we develop another recursion which redeems  (\ref{eq:recursion eq atypical}). Our first step is to build up a recursion regarding $R(T)$, the expected survival time.

\begin{proposition}\label{prop:recursion of R}
	Let $T_1,\ldots,T_D$ be as Proposition \ref{prop:Srecursion atypical}, and assume that $\lambda^2 \sum_{i=1}^D R(T_i) <1$. Then, 
	\begin{equation}\label{eq:recursion of R}
	R(T) \leq \frac{1+\lambda \sum_{i=1}^D R(T_i)}{1-\lambda^2\sum_{i=1}^D R(T_i)}.
	\end{equation}
\end{proposition}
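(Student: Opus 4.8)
The plan is to mimic the structure of the proof of Proposition \ref{prop:Srecursion atypical}, but now tracking the survival time $\textbf{R}(T)$ of the \emph{ordinary} contact process $\cp^\lambda(T;\one_\rho)$ (no permanent parent at $\rho$) rather than the excursion time. The key difference is that now the root $\rho$ itself recovers at rate $1$ and has no external source keeping it alive, so an excursion of infection emanating from $\rho$ into a subtree $T_i$ may or may not "return" to reinfect $\rho$, and we must account for the possibility that $\rho$ recovers while the infection is off in some subtree and then never comes back. First I would again pass to the modified process $(X^\sharp_t)$ in which recoveries at $\rho$ are suppressed whenever some other vertex is infected — by the graphical-representation coupling this only increases the survival time, so it suffices to bound $R^\sharp(T)$. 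In $(X^\sharp_t)$, $\rho$ can only recover when the whole configuration is $\one_\rho$, so the life of the process decomposes into a sequence of "rounds": from state $\one_\rho$, with the competing clocks we either heal $\rho$ (rate $1$, probability $\tfrac{1}{1+\lambda D}$ per attempt — this terminates the process) or send an infection to a uniformly chosen child $v_i$ (rate $\lambda D$ total), after which we run an excursion of the product chain $\cp^\otimes(T;\one_{v_i})$ keeping $\rho$ infected, and upon its completion return to state $\one_\rho$.

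The new ingredient is that during an excursion of $\cp^\otimes(T;\one_{v_i})$ with $\rho$ held infected, the infection in the subtrees may, at some point, \emph{fail to have been re-seeded} — but since $\rho$ stays infected throughout that excursion and keeps firing at rate $\lambda$ into each child, this is exactly the excursion-time quantity. The subtlety is instead at the level of $\rho$ itself: in the genuine process $\textbf{R}(T)$, once $\rho$ recovers, the process can still survive if some $T_i$ is infected and later reinfects a child of $\rho$ — but $(X^\sharp_t)$ overcounts precisely this by never letting $\rho$ recover then. To get the stated bound I would instead directly estimate $R(T)$ by a first-step / renewal decomposition of the true process: run the infection from $\rho$; with probability $\tfrac{1}{1+\lambda D}$ it dies immediately (contributing the "$1$" in the numerator, up to the mean holding time), otherwise it seeds some $v_i$, and I bound the contribution of that excursion by $\lambda R(T_i)$-type terms. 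The geometric-series bookkeeping $\sum_k \big(\tfrac{1}{1+\lambda D}\big)\big(\tfrac{\lambda D}{1+\lambda D}\big)^k$ as in \eqref{eq:Sbound geometric trials} gives a leading factor $1+\lambda D \cdot (\text{per-excursion cost})$; the condition $\lambda^2\sum_i R(T_i)<1$ is exactly what makes the relevant geometric series converge and produces the $\big(1-\lambda^2\sum_i R(T_i)\big)^{-1}$ denominator, because each excursion into a subtree, costing on the order of $R(T_i)$ in time, has probability on the order of $\lambda R(T_i)$ of generating a \emph{further} return excursion before everything dies, and these chain geometrically.

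More concretely, the cleanest route is probably: (i) show $R(T)\le 1 + \lambda\sum_{i=1}^D U_i$ where $U_i$ is the expected total time that $T_i$ carries infection over the whole lifetime of the process, summed over all excursions into $T_i$; (ii) show $U_i \le R(T_i) + \lambda R(T_i)\cdot(\text{expected number of re-infections of }v_i)$, and that the expected number of times $v_i$ is reinfected is controlled by $\lambda \sum_j U_j$ (an infection in $T_i$ can reach $\rho$, which can then re-fire into any child), closing a linear system; (iii) solve the linear system under $\lambda^2\sum_i R(T_i)<1$, i.e. bound $\sum_i U_i \le \frac{\sum_i R(T_i)}{1-\lambda^2\sum_i R(T_i)}$, and substitute into (i), using $1+\lambda\sum_i R(T_i) + \frac{\lambda^3(\sum R(T_i))^2}{1-\lambda^2\sum R(T_i)} \le \frac{1+\lambda\sum_i R(T_i)}{1-\lambda^2\sum_i R(T_i)}$. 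The main obstacle I anticipate is making step (ii) rigorous: one must justify, via the graphical representation and a careful decomposition of the contact process into independent excursion pieces, that the expected number of re-infections of $v_i$ really is bounded by $\lambda\sum_j U_j$ rather than something that feeds back on $R(T)$ itself in an uncontrolled way — this requires carefully coupling the true process to a sequence of independent root-added excursions on the $T_j$'s interleaved with independent $\operatorname{Exp}(1+\lambda D)$ holding times at $\rho$, exactly the decomposition philosophy flagged in Section \ref{subsec:idea1}, and checking that the monotone coupling only loses in the direction we want.
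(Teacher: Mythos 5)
Your eventual strategy---drop the $\sharp$-process/product-chain machinery and instead do a first-step/renewal decomposition of the true process $\cp^\lambda(T;\one_\rho)$ via the graphical representation, using that an infected vertex transmits across a fixed edge $\lambda$ times per unit infected time in expectation, and closing the resulting chaining under $\lambda^2\sum_i R(T_i)<1$---is exactly the paper's argument, not a different route. The paper simply organizes the bookkeeping as one self-consistent inequality rather than a linear system: the first round (the root's infected period together with the subtree copies it seeds, each costing at most $R(T_i)$ in expectation and seeded $\lambda$ times per child in expectation) lasts at most $1+\lambda\sum_i R(T_i)$; the expected number of child-to-root reinfections during that round is at most $\lambda^2\sum_i R(T_i)$; and each reinfection launches a fresh round contributing at most $R(T)$, giving $R(T)\le 1+\lambda\sum_i R(T_i)+\bigl(\lambda^2\sum_i R(T_i)\bigr)R(T)$, which is \eqref{eq:recursion of R}.

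However, your concrete system (i)--(iii) does not close as written. With $U_i$ defined as the expected total time $T_i$ carries infection over the whole lifetime, step (i), $R(T)\le 1+\lambda\sum_i U_i$, is false: the occupation times of the subtrees enter the survival time additively, not with a prefactor $\lambda$ (the factor $\lambda$ belongs to seeding counts, not to occupation times), and the root's total infected time is $1+\E[\text{number of reinfections of }\rho]$, not $1$. Already for a single edge ($D=1$, $T_1$ a single vertex) and small $\lambda$ one computes $R(T)=1+\lambda/2$ while $1+\lambda U_1=1+O(\lambda^2)$. Your step (ii) is loose in the compensating direction (the first seeding of $T_i$ is not free either: it occurs at rate $\lambda$ per unit of $\rho$-infected time, so its expected cost is $\lambda R(T_i)$, not $R(T_i)$), and the two slips happen to cancel in your final arithmetic; but the derivation passes through an invalid inequality, and if you repair (i) while keeping your (ii) you only reach $1+(1+\lambda)\sum_i R(T_i)\big/\bigl(1-\lambda^2\sum_i R(T_i)\bigr)$, which is strictly weaker than the claim. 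The correct closed system is $U_i\le \lambda R(T_i)\bigl(1+\lambda\sum_j U_j\bigr)$ together with $R(T)\le 1+\lambda\sum_j U_j+\sum_i U_i$, which yields exactly \eqref{eq:recursion of R}. Your worry about making the Wald-type step rigorous is resolved the way you anticipate: seedings of $T_i$ are arrivals of the rate-$\lambda$ Poisson process on the edge $(\rho,v_i)$ during $\rho$'s infected time, and the graphical representation inside $T_i$ after each seeding time is independent of the past, so each seeding contributes at most $R(T_i)$ in expectation---which is precisely the one-line accounting in the paper's proof.
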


\begin{proof}
	Suppose that we run $\cp^\lambda(T;\one_\rho)$. In the beginning, which we call the \textit{first round}, the infection at the root stays there for a while, then it may infect some the children. If a children $v_i$ gets infected, then we can think of it as running a new contact process $\cp^\lambda(T_i; \one_{v_i})$. Here, we should also consider the effect of $v_i$ infecting $\rho$ again, and if this happens, the reinfected $\rho$ starts the \textit{second round} of the dynamics.
	
	The expected survival time of the first round is bounded by
	\begin{equation*}
	1+\lambda \sum_{i=1}^D R(T_i),
	\end{equation*}
	since the expected survival time of the root is $1$ and it sends infections $\lambda$ times in expectation to each children before dying out. Similarly, the expected number of infections sent from the children $\{v_i\}$ to $\rho$ in the first round is bounded by
	\begin{equation*}
	\lambda^2 \sum_{i=1}^D R(T_i).
	\end{equation*}
	Therefore, we obtain that
	\begin{equation*}
	R(T )\leq 
	1+\lambda \sum_{i=1}^D R(T_i) + \left\{\lambda^2 \sum_{i=1}^D R(T_i)\right\} R(T),
	\end{equation*}
	and the conclusion follows since we assumed $\lambda^2 \sum_{i=1}^D R(T_i) <1$.
\end{proof}	

We are now interested in the relation between $R(T)$ and $S(T)$.

\begin{proposition}\label{prop:S vs R}
	On a finite rooted tree $(T,\rho)$, let $R(T)$ and $S(T)$ be the expected survival time of $\cp^\lambda(T;\one_\rho)$ and the expected excursion time of $\cp^\lambda_{\rho^+}(T;\one_\rho)$, respectively. Then, we have
	\begin{equation}\label{eq:S vs R}
	\frac{S(T)}{1+\lambda S(T)} \leq R(T).
	\end{equation}
\end{proposition}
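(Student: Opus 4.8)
The plan is to relate the root-added contact process $\cp^\lambda_{\rho^+}(T;\one_\rho)$ to the ordinary contact process $\cp^\lambda(T;\one_\rho)$ by examining what happens right after the permanently-infected parent $\rho^+$ first infects the root $\rho$. Run the root-added process; the excursion (until all of $T$ is healthy) consists of a random number of ``visits'' to the state where $\rho$ is infected while $T$ has just been re-seeded. More precisely, I would decompose $\textbf{S}(T)$ via the renewal structure at $\rho^+$: the parent $\rho^+$ fires at rate $\lambda$, so it tries to infect $\rho$ at the event times of a rate-$\lambda$ Poisson clock. Starting from $\one_\rho$, the root-added process evolves exactly like the ordinary process $\cp^\lambda(T;\one_\rho)$ until either that ordinary process dies out (after expected time $R(T)$) or $\rho^+$ re-infects $\rho$ at some point. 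If the ordinary process on $T$ dies out before $\rho^+$ re-infects anything, the excursion ends; otherwise it continues.

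The key computation: consider one ``attempt,'' namely running $\cp^\lambda(T;\one_\rho)$ and simultaneously waiting for the first ring of the independent rate-$\lambda$ clock at $\rho^+$. Let $\textbf{R}$ be the survival time of this ordinary process and let $E\sim\Exp(\lambda)$ be independent. On the event $\{E > \textbf{R}\}$ — the clock does not ring before the infection on $T$ dies out — the excursion terminates, and it contributes expected time at most $R(T)$. On the event $\{E \le \textbf{R}\}$, by the memoryless property and the strong Markov property at time $E$, the process restarts from a configuration that stochastically dominates $\one_\rho$ (it contains an infection at $\rho$ and possibly more), so we may bound the remainder by another independent copy of the excursion. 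Writing $p = \P_{\textsc{cp}}(E \le \textbf{R})$, the standard geometric-renewal bound gives
\begin{equation*}
S(T) \le \frac{\E_{\textsc{cp}}[\min(E,\textbf{R})] + p\, S(T)}{1} \quad\Longrightarrow\quad S(T) \le \frac{\E_{\textsc{cp}}[\min(E,\textbf{R})]}{1-p}.
\end{equation*}
Since $\E_{\textsc{cp}}[\min(E,\textbf{R})] \le R(T)$ and a short estimate (using $1-e^{-\lambda t}\le \lambda t$, or a direct coupling) shows $1 - p = \P_{\textsc{cp}}(E > \textbf{R}) = \E_{\textsc{cp}}[e^{-\lambda \textbf{R}}] \ge \frac{1}{1+\lambda R(T)}$ by Jensen's inequality applied to the convex function $x\mapsto e^{-\lambda x}$ — wait, Jensen gives the wrong direction, so instead I would bound $1-p$ from below differently. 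Actually the cleanest route is to invert the recursion for $S$ in terms of $R$ directly: by the same renewal argument applied to $R$ itself (the ordinary process, observed against the parent clock, but now the parent is not there), one gets a companion identity, and combining the two yields $S(T)(1-p) \le R(T)$ with $p$ expressed through $S$. I expect the honest statement to be that $R(T) = (1-p)\cdot\text{(something)} + p\cdot(R(T) + S(T))$-type bookkeeping collapses to exactly $R(T) \ge S(T)/(1+\lambda S(T))$ after writing $p$ in terms of the stationary weight $\pi(\zero)$ of $\cp^\lambda(T)$, which by \eqref{eq:stationary dist1}-style reasoning equals $1/(1+\lambda S(T))$.

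The main obstacle I anticipate is the domination step at the renewal times: after $\rho^+$ re-infects $\rho$, the current configuration of $T$ is generally \emph{not} $\one_\rho$ but something larger, and I must argue that restarting from a larger configuration only increases the remaining excursion time — this is the standard monotonicity of the contact process in its initial condition, provable via the graphical representation coupling, exactly as invoked in Definition \ref{def:excursion and survival time} for $R(T)\le S(T)$. The second delicate point is getting the constant right: I need the precise relation $\P_{\textsc{cp}}(E>\textbf{R}) = $ (stationary probability that $\cp^\lambda(T)$ sits at $\zero$ between successive re-seedings), which ties the survival-time normalization to $S(T)$ via the $\lambda^{-1}/(S(T)+\lambda^{-1})$ bookkeeping already used in the proof of Proposition \ref{prop:Srecursion atypical}. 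Once that identification is made, the inequality \eqref{eq:S vs R} falls out by rearranging $S(T) \le R(T) + \frac{\lambda S(T)}{1+\lambda S(T)} S(T)$, equivalently $\frac{S(T)}{1+\lambda S(T)}\le R(T)$.
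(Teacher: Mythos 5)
Your renewal decomposition has the monotonicity pointing the wrong way at exactly the step that carries the proof. At the first ring $E$ of the parent's clock (on $\{E\le \textbf{R}\}$), the configuration of the root-added process is the still-alive configuration of $\cp^\lambda(T;\one_\rho)$ at time $E$ together with a re-infected root, which \emph{dominates} $\one_\rho$. By the graphical-representation monotonicity you invoke, the remaining time to reach $\zero$ therefore stochastically \emph{dominates} a fresh excursion; it is not dominated by one. So the honest bookkeeping is
\begin{equation*}
S(T) \;\geq\; \E_{\textsc{cp}}\bigl[\min(E,\textbf{R}(T))\bigr] + \P_{\textsc{cp}}(E\le \textbf{R}(T))\, S(T),
\end{equation*}
the reverse of the display you wrote, and it only produces a lower bound on $S(T)$ — useless for \eqref{eq:S vs R}, which is an upper bound on $S(T)$ in terms of $R(T)$. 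Your own ``main obstacle'' paragraph concedes that restarting from a larger configuration ``only increases the remaining excursion time,'' which is precisely what invalidates the inequality $S(T)\le \E[\min(E,\textbf{R})]+pS(T)$ rather than what saves it. The remainder of the proposal (``the cleanest route is to invert the recursion\dots I expect the honest statement to be\dots'') is a guess at a bookkeeping identity, not an argument, so as it stands there is no proof.

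The fix is to compare occupation times globally rather than recursing within a single excursion, which is what the paper does. Run $(X_t)\sim\cp^\lambda_{\rho^+}(T;\one_\rho)$ on $[0,t_0]$ and let $A_{t_0}=\{t\le t_0: X_t\neq\zero\}$. Renewal–reward (excursions of mean length $S(T)$ separated by idle periods of mean $\lambda^{-1}$) gives $\E_{\textsc{cp}}|A_{t_0}|/t_0 \to \lambda S(T)/(1+\lambda S(T))$. On the other hand, by the graphical representation the infected set at any time is contained in the union of the contact processes started from $\one_\rho$ at time $0$ and at each ring of the parent's rate-$\lambda$ clock; here monotonicity is used in the valid direction (containment in freshly launched copies), and summing expected survival times over the expected $\lambda t_0$ rings yields $\E_{\textsc{cp}}|A_{t_0}|\le \lambda t_0 R(T)$ up to negligible boundary terms. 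Comparing the two rates gives $\lambda S(T)/(1+\lambda S(T))\le \lambda R(T)$, which is \eqref{eq:S vs R}. Your closing identification of $\P(E>\textbf{R})$ with a stationary weight of $\zero$ gestures at this, but without the covering-by-launched-copies step the constant cannot be extracted.
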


\begin{proof}
	Suppose that we are running a root-added contact process $(X_t)\sim \cp^\lambda_{\rho^+} (T;\one_\rho)$ until time $t_0$.  Further, let  
	$$A_{t_0} = \{t\leq t_0: X_t \neq \zero \}, $$
	and let $|A_{t_0}|$ be the Lebesgue measure of the set $A_{t_0}$. In the root-added contact process, after one excursion we wait $(1/\lambda)$-time in expectation until we start the next excursion. Therefore, we have
	\begin{equation}\label{eq:occupation time by S}
	\lim_{t_0 \rightarrow \infty} \frac{\E_{\textsc{cp}}|A_{t_0}|}{t_0} = \frac{S(T)}{\lambda^{-1} + S(T)} = \frac{\lambda S(T)}{1+ \lambda S(T)}.
	\end{equation}
	
	On the other hand, $\cp^\lambda_{\rho^+} (T; \one_\rho)$ can be considered as the contact process $\cp^\lambda(T;\one_\rho)$ of which the root $\rho$ receives new infections at every ring of an independent Poisson process with rate $\lambda$. Since the rate-$\lambda$ Poisson process rings $\lambda t_0$ times in expectation until time $t_0$, we see that
	\begin{equation*}
	\E_{\textsc{cp} } |A_{t_0}| \leq \lambda t_0 R(T).
	\end{equation*}
	Comparing this to (\ref{eq:occupation time by S}), we obtain that
	$$\frac{\lambda S(T)}{1+\lambda S(T)} \leq \lambda R(T), $$
	implying the conclusion.
\end{proof}

Combining Propositions \ref{prop:recursion of R} and \ref{prop:S vs R}, we obtain that
\begin{equation}\label{eq:recursion eq typical}
\begin{split}
S(T) \leq \frac{R(T)}{1-\lambda R(T)} 
&\leq \frac{1+\lambda \sum_{i=1}^D R(T_i)}{1-\lambda - 2\lambda^2 \sum_{i=1}^D R(T_i) } \\
&\leq
\frac{1+\lambda \sum_{i=1}^D S(T_i)}{1-\lambda - 2\lambda^2 \sum_{i=1}^D S(T_i) }, 
\end{split}
\end{equation}
provided that $\lambda + 2\lambda^2 \sum_{i=1}^D S(T_i) <1$. In the rest of the paper, (\ref{eq:recursion eq atypical}) and (\ref{eq:recursion eq typical}) serve as two major recursive inequalities for $S(T)$.

\subsection{Recursive tail estimate for Galton-Watson trees}\label{subsec:tail estimate survival time}
In this subsection, we establish the primary tail probability estimate on $S(\mathcal{T})$ for a Galton-Watson tree $\mathcal{T}$, and prove Theorem \ref{thm:phasetransition:tree} as its  application.

Let $\xi$ be an integer-valued random variable that satisfy the concentration condition (\ref{eq:concentration condition}) for $\mathfrak{c}=\{c_\delta \}_{\delta\in(0,1]}$. For the Galton-Watson tree $\mathcal{T} \sim \gw(\xi)$, the expected excursion time $S(\mathcal{T})$ is now a random variable driven by the randomness from $\gw(\xi)$. The goal of this subsection is to show that $S(\mathcal{T})$ is finite almost surely if $\lambda\leq (1-\varepsilon)d^{-1} $, where $\ep>0$ is an arbitrarily fixed constant and $d = \E\xi$ is large enough depending on $\ep$. We establish this by proving that the upper tail of  $S(\mathcal{T})$ is very light. In what follows, we denote the law of Galton-Watson trees of depth $l$ by $\gw(\xi)^l$.

\begin{theorem}\label{thm:tail bound of S}
	Let $l\geq0$ be an integer, $\ep\in(0,1)$ and $\mathfrak{c}=\{c_\delta \}_{\delta\in (0,1]}$ be a collection of positive constants. Then there exists $d_0(\ep, \mathfrak{c}) >0$ such that the following holds true. For  any $\xi$ that satisfies $d:=\E \xi \geq d_0$ and (\ref{eq:concentration condition}) with $\mathfrak{c}$, we have for $\lambda = (1-\ep) d^{-1}$ and $\mathcal{T}^l\sim \gw(\xi)^l$ that
	\begin{equation}\label{eq:tail bound of S}
	\P_{\textsc{gw}} \left(S(\mathcal{T}^l) \geq t \right) \leq 
	t^{-\sqrt{d}} (\log t)^{-2} ~~~\textnormal{for all }t\geq \frac{2}{\ep},
	\end{equation}
	where  $S(\mathcal{T}^l)$ is the expected excursion time on $\mathcal{T}^l$.
\end{theorem}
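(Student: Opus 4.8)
The plan is to run an induction on the depth $l$, controlling the tail of $S(\mathcal T^l)$ via the two recursions \eqref{eq:recursion eq atypical} and \eqref{eq:recursion eq typical}. The key point is that these two recursions have complementary strengths: the ``typical'' recursion \eqref{eq:recursion eq typical} is efficient when the root degree $D$ is close to its mean $d$ (so that $\lambda \sum_{i=1}^D S(T_i)$ is of order $(1-\ep)\cdot(\text{average of }S(T_i))$, a contraction for $\ep$ fixed), but it degrades badly or becomes vacuous when $D$ is atypically large; the ``atypical'' recursion \eqref{eq:recursion eq atypical}, $S(T)\le \prod_i (1+\lambda S(T_i))$, is always valid and loses only a multiplicative factor per child, which is affordable precisely on the rare event that $D$ is large, because the concentration hypothesis \eqref{eq:concentration condition} makes such events exponentially unlikely in $d$. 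So the strategy is: condition on $D$, use \eqref{eq:recursion eq typical} on the event $\{D\le (1+\delta)d\}$ for a suitable small $\delta=\delta(\ep)$, and use \eqref{eq:recursion eq atypical} together with the tail bound on $D$ on the complementary event.

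First I would set up the induction hypothesis in the form \eqref{eq:tail bound of S}, i.e. $\Pgw(S(\mathcal T^l)\ge t)\le t^{-\sqrt d}(\log t)^{-2}$ for all $t\ge 2/\ep$, with the base case $l=0$ being trivial since $S(\text{point})$ is a fixed small constant (it is $\le 1/(1-\lambda)$ or similar, certainly below $2/\ep$ for $d$ large). For the inductive step, write $\mathcal T^l$ as a root of degree $D$ with i.i.d.\ subtrees $\mathcal T^{l-1}_1,\dots,\mathcal T^{l-1}_D\sim\gw(\xi)^{l-1}$. On the ``typical'' event I would observe that \eqref{eq:recursion eq typical} gives, whenever $\lambda+2\lambda^2\sum_i S(T_i)<1$, a bound of the form $S(\mathcal T^l)\le \frac{1+\lambda\Sigma}{1-\lambda-2\lambda^2\Sigma}$ where $\Sigma=\sum_{i=1}^D S(\mathcal T^{l-1}_i)$; since $\lambda D\approx(1-\ep)$, if each $S(\mathcal T^{l-1}_i)$ were its ``typical value'' $O(1)$ this is $O(1)$, and more importantly a large value of $S(\mathcal T^l)$ forces $\Sigma$ — hence at least one of the $D\le(1+\delta)d$ subtree values — to be large. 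Quantitatively, $S(\mathcal T^l)\ge t$ with $\lambda D\le(1+\delta)(1-\ep)<1$ forces $\lambda\Sigma \gtrsim t$ (up to the $1-\lambda$ denominator), so $\Sigma\gtrsim td/(1+\delta)$, and then by a union bound over the $\le (1+\delta)d$ subtrees and the induction hypothesis at depth $l-1$, the probability of this is at most roughly $(1+\delta)d\cdot \big(\tfrac{ct}{d}\big)^{-\sqrt d}(\log\cdot)^{-2}$, which — this is where the exponent $\sqrt d$ rather than a constant is crucial, and where $d$ large is used — beats $t^{-\sqrt d}(\log t)^{-2}$ with room to spare. (One has to handle the ``many moderately large subtrees'' case separately from the ``one very large subtree'' case, but both are controlled by the same induction hypothesis via a Chernoff/union argument since the tail $t^{-\sqrt d}$ is summable and reproduces itself under sums.)

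On the ``atypical'' event $\{D=(1+a)d, a\ge\delta\}$ I would use $S(\mathcal T^l)\le\prod_{i=1}^D(1+\lambda S(\mathcal T^{l-1}_i))$, i.e.\ $\log S(\mathcal T^l)\le\sum_{i=1}^D\log(1+\lambda S(\mathcal T^{l-1}_i))$. Conditioning on $D$, the summands are i.i.d.\ with a very light tail (inherited from the depth-$(l-1)$ hypothesis, and $\log(1+\lambda s)$ is much smaller than $s$), so $\log S(\mathcal T^l)$ concentrates around $D\cdot\E\log(1+\lambda S(\mathcal T^{l-1}))=O(D/d)=O(1+a)$; thus $\{S(\mathcal T^l)\ge t\}$ on this event needs either $a$ comparable to $\log t$ — which by \eqref{eq:concentration condition}, $\P(D\ge(1+a)d)\le e^{-c_1ad}$, costs $e^{-c_1 ad}\ll t^{-\sqrt d}$ once $a\gtrsim\log t$ and $d$ is large — or, for moderate $a$, a large-deviation excess in the sum $\sum_i\log(1+\lambda S(\mathcal T^{l-1}_i))$, again controlled by the induction hypothesis at depth $l-1$. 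Summing the typical and atypical contributions, and choosing $d_0(\ep,\mathfrak c)$ large enough that all the ``room to spare'' factors (the extra $d$ or $e^{-cd}$ or $d^{-\Omega(\sqrt d)}$) dominate the union-bound multiplicities, closes the induction and gives \eqref{eq:tail bound of S} at depth $l$.

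The main obstacle I expect is bookkeeping the interplay between the two regimes so that the self-improving structure of the bound is genuinely preserved: one must choose $\delta$ as a function of $\ep$ so that $(1+\delta)(1-\ep)<1$ with a fixed gap, verify that the hypothesis $\lambda+2\lambda^2\Sigma<1$ needed for \eqref{eq:recursion eq typical} either holds or places us in a regime already covered by the tail estimate, and — most delicately — check that summing i.i.d.\ copies of a random variable with tail $t^{-\sqrt d}(\log t)^{-2}$ (which is \emph{not} exponentially light, only polynomially so with a large exponent) still yields, for the relevant sum of $O(d)$ terms reaching level $\sim td$, a tail that reproduces $t^{-\sqrt d}$ with a spare factor of $d$; this requires a careful union bound distinguishing the number of ``large'' subtrees, and it is the step where the precise shape of the claimed bound (the $\sqrt d$ exponent and the $(\log t)^{-2}$ correction, which makes the sum over dyadic scales of $t$ converge) has been reverse-engineered to make the induction go through.
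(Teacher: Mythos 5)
Your overall toolkit (induction on $l$, the two recursions \eqref{eq:recursion eq typical} and \eqref{eq:recursion eq atypical}, the concentration of $D$) is the right one, but the way you assign the recursions to events — \eqref{eq:recursion eq typical} on $\{D\le(1+\delta)d\}$, \eqref{eq:recursion eq atypical} only on the atypical-degree event — has a genuine gap in the large-$t$ regime. On the typical event you claim that $S(\mathcal T^l)\ge t$ forces $\lambda\Sigma\gtrsim t$, i.e. $\Sigma=\sum_i S(T_i)\gtrsim td$, ``up to the $1-\lambda$ denominator''; but the denominator in \eqref{eq:recursion eq typical} is $1-\lambda-2\lambda^2\Sigma$, and solving $\frac{1+\lambda\Sigma}{1-\lambda-2\lambda^2\Sigma}\ge t$ only yields $\Sigma\ge \frac{t(1-\lambda)-1}{\lambda(1+2t\lambda)}$, which saturates at roughly $\tfrac12\lambda^{-2}\asymp d^2$ as $t$ grows (and the recursion is simply vacuous once $\Sigma\ge\frac{1-\lambda}{2\lambda^2}$). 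So for $t\gg d^2$ the event $\{S(\mathcal T^l)\ge t,\ D\le(1+\delta)d\}$ forces only $\Sigma\gtrsim d^2$, whose probability under the induction hypothesis is about $d\cdot(cd^2)^{-\sqrt d}=e^{-2\sqrt d\log d+O(\sqrt d)}$ — vastly larger than the target $t^{-\sqrt d}(\log t)^{-2}$ when, say, $t=e^{\sqrt d}$. Your fallback (``the failure of $\lambda+2\lambda^2\Sigma<1$ places us in a regime already covered by the tail estimate'') therefore does not close the induction: on the typical-degree event with large $t$ you have no mechanism producing decay in $t$, and you are forced to use the product recursion \eqref{eq:recursion eq atypical} there as well. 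A secondary quantitative slip: for $t$ near $2/\ep$ your union bound $(1+\delta)d\,(ct/d)^{-\sqrt d}$ is not obtainable, because $\Sigma$ can exceed $td/(1+\delta)$ with all subtree values below their cap $2/\ep$ (no single subtree needs to be large), so the probability is not small by that route.

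The paper's organization avoids both problems by splitting according to $t$, not according to $D$. For the bottom of the range it proves the bound only at the single point $t=2/\ep$: the thresholds $D\le(1+\tfrac{\ep}{6})d$ and $\Sigma\le\frac{2d}{\ep}(1+\tfrac{\ep}{3})$ are tuned so that exceeding the latter forces a total excess of order $d$ above the per-term cap $2/\ep$, which is controlled by Lemma \ref{lem:sum Si small t}; monotonicity of the tail in $t$ then covers all $2/\ep\le t\le d^{1/10}$ since $d^{-\sqrt d/10}\le t^{-\sqrt d}$ there. For every $t\ge d^{1/10}$ it uses the product recursion \eqref{eq:recursion eq atypical} for \emph{all} degrees, converting $\{S\ge t\}$ into $\{\sum_i\log(1+\lambda S(T_i))\ge\log t\}$ and splitting only over the value of $D$ inside that analysis (up to $2d$ via Lemma \ref{lem:recursion principle atypical}, between $2d$ and $\Delta_{d,t}\asymp\sqrt d\log t$ via Corollary \ref{cor:recursion principle atypical} paired with $\P(D\ge r)\le e^{-c_1r/2}$, and beyond $\Delta_{d,t}$ by \eqref{eq:concentration condition} alone). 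If you adopt this $t$-based split, the rest of your outline (in particular your treatment of atypically large $D$) matches the paper's argument.
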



\begin{remark}
	The term $(\log t)^{-2}$ in the r.h.s.$\;$of (\ref{eq:tail bound of S}) may look useless, but this plays a key role in carrying out an inductive argument (see Lemma \ref{lem:recursion principle atypical}). We remark that the exponent $-2$ of $(\log t)^{-2}$ can be replaced by any number smaller than $-1$. Moreover, the exponent $\sqrt{d}$ can be replaced by any term of order $O(d^{1-\eta})$ for $\eta>0$.
\end{remark}

The rest of the subsection is devoted to the proof of Theorem \ref{thm:tail bound of S}. We do this by an induction on $l$, the tree depth. If $l=0$, $\mathcal{T}^0$ is just a single vertex $\rho$ and hence $S(\mathcal{T}^0)\equiv 1$, implying (\ref{eq:tail bound of S}).

Suppose that we have (\ref{eq:tail bound of S}) for $l$. Let $(\mathcal{T}^{l+1},\rho)\sim \gw(\xi)^{l+1}$, and set $D=\deg(\rho) \overset{\textnormal{d}}{=} \xi$. As before, let $T_i,\; i\in[D]$ denote the subtrees of $\mathcal{T}^{l+1}$, rooted at the child $v_i$. Let $c_1$ be the constant in $\mathfrak{c}$ with $\delta =1$, and we  divide $t$ into three  regimes as follows. 
\begin{itemize}
	\item [1.] (small) $2/\ep \leq t\leq d^{\frac{1}{10}}$;
	
	\item [2.] (intermediate) $ d^{\frac{1}{10}}\leq t\leq \exp(\frac{1}{2}c_1 \sqrt{d} ) $;
	
	\item[3.] (large) $\exp(\frac{1}{2} c_1 \sqrt{d}) \leq t$.
\end{itemize}
Then, we establish (\ref{eq:tail bound of S}) on each regime separately. As the proof goes on, we will figure out the conditions for $d_0 (\ep, \mathfrak{c})$ as well.

\begin{remark}\label{rmk:small intermediate regime choice}
	There is much freedom in choosing $d^{\frac{1}{10}}$, the threshold between the small and intermediate regime. Indeed, any $d^\eta$ with $\eta\in(0, \;1-d^{-1/2})$ would work for our purpose. However, the specific choice of $d^{1/10}$ will turn out to be useful later, in the proof of Proposition \ref{prop:SMunicyclic} and Lemma \ref{lem:unicyclic recursion small t}. The choice of $\exp(\frac{1}{2} c_1\sqrt{d})$ will be clear in (\ref{eq:d0 condition 3}).
\end{remark}

\subsubsection{Proof of Theorem \ref{thm:tail bound of S} for small $t$}\label{subsubsection1}

To show (\ref{eq:tail bound of S}) for small $t$, we rely on (\ref{eq:recursion eq typical}). For $\mathcal{T}^{l+1}\sim \gw(\xi)^{l+1}$ and $T_i,\; i\in[D]$ as above, suppose that 
$$\sum_{i=1}^D S(T_i) \leq \frac{2d}{\ep} \left(1+\frac{\ep}{3} \right). $$
Then, from (\ref{eq:recursion eq typical}) and a little bit of algebra we see that
\begin{equation*}
S(\mathcal{T}^{l+1}) \leq \frac{1+\lambda \frac{2d}{\ep} \left(1+\frac{\ep}{3} \right) }{1-\lambda - 2\lambda^2  \frac{2d}{\ep} \left(1+\frac{\ep}{3} \right) }
\leq
\frac{\frac{2}{\ep} -\frac{1}{3} }{1-\frac{7}{d\ep} } 
<
\frac{2}{\ep}, 
\end{equation*}
where the last inequality holds if 
\begin{equation}\label{eq:d0 condition 1}
d\geq \frac{42}{\ep^2}.
\end{equation}
Therefore, for $d$ with (\ref{eq:d0 condition 1}), we have
\begin{equation}\label{eq:tail bound of S split}
\Pgw \left(S(\mathcal{T}^{l+1} ) \geq \frac{2}{\ep}  \right) 
\leq
\P \left(D\geq \left(1+\frac{\ep}{6} \right)d \right) 
+
\Pgw \left(\sum_{i=1}^{(1+\frac{\ep}{6})d } S(T_i) \geq \frac{2d}{\ep}\left(1+\frac{\ep}{3} \right)  \right),
\end{equation}
where $T_i,\; i\in \mathbb{N}$ are i.i.d. $\gw(\xi)^l$. By the assumption on $\xi$, we can bound the first term in the r.h.s. by
\begin{equation}\label{eq:tail bound of S small t 1st part}
\P \left(D\geq \left(1+\frac{\ep}{6} \right)d \right) \leq 
\exp \left(-c_{\ep'}d \right),
\end{equation}
for $\ep' = \ep/6$. To deal with the second term, the induction hypothesis tells us that the c.d.f.$\;$of $S(T_i)$ has an upper bound
$$\Pgw (S(T_i) \geq s) \leq s^{-\sqrt{d}}  $$
for all $s \geq 2/\ep$, and hence we can apply the following lemma. 

\begin{lemma}\label{lem:sum Si small t}
	Let $\ep\in(0,1)$  be a given constant. Then there exists $d_1>0$ independent of $\ep$ such that the following holds for all $d\geq d_1$. Let $Z_i, \; i\in \mathbb{N}$ be i.i.d. positive random variables that satisfies 
	\begin{equation}\label{eq:tail condition for small t}
	\P\left(Z_i \geq t \right) \leq 5t^{-\sqrt{d}}, \quad \textnormal{for all } t\geq \frac{2}{\ep}.
	\end{equation}
	Then, we have
	\begin{equation}\label{eq:tail bound of S small t 2nd part}
	\begin{split}
	\P \left(\sum_{i=1}^{(1+\frac{\ep}{6})d } Z_i \geq \frac{2d}{\ep}\left(1+\frac{\ep}{3} \right)  \right)
	\leq
	\frac{1}{2} d^{-\frac{1}{10}\sqrt{d}} \left(\frac{1}{10} \log d\right)^{-2}.
	\end{split}
	\end{equation}
\end{lemma}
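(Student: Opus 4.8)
The plan is to prove Lemma~\ref{lem:sum Si small t} by first truncating the summands at the level $2/\ep$ --- which absorbs all of the $\ep$-dependence --- and then controlling the remaining, $\ep$-free event by a union bound (``one big jump'') together with a Chernoff estimate for a sum of boundedly small variables.

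\textbf{Reduction.} Write $a:=2/\ep$ and let $n$ denote the number of summands, so $n\le 2d$ and $na\le\frac{2d}{\ep}(1+\ep/6)$. Decompose $Z_i=(Z_i\wedge a)+Y_i$ with $Y_i:=(Z_i-a)^+\ge0$. Since $\sum_{i=1}^n(Z_i\wedge a)\le na$, the event $\bigl\{\sum_{i=1}^n Z_i\ge\frac{2d}{\ep}(1+\ep/3)\bigr\}$ is contained in $\bigl\{\sum_{i=1}^n Y_i\ge\frac d3\bigr\}$, the deficit being exactly $\frac{2d}{\ep}(\ep/3-\ep/6)=\frac d3$. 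The crucial gain is that the $Y_i$ no longer feel $\ep$: by \eqref{eq:tail condition for small t} and $a\ge2$, for all $s\ge0$
\[
\P(Y_i\ge s)=\P(Z_i\ge a+s)\le 5(a+s)^{-\sqrt d}\le 5\min\bigl(s^{-\sqrt d},\,2^{-\sqrt d}\bigr),\qquad \E Y_i\le\int_0^\infty 5(a+s)^{-\sqrt d}\,ds\le\frac{10}{(\sqrt d-1)2^{\sqrt d}}.
\]
From here on nothing depends on $\ep$, which is what lets $d_1$ be universal.

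\textbf{Big-jump term.} Fix the truncation level $\tau:=d^{1/9}$ and bound
\[
\P\Bigl(\textstyle\sum_{i=1}^n Y_i\ge\tfrac d3\Bigr)\le\P\bigl(\max_{i\le n}Y_i\ge\tau\bigr)+\P\Bigl(\textstyle\sum_{i=1}^n(Y_i\wedge\tau)\ge\tfrac d3\Bigr).
\]
For the first term a union bound gives $\P(\max_i Y_i\ge\tau)\le 5n\tau^{-\sqrt d}\le 10\,d^{\,1-\sqrt d/9}$; since $\frac{\sqrt d}{9}-\frac{\sqrt d}{10}=\frac{\sqrt d}{90}$, the surplus $d^{-\sqrt d/90}$ swamps the polynomial and polylogarithmic factors, so this is $\le\frac14 d^{-\sqrt d/10}(\frac1{10}\log d)^{-2}$ for $d\ge d_1$. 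It is here that the truncation exponent must be taken strictly above $1/10$: with the ``natural'' level $d^{1/10}$ the bound would be $\asymp d\,d^{-\sqrt d/10}$, off by a factor $\asymp d(\log d)^2$.

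\textbf{Truncated sum.} For the second term the summands lie in $[0,\tau]$ and have exponentially small mean. Using $\E(Y_i\wedge\tau)^k\le\tau^{k-1}\E Y_i$ we get $\E e^{\theta(Y_i\wedge\tau)}\le\exp\bigl(\tfrac{\E Y_i}{\tau}(e^{\theta\tau}-1)\bigr)$, so for $\theta>0$
\[
\P\Bigl(\textstyle\sum_{i=1}^n(Y_i\wedge\tau)\ge\tfrac d3\Bigr)\le\exp\Bigl(-\tfrac{\theta d}{3}+\tfrac{n\,\E Y_i}{\tau}\,e^{\theta\tau}\Bigr).
\]
Choosing $\theta:=\frac{\sqrt d\log2}{2\tau}$ makes $e^{\theta\tau}=2^{\sqrt d/2}$, so the error term is $\le\frac{20d}{(\sqrt d-1)\tau}\,2^{-\sqrt d/2}\le 1$ for $d\ge d_1$, while $\frac{\theta d}{3}=\frac{\log2}{6}\,d^{3/2-1/9}\gg\sqrt d\log d$; hence this term is at most $\exp(-c\,d^{25/18})$ for a universal $c>0$, far below $\frac14 d^{-\sqrt d/10}(\frac1{10}\log d)^{-2}$. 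Adding the two bounds gives \eqref{eq:tail bound of S small t 2nd part}.

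\textbf{Main obstacle.} No single estimate is hard; the real content is the bookkeeping that makes $d_1$ independent of $\ep$. One must verify that subtracting the $\ep$-dependent quantity $n(2/\ep)$ in the Reduction step leaves the $\ep$-free deficit $d/3$ \emph{and} $\ep$-free tails for the $Y_i$, and one must place $\tau$ in the window $d^{1/10}\ll\tau\ll d/\log d$ so that the big-jump term beats $d^{-\sqrt d/10}$ while the Chernoff exponent of the truncated sum still dominates $\sqrt d\log d$; $\tau=d^{1/9}$ lies comfortably in that window.
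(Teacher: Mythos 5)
Your proof is correct, and after the shared first step it takes a genuinely different route from the paper's. Both arguments start with the same reduction: subtracting $2/\ep$ from each summand (the paper's $Z_i'=Z_i-2/\ep$, your $Y_i=(Z_i-2/\ep)^+$) converts the $\ep$-dependent threshold into the $\ep$-free deficit $d/3$ and yields variables whose tails and means are bounded uniformly in $\ep$, which is what makes $d_1$ universal. From there the paper splits according to the random set of indices with $Z_i'\ge \tfrac1{10}$ (the small summands contribute at most $d/6$), and bounds the probability that the $k$ large ones sum to at least $d/6$ by a union over $k$ and over a discretized grid of values, using the observation that the product of polynomial tails is maximized when all the mass sits on a single summand; this is the same composition-counting machinery as in Lemma \ref{lem:recursion principle atypical}, and it produces a bound of order $d^{3}(18/d)^{\sqrt d}$. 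You instead make the one-big-jump dichotomy explicit: either some $Y_i\ge\tau=d^{1/9}$, handled by a plain union bound giving $10\,d^{1-\sqrt d/9}$, which clears the target thanks to the surplus factor $d^{-\sqrt d/90}$, or all summands are capped at $\tau$ and a Bennett--Chernoff exponential-moment bound with $\theta\asymp\sqrt d/\tau$ gives the much smaller $\exp(-c\,d^{25/18})$. Your route avoids the discretized sums and the maximizing-configuration argument, is more modular, and makes the $\ep$-independence completely transparent; the paper's version is written to parallel the harder Lemma \ref{lem:recursion principle atypical} (where the $(\log t)^{-2}$ factors must be tracked through compositions) and incidentally yields a slightly sharper bound, roughly $d^{-\sqrt d(1-o(1))}$ versus your $d^{1-\sqrt d/9}$, though both comfortably beat the required $\tfrac12 d^{-\sqrt d/10}(\tfrac1{10}\log d)^{-2}$. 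One cosmetic remark: the identity $\P(Y_i\ge s)=\P(Z_i\ge 2/\ep+s)$ fails at the single point $s=0$ (the left side is $1$), but you only invoke it for $s>0$ and under the integral defining $\E Y_i$, so nothing in the argument is affected.
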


We defer the proof of this lemma until Section \ref{subsec:proof of recursive tail estim}, since it is essentially a special case of Lemma \ref{lem:recursion principle atypical} below. 

Thanks to Lemma \ref{lem:sum Si small t}, we combine (\ref{eq:tail bound of S split}), (\ref{eq:tail bound of S small t 1st part}) and (\ref{eq:tail bound of S small t 2nd part}) to  obtain that
\begin{equation}\label{eq:tail bd of S small t fin}
\Pgw \left(S(\mathcal{T}^{l+1})\geq \frac{2}{\ep} \right)
\leq
\exp(-c_{\ep'}d)+ \frac{1}{2}d^{-\frac{1}{10}\sqrt{d}} \left(\log d^{\frac{1}{10}} \right)^{-2}
\leq d^{-\frac{1}{10}\sqrt{d}} \left({\frac{1}{10}}\log d \right)^{-2},
\end{equation}
for $d$ satisfying 
\begin{equation}\label{eq:d0 condition 2}
\exp(-c_{\ep'}d)
\leq
\frac{1}{2} d^{-\frac{1}{10}\sqrt{d}} \left({\frac{1}{10}}\log d \right)^{-2}. 
\end{equation}	
Finally, we clearly see that (\ref{eq:tail bd of S small t fin}) settles (\ref{eq:tail bound of S}) for all small $t$, namely, $2/\ep \leq t \leq d^{\frac{1}{10}}$.

\subsubsection{Proof of Theorem \ref{thm:tail bound of S} for intermediate $t$}\label{subsubsection2}
For  $t\geq d^{\frac{1}{10}}$, we use (\ref{eq:recursion eq atypical}). That is, we attempt to control 
\begin{equation}\label{eq:tail bd of S intermed t 0}
\Pgw\left(S(\mathcal{T}^{l+1})\geq t\right)
\leq
\Pgw \left(\sum_{i=1}^D \log (1+\lambda S(T_i)) \geq \log t \right).
\end{equation}
Let $c_1$ be the constant given from the assumption of Theorem \ref{thm:tail bound of S} (with $\delta=1$).  Note that we can split the event in the r.h.s.$\;$as follows.
\begin{equation}\label{eq:tail bd of S intermed split}
\Pgw\left(S(\mathcal{T}^{l+1})\geq t\right)
\leq
\P \left(D \geq 2d  \right) +
\Pgw \left(\sum_{i=1}^{2d} \log (1+\lambda S(T_i)) \geq \log t \right).
\end{equation}

The concentration assumption (\ref{eq:concentration condition}) on $\xi$ tells us that the first term is bounded by
\begin{equation}\label{eq:d0 condition 3}
\P(D\geq 2d) \leq \exp(-c_1 d) \leq \frac{1}{2} t^{-\sqrt{d}} (\log t)^{-2}, \quad \textnormal{for } t\leq e^{\frac{1}{2}c_1 \sqrt{d}}.
\end{equation}

To control the second term in the r.h.s.$\;$of (\ref{eq:tail bd of S intermed split}), we use the following lemma.

\begin{lemma}\label{lem:recursion principle atypical}
	Let $\ep\in(0,1)$  be a given constant. Then there exists $d_1(\ep)>0$ such that the following holds true for all $d\geq d_1$. Let $Z_i, \; i\in \mathbb{N}$ be i.i.d. positive random variables that satisfies 
	\begin{equation}\label{eq:recursion principle assumption}
	\P(Z_i \geq t ) \leq 5t^{-\sqrt{ d}} (\log t)^{-2}, ~~~\textnormal{for all } t\geq \frac{3}{\ep}.
	\end{equation}
	Then, for all $t\geq d^{\frac{1}{10}}$, we have
	\begin{equation}\label{eq:tail bd atypical generalized form}
	\P\left( \sum_{i=1}^{2d } \log\left(1+\frac{4}{d}Z_i \right) \geq \log \left(\frac{t}{2} \right) \right) 
	\leq  \frac{1}{2} t^{-\sqrt{d}}(\log t)^{-2}.
	\end{equation}
\end{lemma}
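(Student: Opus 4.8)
The plan is to control $\sum_{i=1}^{2d}\log(1+\tfrac4d Z_i)$ via a truncation-and-union argument, separating the contribution of ``small'' summands from ``large'' ones. First I would fix a threshold, say $K = d^{1/5}$ (or any $d^{\eta}$ with $\eta$ chosen compatibly with the other regimes), and write each term as $\log(1+\tfrac4d Z_i) = \log(1+\tfrac4d Z_i)\mathbf{1}_{Z_i\le K} + \log(1+\tfrac4d Z_i)\mathbf{1}_{Z_i> K}$. For the truncated part, I would use that when $Z_i\le K$ we have $\tfrac4d Z_i \le 4d^{-4/5}\to 0$, so $\log(1+\tfrac4d Z_i)\le \tfrac4d Z_i$; the expectation $\E[\tfrac4d Z_i\mathbf{1}_{Z_i\le K}]$ is $O(1/d)$ up to the logarithmic correction coming from \eqref{eq:recursion principle assumption} (the tail $5t^{-\sqrt d}(\log t)^{-2}$ is summable and has a bounded first moment once $d$ is large, dominated by the contribution near $t\approx 3/\ep$), so the sum over $i\in[2d]$ of the truncated terms has mean $O(1)$, far below $\log(t/2)$ for $t\ge d^{1/10}$. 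I would then invoke a standard Chernoff/exponential-moment bound — e.g. using $\E\exp(\theta\log(1+\tfrac4d Z_i\mathbf{1}_{Z_i\le K}))=\E(1+\tfrac4d Z_i\mathbf{1}_{Z_i\le K})^\theta$ with a carefully chosen $\theta=\theta(d)$ (something like $\theta\asymp\sqrt d$, so that the moment stays finite since $Z_i$ has tail exponent $\sqrt d$) — to show that the truncated sum exceeds $\tfrac12\log(t/2)$ with probability at most $\tfrac14 t^{-\sqrt d}(\log t)^{-2}$.

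For the untruncated part, I would argue that with high probability \emph{no} $Z_i$ exceeds $K$: by \eqref{eq:recursion principle assumption} and a union bound, $\P(\exists i\le 2d:\, Z_i> t)\le 10d\, t^{-\sqrt d}(\log t)^{-2}$, and for $t\ge d^{1/10}$ this is already $\le \tfrac14 t^{-\sqrt d}(\log t)^{-2}$ once $d$ is large (since $10d\cdot d^{-\sqrt d/10}\to 0$, and more carefully one checks the inequality holds for all $t\ge d^{1/10}$, not merely asymptotically, because $t^{-\sqrt d}$ decays so fast). On the event that some $Z_i$ is large, say $Z_{i_0}> K$, I would bound the whole sum crudely: if the sum is to exceed $\log(t/2)$ and all but one term is small, then essentially $\log(1+\tfrac4d Z_{i_0})\gtrsim \log(t/2) - O(1)$, i.e. $Z_{i_0}\gtrsim \tfrac d4 \cdot \tfrac{t}{2}\cdot e^{-O(1)}\gtrsim t$, whose probability is again $\le 5t^{-\sqrt d}(\log t)^{-2}$ by \eqref{eq:recursion principle assumption} — so this branch is absorbed into the union bound with room to spare. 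Combining the two branches with the truncated-sum estimate, and collecting the constants, yields the bound $\tfrac12 t^{-\sqrt d}(\log t)^{-2}$.

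The main obstacle I anticipate is the bookkeeping to make the bound hold \emph{for all} $t\ge d^{1/10}$ uniformly, rather than just asymptotically in $t$ — in particular, getting the constant exactly $1/2$ on the right-hand side (as opposed to some $C$) requires that the polynomial prefactors like $10d$ be genuinely dominated by $t^{-\sqrt d}$ at the left endpoint $t=d^{1/10}$, which forces $d$ to be large in a way that must be quantified as the promised $d_1(\ep)$. A related delicate point is the preservation of the $(\log t)^{-2}$ factor: the Chernoff bound on the truncated sum naturally produces something like $t^{-\sqrt d}$, and one must check that the exponent $\theta$ can be taken slightly larger than $\sqrt d$, or that a separate small correction is available, so that the extra $(\log t)^{-2}$ is not lost — this is precisely the role flagged in the remark after Theorem \ref{thm:tail bound of S} that the exponent $\sqrt d$ has slack ($O(d^{1-\eta})$ would do), which gives exactly the room needed. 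The treatment of $t$ very large (beyond the ``intermediate'' window) is easy here since the lemma's hypothesis \eqref{eq:recursion principle assumption} holds for all $t\ge 3/\ep$ with no upper cutoff, so no separate large-$t$ case is needed inside the lemma itself.
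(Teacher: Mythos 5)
There is a genuine gap, and it sits exactly where the real difficulty of the lemma lies. Your ``no $Z_i$ exceeds the cutoff'' branch cannot be discarded by a union bound: $\P(\exists i\le 2d:\ Z_i>K)\le 10d\,K^{-\sqrt d}(\log K)^{-2}$ with $K=d^{1/5}$ is a quantity that does not depend on $t$, so it cannot be below $\tfrac14 t^{-\sqrt d}(\log t)^{-2}$ \emph{for all} $t\ge d^{1/10}$ — it already fails once $t$ is moderately larger than $K$, and the lemma is needed (and used, e.g.\ in the large-$t$ regime of Theorem \ref{thm:tail bound of S}) with no upper cutoff on $t$. So you must analyze the sum \emph{on} the event that some $Z_i$ are large, and your treatment of that event — ``all but one term is small, hence $Z_{i_0}\gtrsim t$'' — is not justified. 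Two (or more) moderately large variables, e.g.\ $Z_1\approx Z_2\approx \tfrac d4\sqrt{t}$, push $\sum_i\log(1+\tfrac4d Z_i)$ past $\log(t/2)$ with no single $Z_i$ of order $t$; to handle this one has to sum over all configurations (the number $k$ of large terms, the $\binom{2d}{k}$ choices, and the way $\log(t/2)$ is partitioned among them) and check that the $(\log t)^{-2}$ factor in the hypothesis survives the resulting convolution. That is precisely the content of the paper's Lemma \ref{lem:recinterm1} combined with the elementary convolution estimate Lemma \ref{lem:sum for induction log term}, and it is absent from your outline.

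A second, related problem is the allocation of the threshold between the two parts. The small terms do not contribute ``$O(1)$'' deterministically (with your cutoff they can contribute up to $8d^{1/5}$), and whatever level $j$ they do contribute relaxes the requirement on the large terms, costing a factor $e^{j\sqrt d}$ in the large-term estimate; this must be repaid by a bound on $\P(\text{small part}\ge j)$ decaying at a rate strictly faster than $\sqrt d$ — the paper uses an exponential moment at scale $\ep d^{2/3}$ (Lemma \ref{lem:recintermed2}) and then sums over $j$. A fixed split such as ``truncated sum $\ge\tfrac12\log(t/2)$'' versus ``large terms $\ge\tfrac12\log(t/2)$'' provably cannot give the stated bound uniformly in $t$: a single $Z_{i_0}\approx\tfrac d4\sqrt t$ carries the large half with probability of order $(\tfrac d4)^{-\sqrt d}\,t^{-\sqrt d/2}$, which exceeds $t^{-\sqrt d}$ as soon as $t\gg d^2$. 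Your Chernoff treatment of the truncated part is sound in spirit (it corresponds to Lemma \ref{lem:recintermed2}), but without the level-by-level trade-off between the small-part deviation $j$ and the $e^{j\sqrt d}$ relief to the large part, and without the multi-large-term convolution, the proof does not go through.
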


The proof of Lemma \ref{lem:recursion principle atypical} is postponed until Section \ref{subsec:proof of recursive tail estim}, since it requires a substantial technical work. Note that (\ref{eq:tail bd atypical generalized form}) contains a slightly more  generalized form than the r.h.s.$\;$of (\ref{eq:tail bd of S intermed split}), which will be useful in Sections \ref{sec:numberofhits} and \ref{sec:shortsurvival randomgraphs}.

To conclude the proof for intermediate $t$, (\ref{eq:tail bd of S intermed split}), (\ref{eq:d0 condition 3}) and (\ref{eq:tail bd atypical generalized form}) together deduce (\ref{eq:tail bound of S}) for $t\geq d^{\frac{1}{10}}$.  Here, the constant $d_0$ in the statement of the theorem should satisfy $d_0 \geq d_1$ for $d_1$ in Lemma \ref{lem:recursion principle atypical}.

\subsubsection{Proof of Theorem \ref{thm:tail bound of S} for large $t$}\label{subsubsection3} 
For  $t\geq \exp(\frac{1}{2} c_1\sqrt{d})$, we again attempt to control 
(\ref{eq:tail bd of S intermed t 0}). Let
$$\Delta_{d, t} = \frac{4\sqrt{d}}{c_1}\log t,$$
and note that we can split the event in the r.h.s.$\;$of (\ref{eq:tail bd of S intermed t 0}) as follows.
\begin{equation}\label{eq:tail bd of S large split}
\begin{split}
\Pgw\left(S(\mathcal{T}^{l+1})\geq t\right)
\leq&
\;\P \left(D \geq  \Delta_{d,t} \right) +
\Pgw \left(\sum_{i=1}^{2d} \log (1+\lambda S(T_i)) \geq \log t \right)\\
&+
\sum_{2d\, \leq\, r\, \leq\, \Delta_{d,t} } \P(D \geq r) 
\times \Pgw \left(\sum_{i=1}^r \log(1+\lambda S(T_i) ) \geq \log t \right).
\end{split}
\end{equation}
The concentration assumption (\ref{eq:concentration condition}) on $\xi$ tells us that for $t\geq \exp(\frac{1}{2} c_1\sqrt{d})$,
\begin{equation}\label{eq:d0 condition 7}
\begin{split}
\P\left(D \geq \Delta_{d,t} \right)		
\leq 
\P \left(D\geq d+ \frac{\Delta_{d,t}}{2}  \right)
\leq \exp\left(-2\sqrt{d}\log t \right) 
\leq
\frac{1}{4}t^{-\sqrt{d}} (\log t)^{-2},
\end{split}
\end{equation}
where the last inequality is satisfied by large $d$. This controls the first term in the r.h.s.$\;$of (\ref{eq:tail bd of S large split}). The second term is then estimated by Lemma \ref{lem:recursion principle atypical}, which gives
\begin{equation}\label{eq:recursionmid term}
\Pgw \left(\sum_{i=1}^{2d} \log (1+\lambda S(T_i)) \geq \log t \right)
\leq 
\frac{1}{2}t^{-\sqrt{d}} (\log t)^{-2}.
\end{equation}

To bound the last term, we use the following lemma.

\begin{corollary}\label{cor:recursion principle atypical}
	Let $\ep\in(0,1)$  be a given constant. Then there exist $d_1(\ep)>0$ and an absolute constant $C_1>0$ such that the following holds true for all $d\geq d_1$. Let $Z_i, \; i\in \mathbb{N}$ be i.i.d. positive random variables that satisfies 
	\begin{equation*}
	\P(Z_i \geq t ) \leq 5t^{-\sqrt{ d}} (\log t)^{-2}, ~~~\textnormal{for all } t\geq \frac{3}{\ep}.
	\end{equation*}
	Then, for all $t\geq \exp(\frac{1}{2} c_1 \sqrt{d} )$ and $2d\leq r\leq \Delta_{d,t}$ with $\Delta_{d,t}$ as above, we have
	\begin{equation}\label{eq:tail bd atypical gen r}
	\P\left( \sum_{i=1}^{r } \log\left(1+\frac{4}{d}Z_i \right) \geq \log \left(\frac{t}{2} \right) \right) 
	\leq   t^{-\sqrt{d}}(\log t)^{-2} \exp\left(\frac{C_1r}{\ep \sqrt{d}} \right).
	\end{equation}
\end{corollary}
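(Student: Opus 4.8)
The plan is to prove Corollary \ref{cor:recursion principle atypical} by essentially reducing it to the estimate of Lemma \ref{lem:recursion principle atypical}, tracking how the bound degrades as $r$ grows beyond $2d$. First I would observe that the left-hand side of \eqref{eq:tail bd atypical gen r} is monotone increasing in $r$, so it suffices to understand the multiplicative price of passing from a sum of $2d$ terms to a sum of $r$ terms. The natural device is to split the index set $\{1,\dots,r\}$ into $\lceil r/(2d)\rceil$ blocks of size at most $2d$. On the event that $\sum_{i=1}^r \log(1+\tfrac4d Z_i)\geq \log(t/2)$, at least one of these blocks must contribute at least $\tfrac{2d}{r}\log(t/2)$ to the sum; but I would prefer the cleaner route of allocating the target value $\log(t/2)$ across the blocks and using a union bound together with the observation that within a block of size $2d$ we may directly invoke \eqref{eq:tail bd atypical generalized form}.

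Concretely, write $r = 2dm + r'$ with $0\leq r' < 2d$ (so $m\leq \Delta_{d,t}/(2d) = \tfrac{2\log t}{c_1\sqrt d}$), and for each of the $m+1$ blocks apply Lemma \ref{lem:recursion principle atypical} with an appropriately chosen threshold. The key point is that Lemma \ref{lem:recursion principle atypical} gives, for a block $B$ of size at most $2d$ and any target $u\geq d^{1/10}$,
\begin{equation*}
\P\Big(\sum_{i\in B}\log\big(1+\tfrac4d Z_i\big)\geq \log\tfrac u2\Big) \leq \tfrac12 u^{-\sqrt d}(\log u)^{-2}.
\end{equation*}
Setting $u$ equal to (a constant multiple of) $t^{2d/r}$ in each block — which is $\geq d^{1/10}$ precisely because $t\geq \exp(\tfrac12 c_1\sqrt d)$ and $r\leq \Delta_{d,t}$, so $2d/r \geq 2d/\Delta_{d,t} = \tfrac{c_1\sqrt d}{2\log t}$ and hence $t^{2d/r}\geq e^{c_1\sqrt d/4}\gg d^{1/10}$ — a union bound over the $m+1$ blocks yields a total of the form $(m+1)\cdot \tfrac12 (t^{2d/r})^{-\sqrt d}\cdot(\cdots)^{-2}$. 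Since $(t^{2d/r})^{-\sqrt d}$ raised to the number of blocks $\approx r/(2d)$ recombines to $t^{-\sqrt d}$, the surviving factors are: the combinatorial factor $m+1 \leq 1+\tfrac{2\log t}{c_1\sqrt d}$, which is at most $\exp(C r/(\ep\sqrt d))$ after crudely bounding $\log t \lesssim r$ on the relevant range (using $r\geq 2d$ and $t\geq e^{c_1\sqrt d/2}$ relates $\log t$ and $r\sqrt d$... actually more simply $m+1\le e^{m}\le e^{\log t/(c_1\sqrt d)}$ and $\log t \le \tfrac{c_1}{4\sqrt d}\,\Delta_{d,t}\cdot\sqrt d$ is not needed); and a mismatch between $(\log t^{2d/r})^{-2}$ powers and $(\log t)^{-2}$, which contributes a factor polynomial in $r/(2d)$, again absorbed into $\exp(C_1 r/(\ep\sqrt d))$. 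One also picks up the discrepancy between $\log(t/2)$ and the split targets $\log(u/2)$, a harmless additive $O(m)$ shift handled by enlarging $u$ by a bounded factor.

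The main obstacle I anticipate is bookkeeping rather than conceptual: one must choose the per-block targets so that (i) each is $\geq d^{1/10}$ (so Lemma \ref{lem:recursion principle atypical} applies), (ii) their sum of logarithms is $\geq \log(t/2)$ (so the union bound is valid), and (iii) the product of the resulting upper bounds collapses to $t^{-\sqrt d}(\log t)^{-2}$ times a controllable factor $\exp(C_1 r/(\ep\sqrt d))$ with $C_1$ \emph{absolute} (independent of $\ep$ and $d$). Condition (i) is exactly where the hypothesis $t\geq\exp(\tfrac12 c_1\sqrt d)$ and $r\leq \Delta_{d,t}$ are used in tandem, forcing $2d/r$ to not be too small; condition (iii) requires carefully isolating which leftover factors are genuinely of size $\exp(O(r/(\ep\sqrt d)))$ — the $(\log\cdot)^{-2}$ powers and the $\tfrac12$-factors raised to $\approx r/(2d)$ both shrink, which only helps, while the combinatorial count $m+1$ and the $\log$-mismatch grow at most like $\mathrm{poly}(r/d)$, which is $\exp(O(\log(r/d)))\le \exp(O(r/(\ep\sqrt d)))$ for $r\geq 2d$ and $d$ large. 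Assembling these gives \eqref{eq:tail bd atypical gen r}; the constant $d_1(\ep)$ is taken to be the maximum of the one from Lemma \ref{lem:recursion principle atypical} and whatever is needed to absorb the polynomial factors into the exponential.
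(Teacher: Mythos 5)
Your plan hinges on a step that does not hold: after splitting $\{1,\dots,r\}$ into $m+1\approx r/(2d)$ blocks you take ``a union bound over the $m+1$ blocks,'' which produces $(m+1)$ \emph{times} the single-block tail $\tfrac12\,(t^{2d/r})^{-\sqrt d}(\cdots)$, and then you assert that this ``recombines'' to $t^{-\sqrt d}$ because the per-block factor raised to the number of blocks equals $t^{-\sqrt d}$. Those are two incompatible bounds. The pigeonhole/union route (at least one block exceeds its share $\tfrac{2d}{r}\log(t/2)$) is logically valid but only gives $\approx t^{-2d\sqrt d/r}$, i.e.\ the exponent is \emph{divided} by the number of blocks; to get the \emph{product} $\prod_b(t^{2d/r})^{-\sqrt d}=t^{-\sqrt d}$ you would need the intersection event that every block simultaneously exceeds its share, which the event $\sum_{i=1}^r Y_i\ge\log(t/2)$ simply does not imply (one block can carry all the mass). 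The loss is not cosmetic: take $t=e^{c_1 d}$ and $r=\Delta_{d,t}=4d^{3/2}$; the union-bound estimate is $\approx t^{-2d\sqrt d/r}=e^{-c_1 d/2}$, whereas \eqref{eq:tail bd atypical gen r} demands roughly $t^{-\sqrt d}e^{O(d/\ep)}=e^{-c_1 d^{3/2}(1+o(1))}$, which is smaller by an enormous margin. So as written the argument proves a much weaker statement than the Corollary. (Your side computations — monotonicity in $r$, and that the per-block target $t^{2d/r}\ge e^{c_1\sqrt d/2}\gg d^{1/10}$ so Lemma \ref{lem:recursion principle atypical} is applicable blockwise — are fine, but they do not repair this.)

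A blocking strategy could in principle be rescued, but only by replacing the single union bound with a union over \emph{allocations} of the total deviation to the blocks (a sum over discretized compositions), so that independence lets the per-block tails multiply; one must then pay the combinatorial factor for the allocations and, more delicately, handle blocks whose allocated share falls below the $d^{1/10}$ validity threshold of Lemma \ref{lem:recursion principle atypical} by a separate device (an exponential-moment/Chernoff bound for the aggregate of small contributions). At that point you are essentially re-deriving the paper's argument, which never blocks at all: it works with the individual summands $Y_i=\log(1+4Z_i/d)$, splits according to whether $Y_i\ge\frac{\log d}{80d}$, controls the few large ones by a discretized convolution estimate (Lemma \ref{lem:recinterm1}) and the many small ones by an exponential moment (Lemma \ref{lem:recintermed2}), and the factors $\binom{r}{k}$ together with $\exp(19r\,d^{-1/3})$ are exactly what produce the correction $\exp(C_1 r/(\ep\sqrt d))$ in \eqref{eq:tail bd atypical gen r}. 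Your proposal is missing precisely this mechanism for making the block bounds multiply while accounting for the allocation entropy and the sub-threshold regime, so as it stands there is a genuine gap.
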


The proof of Corollary \ref{cor:recursion principle atypical} is postponed until Section \ref{subsec:proof of recursive tail estim}, which will be proven together with Lemma \ref{lem:recursion principle atypical}. 


To conclude the proof of Theorem \ref{thm:tail bound of S} for large $t$, observe that for $2d\leq r \leq \Delta_{d,t}$
\begin{equation*}
\P(D \geq r ) \leq \P\left(D \geq d+ \frac{r}{2} \right) \leq \exp\left(-\frac{c_1 r}{2} \right),
\end{equation*}
by the concentration condition (\ref{eq:concentration condition}) of $D$. Therefore, for sufficiently large $d$, we have by (\ref{eq:tail bd atypical gen r}) that
\begin{equation*}
\P(D\geq r) \times \Pgw \left(\sum_{i=1}^r \log(1+\lambda S(T_i) ) \geq \log t \right)
\leq
t^{-\sqrt{d}}(\log t)^{-2} \exp\left(-\frac{c_1 r}{4} \right) .
\end{equation*}
Along with (\ref{eq:tail bd of S large split}), (\ref{eq:d0 condition 7}) and (\ref{eq:recursionmid term}), this proves Theorem \ref{thm:tail bound of S} for  $t\geq \exp(\frac{1}{2}c_1 \sqrt{d} )$.	  \qed

\vspace{3mm}

As the first application of Theorem \ref{thm:tail bound of S}, we establish Theorem \ref{thm:phasetransition:tree}.

\begin{proof}[Proof of Theorem \ref{thm:phasetransition:tree}: the lower bound]
	Let $\ep\in(0,1)$ be an arbitrary constant and suppose that the collection of distributions $\{\xi_k \}_k$ satisfies (\ref{eq:concentration condition}) for $\mathfrak{c} = \{c_\delta\}_{\delta \in(0,1]} $. Let $d_0 = d_0(\ep, \mathfrak{c})$ as in Theorem \ref{thm:tail bound of S}, and assume that $\E \xi_k \geq d_0$. Then, Theorem \ref{thm:tail bound of S} implies that for $\cp^\lambda_{\rho^+}(\mathcal{T}^l_k;\one_\rho)$ on $\mathcal{T}_k \sim \gw(\xi_k)^l$ with intensity $\lambda = (1-\ep) (\E \xi_k)^{-1}$, we have
	\begin{equation*}
	\E_{\textsc{gw}} \left[S(\mathcal{T}^l_k) \right] < \frac{3}{\ep}.
	\end{equation*}
	Thus, we deduce that the expected survival time of $\cp^\lambda(\mathcal{T}^l_k;\one_\rho)$ is also 
	\begin{equation*}
	\E_{\textsc{gw}} \left[R(\mathcal{T}^l_k) \right] <\frac{3}{\ep}.
	\end{equation*}
	Therefore, by the monotone convergence theorem, we have that for $\mathcal{T}_k \sim \gw(\xi_k)$,
	\begin{equation*}
	\E_{\textsc{gw}} [ R(\mathcal{T}_k)] = \lim_{l\rightarrow \infty} \E_{\textsc{gw}} [R(\mathcal{T}_k^l)] \leq \frac{3}{\ep},
	\end{equation*}
	and hence the survival time $\textbf{R}(\mathcal{T}_k)$ is finite almost surely, which implies that 
	$$\lambda = \frac{1-\ep}{\E \xi_k} \leq \lambda_1^{\textsc{gw}}(\xi_k), $$
	for all large enough $k$.
\end{proof}

\subsection{Proof of the induction lemmas}\label{subsec:proof of recursive tail estim}

To conclude Section \ref{sec:subcritical trees}, we discuss the proof of Lemmas \ref{lem:sum Si small t}, \ref{lem:recursion principle atypical} and Corollary \ref{cor:recursion principle atypical}. We first establish Lemma \ref{lem:recursion principle atypical} and Corollary \ref{cor:recursion principle atypical} together, and then the proof of Lemma \ref{lem:sum Si small t} will follow based on similar ideas.

\begin{proof}[Proof of Lemma \ref{lem:recursion principle atypical} and Corollary \ref{cor:recursion principle atypical}]
	We first set 
	\begin{equation}\label{eq:ndt def}
	Y_i = \log \left(1+\frac{4Z_i}{d}\right), \quad \Delta_{d,t} =\frac{4\sqrt{d}}{c_1}\log t, \quad \textnormal{and let} \quad 2d \leq r \leq  2d \,\vee \Delta_{d,t}.
	\end{equation}
	We will attempt to find an upper bound of
	\begin{equation}\label{eq:recprinciplemainobject}
	\P\left( \sum_{i=1}^{r } \log\left(1+\frac{4}{d}Z_i \right) \geq \log \left(\frac{t}{2} \right) \right), 
	\end{equation}
	to cover the case of Corollary \ref{cor:recursion principle atypical} as well. 
	
	Note that the assumption (\ref{eq:recursion principle assumption}) on $Z_i$ gives us $\E Z_i\leq \frac{4}{\ep}$. Based on (\ref{eq:recursion principle assumption}), we can find an upper bound for $\E Y_i$ by 
	\begin{equation}\label{eq:EYi bound}
	\E Y_i \leq \log \E e^{Y_i} = \log\left(1+ \frac{4\E Z_i}{d}\right) \leq \log \left(1+\frac{16}{\ep d} \right) \leq \frac{16}{\ep d}.
	\end{equation}
	Further, (\ref{eq:recursion principle assumption}) tells us that the tail of $Y_i$ is bounded by
	\begin{equation}\label{eq:tail bd of Yi 1}
	\P(Y_i \geq s  ) =\P\left(Z_i \geq \frac{d}{4}(e^s-1) \right)
	\leq
	5
	\left(\frac{d}{4}(e^s -1) \right)^{-\sqrt d}
	\left\{\log\left(\frac{d}{4}(e^s -1) \right) \right\}^{-2}.
	\end{equation}
	
	In order to have $\sum_{i=1}^{r} Y_i \geq \log(t/2)$ for $t\geq \sqrt{d}$ and $r\leq 2d \vee \Delta_{d,t}$, we need to have some of $Y_i$ being larger than $\frac{\log d}{80 d}$. In the following, $k$ denotes the number of $Y_i$ that are at least $\frac{\log d}{80 d}$. Note that in this case, the sum over all $Y_i$ that are smaller than $\frac{\log d}{80d}$ is at most 
	\begin{equation}\label{eq:mdt def}
	(2d\vee \Delta_{d,t}) \cdot \frac{\log d}{80 d}= \frac{1}{40}\log d \, \vee \frac{\log d}{20c_1\sqrt{d}} \log t =: m(d,t).
	\end{equation} 
	Then, we can split the probability in  (\ref{eq:recprinciplemainobject}) as follows.
	\begin{equation}\label{eq:tail bd sum of Yi split}
	\begin{split}
	&\P \left(\sum_{i=1}^{r} Y_i \geq \log \left(\frac{t}{2} \right) \right)\\
	&\leq
	\sum_{k=1}^{r} {r \choose k} \sum_{j=1}^{m(d,t)+1}
	\P\left(\sum_{i=1}^k Y_i \geq \log \left(\frac{t}{2} \right)-j \; ; \; Y_i \geq \frac{\log d}{80d}, \;\forall i\leq k \right)
	\;\\
	&\qquad \qquad\qquad \qquad \qquad \times \P	\left(\sum_{i=k+1}^{r} Y_i \geq j-1 \; ;\; Y_i \leq \frac{\log d}{80 d},\; \forall i>k  \right).
	\end{split}
	\end{equation}
	Our focus is to control the two terms in the inner sum of the r.h.s.. We begin with the first one.
	
	\begin{lemma}\label{lem:recinterm1}
		Under the setting of Lemma \ref{lem:recursion principle atypical} and (\ref{eq:ndt def}), there exist absolute constants $K,d_0>0$ such that for all $d\geq d_0$, $1\leq k\leq r$ and $1\leq j \leq m(d,t)+1$, we have
		\begin{equation*}
		\P\left(\sum_{i=1}^k Y_i \geq \log \left(\frac{t}{2} \right)-j \; ; \; Y_i \geq \frac{\log d}{80d}, \;\forall i\leq k \right)
		\leq
		\left(\frac{ K^{\sqrt{d}} d }{(\log d)^{\sqrt{d}}} \right)^k
		e^{-\sqrt{d}\left(\log t-j \right)} (\log t)^{-2}
		\end{equation*}
	\end{lemma}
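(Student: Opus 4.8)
The plan is to prove the bound by a single-big-jump analysis of the sum $\sum_{i=1}^kY_i$. Write $L:=\log(t/2)-j$, so that the target decay reads $e^{-\sqrt d(\log t-j)}=2^{-\sqrt d}e^{-\sqrt dL}$, and put $\theta:=\tfrac{\log d}{80d}$. The heuristic driving everything is that, conditionally on all $Y_i\ge\theta$, a typical $Y_i$ stays close to the \emph{tiny} value $\theta$; since $L$ is of order $\log t$, hence far larger than $k\theta$, the only efficient way to have $\sum_iY_i\ge L$ is for one $Y_i$ to jump up to size $\approx L$ while the rest remain $\approx\theta$. Thus, of the $k$ factors $\tfrac{K^{\sqrt d}d}{(\log d)^{\sqrt d}}$ in the target, one should absorb the big jump and carry the whole of $e^{-\sqrt d(\log t-j)}(\log t)^{-2}$, while each of the remaining $k-1$ should absorb one near-$\theta$ variable.

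First I would extract from \eqref{eq:tail bd of Yi 1} two single-variable estimates, valid for $d\ge d_0$ with $d_0$ absolute. Since $\{Y_i\ge\theta\}=\{Z_i\ge\tfrac d4(e^\theta-1)\}$ with $e^\theta-1\asymp\theta$, the relevant threshold is $\asymp\tfrac{\log d}{320}$, so
\[
p:=\P(Y_i\ge\theta)\le\Bigl(\tfrac{C_0}{\log d}\Bigr)^{\!\sqrt d},\qquad
\beta:=\E\bigl[e^{\sqrt dY_i}\mathbf 1_{Y_i\ge\theta}\bigr]\le\Bigl(\tfrac{C_0}{\log d}\Bigr)^{\!\sqrt d}
\]
for an absolute $C_0$; and for $s\ge1$, using $e^s-1\ge\tfrac12e^s$ and $\log(\tfrac d8e^s)=\log\tfrac d8+s\ge\tfrac s2$,
\[
\P(Y_i\ge s)\le 80\cdot 8^{\sqrt d}d^{-\sqrt d}\,e^{-\sqrt ds}\,s^{-2}.
\]
The constant $K$ is then taken to be a suitably large fixed multiple of $C_0$ and of the numerical constants below. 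The decisive feature is the factor $d^{-\sqrt d}$ in the last display: it creates slack of order $d^{\sqrt dk}$ between $p^k,\beta^k$ and the target factor $\bigl(\tfrac{K^{\sqrt d}d}{(\log d)^{\sqrt d}}\bigr)^k$, so that numerical losses, the combinatorial factor $k$, and the $2^{\sqrt d}$ from $e^{-\sqrt dL}=2^{\sqrt d}e^{-\sqrt d(\log t-j)}$ are all harmlessly absorbed. I also use the structural facts $k\le r\le 2d\vee\Delta_{d,t}\Rightarrow k\theta\le m(d,t)$, and, from the regime constraints on $t$ and $j$, $L\ge\tfrac12\log t$ together with $m(d,t)\le\tfrac14\log t$ once $d\ge d_0$; in particular $k\theta\le\tfrac12L$.

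Next I would union-bound over the index of the largest of $Y_1,\dots,Y_k$ (a factor $k$, ties having measure zero), take it to be $Y_1$, condition on $(Y_2,\dots,Y_k)$, and split on $\Sigma':=\sum_{i=2}^kY_i$. On $\{\Sigma'\le\tfrac12L\}$ the event forces $Y_1\ge L-\Sigma'\ge\tfrac12L\ge\tfrac14\log t\ge1$, so the $s^{-2}$ estimate applies; since $e^{-\sqrt d(L-\Sigma')}=e^{-\sqrt dL}\prod_{i\ge2}e^{\sqrt dY_i}$ and $(L-\Sigma')^{-2}\le16(\log t)^{-2}$, taking expectations (which turns $\prod_{i\ge2}e^{\sqrt dY_i}$ into $\beta^{k-1}$) bounds this contribution by $C\,k\,8^{\sqrt d}d^{-\sqrt d}\beta^{k-1}e^{-\sqrt dL}(\log t)^{-2}$, which is $\le\tfrac12$ of the target. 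On $\{\Sigma'>\tfrac12L\}$ one \emph{iterates}: $Y_1$ still contributes the residual exponential weight $e^{-\sqrt d(L-\Sigma')}=e^{-\sqrt dL}\prod_{i\ge2}e^{\sqrt dY_i}$ (with the crude substitute $\P(Y_1\ge\theta)=p$ on the negligible sub-event $\Sigma'>L-\log2$), reducing matters to the analogous sub-problem for $\sum_{i=2}^kY_i$ with one fewer variable, carrying along a factor $\le\max(\beta,p)$; peeling at most $k-1$ times, each peel costs $\le(C_0/\log d)^{\sqrt d}$ and never forfeits the factor $e^{-\sqrt dL}$, the iteration must terminate at a genuine big jump whose $s^{-2}$ tail produces the single $(\log t)^{-2}$, and the geometric-type sum over the number of peels converges because each peel factor is smaller than $\tfrac{K^{\sqrt d}d}{(\log d)^{\sqrt d}}$ by the super-polynomial margin above.

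The hard part is the case $\{\Sigma'>\tfrac12L\}$. The subtle points are: retaining the full weight $e^{-\sqrt dL}$ through \emph{every} peel --- discarding $Y_1$'s constraint would leave a bound like $p^k$ with no $e^{-\sqrt dL}$, which fails once $\log t$ is large; organizing the iteration so that the residual target and threshold stay within the admissible ranges for $t$ and $j$; and controlling the accumulation of combinatorial and numerical factors over the $O(\log d)$ peels. All of this is bookkeeping that is feasible only thanks to the $d^{-\sqrt d}$ prefactor, and is presumably why the parallel Lemma~\ref{lem:recursion principle atypical} is flagged as requiring substantial technical work.
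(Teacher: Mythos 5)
Your single-variable estimates and your Case A (the regime $\Sigma'\le L/2$) are correct, and the overall big-jump-plus-critical-tilt strategy is viable; but the handling of Case B ($\Sigma'>L/2$) has a genuine gap. The moment you rewrite the residual weight as $e^{-\sqrt d(L-\Sigma')}=e^{-\sqrt dL}\prod_{i\ge2}e^{\sqrt dY_i}$, every remaining variable enters the computation exponentially tilted at exactly the critical rate $\sqrt d$. For such a tilted variable the relevant quantity is $\E\bigl[e^{\sqrt dY}\,;\,Y\ge s\bigr]$, and integrating $\sqrt d\,e^{\sqrt du}$ against the tail $\asymp c_d\,e^{-\sqrt du}u^{-2}$ gives $\asymp c_d\bigl(s^{-2}+\sqrt d\,s^{-1}\bigr)\asymp c_d\sqrt d\,s^{-1}$: one power of $s$, not two. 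Consequently the terminal "genuine big jump" of your peeling — which by construction lives inside the tilted expectation — produces only $(\log t)^{-1}$, not the claimed $(\log t)^{-2}$. This is fatal in exactly the regime that makes the lemma nontrivial: the bound must hold for all $t$ (in Corollary \ref{cor:recursion principle atypical} one needs $t\ge e^{c_1\sqrt d/2}$ with no upper bound) and for every fixed $k$, including $k=2,3$; the deficit $\sqrt d\log t$ grows without bound in $t$, while all of your slack ($d^{-\sqrt d}$ prefactors, the margin $(K/C_0)^{\sqrt d}d$ per variable) depends only on $d$ and $k$, so for fixed $d,k$ and $t$ large enough your Case B bound exceeds the target. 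The naive Chernoff bound $\beta^ke^{-\sqrt dL}$ fails in the same regime for the same reason, which is precisely why the $(\log t)^{-2}$ must be tracked.

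The fix is to keep the quadratic polynomial factor attached to an \emph{untilted} variable. In Case B use that $Y_1$ is the maximum, hence $Y_1\ge \Sigma'/(k-1)>L/(2(k-1))$: bound $\P\bigl(Y_1\ge\max\bigl((L-\Sigma')_+,\,\tfrac{L}{2(k-1)},\,\theta\bigr)\mid Y_2,\dots,Y_k\bigr)\le C\,c_d\,e^{-\sqrt d(L-\Sigma')_+}\bigl(\tfrac{L}{2(k-1)}\bigr)^{-2}$ whenever $\tfrac{L}{2(k-1)}\ge1$, then integrate the tilted rest to get $\beta^{k-1}$, the extra $(k-1)^2$ being absorbed by the per-variable slack $d^k$; and when $k-1>L/2$ (so $k\gtrsim\log t$) the pure Chernoff bound $\beta^ke^{-\sqrt dL}$ already beats the target because $d^{k}\gg(\log t)^2$. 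This keeps one $(\cdot)^{-2}$ factor alive at every stage, which is exactly the role played in the paper's proof by the lattice discretization of the $Y_i$ together with the convolution estimate of Lemma \ref{lem:sum for induction log term} (each convolution of two $(a+bu)^{-2}$ factors retains a single $(a+bm)^{-2}$ at the cost of a $\tfrac{Kd}{\log d}$ prefactor per variable); your route can be repaired to match it, but not via the tilted peeling as written.
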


	\begin{proof}
		Since $e^x-1\geq\frac{x}{x+1}e^x \geq \frac{y}{2}e^x$ for all $x\wedge 1\geq y>0$, we have that for $s\geq \frac{\log d}{80d}$,
		$$\frac{d}{4}(e^s -1)  \geq \frac{d}{4} \cdot \frac{\log d}{160 d} e^s = \frac{\log d}{640} e^s.$$
		Plugging this into (\ref{eq:tail bd of Yi 1}), we obtain that
		\begin{equation}\label{eq:tail bd of Yi 2}
		\P(Y_i \geq s) \leq 5 \left(\frac{640}{\log d} \right)^{\sqrt{d}} e^{-s\sqrt{d}} \left(s+ \frac{1}{2}\log \log d \right)^{-2},
		\end{equation}
		for all $ s\geq \frac{\log d}{80d} $, if  $d$  is such that
		\begin{equation}\label{eq:d0 condition 4}
		\frac{1}{2}\log \log d \leq \log \left(\frac{\log d}{640} \right), 
		\quad \textnormal{i.e.,}
		\quad
		d\geq \exp\left(640^2 \right).
		\end{equation}
		Keeping this in mind, we can rewrite the probability for each $k$ and $j$ as
		\begin{equation}\label{eq:tail bd of sum Yi large}
		\begin{split}
		&\P\left(\sum_{i=1}^k Y_i \geq \log \left(\frac{t}{2} \right)-j \; ; \; Y_i \geq \frac{\log d}{80d}, \;\forall i\leq k \right) \\
		&\leq
		\sum_{\substack{ u_1,\ldots , u_{k} \geq 1 : \\ \sum_{i=1}^{k} u_i \geq \frac{80d}{\log d}\{\log (t/2)-j\}-k }}
		\left[\prod_{i=1}^k \P\left(Y_i \in \frac{\log d}{80 d} [u_i, \; u_i +1] \right) \right]\\
		&\leq 
		\sum_{\substack{ u_1,\ldots , u_{k} \geq 1 : \\ \sum_{i=1}^{k} u_i = \lfloor \frac{80d}{\log d}\{\log (t/2)-j\} -k\rfloor }}
		\left[\prod_{i=1}^k \P\left(Y_i \geq \frac{\log d}{80 d}u_i \right) \right]	.
		\end{split}
		\end{equation}
		Then, the r.h.s. of (\ref{eq:tail bd of sum Yi large}) is upper bounded by
		\begin{equation}\label{eq:tail bd of sum Yi large 2}
		\begin{split}
		&5^k \left(\frac{640}{\log d} \right)^{k\sqrt{d}}
		e^{-\sqrt{d}\left( \log (t/2) -j -\frac{k\log d}{80 d} \right)}\\
		&\times 
		\sum_{\substack{ u_1,\ldots , u_{k} \geq 1 : \\ \sum_{i=1}^{k} u_i = \lfloor \frac{80d}{\log d}\{\log (t/2)-j\}-k\rfloor }}
		\prod_{i=1}^k \left(\frac{\log d}{80d} u_i + \frac{1}{2} \log\log d \right)^{-2}.
		\end{split}
		\end{equation}
		The sum in the right can be controlled inductively in $k$ according to the following lemma.
		
		\begin{lemma}\label{lem:sum for induction log term}
			Let $a> b>0$ and $m\in \mathbb{N}$. Then,
			\begin{equation*}
			\sum_{u=0}^m (a+bu)^{-2} (a+b(m-u))^{-2} \leq \frac{8}{b(a-b) (a+bm)^2}.
			\end{equation*}
		\end{lemma}
		
		\begin{proof}[Proof of Lemma \ref{lem:sum for induction log term}]
			The l.h.s.$\;$of above can be upper bounded by
			\begin{equation*}
			\begin{split}
			2\sum_{u=0}^{\lceil m/2\rceil } (a+bu)^{-2} \left(a+\frac{bm}{2} \right)^{-2}
			\leq
			\frac{8}{b^2 (2a+bm)^2}\sum_{m=0}^{\infty} \left(u+\frac{a}{b} \right)^{-2}
			\leq
			\frac{8}{b(a-b) (a+bm)^2}.
			\end{split}
			\end{equation*}
		\end{proof}
		
		From now on, $K>0$ is an absolute constant that may vary from line by line. We apply Lemma \ref{lem:sum for induction log term} $(k-1)$-times to (\ref{eq:tail bd of sum Yi large 2}) with $a=\frac{1}{2}\log\log d$, $b=\frac{\log d}{80d}$ and see that 
		\begin{equation}\label{eq:tail bd of sum Yi large 3}
		\begin{split}
		&\sum_{\substack{ u_1,\ldots , u_{k} \geq 1 : \\ \sum_{i=1}^{k} u_i = \lfloor \frac{80d}{\log d}\{\log (t/2)-j\}-k\rfloor }}
		\prod_{i=1}^k \left(\frac{\log d}{80d} u_i + \frac{1}{2} \log\log d \right)^{-2} \\
		&\qquad \qquad \leq
		\left(\frac{K d}{\log d } \right)^{k-1} \left(\log \left(\frac{t}{2} \right) - j -\frac{k\log d}{80 d} \right)^{-2}.
		\end{split}
		\end{equation}
		Therefore, we see from (\ref{eq:tail bd of sum Yi large}), (\ref{eq:tail bd of sum Yi large 2}) and (\ref{eq:tail bd of sum Yi large 3}) that
		\begin{equation}\label{eq:tail bd of sum Yi larger fin}
		\begin{split}
		&\P\left(\sum_{i=1}^k Y_i \geq \log \left(\frac{t}{2} \right)-j \; ; \; Y_i \geq \frac{\log d}{80d}, \;\forall i\leq k \right) \\
		&\leq
		\left(\frac{K^{\sqrt{d}}d }{(\log d)^{\sqrt{d}}} \right)^k
		e^{-\sqrt{d}\left(\log (t/2)-j- \frac{k\log d}{80d} \right)}\left(\log \left(\frac{t}{2} \right) -j -\frac{k \log d}{80d}  \right)^{-2}. 
		\end{split}
		\end{equation}
		Note that the definition of $r$ (\ref{eq:ndt def}) and $m(d,t)$ (\ref{eq:mdt def}) implies that for $t\geq d^{\frac{1}{10}}$,
		$$\log\left(\frac{t}{2} \right) - m(d,t) -
		\frac{r\log d}{80 d} 
		\geq 
		\log \left(\frac{t}{2}\right) - \frac{1}{4}\log t -\frac{1}{4}\log t \geq \frac{1}{3} \log t.$$
		Therefore, the r.h.s. of (\ref{eq:tail bd of sum Yi larger fin}) is at most
		\begin{equation*}
		\left(\frac{ K^{\sqrt{d}} d }{(\log d)^{\sqrt{d}}} \right)^k
		e^{-\sqrt{d}\left(\log (t/2)-j- \frac{k\log d}{80d} \right)} 9(\log t)^{-2},
		\end{equation*}
		Absorbing the constants $9$, $2^{\sqrt{d}}$ and the term $\exp(\frac{k\log d}{80\sqrt{d}})$ into $K$, we obtain the conclusion.
	\end{proof}
	
	Now we turn our attention to bounding the second term in the inner sum of (\ref{eq:tail bd sum of Yi split}) in the r.h.s.. 
	
	\begin{lemma}\label{lem:recintermed2}
		Under the setting of Lemma \ref{lem:recursion principle atypical} and (\ref{eq:ndt def}), there exist  absolute constants $K,d_0>0$  such that for all $d\geq d_0$, $1\leq k\leq r$ and $1\leq j \leq m(d,t)+1$, we have
		\begin{equation*}
		\P	\left(\sum_{i=k+1}^{r} Y_i \geq j-1 \; ;\; Y_i \leq \frac{\log d}{80 d},\; \forall i>k  \right)
		\leq
		1\wedge
		\left( \frac{ \exp\left(19 r \,d^{-1/3} \right)}{\exp\left(\ep d^{2/3}j \right)}  \right).
		\end{equation*}
	\end{lemma}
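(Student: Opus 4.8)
The plan is to control this probability by an exponential Markov (Chernoff) inequality applied to the truncated summands. The bound $1$ in the conclusion is immediate since the left-hand side is a probability, so it remains to prove the exponential bound. Introduce $\widetilde{Y}_i := Y_i \one\{Y_i \le \tfrac{\log d}{80d}\}$; on the event under consideration one has $\sum_{i=k+1}^r Y_i = \sum_{i=k+1}^r \widetilde{Y}_i$, so it suffices to bound $\P\big(\sum_{i=k+1}^{r}\widetilde{Y}_i \ge j-1\big)$, where the $\widetilde{Y}_i$ are i.i.d., nonnegative, and bounded above by $\tfrac{\log d}{80d}$. When $j=1$ there is nothing to prove beyond the trivial bound, because $r\ge 2d$ gives $19 r d^{-1/3} \ge 38 d^{2/3} \ge \ep d^{2/3}$, hence $\exp(19rd^{-1/3}-\ep d^{2/3})\ge 1$.

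For $j\ge 2$ I would apply the Chernoff bound with parameter $\theta = \ep d^{2/3}$, obtaining
\[
\P\Big(\sum_{i=k+1}^{r}\widetilde{Y}_i \ge j-1\Big) \;\le\; e^{-\theta(j-1)}\,\big(\E e^{\theta \widetilde{Y}_1}\big)^{\,r-k} \;\le\; e^{\theta}\,e^{-\theta j}\,\big(\E e^{\theta \widetilde{Y}_1}\big)^{r},
\]
using $\E e^{\theta\widetilde{Y}_1}\ge 1$ in the last step. The heart of the matter is the moment generating function estimate $\E e^{\theta\widetilde{Y}_1} \le 1 + O(d^{-1/3})$. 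Since $e^{\theta\widetilde{Y}_1} = e^{\theta Y_1}\one\{Y_1\le \tfrac{\log d}{80d}\} + \one\{Y_1> \tfrac{\log d}{80d}\}$, the two indicator terms recombine and, using $e^{\theta Y_1}-1 = (1+\tfrac4dZ_1)^\theta - 1 \le \tfrac{4\theta}{d}Z_1 e^{\frac{4\theta}{d}Z_1}$ (which follows from $\log(1+x)\le x$ and $e^x-1\le xe^x$), one gets
\[
\E e^{\theta\widetilde{Y}_1} \;=\; 1 + \E\Big[(e^{\theta Y_1}-1)\,\one\{Y_1\le \tfrac{\log d}{80d}\}\Big] \;\le\; 1 + \tfrac{4\theta}{d}\,e^{\frac{4\theta}{d}z^\ast}\,\E Z_1,
\]
where $z^\ast := \tfrac d4(d^{1/(80d)}-1) = (1+o(1))\tfrac{\log d}{320}$ is the value of $Z_1$ corresponding to $Y_1 = \tfrac{\log d}{80d}$. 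Because $\tfrac{4\theta}{d}z^\ast = O\big(\tfrac{\log d}{d^{1/3}}\big)=o(1)$, the exponential factor is at most, say, $1.1$ for $d$ large; meanwhile the tail hypothesis (\ref{eq:recursion principle assumption}) gives $\E Z_1 \le \tfrac4\ep$, so $\tfrac{4\theta}{d}\E Z_1 \le 16 d^{-1/3}$ and $\E e^{\theta\widetilde{Y}_1} \le 1 + 18 d^{-1/3} \le e^{18 d^{-1/3}}$ for all $d\ge d_0$ with $d_0$ an absolute constant --- note that the $\ep$'s cancel here, which is exactly why $d_0$ need not depend on $\ep$.

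Finally I would substitute back: the Chernoff bound becomes at most $e^{\ep d^{2/3}}\,e^{-\ep d^{2/3}j}\,e^{18 r d^{-1/3}}$, and since $r\ge 2d$ we have $\ep d^{2/3} = \tfrac\ep2 (2d)\,d^{-1/3} \le \tfrac12 r d^{-1/3}$, so the right-hand side is $\le e^{-\ep d^{2/3}j}\exp\big((18+\tfrac12)\,r d^{-1/3}\big) \le e^{-\ep d^{2/3}j}\,e^{19 rd^{-1/3}}$, which is the claimed bound. The only delicate point is the moment generating function estimate, and it succeeds precisely because the truncation level $\tfrac{\log d}{80d}$ was chosen of order $\tfrac{\log d}{d}$ (rather than $\Theta(1)$), so that $\theta$ of order $d^{2/3}$ times the maximal value of $\widetilde{Y}_1$ is still $o(1)$, permitting the linearization $e^{\theta Y_1}-1 \approx \tfrac{4\theta}{d}Z_1$; once this is in place, the remainder is bookkeeping of absolute constants, for which the generous factor $\tfrac1{80}$ and the slack between $18$ and $19$ leave ample room.
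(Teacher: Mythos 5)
Your proof is correct and follows essentially the same route as the paper: a Chernoff bound with parameter $\theta=\ep d^{2/3}$ applied to the truncated variables, a truncated moment-generating-function estimate of the form $1+18d^{-1/3}$, and absorption of the spare factor $e^{\ep d^{2/3}}$ into $e^{19rd^{-1/3}}$ via $r\ge 2d$. The only (harmless) difference is internal to the MGF step: you use the pointwise bound $e^{\theta Y_1}-1\le \tfrac{4\theta}{d}Z_1e^{4\theta Z_1/d}$ together with $\E Z_1\le 4/\ep$, while the paper splits at $16/(\ep d)$ and integrates the tail bound on $Y_i$; both yield the same constant.
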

	
	\begin{proof}
		 We first observe that by (\ref{eq:EYi bound}) and (\ref{eq:tail bd of Yi 1}),
		\begin{equation*}
		\begin{split}
		\E &\left[\exp\left(\ep d^{2/3} Y_i \right) \; ;\; Y_i \leq \frac{\log d}{80 d} \right] \leq \exp\left(\ep d^{2/3} \left(\frac{16}{\ep d} \right) \right) +  \E \left[ \exp \left(\ep d^{2/3} Y_i \right) \one_{ \left\{ \frac{16}{\ep d} \leq Y_i \leq \frac{\log d}{80 d} \right\}} \right]
		\\
		&\qquad \qquad \qquad \qquad \leq 
		\exp\left(16d^{-1/3} \right)
		+
		\int_{\frac{16}{\ep d}}^{\frac{\log d}{80 d}} e^{\ep d^{2/3} y} \left(\frac{d}{4}(e^y-1) \right)^{-\sqrt{d}} \textnormal{d}y\\
		&\qquad \qquad \qquad \qquad \leq 
		1+17d^{-\frac{1}{3}} + \frac{\log d}{80d} \times e^{\frac{\ep\log d}{80d^{1/3} }} \times \left(\frac{d}{4} \cdot \frac{16}{\ep d} \right)^{-\sqrt{d}}
		\leq 1+ 18d^{-\frac{1}{3}},
		\end{split}
		\end{equation*}
		where the last two inequalities hold for $d$ larger than some absolute constant $d_2$. Therefore, we obtain that
		\begin{equation*}
		\begin{split}
		&\P	\left(\sum_{i=k+1}^{r} Y_i \geq j-1 \; ;\; Y_i \leq \frac{\log d}{80 d},\; \forall i>k  \right)
		\leq
		\P	\left(\sum_{i=1}^{r} Y_i \geq j-1 \; ;\; Y_i \leq \frac{\log d}{80 d},\; \forall i>k  \right)\\
		&\qquad \qquad \qquad \leq 
		1\;\wedge\; \exp\left(-\ep d^{2/3}(j-1) \right) \times \left\{\E \left[\exp\left(\ep d^{2/3} Y_i \right) \; ;\; Y_i \leq \frac{\log d}{80 d} \right]\right\}	^r
		\\
		&\qquad \qquad \qquad \leq
		1\;\wedge\; \frac{\left(1+ 18d^{-1/3} \right)^{r} }{\exp\left(\ep d^{2/3}(j-1) \right) } 
		\leq
		1\wedge
		\left( \frac{ \exp\left(19 r \,d^{-1/3} \right)}{\exp\left(\ep d^{2/3}j \right)}  \right) ,
		\end{split}
		\end{equation*}
		concluding the proof.
	\end{proof}
	
	We can now combine (\ref{eq:tail bd sum of Yi split}) with Lemmas \ref{lem:recinterm1} and \ref{lem:recintermed2} to see that
	\begin{equation}\label{eq:tail bd of sum Yi fin}
	\begin{split}
	&\P \left(\sum_{i=1}^{r} Y_i \geq \log \left(\frac{t}{2} \right) \right)\\
	&\leq
	\frac{\;t^{-\sqrt{d}}}{(\log t)^2} \sum_{k=1}^{r} \sum_{j=1}^{m(d,t)+1} {r \choose k}
	\left(\frac{K^{\sqrt{d}}d }{(\log d)^{\sqrt{d}}} \right)^k
	e^{j\sqrt{d}} 
	\left( \frac{ e^{19 r\,d^{-1/3} }}{e^{\ep d^{2/3}j }}   \; \wedge \; 1\right).\\
	&\leq 
	\frac{t^{-\sqrt{d}}}{(\log t)^2} 
	\sum_{k=1}^{r} {r \choose k}
	\left(\frac{K^{\sqrt{d} }d}{(\log d)^{\sqrt{d}}} \right)^k
	\left[\sum_{j=1}^{m(d,t)+1}
	e^{j\sqrt{d}} 	\left( \frac{ e^{19 r\,d^{-1/3} }}{e^{\ep d^{2/3}j }}   \; \wedge \; 1\right) \right],
	\end{split}
	\end{equation}
	
	The last inner sum over $j$ can be split into two parts, $\ep d^{2/3 } j \leq 19 r\, d^{-1/3}$ and $\ep d^{2/3}j > 19r\, d^{-1/3}$. That is, we divide into two regimes based on $j_0= 19r (\ep d)^{-1} $.	 If, for instance, $2\sqrt{d}\leq \ep d^{2/3}$, then this sum  is bounded by $2\exp(j_0 \sqrt{d})=2\exp(19 r\, \ep^{-1} d^{-1/2} ) $, since  the exponent $\ep d^{2/3} j$ in the denominator becomes at least twice as large as $j\sqrt{d} $ in the numerator. This gives another condition for $d_1$, namely,
	\begin{equation}\label{eq:d0 condition 5}
	d \geq \left(\frac{2}{\ep} \right)^6.
	\end{equation}
	
	Further,	the outer sum over $k$ in the r.h.s.$\;$of (\ref{eq:tail bd of sum Yi fin}) is at most
	\begin{equation}\label{eq:tail bd of sum Yi fin2}
	\begin{split}
	2\exp\left(\frac{19r}{\ep \sqrt{d}} \right) \sum_{k=1}^{r} 
	{r \choose k} \left(\frac{K^{\sqrt{d} }d}{(\log d)^{\sqrt{d}}} \right)^k
	&\leq 
	2\exp\left(\frac{19r}{\ep \sqrt{d}}\right) \times \left( 1+\frac{K^{\sqrt{d} }d}{(\log d)^{\sqrt{d}}} \right)^r\\
	&\leq \exp \left(\frac{20r}{\ep \sqrt{d}} \right),
	\end{split}
	\end{equation}
	where the last inequality holds for large $d$ such that 
	\begin{equation}\label{eq:d0 condition 8}
	\frac{1}{\ep \sqrt{d}} > \left(\frac{K}{\log d}\right)^{\sqrt d} d .
	\end{equation}
	Therefore, we establish Corollary \ref{cor:recursion principle atypical} from  (\ref{eq:tail bd of sum Yi fin}) and (\ref{eq:tail bd of sum Yi fin2}) by setting $C_1 =20$ and $d_0$ to satisfy (\ref{eq:d0 condition 4}), (\ref{eq:d0 condition 5}) and (\ref{eq:d0 condition 8}).
	For Lemma \ref{lem:recursion principle atypical}, we plug in $r=2d$ in the l.h.s. of (\ref{eq:tail bd of sum Yi fin2}), and see that if $d$ is so large  that
	\begin{equation}\label{eq:d0 condition 6}
	\log d\geq 3 \cdot K\cdot e^{19/\ep},
	\end{equation}
	then by using ${2d \choose k} \leq (2d)^k$ and the fact that $\sum_{k\geq 1} u^k \leq 2u$ for $u\leq \frac{1}{2}$,
	\begin{equation}\label{eq:constant infrontofthe tail}
	2\exp\left(\frac{19\sqrt{d}}{\ep } \right) \sum_{k=1}^{2d} 
	{2d \choose k} \left(\frac{K^{\sqrt{d} }d}{(\log d)^{\sqrt{d}}} \right)^k
	\leq
	4e^{\frac{19\sqrt{d}}{\ep }}\left(\frac{K^{\sqrt{d}}d^2}{(\log d)^{\sqrt{d}}} \right)
	\leq
	\frac{ 4d^2}{2^{\sqrt{d}}} \leq \frac{1}{2}.
	\end{equation}
	
	Therefore, we obtain the conclusion of Lemma \ref{lem:recursion principle atypical}, by setting $d_1(\ep)$ to satisfy (\ref{eq:d0 condition 5}) and (\ref{eq:d0 condition 6}). (Condition (\ref{eq:d0 condition 4}) is absorbed into (\ref{eq:d0 condition 6}).) 
\end{proof}

We conclude this section by proving Lemma \ref{lem:sum Si small t}. The idea of splitting the probability as (\ref{eq:tail bd sum of Yi split}) is used, but here the computation is simpler than the previous one.

\begin{proof}[Proof of Lemma \ref{lem:sum Si small t}]
	Define $Z_i' = Z_i - \frac{2}{\ep}, $ then $Z_i'$ satisfies
	\begin{equation*}
	\P(Z_i'\geq s) \leq 5\left(s+\frac{2}{\ep} \right)^{-\sqrt{d}}, \quad \textnormal{for all } s\geq 0.
	\end{equation*}
	We claim that
	\begin{equation}\label{eq:tail bound of S small t 2nd part2}
	\begin{split}
	\P \left(\sum_{i=1}^{(1+\frac{\ep}{6})d } Z_i \geq \frac{2d}{\ep}\left(1+\frac{\ep}{3} \right)  \right)
	\leq
	\P \left(\sum_{i=1}^{(1+\frac{\ep}{6}) d} Z_i' \geq \frac{d}{3} \right)
	\leq
	\frac{1}{2} d^{-\frac{1}{10}\sqrt{d}} \left(\log\left(d^{\frac{1}{10}} \right) \right)^{-2}.
	\end{split}
	\end{equation}
	In order to have $\sum_{i=1}^{(1+\frac{\ep}{6}) d} Z_i' \geq \frac{d}{3}$, some $Z_i'$ must be at least $\frac{1}{10}$. In what follows, $k$ denotes the number of such $Z_i'$. Note that on the other hand, the sum over all $Z_i'$ that are at most $\frac{1}{10}$ is bounded by $(1+\frac{\ep}{6})\frac{d}{10} \leq  \frac{d}{6}$ from above. Therefore, we can bound the probability in the middle in (\ref{eq:tail bound of S small t 2nd part2}) by
	\begin{equation}\label{eq:Ziprime split}
	\P\left(\sum_{i=1}^{(1+\frac{\ep}{6}) d} Z_i' \geq \frac{d}{3} \right)
	\leq 
	\sum_{k=1}^{(1+\frac{\ep}{6})d} {(1+\frac{\ep}{6})d \choose k}  \P\left( \sum_{i=1}^k Z_i' \geq \frac{d}{6} \; ; \; Z_i' \geq \frac{1}{10} \right).
	\end{equation}
	We can go through  similar steps as in (\ref{eq:tail bd of sum Yi large}) to see that 
	\begin{equation*}
	\begin{split}
	\P\left( \sum_{i=1}^k Z_i' \geq \frac{d}{6} \; ; \; Z_i' \geq \frac{1}{10} \right)
	&\leq
	\sum_{\substack{u_1,\ldots,u_k \geq 0 \\  u_1 +\ldots + u_k = \lfloor\frac{5}{3}d-{k}\rfloor }} \prod_{i=1}^k \P\left(Z_i' \geq \frac{u_i}{10} \right)\\
	&\leq
	\sum_{\substack{u_1,\ldots,u_k \geq 0 \\  u_1 +\ldots + u_k = \lfloor\frac{5}{3}d-{k}\rfloor }} 5^k \prod_{i=1}^k \left(\frac{2}{\ep} +\frac{u_i}{10} \right)^{-\sqrt{d}}.
	\end{split}
	\end{equation*}
	The choice of $\{u_i \}_{i=1}^k$ that maximizes the product in the r.h.s. is such that all but exactly one $u_i$ are equal to zero. Hence, its maximum is at most
	$$\left(\frac{\ep}{2} \right)^{(k-1)\sqrt{d}}\left[\frac{\ep}{2}+ \frac{1}{10}\left(\frac{5d}{3}-k \right) \right]^{-\sqrt{d}}  \leq \left(\frac{\ep}{2} \right)^{(k-1)\sqrt{d}} \left(\frac{18}{d} \right)^{\sqrt{d}}, $$
	using $\frac{k}{10} \leq \frac{d}{9}$.
	Since the number of $\{u_i \}_{i=1}^k$ satisfying $\sum_{i=1}^k u_i = \lfloor\frac{5}{3}d -k  \rfloor$ is at most $2d \choose k$, the r.h.s. of (\ref{eq:Ziprime split}) is at most
	\begin{equation*}
	\sum_{k=1}^{(1+\frac{\ep}{6})d}  {2d \choose k}^2 5^k \left(\frac{\ep}{2} \right)^{(k-1)\sqrt{d}} \left(\frac{18}{d} \right)^{\sqrt{d}}
	\leq
	2d \left(\frac{36}{\ep d} \right)^{\sqrt{d}} \sum_{k\geq 1} \left[ 20d^2 \left(\frac{\ep}{2} \right)^{\sqrt{d}} \right]^k
	\leq
	80d^3 \left(\frac{18}{d} \right)^{\sqrt{d}},
	\end{equation*}
	using $\sum_{k\geq 1} u^k \leq 2u$ for small enough $u>0$. Thus, there exists $d_1>0$ such that the r.h.s.$\;$is smaller than that of (\ref{eq:tail bound of S small t 2nd part2}) for $d\geq d_1(\ep)$.
\end{proof}

\section{Total infections at leaves on trees}\label{sec:numberofhits}

Let $(T,\rho)$ be a finite rooted tree of \textit{depth} $l$. That is, 
\begin{equation}\label{eq:depth def}
l=\max\{\textnormal{dist}(\rho,v):v\in T \}.
\end{equation}
As discussed in Section \ref{subsubsec:idea2-1}, we desire to control the infection going deep inside the tree for subcritical $\lambda$. To this end, we investigate $M^l(T) $, the \textit{expected total infections at depth-$l$ leaves} defined in Definition \ref{def:tit}. In particular, the goal of this section is to establish the following theorem:

\begin{theorem}\label{thm:M tail bd}
	Let $l\geq 0$ be an integer, $\ep\in(0,1)$ and $\mathfrak{c}=\{c_\delta \}_{\delta\in (0,1]}$ be a collection of positive constants. Then there exists $d_0(\ep, \mathfrak{c}) >0$ such that the following holds true. For  any $\xi$ that satisfies $d=\E \xi \geq d_0$ and (\ref{eq:concentration condition}) with $\mathfrak{c}$, we have for $\lambda = (1-\ep) d^{-1}$ and $\mathcal{T}^l\sim \gw(\xi)^l$ that
	\begin{equation}\label{eq:Mtail:in thm}
	\P_{\textsc{gw}} \left(M^l(\mathcal{T}^l) \geq \left(1- \frac{\ep}{10} \right)^l t \right) \leq 
	t^{-\sqrt{d}} (\log t)^{-2} ~~~\textnormal{for all }t\geq 2,
	\end{equation}
	where  $M^l(\mathcal{T}^l)$ is the expected total infections at depth-$l$ leaves on $\mathcal{T}^l$.
\end{theorem}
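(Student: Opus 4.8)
The plan is to run the argument in lockstep with the proof of Theorem~\ref{thm:tail bound of S}, with $M^l$ playing the role of $S$ and an extra prefactor $(1-\ep/10)^l$ carried through the induction. The first step is to establish two deterministic recursions for $M^l$. Write $T$ with root $\rho$, $D=\deg(\rho)$, children $v_1,\dots,v_D$, and subtrees $T_1,\dots,T_D$ (each of depth $\le l-1$). Repeating the modified-process-plus-stationary-distribution argument used for Proposition~\ref{prop:Srecursion atypical} — decompose a root-added excursion on $T$ into the sub-excursions of the product chains $\cp^{\otimes}(T;\one_{v_i})$, during which $\rho$ remains infected, and note that each depth-$l$ leaf infection occurs inside some $T_i$ at its depth-$(l-1)$ leaves, so that its long-run rate inside $\cp^{\otimes}(T)$ equals $\sum_i \lambda M^{l-1}(T_i)/(1+\lambda S(T_i))$ — and then invoking (\ref{eq:stationary dist1})--(\ref{eq:Sotimes product}), one obtains the \emph{atypical} recursion
\begin{equation*}
M^l(T)\ \le\ \lambda\sum_{i=1}^D M^{l-1}(T_i)\prod_{j\ne i}\bigl(1+\lambda S(T_j)\bigr)\ \le\ \lambda\Bigl(\sum_{i=1}^D M^{l-1}(T_i)\Bigr)\prod_{j=1}^D\bigl(1+\lambda S(T_j)\bigr).
\end{equation*}
For the \emph{typical} recursion I would introduce $M^l_R(T):=\E_{\textsc{cp}}[\#\{\textnormal{depth-}l\textnormal{ leaf infections of }\cp^\lambda(T;\one_\rho)\}]$, prove $M^l_R(T)\le (1-\lambda^2\sum_i R(T_i))^{-1}\,\lambda\sum_i M^{l-1}_R(T_i)$ by counting re-infections of $\rho$ as in Proposition~\ref{prop:recursion of R}, compare $M^l$ with $M^l_R$ in the manner of Proposition~\ref{prop:S vs R}, and use $M^{l-1}_R\le M^{l-1}$ and $R\le S$ to get, when $\lambda+C\lambda^2\sum_i S(T_i)<1$,
\begin{equation*}
M^l(T)\ \le\ \frac{\lambda\sum_{i=1}^D M^{l-1}(T_i)}{1-\lambda-C\lambda^2\sum_{i=1}^D S(T_i)}.
\end{equation*}
The key feature of both recursions is that the per-level multiplier is $\lambda D$, which on the event $\{D\le(1+\ep/2)d\}$ is at most $(1-\ep)(1+\ep/2)\le 1-\ep/10$, producing exactly the asserted decay.

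With the recursions in hand, I would induct on $l$. The base case $l=0$ is trivial, since $M^0\equiv1<t$ for $t\ge2$. For the inductive step let $(\mathcal T^{l+1},\rho)\sim\gw(\xi)^{l+1}$, $D=\deg(\rho)\overset{\textnormal{d}}{=}\xi$, and condition on the i.i.d.\ subtrees $T_i\sim\gw(\xi)^l$; by the inductive hypothesis the variables $Z_i:=M^l(T_i)/(1-\ep/10)^l$ obey $\P(Z_i\ge s)\le s^{-\sqrt d}(\log s)^{-2}$ for $s\ge2$, which is precisely the hypothesis of Lemmas~\ref{lem:sum Si small t}, \ref{lem:recursion principle atypical} and Corollary~\ref{cor:recursion principle atypical}, while the $S(T_i)$ are controlled by Theorem~\ref{thm:tail bound of S}. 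Splitting $t$ into the same three regimes $2\le t\le d^{1/10}$, $d^{1/10}\le t\le e^{c_1\sqrt d/2}$, and $t\ge e^{c_1\sqrt d/2}$: in the small regime use the typical recursion, so that on the good event $\{D\le(1+\ep/2)d\}\cap\{\sum_iS(T_i)\le Kd\}$ the denominator is $1-O(1/d)$ and $\{M^{l+1}(\mathcal T^{l+1})\ge(1-\ep/10)^{l+1}t\}$ forces $\sum_iZ_i\gtrsim dt$; the failure of the good event is exponentially small (Theorem~\ref{thm:tail bound of S} and Lemma~\ref{lem:sum Si small t}) and $\P(\sum_iZ_i\gtrsim dt)\le\tfrac12 t^{-\sqrt d}(\log t)^{-2}$ follows from a variant of Lemma~\ref{lem:sum Si small t} with a $t$-dependent threshold (same proof, splitting on the number of $Z_i$ above a small constant). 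In the intermediate and large regimes use the atypical recursion, bound $\prod_{j\ne i}(1+\lambda S(T_j))\le\prod_j(1+\lambda S(T_j))$ and take logarithms, so that the target event is contained in $\{\sum_j\log(1+\lambda S(T_j))+\log(\tfrac1d\sum_iZ_i)\ge\log(\textnormal{const}\cdot t)\}$; the first sum is exactly what the proof of Theorem~\ref{thm:tail bound of S} estimates and the second is handled by Lemma~\ref{lem:recursion principle atypical}, with the large-$t$ case additionally summing over $2d\le r\le\Delta_{d,t}$ against $\P(D\ge r)\le e^{-c_1 r/2}$ via Corollary~\ref{cor:recursion principle atypical}, exactly as for Theorem~\ref{thm:tail bound of S}.

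The principal obstacle is to keep the per-level loss down to the single factor $\lambda D\le 1-\ep/10$: any additional constant $>1$ per level — for instance the factor $\prod_j(1+\lambda S(T_j))\approx e^{\Theta(1)}$ appearing in the atypical recursion — would destroy the $(1-\ep/10)^l$ rate, so the typical recursion must govern the bulk of the probability, and the comparison of $M^l$ with $M^l_R$ must cost only a factor $1+O(1/d)$ on the good event; this rests on $S(\mathcal T^{l+1})=O(1/\ep)$ there, which is exactly what Theorem~\ref{thm:tail bound of S} delivers. A secondary difficulty is that both recursions couple $M^l(T_i)$ with $S(T_i)$ on the same random subtree, which is dealt with by a union bound over the (super-polynomially rare) events on which the $S(T_i)$ are atypical, using only the estimates already available from Theorem~\ref{thm:tail bound of S} and Lemmas~\ref{lem:sum Si small t}--\ref{lem:recursion principle atypical}; in particular no new probabilistic ingredient is needed.
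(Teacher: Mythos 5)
Your overall scheme is the paper's: the same two deterministic recursions for $M^l$ (the product-chain ``atypical'' one, which is Proposition \ref{prop:Mrecursion atypcial}, and the ``typical'' one obtained through $\bar M^l$ for the plain process plus the comparison $M^l(T)\le(1+\lambda S(T))\bar M^l(T)$, i.e.\ Corollaries \ref{cor:recursion Mprime} and \ref{cor:MvsMprime}), the same induction on $l$ carrying the prefactor $(1-\ep/10)^l$, the same three regimes of $t$, and the same auxiliary results (Theorem \ref{thm:tail bound of S}, Lemmas \ref{lem:sum Si small t}, \ref{lem:recursion principle atypical}, Corollary \ref{cor:recursion principle atypical}). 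The base case, the small-$t$ regime and the large-$t$ superstructure are fine as sketched.

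There is, however, a concrete gap in the intermediate/large-$t$ step. Your first move there is to relax $\prod_{j\ne i}(1+\lambda S(T_j))\le\prod_j(1+\lambda S(T_j))$ and factor out $\lambda\sum_i Z_i$, reducing to the event $\{\sum_j\log(1+\lambda S(T_j))+\log(\lambda\sum_i Z_i)\ge\log(ct)\}$, which you then claim is ``handled by Lemma \ref{lem:recursion principle atypical}''. That lemma controls a single sum of i.i.d.\ terms of the form $\log(1+\tfrac4d Z_i)$; here you have the sum of two \emph{dependent} quantities ($Z_i$ and $S(T_i)$ live on the same subtree $T_i$), and none of the obvious ways of combining them retains the exponent $\sqrt d$: a naive threshold split ($A+B\ge\log t$ implies $A\ge\tfrac12\log t$ or $B\ge\tfrac12\log t$) only yields $t^{-\sqrt d/2}$; a dyadic/convolution argument as in Claim \ref{claim:B tail large t} needs independence of the two factors; bounding $\log(\lambda\sum_iZ_i)\le\sum_i\log(1+\lambda Z_i)$ and merging termwise gives $\prod_i(1+\lambda W_i)^2$ with $W_i=\max\{Z_i,S(T_i)\}$, again a factor-$2$ loss in the exponent that is fatal in the far tail; and your fallback — union-bounding over ``super-polynomially rare'' atypical $S(T_i)$ — fails because in this regime the relevant $S$-deviations have probability of the same order $t^{-\sqrt d}$ as the target bound. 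The missing ingredient is the paper's device: keep the $j\ne i$ structure, set $W_i:=\max\{Z_i,S(T_i)\}$, and use the elementary inequality $\sum_i\lambda W_i\prod_{j\ne i}(1+\lambda W_j)\le\prod_i(1+2\lambda W_i)$, so the entire right-hand side of the atypical recursion becomes a single product to which Lemma \ref{lem:recursion principle atypical} and Corollary \ref{cor:recursion principle atypical} apply directly (the tail of $W_i$ is at most $2s^{-\sqrt d}(\log s)^{-2}$, within the lemma's constant $5$). With that one substitution, your argument goes through as in Sections \ref{subsubsec:M2} and \ref{subsubsec:M3}.
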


To prove this theorem, we first derive two different recursive inequalities for $M^l(T)$ for a deterministic tree $T$  in Sections   \ref{subsec:tit:deterministic2} and \ref{subsec:tit:deterministic}. Then, in Section \ref{subsec:tit:gw}, we verify the theorem, which is  along the lines of proving Theorem \ref{thm:tail bound of S}.

\subsection{The first recursive inequality}\label{subsec:tit:deterministic2}
We begin with  deriving a recursive inequality on $M^l(T)$ described in the following proposition, which is an analogue of (\ref{eq:recursion eq atypical}). 

\begin{proposition}\label{prop:Mrecursion atypcial}
	For a finite rooted tree $(T,\rho)$ of depth $l$, let $D=\deg(\rho)$ and $T_1,\ldots,T_D$ be the subtrees rooted at each child $v_i$ of $\rho$. Then, $M^l(T)$, the expected total infections at depth-$l$ leaves  on $T$, satisfies the following.
	\begin{equation}\label{eq:Mrecursion atypical}
	M^l(T) \leq \lambda \sum_{i=1}^D M^{l-1}(T_i) \prod_{\substack{1\,\leq\, j\, \leq\, D\\ j\neq i}} (1+\lambda S(T_j)).
	\end{equation}
\end{proposition}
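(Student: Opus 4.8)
The plan is to mirror the proof of Proposition~\ref{prop:Srecursion atypical}, now bookkeeping the number of re-infections at the depth-$l$ leaves rather than the length of the excursion. As there, I would first pass to the modified process $(X^\sharp_t)$ on $T$ obtained from $\cp^\lambda_{\rho^+}(T;\one_\rho)$ by suppressing every recovery of $\rho$ that occurs while some other vertex is infected, so that $\rho$ stays infected throughout each $X^\sharp$-excursion. Let $\textbf{M}^{l,\sharp}$ denote the number of times a depth-$l$ leaf of $T$ turns from healthy to infected during one excursion of $X^\sharp$. Coupling $(X_t)\sim\cp^\lambda_{\rho^+}(T;\one_\rho)$ with $(X^\sharp_t)$ through the graphical representation, adding infection power at $\rho$ can only increase the infection activity at every leaf, so $M^l(T)=\E_{\textsc{cp}}\textbf{M}^l(T)\le \E_{\textsc{cp}}\textbf{M}^{l,\sharp}$; it then remains to show that $\E_{\textsc{cp}}\textbf{M}^{l,\sharp}$ equals the right-hand side of \eqref{eq:Mrecursion atypical}.

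Next I would reuse verbatim the decomposition of an $X^\sharp$-excursion from the proof of Proposition~\ref{prop:Srecursion atypical}: it consists of a $\mathrm{Geom}$-number of i.i.d.\ sub-excursions, each running the product chain $\cp^\otimes(T;\one_{v_i})$ of \eqref{eq:def:productchain} with $i$ chosen uniformly in $[D]$, independently of the geometric count, and the expected number of propagating sub-excursions is $\lambda D$. Writing $\mu^\otimes_i$ for the expected number of new infections at depth-$l$ leaves of $T$ during one excursion of $\cp^\otimes(T;\one_{v_i})$ and $\mu^\otimes=\tfrac1D\sum_{i=1}^D\mu^\otimes_i$, a Wald-type identity gives $\E_{\textsc{cp}}\textbf{M}^{l,\sharp}=\lambda D\,\mu^\otimes$.

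Finally I would evaluate $\mu^\otimes$ through the stationary distribution, exactly as in the last step of Proposition~\ref{prop:Srecursion atypical}. Run $\cp^\otimes(T)=\bigotimes_{j=1}^D\cp^\lambda_{\rho}(T_j)$ in stationarity. A depth-$l$ leaf of $T$ is a depth-$(l-1)$ leaf of the unique subtree $T_j$ containing it, so, by independence of the coordinates and the excursion structure of each $\cp^\lambda_\rho(T_j)$ (off-periods of mean $\lambda^{-1}$, excursions of mean $S(T_j)$, carrying $M^{l-1}(T_j)$ new infections at depth-$(l-1)$ leaves on average, with the convention $M^{l-1}(T_j)=0$ unless $T_j$ has depth $l-1$), the long-run rate of new infections at depth-$l$ leaves of $T$ equals $\sum_{j=1}^D \tfrac{M^{l-1}(T_j)}{\lambda^{-1}+S(T_j)}$. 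Computed instead from the excursion structure of $\cp^\otimes(T)$, the same rate is $\tfrac{\mu^\otimes}{(\lambda D)^{-1}+S^\otimes}$. Equating, multiplying through by $1+\lambda D S^\otimes$, and using \eqref{eq:Sotimes product} to rewrite $1+\lambda D S^\otimes=\prod_{i=1}^D(1+\lambda S(T_i))$ yields $\lambda D\,\mu^\otimes=\lambda\sum_{j=1}^D M^{l-1}(T_j)\prod_{i\ne j}(1+\lambda S(T_i))$; combined with $M^l(T)\le\lambda D\mu^\otimes$ this is exactly \eqref{eq:Mrecursion atypical}.

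The transcription of Proposition~\ref{prop:Srecursion atypical}'s argument, the Wald step, and the stationarity bookkeeping are all routine. The one genuinely new point — and the step I expect to require the most care — is the coupling inequality $M^l(T)\le\E_{\textsc{cp}}\textbf{M}^{l,\sharp}$: unlike the excursion-time comparison $S(T)\le S^\sharp(T)$, monotonicity of the re-infection count is not a formal consequence of pointwise domination of configurations, since a longer contiguous infected interval at a leaf can contain fewer $0\to1$ transitions. I would therefore establish it by tracking, under the graphical-representation coupling, the on-intervals of each vertex from $\rho$ down to the leaves and checking that suppressing $\rho$'s recoveries only refines this structure at every level.
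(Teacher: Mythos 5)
Your proposal follows essentially the same route as the paper's proof: pass to the modified process $(X^\sharp_t)$ of Proposition~\ref{prop:Srecursion atypical}, decompose its excursion into a geometric number of product-chain excursions to get $M^\sharp(T)=\lambda D M^\otimes(T)$, and compute $M^\otimes(T)$ by evaluating the long-run rate of leaf infections of $\cp^\otimes(T)$ in two ways and invoking \eqref{eq:Sotimes product}. The coupling inequality $M^l(T)\le M^\sharp(T)$ that you rightly flag is exactly what the paper dismisses as a ``standard coupling,'' and your plan to close it by tracking on-intervals is sound: since recoveries at a leaf are shared by both processes, each inter-recovery interval of a leaf carries at most one $0\to1$ transition, and a transition in $(X_t)$ forces an (earlier) transition of $(X^\sharp_t)$ in the same interval, giving the pathwise bound.
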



\begin{proof}

	Recall the processes $\cp^\otimes(T;\one_{v_i})$ and $(X^\sharp_t)$ defined in the proof of Proposition \ref{prop:Srecursion atypical}. Let $\textbf{S}^\sharp(T)$ and $\textbf{S}^\otimes_i(T)$ be the excursion time of $(X^\sharp_t)$ and $(X^\otimes_{i,t}) \sim \cp^\otimes(T;\one_{v_i})$, respectively, and set $\mathcal{L} = \{v\in T: \textnormal{dist}(v,\rho) = l \}$.  We define the total infections at depth-$l$ leaves $\textbf{M}^\sharp(T)$ (resp., $\textbf{M}_i^\otimes(T)$) of $(X^\sharp_t)$ (resp., $(X^\otimes_{i,t})$) analogously as Definition \ref{def:tit}.
	\begin{itemize}
		\item For $v\in \mathcal{L}$, let
		\begin{equation*}
		\begin{split}
		\textbf{M}^\sharp_v(H)&:= \left|\left\{ s \in [0,\textbf{S}^\sharp(H)]:\; X_s^\sharp(v)=1 \textnormal{ and } X_{s-}^\sharp (v)=0 \right\} \right|;\\
		\textbf{M}^\otimes_{i,v}(H)&:=\left|\left\{ s \in [0,\textbf{S}^\otimes_i(H)]:\; X_{i,s}^\otimes(v)=1 \textnormal{ and } X_{i,s-}^\otimes (v)=0 \right\} \right| .
		\end{split}
		\end{equation*}
		
		\item We further set the total infections at leaves to be
		\begin{equation*}
		\begin{split}
		\textbf{M}^\sharp(T) &:= \sum_{v\in\mathcal{L}} \textbf{M}_v^\sharp(T) \quad \textnormal{and} \quad M^\sharp(T)=\E_{\textsc{cp}} \textbf{M}^\sharp(T);	 \\
		\textbf{M}^\otimes_i(T) &:= \sum_{v\in\mathcal{L}} \textbf{M}^\otimes_{i,v}(T) \quad \textnormal{and} \quad M^\otimes_i(T) =\E_{\textsc{cp}} \textbf{M}^\otimes_i(T).
		\end{split}
		\end{equation*}
	\end{itemize}
	
	By a standard coupling between $(X^\sharp_t)$ and $(X_t)\sim\cp^\lambda_{\rho^+}(T;\one_\rho)$ based on their graphical representations, we have $M^\sharp(T) \geq M^l(T)$.
	
	Moreover, in the perspectives of (\ref{eq:Sbound geometric trials}), the number of excursions of $\{(X_{i,t}^\otimes) \}$ included in a single excursion of $(X^\sharp_t)$ is the same as a geometric random variable with success probability $(1+\lambda D)^{-1}$. Therefore, $\lambda D$ excursions of $\{(X_{i,t}^\otimes) \}$ happen in expectation during an excursion of $(X^\sharp_t)$. Since the initial condition $\one_{v_i}$ of the product chain is selected uniformly at random at each excursion, we see that
	\begin{equation}\label{eq:Msharp and Motimes}
	M^\sharp(T) = \lambda D\times \frac{\sum_{i=1}^D M_i^\otimes(T)}{D} = \lambda D M^\otimes(T),
	\end{equation}
	where we define $M^\otimes(T)$ as the arithmetic mean of $\{M^\otimes_i(T)\}$.
	
	Now we attempt to control $M^\otimes(T)$ in terms of $\{M(T_i) \}$. To this end, let $(X^\otimes_t) \sim \cp^\otimes(T;\zero)$, and we observe that due to the same argument as (\ref{eq:MvsMprime pivot aux}) and (\ref{eq:MvsMprime pivot quant}),
	\begin{equation}\label{eq:Motimes integral}
	\begin{split}
	\lim_{t_0\rightarrow \infty} \frac{1}{t_0} \E_{\textsc{cp}} &
	\left[\,
	\left|\left\{ s \in [0,t_0]:\; X_{s}^\otimes(v)=1 \textnormal{ and } X_{s-}^\otimes (v)=0 \right\} \right| \,
	\right]\\
	&=
	\frac{M^\otimes(T)}{(\lambda D)^{-1} + S^\otimes(T)}.
	\end{split}
	\end{equation}
	On the other hand, let $\mathcal{L}_i = \{v\in T_i: \textnormal{dist}(v,v_i) = l-1 \}$ for each $i\in [D]$ and $X^{(i)}_t \sim \cp^\lambda_\rho (T_i; \zero)$ be the restriction of $(X_t^\otimes)$ on $T_i$. Note that
	\begin{equation*}
	\begin{split}
	&\sum_{v\in\mathcal{L}}\left|\left\{ s \in [0,t_0]:\; X_{s}^\otimes(v)=1 \textnormal{ and } X_{s-}^\otimes (v)=0 \right\} \right| \\
	&= \sum_{i=1}^D\sum_{v\in \mathcal{L}_i}
	\left|\left\{ s \in [0,t_0]:\; X_{s}^{(i)}(v)=1 \textnormal{ and } X_{s-}^{(i)} (v)=0 \right\} \right|.
	\end{split}
	\end{equation*}
	Thus, by the same reasoning as above, the l.h.s. of (\ref{eq:Motimes integral}) is also equal to
	\begin{equation*}
	\sum_{i=1}^D \frac{M^{l-1}(T_i)}{\lambda^{-1} + S(T_i)}.
	\end{equation*}
	Therefore, by (\ref{eq:Sotimes product}), (\ref{eq:Motimes integral}) implies that
	\begin{equation*}
	M^\otimes(T) = \frac{1}{D} \sum_{i=1}^D M^{l-1}(T_i) \prod_{\substack{1\,\leq\, j\,\leq\, D\\j\neq i}} (1+\lambda S(T_j)).
	\end{equation*}
	Finally, combining this with (\ref{eq:Msharp and Motimes}) and $M^\sharp(T) \geq M^l(T)$, we deduce the conclusion.
\end{proof}

\subsection{The second recursive inequality}\label{subsec:tit:deterministic}

The goal of this section is to obtain the recursive inequality on $M^l(T)$ that is in parallel to (\ref{eq:recursion eq typical}). In Definition \ref{def:tit}, $\textbf{M}^l(T)$ and $M^l(T) $ were defined with respect to the root-added contact process $\cp^\lambda_{\rho^+} (T;\one_\rho).$ We begin with defining  $\bar{\textbf{M}}^l(T)$ and $ \bar{M}^l (T)$ for the (usual) contact process $(\bar{X}_t)\sim \cp^\lambda(T;\one_\rho)$ similarly as Definition \ref{def:tit}.

\begin{itemize}
	\item Let $(T,\rho)$ be a rooted tree with depth $l$ and $\mathcal{L}:= \{v\in T: \textnormal{dist}(\rho,v)=l \}$ be the collection of depth-$l$ leaves. For each $v\in \mathcal{L}$, we define $\bar{\textbf{M}}^l_v(T)$ to be the \textit{total infections at} $v$, that is,
	\begin{equation*}
	\bar{\textbf{M}}^l_v(T):= \left| \left\{ s\in [0,\textbf{R}(T)]: \; \bar{X}_s(v)=1 \textnormal{ and } \bar{X}_{s-}(v)=0 \right\}\right|,
	\end{equation*}
	where $\textbf{R}(T)$ is the survival time of $(\bar{X}_t)$.

	\item $\bar{\textbf{M}}^l(T)$ is the \textit{total infections at depth-$l$ leaves} with respect to $(\bar{X}_t)\sim \cp^\lambda(T;\one_\rho)$, given by 
	\begin{equation*}
	\bar{\textbf{M}}^l(T) := 
	\sum_{v\in \mathcal{L}} \bar{\textbf{M}}^l_v(T).
	\end{equation*}
	As before, we set $\bar{\textbf{M}}^{l'}(T)\equiv 0$ for $l'>l.$
	
	\item $\bar{M}^l(T) := \E_{\textsc{cp}} \bar{\textbf{M}}^l(T)$ is the \textit{expected total infections at depth-$l$ leaves} with respect to $\cp^\lambda(T;\one_\rho)$. We also set $\bar{M}^{l'}(T)=0$ for $l'>l$.
\end{itemize}

For  $(T,\rho)$ as above, let $D=\deg(\rho)$ and $v_1,\ldots,v_D$ be the children of $\rho$. We denote the subtrees at each child by $T_1,\ldots, T_D$ as before. Here, each subtree has depth at most $l-1$. We begin with obtaining the following recursive inequality on $\bar{M}^l(T)$ which is parallel to Proposition \ref{prop:recursion of R} on $R(T)$.

\begin{corollary}\label{cor:recursion Mprime}
	Under the above setting, let $R(\cdot)=\E_{\textsc{cp}} \textbf{R}(\cdot)$ and assume that $\lambda^2 \sum_{i=1}^D R(T_i) < 1$. Then, we have
	\begin{equation}\label{eq:Mprime recursion typical}
	\bar{M}^l(T) \leq \frac{\lambda \sum_{i=1}^D \bar{M}^{l-1}(T_i)}{1-\lambda^2 \sum_{i=1}^D R(T_i)}.
	\end{equation}
\end{corollary}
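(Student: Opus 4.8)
The plan is to mimic the proof of Proposition~\ref{prop:recursion of R} (the recursion on $R(T)$), tracking infections at the depth-$l$ leaves instead of just the survival time. The key structural fact is that the contact process $\cp^\lambda(T;\one_\rho)$, viewed from the root, proceeds in ``rounds'': the root is infected, it survives for an $\Exp(1)$ time during which it fires infections at rate $\lambda$ to each child, and each time a child $v_i$ is infected we may think of starting a fresh copy of $\cp^\lambda(T_i;\one_{v_i})$, which in turn may re-infect $\rho$ and thereby start a new round. This decomposition is exactly the one used in Proposition~\ref{prop:recursion of R}, and I would reuse it verbatim, now bookkeeping the number of leaf-infections generated rather than the elapsed time.

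First I would bound the expected number of infections at depth-$l$ leaves produced \emph{during the first round}. In the first round the root infects each child $v_i$ a number of times that is stochastically dominated by (in fact has expectation exactly) $\lambda$ before the root's $\Exp(1)$ clock rings, independently across children; each such infection of $v_i$ initiates a process on $T_i$ which is (by the graphical-representation coupling, as in Definition~\ref{def:excursion and survival time}) dominated by $\cp^\lambda(T_i;\one_{v_i})$, hence contributes at most $\bar M^{l-1}(T_i)$ expected infections at the depth-$(l-1)$ leaves of $T_i$ — which are precisely the depth-$l$ leaves of $T$ lying in $T_i$. Summing, the expected leaf-infection count accrued in the first round is at most $\lambda\sum_{i=1}^D \bar M^{l-1}(T_i)$. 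Next, as in Proposition~\ref{prop:recursion of R}, the expected number of re-infections of $\rho$ sent back from the children during the first round is at most $\lambda^2\sum_{i=1}^D R(T_i)$ (each child is infected $\le \lambda$ times in expectation, lives for time $\le R(T_i)$ in expectation, and fires at $\rho$ at rate $\lambda$). Each such re-infection restarts the whole process, and by the strong Markov property / linearity of expectation the expected total leaf-infection count satisfies the self-referential bound
\begin{equation*}
\bar M^l(T) \leq \lambda\sum_{i=1}^D \bar M^{l-1}(T_i) + \Big(\lambda^2\sum_{i=1}^D R(T_i)\Big)\,\bar M^l(T),
\end{equation*}
and solving for $\bar M^l(T)$ under the hypothesis $\lambda^2\sum_{i=1}^D R(T_i)<1$ gives exactly \eqref{eq:Mprime recursion typical}.

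The main obstacle, and the point that needs care, is justifying that the leaf-infections produced by the (possibly many, random number of) excursions on $T_i$ triggered within a single round are all dominated by independent copies contributing $\bar M^{l-1}(T_i)$ each, and that this dominance is compatible with the simultaneous domination used for the root re-infection count — i.e., one cannot ``double-count'' a child's activity once as producing leaf hits and once as producing returns to $\rho$ without being careful that these are read off the \emph{same} graphical representation. The clean way to handle this is to set up the coupling on the graphical representation (Section~\ref{subsubsec:cpprelim}): restrict attention to the Poisson clocks inside $T_i$ plus the $\vec{v_i\rho}$ arrows, note that under this restriction the process on $T_i$ started from each infection of $v_i$ is dominated by $\cp^\lambda(T_i;\one_{v_i})$ \emph{with $v_i$'s earlier infections only helping}, and take expectations; monotonicity of the contact process then delivers both bounds simultaneously. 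With that coupling in place the rest is the elementary algebra above. I would also remark that the inequality is the $M$-analogue of \eqref{eq:recursion eq typical} once combined with a suitable $\bar M$-versus-$M$ comparison (parallel to Proposition~\ref{prop:S vs R}), which presumably appears next in the paper.
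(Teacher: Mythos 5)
Your proposal is correct and follows essentially the same route as the paper: the paper's proof also reuses the first-round decomposition from Proposition~\ref{prop:recursion of R}, bounds the first-round leaf infections by $\lambda\sum_{i=1}^D \bar{M}^{l-1}(T_i)$ and the re-infections of $\rho$ by $\lambda^2\sum_{i=1}^D R(T_i)$, and solves the same self-referential inequality. Your extra discussion of the graphical-representation coupling only makes explicit a point the paper leaves implicit; it does not change the argument.
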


\begin{proof}
	When running $\cp^\lambda(T;\one_\rho)$, recall the notion of the \textit{first  and second round} discussed in the proof of Proposition \ref{prop:recursion of R}. In the first round of infection, the expected total infections at leaves on $T$ is bounded by
	$$\lambda \sum_{i=1}^D \bar{M}^{l-1}(T_i) ,$$
	due to the same reason as before. Also, the expected number of infections sent from the children $\{v_i\}$ to $\rho$ in the first round is at most
	$$\lambda^2 \sum_{i=1}^D R(T_i). $$
	Thus, we get
	\begin{equation*}
	\bar{M}^l(T) \leq {\lambda \sum_{i=1}^D \bar{M}^{l-1}(T_i)} +\left\{\lambda^2 \sum_{i=1}^D R(T_i)\right\}\bar{M}^l(T),
	\end{equation*}
	which leads to our conclusion.
\end{proof}

We would like to translate this result to a recursion on $M^l(T)$, and hence we discuss  the relation between $M^l(T)$ and $\bar{M}^l(T)$, which is analogous to Proposition \ref{prop:S vs R}. Recall the definition of $S(T)$ from Definition \ref{def:excursion and survival time}.

\begin{corollary}\label{cor:MvsMprime}
	On a finite rooted tree $(T,\rho)$ of depth $l$, let $M^l(T)$ and $\bar{M}^l(T)$ be defined as above and let $S(T)$ denote the expected excursion time on $T$. Then, we have
	\begin{equation}\label{eq:MvsMprime ineq}
	M^l(T) \leq (1+\lambda S(T)) \bar{M}^l(T).
	\end{equation}	
\end{corollary}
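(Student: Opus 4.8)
The plan is to compare the root-added contact process $\cp^\lambda_{\rho^+}(T;\one_\rho)$ with the ordinary contact process $\cp^\lambda(T;\one_\rho)$ through the graphical representation, in exactly the same spirit as the proof of Proposition \ref{prop:S vs R}. The point is that the root-added process is the same as the ordinary process except that $\rho$ receives an extra infection at every ring of an independent rate-$\lambda$ Poisson clock $N_{\rho^+}$; each such ring initiates a fresh ordinary excursion rooted at $\rho$, and all the depth-$l$ leaf infections counted by $\textbf{M}^l(T)$ during a single root-added excursion can be partitioned according to which of these restarts ``created'' them.

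\textbf{Main steps.} First I would fix the graphical representation on $T^+\times\mathbb R_+$ and run $(X_t)\sim\cp^\lambda_{\rho^+}(T;\one_\rho)$ up to its excursion time $\textbf{S}(T)$. Let $0=s_0<s_1<s_2<\cdots$ be the rings of $N_{\rho^+}$ before $\textbf{S}(T)$ that actually land at a moment when $\rho$ is healthy (so they genuinely restart the root infection); call these the \emph{renewal times}. Between consecutive renewal times the root-added process evolves exactly as an ordinary contact process $\cp^\lambda(T;\one_\rho)$ started at the renewal time (the earlier infection has died out at $\rho$, and $\rho$ is freshly reinfected), run until it dies out or until the next renewal arrives. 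By a monotone coupling via the graphical representation, the total infections at the depth-$l$ leaves accumulated over one such inter-renewal block is stochastically dominated by $\bar{\textbf M}^l(T)$ for an independent copy of the ordinary process. Hence, writing $K$ for the number of renewal times in $[0,\textbf{S}(T)]$,
\begin{equation*}
\E_{\textsc{cp}}\textbf{M}^l(T) \leq \E_{\textsc{cp}}[K]\cdot \bar M^l(T).
\end{equation*}
It remains to show $\E_{\textsc{cp}}[K]\leq 1+\lambda S(T)$. This is the occupation-time bookkeeping already used in Proposition \ref{prop:S vs R}: during $[0,\textbf{S}(T)]$ the root $\rho$ is infected for a total (expected) Lebesgue time bounded in terms of $S(T)$, and the renewals after the first one arrive at rate $\lambda$ only while $\rho$ is healthy; combining $\E_{\textsc{cp}}\textbf{S}(T)=S(T)$ with the identity $\lim_{t_0\to\infty}\E_{\textsc{cp}}|A_{t_0}|/t_0 = \lambda S(T)/(1+\lambda S(T))$ from (\ref{eq:occupation time by S}) shows that the expected number of renewals per excursion is $1+\lambda S(T)$ — the ``$1$'' being the initial infection at $\rho$ and the ``$\lambda S(T)$'' the expected number of rings of $N_{\rho^+}$ that find $\rho$ healthy during one excursion.

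\textbf{Alternative route.} If making the block-decomposition argument fully rigorous is awkward because successive blocks are not literally independent (the clock configurations are shared), I would instead run the argument at the level of stationary distributions, paralleling (\ref{eq:stationary dist1})–(\ref{eq:Sotimes product}). Consider the stationary root-added chain $\cp^\lambda_{\rho^+}(T)$ and compute the long-run rate of infections at a fixed leaf $v\in\mathcal L$ in two ways: once directly, and once by noting that this rate equals (rate of excursion starts)$\times$(expected leaf-$v$ infections per excursion) $=\frac{1}{\lambda^{-1}+S(T)}M^l_v(T)$, while the same rate for the ordinary chain restarted at $\rho$ at rate $\lambda$ is at most $\lambda\bar M^l_v(T)$ by the graphical-representation coupling; summing over $v\in\mathcal L$ and rearranging yields $M^l(T)\le (1+\lambda S(T))\bar M^l(T)$.

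\textbf{Expected main obstacle.} The delicate point is justifying the stochastic domination of the per-block leaf-infection count by an \emph{independent} copy of $\bar{\textbf M}^l(T)$ when the blocks are carved out of one shared graphical representation: one must argue that extending the (graphical) clocks into the future of a block, conditioned on the block having ended, gives back the clock law of a fresh process, which is the standard strong-Markov/restart property for the contact process but needs to be invoked carefully so that the expectations — rather than the random variables — add up correctly. Everything else (the occupation-time identity, the monotone coupling inequality $\bar{\textbf M}^l$ vs.\ a sub-block) is routine given Section \ref{subsubsec:cpprelim} and the proof of Proposition \ref{prop:S vs R}.
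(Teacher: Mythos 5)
Your ``alternative route'' is essentially the paper's own proof: the paper computes the long-run rate of leaf infections of the root-added process in two ways --- via the excursion/renewal structure it equals $M^l(T)/(\lambda^{-1}+S(T))$, and via the graphical-representation coupling with ordinary contact processes restarted at $\rho$ at each ring of the rate-$\lambda$ clock it is at most $\lambda\bar M^l(T)$ --- and then rearranges, exactly as you sketch. So that part is correct and matches the paper.

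Your primary (block-decomposition) route, however, has a genuine flaw as written. At a renewal time $s_i$ (a ring of $N_{\rho^+}$ landing while $\rho$ is healthy but before $\textbf{S}(T)$), the rest of the tree is \emph{not} healthy --- otherwise the excursion would already have ended --- so on the ensuing block the process evolves as an ordinary contact process started from $\one_\rho$ \emph{together with} the leftover infections on $T\setminus\{\rho\}$, not from $\one_\rho$ alone. Hence the per-block leaf-infection count is not stochastically dominated by $\bar{\textbf{M}}^l(T)$: leaf flips generated during block $i$ by infection paths descending from earlier blocks are unaccounted for. The repair is the attribution idea you mention in your opening paragraph rather than the time-block partition: by additivity of the contact process in the graphical representation, $X_t$ is exactly the union of ordinary processes started from $\one_\rho$ at time $0$ and at \emph{every} ring of the clock (not only those finding $\rho$ healthy), so every leaf flip in $[0,\textbf{S}(T)]$ can be charged to one of these restarts; a Wald/optional-stopping argument (restart times are stopping times, post-restart clocks are fresh) then gives $M^l(T)\le \bigl(1+\lambda S(T)\bigr)\bar M^l(T)$, since the expected number of clock rings in $[0,\textbf{S}(T)]$ is $\lambda S(T)$. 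Your occupation-time bound on the number of renewals is fine; it is the block-wise domination, not the independence bookkeeping you flagged, that needed fixing --- and the paper sidesteps both by working with long-run rates as in Proposition \ref{prop:S vs R}.
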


\begin{proof}
	We rely on a similar idea as in Proposition \ref{prop:S vs R}. Let  $\mathcal{L} = \{v \in T: \textnormal{dist}(v,\rho) = l \}$ and $(X_t) \sim \cp^\lambda_{\rho^+}(T;\one_\rho)$ be the root-added contact process. For  $v\in\mathcal{L}$, consider the following quantity:
	\begin{equation}\label{eq:MvsMprime pivot aux}
	\lim_{t_0\rightarrow \infty} \frac{1}{t_0} \sum_{v\in\mathcal{L}} \E_{\textsc{cp}} \left[\, \left|\left\{s\in [0,t_0]:\,  X_s(v)=1 \textnormal{ and } X_{s-}(v)=0 \right\}\right| \,\right].
	\end{equation}
	First, observe that the above is the same as
	\begin{equation}\label{eq:MvsMprime pivot quant}
	\lim_{t_0\rightarrow \infty} \frac{ \textnormal{the number of excursions in } (X_t)\textnormal{ during }[0,t_0] }{ t_0} \times M^l(T) = \frac{M^l(T)}{\lambda^{-1} +S(T)},
	\end{equation}
	since the sum of the expected excursion time and the expected waiting time until the next excursion is $S(T) + \lambda^{-1}$. On the other hand,  since the rate-$\lambda$ Poisson process rings $\lambda t_0$ times in expectation during $[0,t_0]$,  the l.h.s.$\;$of (\ref{eq:MvsMprime pivot quant}) is at most
	$\lambda \bar{M}^l(T), $
	by  following the discussion in Proposition \ref{prop:S vs R}. 
\end{proof}

Let us combine (\ref{eq:Mprime recursion typical}) and (\ref{eq:MvsMprime ineq}) to deduce a recursion on $M^l(T)$ as follows.
\begin{equation*}
M^l(T) \leq \frac{1+\lambda S(T)}{1-\lambda^2 \sum_{i=1}^D S(T_i) } \lambda \sum_{i=1}^D M^{l-1}(T_i). 
\end{equation*}
By plugging in (\ref{eq:recursion eq typical}) in the r.h.s., we obtain the first recursion for $M^l(T)$, namely,
\begin{equation}\label{eq:Mrecursion typical}
M^l(T) \leq \frac{\lambda\sum_{i=1}^D M^{l-1}(T_i)}{1-\lambda - 2\lambda^2 \sum_{i=1}^D S(T_i)},
\end{equation}
which is valid given that $\lambda + 2\lambda^2 \sum_{i=1}^D S(T_i)<1$.

\subsection{Recursive tail estimate for Galton-Watson trees}\label{subsec:tit:gw}

The goal of this subsection is to establish Theorem \ref{thm:M tail bd}. As one may expect, we go through similar steps as Theorem \ref{thm:tail bound of S} with some appropriate adjustments. 

Let $\xi$ be a positive, integer-valued random variable with mean $d=\E \xi$, and let $(\mathcal{T}^l,\rho) \sim \gw(\xi)^l$. Further, we denote  the children of the root by $v_1,\ldots,v_D$ where $D=\deg(\rho)$.  The subtree of $\mathcal{T}^l$ rooted at $v_i$ is denoted by $T_i$. Set $\lambda = (1-\ep)d^{-1}$ as in the assumption.

We establish the theorem by an induction on $l$. The initial case $l=0$ is obvious, since $\mathcal{T}^0$ is a single vertex and $M^0(\mathcal{T}^0) = 1$ by its definition (Definition \ref{def:tit}). From now on, suppose that the conclusion holds for $l$, and we attempt to prove it for $l+1$. 	As before, we split the inequality into three cases, namely,
\begin{enumerate}
	\item [1.] (small) $2/\ep \leq t\leq d^{\frac{1}{10}}$;
	
	\item[2.] (intermediate) $d^{\frac{1}{10}}\leq t\leq \exp(\frac{1}{2}c_1\sqrt{d})$;
	
	\item[3.] (large) $\exp(\frac{1}{2}c_1\sqrt{d})\leq t$.
\end{enumerate}

\noindent (The reason for the choice of $d^{\frac{1}{10}}$ is explained in Remark \ref{rmk:small intermediate regime choice}.) For convenience, we define
\begin{equation}\label{eq:tildeMdef}
\widetilde{M}(\mathcal{T}^{l+1}) = \left(1-\frac{\ep}{10} \right)^{l+1} M^{l+1}(\mathcal{T}^{l+1}), \quad \textnormal{and} \quad
\widetilde{M}(T_i) = \left(1-\frac{\ep}{10} \right)^l M^l(T_i).
\end{equation}
\subsubsection{Proof of Theorem \ref{thm:M tail bd} for small $t$}\label{subsubsec:M1} 

Suppose that
\begin{equation*}
\sum_{i=1}^D \widetilde{M}(T_i) \leq (2+\ep)d, \quad \textnormal{and} \quad \sum_{i=1}^D S(T_i) \leq \frac{2d}{\ep} \left(1+\frac{\ep}{3} \right).
\end{equation*}
Then, the second recursive inequality (\ref{eq:Mrecursion typical}) tells us that
\begin{equation*}
\widetilde{M}(\mathcal{T}^{l+1}) \leq \left(1-\frac{\ep}{10} \right)^{-1}  
\frac{(1-\ep)(2+\ep) }{1-\frac{1}{d}-\frac{4}{\ep d} (1+\frac{\ep}{3}) } \leq 2,
\end{equation*}
where the last inequality holds for all $d\geq d_0'(\ep)$ for appropriate $d_0'(\ep)$. Therefore, for $t\geq 2$, we have
\begin{equation}\label{eq:Mrecursion small split}
\begin{split}
\Pgw \left(\widetilde{M}(\mathcal{T}^{l+1}) \geq t \right)
&\leq
\P\left(D\geq \left(1+\frac{\ep}{6}\right)d\right)
+
\Pgw \left(\sum_{i=1}^{(1+\frac{\ep}{6})d } \widetilde{M} (T_i) \geq (2+\ep) d  \right)\\
&+
\Pgw \left(\sum_{i=1}^{(1+\frac{\ep}{6})d} S(T_i) \geq \frac{2d}{\ep }\left(1+\frac{\ep}{3} \right) \right),
\end{split}
\end{equation}
where $T_i, \;i\in\mathbb{N}$ are i.i.d. $\gw(\xi)^l$. The first term in the r.h.s. is estimated by the concentration condition (\ref{eq:concentration condition}), namely,
\begin{equation}\label{eq:Dbd for M small}
\P\left(D\geq \left(1+\frac{\ep}{6}\right)d\right)
\leq
\exp(-c_{\ep'} d)
\leq
\frac{1}{4} d^{-\frac{1}{10}\sqrt{d}} \left(\frac{1}{10}\log d\right)^{-2},
\end{equation}
for $\ep'=\frac{\ep}{6}$, where the last inequality holds if $d\geq d_0'(\ep)$ for appropriate constant $d_0'(\ep)>0$. Then, Lemma \ref{lem:sum Si small t} controls the last term in the r.h.s.$\;$of (\ref{eq:Mrecursion small split}), implying the bound (\ref{eq:tail condition for small t}). For the second term, we claim that there exists $d_1(\ep)>0$ such that if $d\geq d_1(\ep)$, then 
\begin{equation}\label{eq:Msum small}
\Pgw\left(\sum_{i=1}^{(1+\frac{\ep}{6})d } \widetilde{M} (T_i) \geq (2+\ep) d\right) \leq \frac{1}{4} d^{-\frac{1}{10}\sqrt{d}} \left(\frac{1}{10}\log d\right)^{-2}.
\end{equation}
Indeed, almost the same argument from the proof of Lemma \ref{lem:sum Si small t} can be applied to deduce (\ref{eq:Msum small}). The only two changes we need are to set
\begin{equation*}
\widehat{M}(T_i) = \widetilde{M}(T_i) - 2,
\end{equation*}
(in the lemma, it was $Z_i' = Z_i-\frac{2}{\ep}.$) and to split $\widehat{M}$ based on $\frac{\ep}{10}$, not $\frac{1}{10}$. Also, Lemma \ref{lem:sum Si small t} holds if $d\geq d_1$ for an absolute constant $d_1$, but here $d_1$ depends on $\ep$. We omit the remaining details.

Finally, we obtain the conclusion (\ref{eq:Mtail:in thm}) for $l+1$ and $2\leq t\leq d^{\frac{1}{10}}$ by combining (\ref{eq:Dbd for M small}), Lemma \ref{lem:sum Si small t} and (\ref{eq:Msum small}).

\subsubsection{Proof of Theorem \ref{thm:M tail bd} for intermediate $t$}\label{subsubsec:M2}

For $t\geq d^{\frac{1}{10}}$, we rely on the second recursive inequality (\ref{eq:Mrecursion atypical}). However, one issue here is that the  the quantity in the r.h.s. of (\ref{eq:Mrecursion atypical}) is no longer a single product of i.i.d. random variables as (\ref{eq:recursion eq atypical}), (\ref{eq:tail bd of S intermed t 0}). To overcome this difficulty, we define
\begin{equation}\label{eq:Widef}
W_i := \max\left\{ \widetilde{M}(T_i), \; S(T_i) \right\}, \quad \textnormal{for each } i\in \mathbb{N},
\end{equation}
for $\widetilde{M}(T_i)$ as in (\ref{eq:tildeMdef}).
Then, from a little bit of algebra we see that
\begin{equation*}
\begin{split}
\lambda \sum_{i=1}^D \widetilde{M}(T_i) \prod_{\substack{1\,\leq\, j\, \leq\, D\\ j\neq i}} (1+\lambda S(T_j))
\;&\leq\;
\sum_{i=1}^D \lambda W_i \prod_{\substack{1\,\leq\, j\, \leq\, D\\ j\neq i}} (1+\lambda W_j)\\
&\leq
\; \prod_{i=1}^D (1+2\lambda W_i).
\end{split}
\end{equation*}
Based on this observation, we attempt to control
\begin{equation}\label{eq:Mrecursion intermed split}
\begin{split}
\Pgw \left(\widetilde{M}(\mathcal{T}^{l+1}) \geq t \right)
&\leq
\Pgw\left( \sum_{i=1}^D \log(1+2\lambda W_i ) \;\geq
\; \log\left\{\left(1-\frac{\ep}{10} \right)t\right\}  \right)\\
&\leq
\Pgw\left( \sum_{i=1}^D \log(1+2\lambda W_i ) \;\geq
\; \log\left(\frac{t}{2} \right)  \right).
\end{split}
\end{equation}

We are also aware of the tail estimate on $W_i$ by the induction hypothesis and Theorem \ref{thm:tail bound of S}, namely,
\begin{equation}\label{eq:Witail}
\P(W_i \geq s) \leq \P\left(\widetilde{M}(T_i) \geq s\right) +
\P(S(T_i) \geq s) \leq 2s^{-\sqrt{d}} (\log s)^{-2},\quad\textnormal{for all }s\geq \frac{2}{\ep}.
\end{equation}
This falls into the assumption of Lemma \ref{lem:recursion principle atypical}. Therefore, we deduce the tail estimate on $\widetilde{M}(\mathcal{T}^{l+1})$ for intermediate $t$ by
\begin{equation}\label{eq:Mrecursion intermed computation}
\begin{split}
\Pgw \left(\widetilde{M}(\mathcal{T}^{l+1})  \geq t\right)
&\leq
\P(D\geq 2d)
+
\Pgw\left( \sum_{i=1}^{2d} \log(1+2\lambda W_i ) \;\geq
\; \log\left(\frac{t}{2} \right)  \right)\\
&\leq
\exp(-c_1 d) + \frac{1}{2} t^{-\sqrt{d}} (\log t)^{-2}
\leq t^{-\sqrt{d}} (\log t)^{-2},
\end{split}
\end{equation}
where we used the concentration condition (\ref{eq:concentration condition}) to bound the tail of $D$, and the last inequality is true if $d\geq d_0'(\ep)$ for appropriate constant $d_0'(\ep)>0$.

\subsubsection{Proof of Theorem \ref{thm:M tail bd} for large $t$}\label{subsubsec:M3}

We continue to rely on $W_i$ defined from (\ref{eq:Widef}). Set
$$\Delta_{d,t} = \frac{4\sqrt{d}}{c_1}\log t.$$
For large $t$, we modify (\ref{eq:Mrecursion intermed split}) as in 
(\ref{eq:tail bd of S large split}), which is
\begin{equation}\label{eq:Mrecursion large split}
\begin{split}
\Pgw\left(\widetilde{M}(\mathcal{T}^{l+1})\geq t\right)
\leq&
\;\P \left(D \geq  \Delta_{d,t} \right) +
\Pgw \left(\sum_{i=1}^{2d} \log (1+2\lambda W_i) \geq \log\left(\frac{t}{2}\right)  \right)\\
&+
\sum_{2d\, \leq\, r\, \leq\, \Delta_{d,t} } \P(D \geq r) 
\times \Pgw \left(\sum_{i=1}^r \log(1+2\lambda W_i ) \geq \log \left(\frac{t}{2} \right) \right).
\end{split}
\end{equation}

\noindent Note that $t/2$ in the log in the r.h.s. is a lower bound of $(1-\frac{\ep}{10})t$ (cf. (\ref{eq:Mrecursion intermed computation})). The first term in the r.h.s. can be controlled by the concentration condition (\ref{eq:concentration condition}). By (\ref{eq:Witail}), we can use Lemma \ref{lem:recursion principle atypical} to obtain the estimate for the second, and the third term is bounded by the same reasoning as in Section \ref{subsubsection3}. This concludes the proof of Theorem \ref{thm:M tail bd}. \qed

\section{Short survival on random graphs: proof of Theorem \ref{thm:phasetransition:graph}}\label{sec:shortsurvival randomgraphs}

Let $\mu$ be a given degree distribution satisfying (\ref{eq:concentration condition}) for some positive constants $\mathfrak{c}= \{c_\delta \}_{\delta\in(0,1]}$.  Define $\widetilde{\mu}$ to be its size-biased distribution (\ref{eq:def:sizebiased}) and $d=\E_{D\sim \widetilde{\mu}} D.$  In this section, we are interested in the contact process on $G_n \sim\mathcal{G}(n,\mu)$, particularly on its short survival. Our goal is to establish the following theorem, which implies the lower bound of Theorem \ref{thm:phasetransition:graph}, that is,
$$\limsup_{k\rightarrow \infty } \;\limsup_{n\rightarrow \infty} \lambda_c^-\left(\mu_k \right) d_k \geq 1. $$

\begin{theorem}\label{thm:lowerbd of lamdac}
	Let $\ep\in(0,1)$ and $\mathfrak{c}=\{c_\delta \}_{\delta\in (0,1]}$ be a collection of positive constants. Then there exists $d_0(\ep, \mathfrak{c}) >0$ such that the following holds true: Let  $\mu$ be a probability measure on $\mathbb{N}$ whose sized-biased distribution $\widetilde{\mu}$ satisfies $d:=\E_{D\sim \widetilde{\mu}} D \geq d_0$ and the concentration condition (\ref{eq:concentration condition}) with $\mathfrak{c}$. Further, let $\lambda = (1-\ep) d^{-1}$ and $G_n\sim \mathcal{G}(n,\mu) $. Then, there exists an event $\mathcal{E}_n$ over graphs such that $\P(G_n \in \mathcal{E}_n) = 1- o(1)$ and 
	\begin{equation}\label{eq:R on G:in thm}
	\Pcp \left(\left.\textbf{R}(G_n) \leq n^2 \;\right|\, G_n\in\mathcal{E}_n \right) = 1-o(1),
	\end{equation}
	where  $\textbf{R}(G_n) $ is the survival time of $\cp^\lambda(G_n; \one_{G_n})$.
\end{theorem}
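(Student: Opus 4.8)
The plan is to pick a radius $l=\gamma\log_d n$ with $\gamma=\gamma(\mathfrak c)$ small enough that Lemma~\ref{lem:1cyc} applies (shrinking $\gamma$ further, if needed, so that the expected number of vertices $v$ with $\ge2$ cycles in $N(v,l)$ also tends to $0$), together with the budget $t_1=n^{2/\sqrt d}$, and to take $\mathcal E_n$ to be the event that: (i) every ball $N(v,l)$ in $G_n$ contains at most one cycle; (ii) $S(N(v,l))\le t_1$ for all $v$; (iii) $M^l(N(v,l))\le\bigl(1-\tfrac{\ep}{10}\bigr)^l t_1$ for all $v$; plus routine regularity (max-degree $\le C\log n$). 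I would first show $\P(G_n\in\mathcal E_n)=1-o(1)$: (i) is Lemma~\ref{lem:1cyc}; for (ii)--(iii), exploring $N(v,l)$ by breadth-first search shows its law is close to $\gw(\mu,\widetilde\mu)^l$ apart from the single cycle that (i) permits, so Theorems~\ref{thm:tail bound of S} and~\ref{thm:M tail bd} (with their unicyclic analogues from Appendix~\ref{sec:unicyclic}) give $\P_{\textsc{rg}}(S(N(v,l))\ge t_1)\le t_1^{-\sqrt d}(\log t_1)^{-2}=o(1/n)$ and the analogous bound for $M^l$; a first moment bound over the $n$ choices of $v$ then makes (ii) and (iii) hold with high probability. (The passage from Galton--Watson tails to neighbourhood tails, and the treatment of the unicyclic ball, is one place where the deferred work of Appendix~\ref{sec:unicyclic} is used.)

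Granting $\mathcal E_n$, I would reduce to a single-source estimate by self-duality. Writing $\textbf R(G_n,v)$ for the survival time of $\cp^\lambda(G_n;\one_v)$, the identity $\P_{\one_{G_n}}(X_t(v)=1)=\P_{\one_v}(X_t\neq\zero)$ integrated over $t$ gives $\E_{\textsc{cp}}\!\int_0^\infty\!\one[X_t(v)=1]\,\mathrm dt=\E_{\textsc{cp}}\textbf R(G_n,v)$, so from $\textbf R(G_n)=\int_0^\infty\one[X_t\neq\zero]\,\mathrm dt\le\sum_v\int_0^\infty\one[X_t(v)=1]\,\mathrm dt$ we obtain $\E_{\textsc{cp}}\textbf R(G_n)\le\sum_v\E_{\textsc{cp}}\textbf R(G_n,v)$. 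Hence, by Markov's inequality, it suffices to prove that on $\mathcal E_n$
\[
\E_{\textsc{cp}}\textbf R(G_n,v)\le 2t_1\qquad\text{for every }v,
\]
since this yields $\Pcp(\textbf R(G_n)\ge n^2\mid\mathcal E_n)\le 2n^{1+2/\sqrt d}/n^2=o(1)$ once $d>4$.

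The heart of the argument is this bound, which I would get by decomposing the process into local excursions. Read off the graphical representation: the infection started at $v$ first runs an excursion confined to $B_0=N(v,l)$; while confined there, the process on $B_0$ --- even allowing for infection re-entering $B_0$ through its boundary from the rest of $G_n$ --- is dominated by the root-added contact process on $B_0$, whose expected excursion time is exactly $S(B_0)\le t_1$ on $\mathcal E_n$ (this is precisely why Definition~\ref{def:rcp} builds in a permanently infected phantom parent). The excursion escapes $B_0$ only if the infection reaches distance $l$ from $v$, and by the same domination the probability of that is at most the expected number of infections reaching the depth-$l$ leaves of the root-added process on $B_0$, i.e.\ at most $M^l(B_0)\le\bigl(1-\tfrac\ep{10}\bigr)^l t_1=n^{-\kappa'}$, where $\kappa'=\gamma\log\!\bigl(\tfrac1{1-\ep/10}\bigr)/\log d-2/\sqrt d>0$ once $d\ge d_0(\ep,\mathfrak c)$ (using $\sqrt d/\log d\to\infty$). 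Upon escape, the infection is handed to a new centre $v_1$ at distance $l$ and one repeats with $B_1=N(v_1,l)$, and so on; so the number of excursions in the chain is stochastically dominated by a geometric variable with success probability $1-n^{-\kappa'}$, each excursion contributing expected time $\le t_1$, whence $\E_{\textsc{cp}}\textbf R(G_n,v)\le\sum_{i\ge1}t_1(n^{-\kappa'})^{i-1}\le 2t_1$.

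The step I expect to be the main obstacle is making this chain-of-excursions decomposition rigorous: the balls $B_0,B_1,\dots$ overlap, infection genuinely flows back and forth between them, and the successive excursions therefore neither are independent nor start from a single infected vertex. The resolution is to carry out the decomposition at the level of the graphical representation and to use monotonicity together with the root-added process as the dominating building block --- its permanently infected root accounting for \emph{all} infection entering a ball from the complement, so that $S$ controls the length of a link of the chain and $M^l$ controls its branching --- and to treat separately the unicyclic balls permitted by Lemma~\ref{lem:1cyc}; this is the technical work deferred to Appendix~\ref{sec:unicyclic}. A secondary point is the transfer of the Galton--Watson tail bounds of Theorems~\ref{thm:tail bound of S} and~\ref{thm:M tail bd} to the actual neighbourhoods $N(v,l)$ of $G_n$.
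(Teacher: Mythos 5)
Your overall architecture --- a \textsf{whp} structural event obtained from the tail bounds of Theorems~\ref{thm:tail bound of S} and~\ref{thm:M tail bd} plus a union bound, then a block-excursion decomposition whose links are timed by $S$ and whose branching is controlled by $M^l$, then Markov --- is the same as the paper's (Section~\ref{subsec:pf of thm2}), and your duality reduction $\E_{\textsc{cp}}\textbf{R}(G_n)\le\sum_v\E_{\textsc{cp}}\textbf{R}(G_n,v)$ is a fine substitute for the paper's $\textbf{R}(G_n)=\max_v\textbf{R}_v(G_n)$. But two of your key steps do not hold as written. First, the claim that the process on a ball $B_0=N(v,l)$, ``even allowing for infection re-entering $B_0$ through its boundary,'' is dominated by the root-added process on $B_0$ is false: the permanently infected parent of Definition~\ref{def:rcp} is attached at the \emph{centre} $v$, so it only accounts for re-infection of the root, whereas external infection enters through boundary vertices at distance $l$ from $v$, which the root-added process on $B_0$ does not see at all. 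The paper needs no such domination: in its decomposition (Definition~\ref{def:decomposition}) every infection at a bottom leaf of any active block spawns a fresh copy $\cp^\lambda(B_u;\one_u)$ driven by the graphical representation of the original process, the union of the copies dominates the true process, and the per-block quantities are the ordinary-process quantities $R(B_u)\le S(B_u)$ and $\bar{M}^{l_n}(B_u)\le M^{l_n}(B_u)$; subcriticality of the spawning (even the weak bound $\bar{M}^{l_n}\le(\log n)^{-1}$ suffices) gives an expected total of at most $2$ copies. Note also that the structure is a branching process rather than a single geometric chain (the expectation computation is the same), and that in the unicyclic case the blocks must be enlarged (the paper's $B_v$) so that the exit set coincides with the bottom leaves covered by the $M$-bounds; this is carried out in Section~\ref{subsec:pf of thm2}, not in Appendix~\ref{sec:unicyclic}, which only supplies the unicyclic tail estimates.

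Second, your transfer of the Galton--Watson tails to all $n$ neighbourhoods simultaneously cannot go through ``the law of $N(v,l)$ is close to $\gw(\mu,\widetilde{\mu})^l$'': any per-vertex coupling has error at least $\Theta(n^{-1})$, which is far larger than the $n^{-2}$ per-vertex failure probability your union bound requires, so events (ii)--(iii) are not established this way. The paper circumvents this by stochastic domination rather than approximation: \textsf{whp} over the degree sequence, every ball (indeed every block $B_v$) is simultaneously dominated by a mixture of $\gw$/$\egw$ processes built from the augmented distribution $\mu^\sharp_\ep$ (Definition~\ref{def:aug}, Lemma~\ref{lem:aug properties}, Lemma~\ref{lem:couplinglocalnbd}, Corollary~\ref{cor:couplingblock}); since $S$ and $M^l$ are monotone under graph inclusion, the tail bounds of Proposition~\ref{prop:egw tail} apply to the blocks with no additive coupling error, and only then does the union bound (Lemma~\ref{lem:RMblocks}) close the argument. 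Without this device, or an equivalent one, the structural event $\mathcal{E}_n$ in your proposal is not shown to be typical.
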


To establish the theorem, we would like to  study the structure of local neighborhoods $N(v,l)$ of $G_n\sim \mathcal{G}(n,\mu)$. Although a neighborhood selected uniformly at random converges weakly to a Galton-Watson tree, there are some neighborhoods who contain a cycle. In Section \ref{subsec:unic}, we extend the properties from Sections \ref{sec:subcritical trees} and \ref{sec:numberofhits} to  certain Galton-Watson-type  random graphs with a cycle that are relevant to the local neighborhoods of $G_n$.  Then in Section \ref{subsec:coupling local nbd}, we develop a coupling between the local neighborhoods and the aforementioned graphs in Section \ref{subsec:unic}, following the ideas from \cite{BNNASurvival}, Section 4.1. Finally, we conclude the proof of Theorem \ref{thm:lowerbd of lamdac} and establish the lower bound of Theorem \ref{thm:phasetransition:graph} in Section \ref{subsec:pf of thm2}, based on all the properties we obtained in the previous sections.

\subsection{Recursive analysis for unicyclic graphs}\label{subsec:unic}

 In this subsection, we do the final preliminary work before delving into the proof of Theorem \ref{thm:phasetransition:graph}. Although we need to consider the neighborhoods $N(v,l)$ inside  $G_n$ that contain a cycle, fortunately, it turns out that it is enough to look at the case with exactly one cycle (see the discussion in Section \ref{subsubsec:idea2-2} for a sketchy review, or Section \ref{subsec:coupling local nbd} for a detailed explanation). Therefore,  we are interested in the Galton-Watson type processes with a single cycle, particularly the ones which are introduced in this subsection. 

\begin{definition}[Galton-Watson-on-cycle process of type one]
	Let $\xi$ be a positive, integer-valued random variable, and let $m,l\geq 1$ be nonnegative integers. Then, $\mathcal{H}^{m,l} \sim \gwc^1(\xi, m)^l$, the \textit{Galton-Watson-on-cycle process of type one} (in short, $\gwc^1$-\textit{process}), is generated according to the following procedure:
	\begin{enumerate}
		\item [1.] Consider a length-$m$ cycle $C= v_1 v_2 \ldots v_m v_1$.
		
		\item[2.] At each $v_j$ for $j=1,\ldots,m$, attach $\mathcal{T}_j^l\sim$ i.i.d.$\; \gw(\xi)^l$ by setting $v_j$ as its root. 
	\end{enumerate}
	The resulting graph is called $\mathcal{H}^{m,l}$. We designate vertex $v_1$ as the root of $\mathcal{H}^{m,l}$ and denote $\rho = v_1$. Note that $m=1$ corresponds to the usual Galton-Watson trees.
\end{definition}

We are again interested in the excursion time and the total infections at leaves on the $\gwc^1$-processes. For concreteness, we present the following definitions.

\begin{definition}\label{def:SMunicyclic}
	Let $m,l\geq 1$ be integers, and $H$ be a graph that consists of a length-$m$ cycle $C=v_1v_2\ldots v_m v_1$ and  depth $\leq l$ trees $T_1,\ldots,T_m$ rooted at $v_1,\ldots,v_m$, respectively (recall the definition of \textit{tree depth} in (\ref{eq:depth def})).  
	\begin{enumerate}
		\item [1.] The \textit{root-added contact process} $\cp^\lambda_{v_1^+} (H;x)$ is the contact process on the graph $H\cup{v_1^+}$ with the permanently infected parent $\rho^+$ having a single connection with $v_1$, and with the initial condition $x\in \{0,1\}^H$. 
		
		\vspace{2mm}
		\item [2.] $\textbf{S}(H)$ (resp., $\textbf{R}(H)$) is the \textit{excursion} (resp. \textit{survival}) \textit{time}, which is the first time when $\cp^\lambda_{v_1^+}(H;\one_{v_1})$ (resp., $\cp^\lambda(H;\one_{v_1})$) returns to the all-healthy state. $S(H)=\E_{\textsc{cp}} \textbf{S}(H)$ (resp., $R(H) = \E_{\textsc{cp}} \textbf{R}(H)$) denotes  the \textit{expected excursion} (resp., \textit{survival}) \textit{time}.
		
		\vspace{2mm}
		\item [3.] Let $\mathcal{L}_j = \{v\in T_j : \textnormal{dist}(v,v_j)=l \}$, $(X_t) \sim \cp^\lambda_{v_1^+} (H;\one_{v_1}) $ and $(\bar{X}_t)\sim \cp^\lambda(H;\one_{v_1})$. For $v\in \mathcal{L}_j$ for some $j$, we define \textit{the total infections at} $v$ by
		\begin{equation*}
		\begin{split}
		\textbf{M}^l_v(H)&:=\left|\left\{s\in [0,\textbf{S}(H)]:\; X_s(v)=1 \textnormal{ and } X_{s-}(v)=0 \right\}\right| ;\\
		\bar{\textbf{M}}^l_v(H)&:=\left|\left\{s\in [0,\textbf{R}(H)]:\; \bar{X}_s(v)=1 \textnormal{ and } \bar{X}_{s-}(v)=0 \right\}\right|.
		\end{split}
		\end{equation*}
		Then, the \textit{total infections at leaves} and its expectation are given as
		\begin{equation*}
		\begin{split}
		\textbf{M}^l(H) &:= \sum_{j=1}^m \sum_{v\in\mathcal{L}_j} \textbf{M}^l_v(H), \quad \textnormal{and} \quad M^l(H) := \E_{\textsc{cp}}\textbf{M}^l(H) ;\\
		\bar{\textbf{M}}^l(H) &:= \sum_{j=1}^m \sum_{v\in\mathcal{L}_j} \bar{\textbf{M}}^l_v(H), \quad \textnormal{and} \quad \bar{M}^l(H) := \E_{\textsc{cp}}\bar{\textbf{M}}^l(H).
		\end{split}
		\end{equation*}
		Note that $\textbf{M}^{l}(H)\equiv\bar{\textbf{M}}^{l}(H)\equiv 0$, $M^{l}(H)= \bar{M}^{l}(H)=0$ if all the depths of $T_1,\ldots,T_m$ are  smaller than $l$.
		
	\end{enumerate}
\end{definition}

The goal of this section is to establish the following theorem.

\begin{proposition}\label{prop:SMunicyclic}
	Let $m,l\geq 1$ be  integers,
	$\ep\in(0,1)$ and $\mathfrak{c}=\{c_\delta \}_{\delta\in (0,1]}$ be a collection of positive constants. Then there exists $d_0(\ep, \mathfrak{c}) >0$ such that the following holds true. For any $\xi$ that satisfies $d:=\E \xi \geq d_0$ and (\ref{eq:concentration condition}) with $\mathfrak{c}$, we have for $\lambda=(1-\ep) d^{-1}$ and $\mathcal{H}^{m,l}\sim \gwc^1(\xi,m)^l$ that
	\begin{equation}\label{eq:unicyc tail:in prop}
	\begin{split}
	\P_{\textsc{gw}} \left(S(\mathcal{H}^{m,l}) \geq  t \right) &\leq 
	3t^{-\sqrt{d}} (\log t)^{-2} ~~~\textnormal{for all }t\geq \frac{2}{\ep};
	\\
	\P_{\textsc{gw}} \left(M^l(\mathcal{H}^{m,l}) \geq \left(1- \frac{\ep}{10} \right)^l t \right) &\leq 
	3t^{-\sqrt{d}} (\log t)^{-2} ~~~\textnormal{for all }t\geq 2,
	\end{split}
	\end{equation}
	where $S(\mathcal{H}^{m,l})$, $M^l(\mathcal{H}^{m,l})$ are given as in Definition \ref{def:SMunicyclic}.
\end{proposition}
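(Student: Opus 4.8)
The plan is to reduce Proposition~\ref{prop:SMunicyclic} to the tree estimates already established in Theorems~\ref{thm:tail bound of S} and \ref{thm:M tail bd}, by covering the unicyclic graph $\mathcal{H}^{m,l}$ with a bounded number of Galton--Watson trees and exploiting sub-multiplicativity properties of the excursion time and the total infections at leaves under the graphical representation. First I would handle the case $m=1$, which is exactly the Galton--Watson tree, so the bound follows from Theorems~\ref{thm:tail bound of S} and \ref{thm:M tail bd} (with the constant $3$ giving slack). For $m\geq 2$, the key structural observation is that $\mathcal{H}^{m,l}$ sits inside a tree: cutting the cycle $C=v_1\cdots v_m v_1$ at the edge $(v_m,v_1)$ turns $\mathcal{H}^{m,l}$ into a tree $\widehat{\mathcal{H}}$ rooted at $v_1$ that has a spine $v_1,v_2,\ldots,v_m$ with pendant trees $\mathcal{T}_j^l\sim\gw(\xi)^l$, and $\widehat{\mathcal H}$ in turn is dominated by (is a subgraph of) a single $\gw(\xi)^{l+m}$-tree where the spine vertices $v_2,\ldots,v_m$ are forced to exist. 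By monotonicity of the contact process (coupling via the graphical representation of Section~\ref{subsubsec:cpprelim}), adding edges only increases $\textbf{S}$ and $\textbf{M}^l$, so $S(\mathcal{H}^{m,l})\leq S(\widehat{\mathcal H})$ and similarly for $M^l$; but this na\"ive domination loses too much because the depth increases by $m$, which is not bounded. So instead I would use the \emph{decomposition} idea: write an excursion on $\mathcal{H}^{m,l}$ in terms of excursions on the $m$ rooted trees $T_j'$, where $T_j'$ is the tree obtained by deleting from $\mathcal{H}^{m,l}$ all cycle-edges incident to $v_j$, viewing $v_{j-1},v_{j+1}$ as permanently-healthy-unless-infected neighbors, i.e. treating $v_j$ as a root with two extra ``parent-like'' attachments.

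The main technical step, then, is a deterministic recursion analogous to Propositions~\ref{prop:Srecursion atypical} and \ref{prop:Mrecursion atypcial} but on the cycle. Concretely: run the root-added process $\cp^\lambda_{v_1^+}(\mathcal{H}^{m,l};\one_{v_1})$; the infection can leave the cycle only through one of the pendant trees $\mathcal{T}_j^l$, and I would bound the excursion time by a product over $j\in[m]$ of factors $(1+\lambda S(\mathcal{T}_j^l))$ together with a ``cycle factor'' accounting for the at most $m$ cycle vertices, each carrying an independent rate-$1$ recovery clock; since the cycle has only $m$ vertices, the infection revisits the cycle geometrically often with a success probability bounded below by a constant depending only on $\lambda$ and $m$ — but crucially, to keep $m$ out of the final bound we only need that the \emph{tail} of $S(\mathcal H^{m,l})$ over $\gw(\xi)$-randomness is governed by the tails of the $S(\mathcal{T}_j^l)$, which are i.i.d. and satisfy \eqref{eq:tail bound of S}. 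Thus I would derive
\begin{equation*}
S(\mathcal{H}^{m,l}) \;\leq\; C_m\prod_{j=1}^m\bigl(1+\lambda S(\mathcal{T}_j^l)\bigr),
\qquad
M^l(\mathcal{H}^{m,l}) \;\leq\; C_m\,\lambda\sum_{j=1}^m M^l(\mathcal{T}_j^l)\prod_{i\neq j}\bigl(1+\lambda S(\mathcal{T}_i^l)\bigr),
\end{equation*}
for a constant $C_m$, and then feed the right-hand sides into the already-proven recursive tail lemmas (Lemma~\ref{lem:recursion principle atypical}, Corollary~\ref{cor:recursion principle atypical}, Lemma~\ref{lem:sum Si small t}), exactly as in the proofs of Theorems~\ref{thm:tail bound of S} and \ref{thm:M tail bd}, to get \eqref{eq:unicyc tail:in prop}. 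The appearance of $C_m$ (rather than a clean $1$) is why the constant $3$ in front of $t^{-\sqrt d}(\log t)^{-2}$ is needed: for $t$ large relative to $d$ the product structure dominates and $C_m$ is absorbed, while for moderate $t$ a small-$t$ argument mirroring Section~\ref{subsubsection1} gives the bound with the factor-$3$ slack, provided $d_0$ is taken large enough (depending only on $\ep,\mathfrak c$, not on $m$ — here one uses that $m\le l$ plays no role because the cycle contributes only a multiplicative geometric-trials factor with a lower bound independent of $m$ once $\lambda<1/d$).

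The main obstacle I anticipate is making the cycle factor $C_m$ genuinely independent of $m$, or at least controlling it uniformly. When the infection sits on the cycle, it can propagate around the cycle both ways and linger there for a time that a priori grows with $m$; the point is that on the bare $m$-cycle (with a permanently infected $v_1^+$ attached at $v_1$) with $\lambda$ subcritical, the excursion time has a light tail uniformly in $m$, because each cycle vertex still recovers at rate $1$ and $\lambda m \cdot$(nothing) — more carefully, one compares the process restricted to the cycle to a subcritical branching-type process whose extinction time has exponential tails with rate not depending on $m$. I would isolate this as a lemma: there is an absolute $C=C(\ep)$ such that the expected excursion time of $\cp^\lambda_{v_1^+}$ on the bare $m$-cycle is at most $C$ for all $m$ and all $\lambda\le(1-\ep)/d$ with $d\ge d_0$; combined with the pendant-tree contributions via the graphical-representation coupling this yields the displayed bounds with $C_m$ replaced by an absolute constant, completing the reduction. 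The remaining bookkeeping — splitting into small/intermediate/large $t$ regimes and invoking the induction lemmas — is then routine and parallels Sections~\ref{subsec:tit:gw} and \ref{subsec:tail estimate survival time} verbatim, which is presumably why the paper defers it to Appendix~\ref{sec:unicyclic}.
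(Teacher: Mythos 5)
Your reduction breaks at the point where you bound $S(\mathcal{H}^{m,l})$ by $C_m\prod_{j=1}^m\bigl(1+\lambda S(\mathcal{T}_j^l)\bigr)$ and then hope to absorb the cycle into a constant. The proposition must hold with the bound $3t^{-\sqrt d}(\log t)^{-2}$ \emph{uniformly in $m$} (note that $d_0$ depends only on $\ep,\mathfrak c$, and $m$ is not tied to $l$), but a product over all $m$ pendant trees cannot deliver this: each factor is deterministically at least $1+\lambda$, so for $m\gtrsim 2d\log t$ your right-hand side exceeds $t$ with probability one and the claimed tail bound is vacuous. Even for moderate $m$, the induction machinery you want to invoke (Lemma \ref{lem:recursion principle atypical}, Corollary \ref{cor:recursion principle atypical}) controls a product of $r$ i.i.d.\ factors only when $r\le 2d$ or when the excess factor $\exp(C_1 r/(\ep\sqrt d))$ is compensated by the exponentially small probability $\P(D\ge r)$ coming from the degree; here the number of factors is the deterministic cycle length $m$, with nothing to compensate it. The difficulty is not, as you suggest, the lingering of the infection on the bare cycle (which is indeed $O(1)$ uniformly in $m$); it is that trees attached to \emph{distant} cycle vertices must contribute negligibly, i.e.\ one needs a quantitative statement that the infection rarely travels distance $k$ along the cycle, with an error decaying geometrically in $k$. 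Your scheme assigns every $\mathcal{T}_j$ a full factor regardless of its distance from the root, so this decay is lost.

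This geometric decay is exactly the extra ingredient in the paper's argument, and it is also why the proof is long. The paper opens the cycle into two overlapping path-with-trees graphs $A_1,A_2$ (the ``cover''), decomposes the process on $\mathcal{H}^{m,l}$ into copies of processes on $A_1,A_2$ via the graphical representation, and controls the feedback through the quantity $B(\mathcal{F}^{m,l})$, the expected number of infections delivered to the far endpoint of a length-$(m-1)$ path with pendant Galton--Watson trees; Lemma \ref{lem:recursion for F} shows $B(\mathcal{F}^{m,l})\lesssim d^{-\frac34(m-1)}$ with the usual tail, which makes the geometric series $\sum_k (B(A_1)B(A_2))^k$ converge and yields the small-$t$ regime (Lemma \ref{lem:unicyclic recursion small t}) uniformly in $m$. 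For large $t$ the paper does not use a product over the $m$ trees at all: it peels off only the subtrees at the root's children and treats the remainder as a $\gwc^2$-process, giving $S(\mathcal{H}^{m,l})\le(1+2\lambda S(\dot{\mathcal{H}}))\prod_{i=1}^{D}(1+\lambda S(T_i))$ (Lemma \ref{lem:gwc1 recursion atypical}), and runs a joint induction between $\gwc^1$- and $\gwc^2$-processes (Steps 1--3 of Appendix \ref{subsec:unic prop large t}). Without an analogue of the $B$-estimate, or some other mechanism producing decay in the distance along the cycle, your $C_m$ cannot be made uniform and the proposal does not close.
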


The main idea of the proof is the same as what we saw in Theorems \ref{thm:tail bound of S} and \ref{thm:M tail bd}. However, as one can easily expect, the analysis becomes much more complicated due to the existence of a cycle. In particular, we cannot apply the tree recursion techniques directly.  Due to its technicality, the proof of Proposition \ref{prop:SMunicyclic} is presented in Appendix \ref{sec:unicyclic}.

 Next, we introduce the \textit{Galton-Watson-on-cycle process of type two} (in short, $\gwc^2$-process), which can
be thought as a certain subgraph of $\gwc^1$-processes. Although it is a very similar object to $\gwc^1$-process,  the \textit{root-added process} on $\gwc^2$ is defined in a different way, as presented in the following definitions. 

\begin{definition}[Galton-Watson-on-cycle process of type two]\label{def:gwc2}
	Let $\xi$ be a positive, integer-valued random variable, and let $m\geq 2$, $l\geq 1$ be integers. Then, $\dot{\mathcal{H}} \sim \gwc^2(\xi, m)^l$, the \textit{Galton-Watson-on-cycle process of type two} (in short, $\gwc^2$-\textit{process}), is generated according to the following procedure:
	\begin{enumerate}
		\item [1.] Consider a length-$m$ cycle $C= v_1 v_2 \ldots v_m v_1$.
		
		\item[2.] At each $v_j ,\; j\in\{2,\ldots,m\}$, attach $\mathcal{T}_j^l\sim$ i.i.d.$\; \gw(\xi)^l$ by setting $v_j$ as its root. At $v_1$, we do nothing.
	\end{enumerate}
	The resulting graph is called $\dot{\mathcal{H}}$. We designate vertex $v_1$ as the root of $\dot{\mathcal{H}}$ and denote $\rho = v_1$. 
\end{definition}

\begin{remark}
	The $\gwc^2$-process is the same object as the $\gwc$-process defined in \cite{BNNASurvival}, Section 4.
\end{remark}

\begin{definition}[Root-added contact process on $\gwc^2$-processes]\label{def:SMdef gwc2}
	For $\dot{\mathcal{H}} \sim \gwc^2(\xi, m)^l$, we define the \textit{root-added contact process} on $\dot{\mathcal{H}}$ without adding a new parent to the root. Instead, we fix the root $\rho=v_1$ to be permanently infected by itself, and we denote this process by $\cp^\lambda_{v_1} (\dot{\mathcal{H}}^{m,l} ; x )$, for an initial configuration $x\in\{0,1\}^{\dot{\mathcal{H}}\setminus \{v_1\} }$. Then, we define  $S_2(\dot{\mathcal{H}})$, $M^l_2(\dot{\mathcal{H}})$ (resp., $S_m(\dot{\mathcal{H}})$, $M^l_m(\dot{\mathcal{H}})$) analogously as Definition \ref{def:SMunicyclic}, with respect to $\cp_{v_1}^\lambda(\dot{\mathcal{H}};\one_{v_2})$ (resp., $\cp_{v_1}^\lambda(\dot{\mathcal{H}};\one_{v_m})$). We also write
	\begin{equation*}
	S(\dot{\mathcal{H}}) = \frac{1}{2} \left(S_2(\dot{\mathcal{H}})+ S_m(\dot{\mathcal{H}}) \right), \quad
	\textnormal{and} \quad
	M^l(\dot{\mathcal{H}}) = \frac{1}{2} \left(M^l_2 (\dot{\mathcal{H}}) + M^l_m (\dot{\mathcal{H}}) \right).
	\end{equation*}
\end{definition}

Then, the analog of Proposition \ref{prop:SMunicyclic} can be derived on $\gwc^2$-process as follows. 

\begin{corollary}\label{cor:SMgwc2}
	Let $m\geq 2$, $l\geq 1$ be  integers,
	$\ep\in(0,1)$ and $\mathfrak{c}=\{c_\delta \}_{\delta\in (0,1]}$ be a collection of positive constants. Then there exists $d_0(\ep, \mathfrak{c}) >0$ such that the following holds true. For any $\xi$ that satisfies $d:=\E \xi \geq d_0$ and (\ref{eq:concentration condition}) with $\mathfrak{c}$, we have for $\lambda=(1-\ep) d^{-1}$ and $\dot{\mathcal{H}}^{m,l}\sim \gwc^2(\xi,m)^l$ that
	\begin{equation}\label{eq:gwc2 tail:in prop}
	\begin{split}
	\P_{\textsc{gw}} \left(S(\dot{\mathcal{H}}^{m,l}) \geq  t \right) &\leq 
	4t^{-\sqrt{d}} (\log t)^{-2} ~~~\textnormal{for all }t\geq \frac{3}{\ep};
	\\
	\P_{\textsc{gw}} \left(M^l(\dot{\mathcal{H}}^{m,l}) \geq \left(1- \frac{\ep}{10} \right)^l t \right) &\leq 
	4t^{-\sqrt{d}} (\log t)^{-2} ~~~\textnormal{for all }t\geq 3,
	\end{split}
	\end{equation}
	where $S(\dot{\mathcal{H}}^{m,l})$, $M^l(\dot{\mathcal{H}}^{m,l})$ are given as in Definition \ref{def:SMdef gwc2}.
\end{corollary}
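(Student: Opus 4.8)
The plan is to derive Corollary \ref{cor:SMgwc2} from Proposition \ref{prop:SMunicyclic}, using that a $\gwc^2$-process is simply a $\gwc^1$-process with the tree at the root deleted. Given $\dot{\mathcal H}^{m,l}\sim\gwc^2(\xi,m)^l$ with cycle $v_1v_2\cdots v_mv_1$ and pendant trees $T_2,\dots,T_m$, I would attach an independent copy $\mathcal T_1^l\sim\gw(\xi)^l$ at the root $v_1$: by construction this produces a graph distributed as $\gwc^1(\xi,m)^l$, call it $\mathcal H^{m,l}$, and $\dot{\mathcal H}^{m,l}=\mathcal H^{m,l}\setminus(\mathcal T_1^l\cup\{v_1^+\})$ as a subgraph. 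It then suffices to bound the $\gwc^2$-quantities $S(\dot{\mathcal H}^{m,l}),M^l(\dot{\mathcal H}^{m,l})$ by the $\gwc^1$-quantities $S(\mathcal H^{m,l}),M^l(\mathcal H^{m,l})$ up to absolute-constant corrections, and then quote \eqref{eq:unicyc tail:in prop}.

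The comparison rests on the $\sharp$-modification used in the proof of Proposition \ref{prop:Srecursion atypical}. In $\cp^\lambda_{v_1}(\dot{\mathcal H}^{m,l};\one_{v_2})$ the root $v_1$ is held infected throughout, whereas in $\cp^\lambda_{v_1^+}(\mathcal H^{m,l};\one_\rho)$ it recovers; but, exactly as in Section \ref{subsec:deterministic recursion survival time}, the $\sharp$-modified process $(X_t^\sharp)$ on $\mathcal H^{m,l}$ — which suppresses the recovery of $v_1$ whenever another vertex is infected — keeps $v_1$ permanently infected during an excursion, and since $\dot{\mathcal H}^{m,l}\setminus\{v_1\}\subseteq\mathcal H^{m,l}$ its restriction to $\dot{\mathcal H}^{m,l}\setminus\{v_1\}$ dominates $\cp^\lambda_{v_1}(\dot{\mathcal H}^{m,l};\one_{v_2})$ via the graphical coupling of Section \ref{subsubsec:cpprelim}. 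This gives $S_2(\dot{\mathcal H}^{m,l})\le S^\sharp(\mathcal H^{m,l})$, and likewise $S_m(\dot{\mathcal H}^{m,l})\le S^\sharp(\mathcal H^{m,l})$, while the occupation-time identities of Sections \ref{sec:subcritical trees}--\ref{sec:numberofhits} (cf. Corollary \ref{cor:MvsMprime}) yield $M^l_2(\dot{\mathcal H}^{m,l}),M^l_m(\dot{\mathcal H}^{m,l})\le(1+\lambda S^\sharp(\mathcal H^{m,l}))\,M^l(\mathcal H^{m,l})$. The point is that the unicyclic recursions proved in Appendix \ref{sec:unicyclic} for Proposition \ref{prop:SMunicyclic} are of the same product form as $(\ref{eq:recursion eq atypical})$ — which already bounds $S^\sharp$ rather than $S$ — together with a $(\ref{eq:recursion eq typical})$-type inequality for small values, so the very estimates proven there control $S^\sharp(\mathcal H^{m,l})$ and hence $S(\dot{\mathcal H}^{m,l})=\tfrac12(S_2+S_m)$ and $M^l(\dot{\mathcal H}^{m,l})$; the relaxation of the constants from $(3,2/\ep)$ to $(4,3/\ep)$ absorbs the union bound over the two entry points $v_2,v_m$, the $(1+\lambda S^\sharp)$-type factor, and the $\lambda^{-1}=O(d)$ inter-excursion waiting times.

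Finally one repeats the three-regime tail argument of Sections \ref{subsubsection1}--\ref{subsubsection3} and \ref{subsubsec:M1}--\ref{subsubsec:M3} verbatim for these quantities, invoking Lemmas \ref{lem:sum Si small t}, \ref{lem:recursion principle atypical} and Corollary \ref{cor:recursion principle atypical} as before. I expect the main obstacle to be making the inequalities of the previous paragraph genuinely deterministic on a fixed $\dot H$: one must verify that holding $v_1$ permanently infected on $\dot H$ and then reattaching, inside $\mathcal H$, the root-tree $\mathcal T_1^l$ and the parent $v_1^+$ can only increase the excursion length and the number of deep hits — in particular the subtle point that an excursion of $\cp^\lambda_{v_1}(\dot H;\one_{v_2})$ started from the single vertex $v_2$ is dominated by one product-chain segment of $(X_t^\sharp)$ on $\mathcal H$ — and then to carry these inequalities through the unicyclic recursion with the cycle possibly long (arbitrary $m$), which is exactly the technical apparatus already developed in Appendix \ref{sec:unicyclic} for Proposition \ref{prop:SMunicyclic}.
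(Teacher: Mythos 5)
There is a genuine gap, and it sits exactly at the comparison you flag as the ``subtle point''. Write $\mathcal H=\dot{\mathcal H}^{m,l}\cup\mathcal T_1^l\cup\{v_1^+\}$ for the augmented $\gwc^1$-graph and let $S^\sharp(\mathcal H)$ be the expected excursion length of the $\sharp$-modified root-added process started from $\one_{v_1}$. Your claimed deterministic inequalities $S_2(\dot{\mathcal H}^{m,l})\le S^\sharp(\mathcal H)$ and $M^l_2(\dot{\mathcal H}^{m,l})\le(1+\lambda S^\sharp(\mathcal H))M^l(\mathcal H)$ are false in general. The graphical coupling does show that, \emph{conditionally on the first child infected by $v_1$ being $v_2$}, the ensuing product-chain segment dominates $\cp^\lambda_{v_1}(\dot{\mathcal H}^{m,l};\one_{v_2})$; but the excursion of $(X^\sharp_t)$ started from $\one_{v_1}$ enters $\dot{\mathcal H}^{m,l}\setminus\{v_1\}$ only at rate $\lambda$ per cycle edge, so the identity behind \eqref{eq:Sbound geometric trials} gives $S^\sharp(\mathcal H)=1+\lambda\bigl[\sum_{i=1}^{D}S_i^{\otimes}+S_{v_2}^{\otimes}+S_{v_m}^{\otimes}\bigr]$, hence only $S_2(\dot{\mathcal H}^{m,l})\le S_{v_2}^{\otimes}\le\lambda^{-1}\bigl(S^\sharp(\mathcal H)-1\bigr)$, a loss of a factor $\lambda^{-1}\asymp d$ (and the same loss occurs for $M^l$, since $M^\sharp(\mathcal H)=\lambda\bigl[\sum_i M_i^\otimes+M_{v_2}^\otimes+M_{v_m}^\otimes\bigr]$). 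Concretely, if the tree at $v_2$ has $S(T_2)=10$, say, and $d$ is large, then $S_2(\dot{\mathcal H}^{m,l})\ge 10$ while $S^\sharp(\mathcal H)\approx 1+O(d^{-1})$, so the domination fails already for bounded quantities. This factor $d$ cannot be absorbed by relaxing the constants from $(3,\,2/\ep)$ to $(4,\,3/\ep)$: replacing $t$ by $t/d$ destroys the bound $t^{-\sqrt d}(\log t)^{-2}$ throughout the small and intermediate regimes, in particular at the constant threshold $t=3/\ep$.

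The paper avoids this by never comparing $\gwc^2(\xi,m)$ with $\gwc^1(\xi,m)$ at the same cycle length. Instead it exploits that in the $\gwc^2$-process $v_1$ is \emph{permanently} infected and pumps infections into the rest at rate $2\lambda$, so the occupation-time argument of Proposition \ref{prop:S vs R} yields $S(\dot{\mathcal H})/(1+2\lambda S(\dot{\mathcal H}))\le\frac12\bigl(R_2(\bar H)+R_{m+1}(\bar H)\bigr)$ where $\bar H=\dot{\mathcal H}\setminus\{v_1\}$, and then $\bar H$ is embedded into the $\gwc^1$-graph $H$ whose cycle is $v_2\ldots v_{m+1}v_2$, i.e.\ of length one less (with the analogous step for $M^l$ via Corollary \ref{cor:MvsMprime}, and product-type recursions through the smaller $\gwc^2$-graphs $H_2,H_{m+1}$ for large $t$). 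This is why Proposition \ref{prop:SMunicyclic} and Corollary \ref{cor:SMgwc2} are proved \emph{jointly}, by induction on $m$ (Steps 1--3 of Appendix \ref{subsec:unic prop large t}), rather than the corollary being a consequence of the proposition at fixed $m$. If you want to salvage your route, you must replace the comparison with $S^\sharp(\mathcal H)$ by a comparison with the graph obtained by \emph{deleting} $v_1$ (so that the permanently infected root is treated as an external rate-$2\lambda$ source, not as an ordinary vertex of a larger unicyclic graph), which is precisely the paper's reduction and forces the cycle-length induction.
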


As before, we explain the proof of the corollary in Appendix \ref{subsec:unic prop large t} due to its technicality.

\subsection{Coupling the local neighborhood of random graphs}\label{subsec:coupling local nbd}

Let $G_n \sim \mathcal{G}(n,\mu)$ and $\widetilde{\mu}$ be the size-biased distribution of $\mu$. Define $\gw(\mu,\mu')^l$ to be the Galton-Watson tree of depth $l$ such that the offspring distribution of the root is $\mu$ while that of all other descendants is $\mu'$.

For a fixed vertex $v$ in $G_n$, $N(v,l)$ converges locally weakly to $\gw(\mu,\widetilde{\mu})^l$ as $n\to \infty$, as we briefly saw in Lemma \ref{lem:lwc} and the explanation below it. However, the standard coupling between $N(v,l)$ and $\gw(\mu,\widetilde{\mu})^l $ always has an error at least $\Theta(n^{-1})$. To diminish this error, we introduce the notion of \textit{augmented distribution} (Definition 4.2, \cite{BNNASurvival}), which allows us to stochastically dominate $N(v,l)$ by a larger geometry. 
For our purpose, the definition will be slightly different from \cite{BNNASurvival} to yield a smaller distortion of the mean of $\mu$.
Note that if $\{p_k\}_{k\in \mathbb{N}} $ satisfies $\sum_k p_k e^{ck} <\infty$ for some $c>0$, then it also has $\sum_k k\sqrt{p_k}  <\infty$ by Cauchy-Schwarz inequality (see, for instance, Lemma 4.1 of \cite{BNNASurvival}).

\begin{definition}[Augmented distribution, \cite{BNNASurvival}]\label{def:aug}
	Let $\ep\in(0,1)$ and $\mu\equiv \{p_k\}_{k\in\mathbb{N}}$ be a probability measure on $\mathbb{N}$ such that $\E_{D\sim \mu} e^{cD} <\infty$ for some $c>0$.  Let $k_0 := \max \{k: \sum_{j\geq k} j\sqrt{p_j} \geq \ep/10 \}$ (which is finite by the above discussion),  and $k_{\textnormal{max}} :=\max\{k:p_k>0 \}$, with $k_{\textnormal{max}}=+\infty$ if the maximum does not exist. If $k_0 < k_{\textnormal{max}}$, we define the \textit{augmented distribution} $\mu^\sharp = \mu^\sharp_{\ep}$ of $\mu$ by
	\begin{equation*}
	\begin{split}
	\mu^\sharp (k) := \frac{1}{Z} 
	\begin{cases}
	\left(1-\frac{\ep}{10} \right)p_k  &\textnormal{if } k\leq k_0 ;\\
	\sqrt{p_k} &\textnormal{if } k>k_0,
	\end{cases}
	\end{split}
	\end{equation*}	
	where $Z= \sum_{k\leq k_0} (1-\frac{\ep}{10}) p_k+ \sum_{k>k_0} \sqrt{p_k}$ is the normalizing constant. When $k_0 = k_{\textnormal{max}}$, we set
	\begin{equation*}
	\mu^\sharp (k) := \frac{1}{Z} 
	\begin{cases}
	\left(1-\frac{\ep}{10} \right)p_k  &\textnormal{if } k< k_0 ;\\
	\sqrt{p_k} &\textnormal{if } k=k_0,
	\end{cases}
	\end{equation*}
	for the normalizing constant $Z= \sum_{j<k_0} (1-\frac{\ep}{10}) p_k + \sqrt{p_{k_0}}$.
\end{definition}

Suppose that we generated the i.i.d. degrees $\{D_i\}_{i\in[n]}$ of $G_n$. Consider the exploration procedure starting from a fixed vertex $v$ and its half-edges, which picks an unmatched half-edge in the \textit{explored} neighborhood of $v$, reveals its pair half-edge uniformly at random among all the \textit{unmatched} half-edges and absorbs the matched vertex. If  the matched half-edge  is from the \textit{unexplored} half-edges, then we include the half-edges adjacent to the matched vertex in our explored neighborhood. and its (unmatched) half-edges into the explored neighborhood. Then, during the early steps of exploration, the number of newly added (unmatched) half-edges is roughly distributed as $\widetilde{\mu}$, as long as we discover a new half-edge out of the previous explored neighborhood. 

However, as we mentioned at the beginning of this subsection, their exact distributions are not precisely $\widetilde{\mu}$. The role of the augmented distribution is to provide a unified law that stochastically dominates the number of newly explored half-edges in all the early steps.  The following lemma describes this property.

\begin{lemma}[Lemma 4.3, \cite{BNNASurvival}]\label{lem:aug properties}
	Let $\ep \in (0,1)$ and $\mathfrak{c}=\{c_\delta \}_{\delta\in(0,1]}$ be a collection of positive constants. Then, there exists $d_0(\ep,\mathfrak{c})>0$ such that the following holds. For any  probability measure $\nu$ on $\mathbb{N}$ having $d:=\E_{D\sim \nu} D \geq d_0$ and  the concentration condition (\ref{eq:concentration condition}) with $\mathfrak{c}=\{c_\delta \}_{\delta\in(0,1]}$, 
	\begin{enumerate}
		\item [\textnormal{1.}] Let $\nu^\sharp := \nu^\sharp_\ep$ and $d^\sharp:=\E_{D^\sharp\sim\nu^\sharp} D^\sharp$ be its mean. Then, $$d^\sharp \leq \left(1+\frac{\ep}{9} \right)d. $$ 
		Further, let  $\widetilde{d}$ (resp. $\widetilde{d}^\sharp$) be the mean of $\widetilde{\nu}$ (resp. $\widetilde{\nu}^\sharp$), the size distribution of $\nu$ (resp. $\nu^\sharp$). Then, $\widetilde{d}^\sharp \leq (1+\frac{\ep}{9}) \widetilde{d}$.

		\item [\textnormal{2.}] There is a collection of positive constants $\{ c'_{\ep'}\}_{\ep'\in[\frac{\ep}{10}, 1]}$ such that $\nu^\sharp$ satisfies
		\begin{equation}\label{eq:concentrationcond2}
		\begin{split}
		\P_{D^\sharp \sim \nu^\sharp} \left( D^\sharp \geq (1+\ep') d \right)
		&	\leq 
		\exp\left(-c'_{\ep'} d \right),\quad  \textnormal{for all } \ep' \in \left[\frac{\ep}{10}, 1\right] 
		;\\
		\P_{D^\sharp \sim \nu^\sharp} \left( D^\sharp \geq (1+a) d \right)
		&	\leq 
		\exp\left(-c'_{1} ad \right),\quad  \textnormal{for all } a\geq 1.
		\end{split}
		\end{equation}
		
		\item [\textnormal{3.}] Let $\{D_i\}_{i\in [n]}$ be a collection of i.i.d. samples of $\nu$. For a subset $\Delta \subset [n]$, let $\{p_k^{\Delta} \}_{k \in \mathbb{N}}$ denote the empirical distribution of $\{D_i \}_{i\in [n] \setminus \Delta}$. \textsf{Whp} over the choice of $\{D_i\}_{i\in[n]}$, $\{p_k^{\Delta} \}_{k \in \mathbb{N}}$ is stochastically dominated by $\nu^\sharp$, for any $\Delta \in[n]$ with $|\Delta| \leq n^{\frac{1}{2}}$.
		
	\end{enumerate}
\end{lemma}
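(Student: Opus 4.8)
The plan is to unpack the two defining features of the augmented distribution $\nu^\sharp$ — the trade-off between downweighting $\{k\le k_0\}$ by $1-\tfrac\ep{10}$ and inflating $p_k$ to $\sqrt{p_k}$ on $\{k>k_0\}$, and the fact that the concentration hypothesis \eqref{eq:concentration condition} pins the cutoff $k_0$ just above $d$ — and then to treat the three claims in turn. Write $\nu\equiv\{p_k\}_k$ and $\bar F(k):=\P_\nu(D\ge k)$, and recall $k_0,Z$ from Definition~\ref{def:aug}. From the maximality in the definition of $k_0$ I will freely use $\sum_{j>k_0}j\sqrt{p_j}<\tfrac\ep{10}$, hence $\sum_{j>k_0}\sqrt{p_j}<\tfrac\ep{10}$, $\P_\nu(D>k_0)<\tfrac\ep{10}$, $\sum_{k\le k_0}p_k\ge 1-\tfrac\ep{10}$, and (a one-line computation from $\sqrt{p_j}\ge p_j$) $Z\ge 1-\tfrac\ep{10}$; and the tail bound $p_k\le e^{-c_1(k-d)}$ for $k\ge 2d$ that \eqref{eq:concentration condition} supplies.

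\emph{Claims 1 and 2.} Writing $d^\sharp=Z^{-1}\big[(1-\tfrac\ep{10})\sum_{k\le k_0}kp_k+\sum_{k>k_0}k\sqrt{p_k}\big]$, the key point is the elementary inequality $\frac{\sum_{k\le k_0}kp_k}{\sum_{k\le k_0}p_k}=\E_\nu[D\mid D\le k_0]\le\E_\nu D=d$, which together with $\sum_{k>k_0}k\sqrt{p_k}<\tfrac\ep{10}$ and $Z\ge 1-\tfrac\ep{10}$ gives $d^\sharp\le d+O(\ep)\le(1+\tfrac\ep9)d$ for $d\ge d_0$. The size-biased bound is the analogous computation for $\widetilde d^\sharp+1=\dfrac{(1-\ep/10)\sum_{k\le k_0}k^2p_k+\sum_{k>k_0}k^2\sqrt{p_k}}{(1-\ep/10)\sum_{k\le k_0}kp_k+\sum_{k>k_0}k\sqrt{p_k}}$: the numerator is $\le(1-\tfrac\ep{10})\E_\nu[D^2]+O(\ep d)$ — using $\sum_{k>k_0}k^2\sqrt{p_k}=O(\ep d)$, which follows by splitting at $2d$, the fact $k_0<2d$, and $p_k\le e^{-c_1(k-d)}$ — and combined with $\E_\nu[D^2]\le 5d^2$ (forced by \eqref{eq:concentration condition} for large $d$) and $\widetilde d\ge d-1$ it yields $\widetilde d^\sharp\le(1+\tfrac\ep9)\widetilde d$. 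For the concentration of $\nu^\sharp$, the crucial input is $k_0<(1+\tfrac\ep{20})d$ for $d\ge d_0(\ep,\mathfrak c)$, obtained by splitting $\sum_{j\ge(1+\ep/20)d}j\sqrt{p_j}$ at $2d$ into a piece bounded by $2d\,e^{-c_{\ep/20}d/2}$ and a piece bounded by $\sum_{j\ge2d}je^{-c_1(j-d)/2}$, both $e^{-\Omega(d)}<\tfrac\ep{10}$. Hence for $\ep'\in[\tfrac\ep{10},1]$ the value $(1+\ep')d$ exceeds $k_0$, so $\P_{\nu^\sharp}(D^\sharp\ge(1+\ep')d)=Z^{-1}\sum_{k\ge(1+\ep')d}\sqrt{p_k}$ with no contribution from the downweighted block; bounding this with $p_k\le e^{-c_{\ep'}d}$ on $[(1+\ep')d,2d)$ and $p_k\le e^{-c_1(k-d)}$ beyond, and using $Z^{-1}\le 2$, gives the claim with $c'_{\ep'}=\tfrac14\min(c_{\ep'},c_1)$; the $a\ge1$ inequality is the same computation using only the $e^{-c_1(k-d)}$ estimate.

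\emph{Claim 3.} I would proceed in three moves. (i) \emph{Eliminate the union over $\Delta$:} for $|\Delta|\le n^{1/2}$ and any $k$, $\sum_{j\ge k}p^\Delta_j=\dfrac{\#\{i\notin\Delta:D_i\ge k\}}{n-|\Delta|}\le\dfrac{n}{n-n^{1/2}}\sum_{j\ge k}\hat p_j$, where $\hat p$ is the empirical law of all $n$ i.i.d.\ samples, since deleting coordinates only shrinks the numerator; so it suffices to show that \whp, for all $k$, $\dfrac{n}{n-n^{1/2}}\sum_{j\ge k}\hat p_j\le\sum_{j\ge k}\nu^\sharp(j)$. (ii) \emph{Deterministic slack in $\nu^\sharp\succeq\nu$:} a short manipulation (using $Z\ge 1-\tfrac\ep{10}$, $\sqrt{p_j}\ge p_j$, and $\sum_{j\ge k}\sqrt{p_j}\ge(\sum_{j\ge k}p_j)^{1/2}$) gives $\sum_{j\ge k}\nu^\sharp(j)\ge\bar F(k)$ for every $k$, with the gap $\gamma_k:=\sum_{j\ge k}\nu^\sharp(j)-\bar F(k)$ equal, for $k\le k_0$, to $Z^{-1}\P_\nu(D\le k-1)\big(\sum_{j>k_0}\sqrt{p_j}-(1-\tfrac\ep{10})\P_\nu(D>k_0)\big)\ge\tfrac\ep{10}\P_\nu(D\le k-1)\P_\nu(D>k_0)$, and at least $(1-\sqrt{p_{\max}})\sqrt{\bar F(k)}$ for $k>k_0$; thus $\gamma_k=0$ only when $\bar F(k)\in\{0,1\}$, where the target inequality holds with equality since then $\sum_{j\ge k}\hat p_j=\bar F(k)=\sum_{j\ge k}\nu^\sharp(j)$, and otherwise $\gamma_k$ is a fixed positive quantity depending only on $\nu$, with $\gamma_k\gtrsim_\nu 1$ for $k\le k_0$ and $\gamma_k\gtrsim_\nu\sqrt{\bar F(k)}$ for $k>k_0$. (iii) \emph{Empirical concentration:} by the DKW inequality, \whp\ $|\sum_{j\ge k}\hat p_j-\bar F(k)|\le\sqrt{\log n/n}$ for all $k$; for the $O_\nu(\log n)$ values of $k$ with $\bar F(k)\ge n^{-3/2}$ a Chernoff bound and a union over those $k$ upgrade this to a matching multiplicative control, and for $k$ deeper in the tail one uses that \whp\ $\max_iD_i$ sits where $\bar F(k)\approx 1/n$, so $\sum_{j\ge k}\hat p_j=0$ beyond it while near it $\sum_{j\ge k}\hat p_j\lesssim\log n/n\ll\sqrt{\bar F(k)}$. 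Since every slack $\gamma_k$ from (ii) dwarfs the corresponding $n^{-1/2+o(1)}$ fluctuation once $n$ is large, assembling the three regimes gives $\dfrac{n}{n-n^{1/2}}\sum_{j\ge k}\hat p_j\le\bar F(k)+\gamma_k=\sum_{j\ge k}\nu^\sharp(j)$ for all $k$ \whp. (The finite-support case $k_0=k_{\max}$ of Definition~\ref{def:aug} needs the same but simpler bookkeeping, since then $\nu$ and $\nu^\sharp$ are both supported on $\{1,\dots,k_0\}$.)

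The first two claims are routine; the main obstacle is Claim 3, namely matching the scale of the empirical fluctuations of $\sum_{j\ge k}\hat p_j$ against the deterministic slack $\gamma_k$ simultaneously across the bulk $\{k\le k_0\}$, the moderate tail, and the deep tail where a stray sample may land on an atom of tiny probability. The $\sqrt{p_k}$-inflation built into $\nu^\sharp$ on $\{k>k_0\}$ is precisely what forces $\gamma_k\gtrsim\sqrt{\bar F(k)}$ there — large enough to absorb the $O(\sqrt{\bar F(k)/n})$ fluctuation — and making this quantitative together with the \whp\ bound on $\max_iD_i$ is the technical heart of the argument.
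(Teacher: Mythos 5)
Your proposal is correct, and for the first two claims it runs along essentially the same lines as the paper's own proof in Appendix~B: bound $Z\ge 1-\tfrac{\ep}{10}$, use $\sum_{k>k_0}k\sqrt{p_k}<\tfrac{\ep}{10}$ together with the concentration condition to control the inflated block, and for the concentration of $\nu^\sharp$ reduce to $\nu^\sharp(k)\le \sqrt{p_k}/Z\le 2\sqrt{p_k}$ and sum the hypothesis \eqref{eq:concentration condition} over $[(1+\ep')d,2d]$ and $[2d,\infty)$. Two differences are worth noting. First, the paper never needs your bound $k_0<(1+\tfrac{\ep}{20})d$: for claim 2 it simply uses the pointwise inequality $\nu^\sharp(k)\le\sqrt{p_k}/Z$, valid on both blocks, so whether $(1+\ep')d$ clears $k_0$ is irrelevant; your bound is nevertheless correct (and it pleasantly trivializes the finite-support case $k_0=k_{\max}$ of claim 1, since then $d^\sharp\le k_0$ and $\widetilde d^\sharp\le k_0-1$, whereas the paper handles that case by a separate appeal to \eqref{eq:concentration condition}). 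The paper is also silent on the size-biased bound beyond ``follows analogously''; your computation via $\widetilde d^\sharp+1=\E_{\nu^\sharp}[D^2]/\E_{\nu^\sharp}[D]$, with $\E_\nu[D^2]\le 5d^2$ and $\widetilde d\ge d-1$, is a correct way to fill that in. Second, and more substantially, the paper does not prove claim 3 at all --- it cites Lemma 4.3 of \cite{BNNASurvival} --- while you give a self-contained argument. Your route is sound: the monotone reduction $\sum_{j\ge k}p_j^\Delta\le\frac{n}{n-\sqrt n}\sum_{j\ge k}\hat p_j$ is exactly the right move (a union bound over the roughly $e^{\Theta(\sqrt n\log n)}$ choices of $\Delta$ could not be afforded), the deterministic slack computation $\gamma_k\ge\tfrac{\ep}{10}\P_\nu(D\le k-1)\P_\nu(D>k_0)$ for $k\le k_0$ and $\gamma_k\gtrsim\sqrt{\bar F(k)}$ for $k>k_0$ is correct (with the degenerate $k$ where $\bar F(k)\in\{0,1\}$ checked directly against the original inequality, since the $\frac{n}{n-\sqrt n}$ reduction is lossy exactly there), and DKW in the bulk, Chernoff over the $O_\nu(\log n)$ moderate-tail levels, and the \whp{} truncation of $\max_i D_i$ in the deep tail together beat the $n^{-1/2+o(1)}$ fluctuations for fixed $\nu$ and large $n$. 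What you buy is independence from the external reference; what the paper buys by citing is brevity, at the cost of the lemma not being verifiable within this manuscript.
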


The third property in the lemma is almost analogous to that of Lemma 4.3, \cite{BNNASurvival}. In Section \ref{subsec:pfof aug}, we discuss the proof of the lemma, focusing on the aspects which are different from \cite{BNNASurvival}.

Now we want to derive a coupling between the local neighborhoods of $G_n\sim \mathcal{G}(n,\mu)$ and the Galton-Watson-type processes. The first thing we should handle
is to control the number of cycles in a local neighborhood $N(v,l)$. For a fixed constant $\gamma>0$, we define the event $\mathcal{E}^C_n(\gamma)$ over the graphs with $n$ vertices to be
\begin{equation*}
\mathcal{E}^C_n (\gamma) := \{G_n : \forall v\in G_n, \; N(v,\gamma \log n) \textnormal{ contains at most one cycle} \}.
\end{equation*}
Then, we adopt the following lemma that shows the event $\mathcal{E}^C_n(\gamma)$ is indeed typical for some $\gamma >0$. The proof of the lemma is presented in Section \ref{subsec:pfof 1cyc} in the Appendix, due to its similarity to \cite{BNNASurvival}, Lemma 4.5 and \cite{ls10}, Lemma 2.1.

\begin{lemma}[\cite{BNNASurvival,ls10}]\label{lem:1cyc}
	Let $\ep \in (0,1)$ and $\mathfrak{c}=\{c_\delta \}_{\delta\in(0,1]}$ be a collection of positive constants. Then, there exists a constant $d_0(\ep,\mathfrak{c})>0$ such that the following holds. For any  probability measure $\mu$ on $\mathbb{N}$ having $d:=\E_{D\sim \mu} D \geq d_0$ and  the concentration condition (\ref{eq:concentration condition}) with $\mathfrak{c}=\{c_\delta \}_{\delta\in(0,1]}$, we have for $G_n \sim \mathcal{G}(n,\mu)$ that
	\begin{equation}\label{eq:1cyc:in lem}
	\P \left(G_n \in \mathcal{E}^C_n \left(\frac{1}{10\log d} \right) \right) = 1-o(1).
	\end{equation}
\end{lemma}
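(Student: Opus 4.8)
The plan is a standard first-moment argument over small ``bi-cyclic'' subgraphs, in the spirit of \cite{BNNASurvival}, Lemma 4.5 and \cite{ls10}, Lemma 2.1. Throughout set $r:=\gamma\log n$ with $\gamma=1/(10\log d)$, and recall $\widetilde\mu$, $\widetilde d:=\E_{D\sim\widetilde\mu}D$.

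\textbf{Step 1 (reduction to a small subgraph).} Suppose some neighborhood $N(v,r)$ contains more than one cycle, i.e.\ its first Betti number is at least $2$. Running breadth-first search from $v$ produces a spanning tree $T$ of $N(v,r)$ of depth at most $r$ together with at least two distinct non-tree edges $e_1,e_2$ of $N(v,r)$. Let $H$ be the union of $e_1,e_2$ with the four $T$-paths joining $v$ to the endpoints of $e_1$ and of $e_2$. Each such path has at most $r+1$ vertices and all four share $v$, so $H$ is a connected subgraph of $G_n$ with $k:=|V(H)|\le 4r+1$ vertices; since $H$ consists of a spanning subtree of itself plus the two edges $e_1,e_2$, it has exactly $|E(H)|=k+1$ edges. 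Hence it suffices to show that \whp\ $G_n$ contains no connected subgraph with $k$ vertices and $k+1$ edges for any $1\le k\le 4r+1$.

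\textbf{Step 2 (first moment in the configuration model).} First, integrating the tail in the concentration condition \eqref{eq:concentration condition} gives $\E_{D\sim\mu}[D^2]\le 5d^2$ for all $d\ge d_0(\mathfrak c)$, hence
\begin{equation*}
\widetilde d \;=\; \frac{\E_\mu[D(D-1)]}{\E_\mu[D]}\;\le\;\frac{\E_\mu[D^2]}{d}\;\le\;5d .
\end{equation*}
Standard concentration for sums of i.i.d.\ variables gives, \whp, $\sum_{i=1}^n d_i\in[\tfrac12 dn,\,2dn]$ and $\max_{i\le n}d_i\le C\log n$; condition on this good degree event $\mathcal D_n$. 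On $\mathcal D_n$ we bound the expected number $\mathcal N_k$ of connected subgraphs of $G_n$ on $k$ vertices with $k+1$ edges by building the subgraph from a spanning tree plus two additional edges. The spanning tree contributes a factor of order $C_1^k\,\widetilde d^{\,k-1}$ ($C_1^k$ for the tree shape on $k$ labelled vertices explored in a fixed order; each tree edge is matched with probability $O((\sum_i d_i)^{-1})=O(1/(dn))$, and the half-edge choices at each newly revealed vertex are summed against the empirical degree distribution, producing a factor of order $\widetilde d$ per vertex). Each of the two extra edges then contributes a factor $O\!\big(k^2\widetilde d^{\,2}/(dn)\big)$. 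Altogether
\begin{equation*}
\mathcal N_k \;\le\; k^{O(1)}\,(C_2\widetilde d)^{k}\,n^{-1}\;\le\; k^{O(1)}\,(C_3 d)^{k}\,n^{-1},
\end{equation*}
for absolute constants $C_1,C_2,C_3$; this is exactly the type of estimate carried out in \cite{BNNASurvival, ls10}, and is the crux of the argument (the half-edge counting must be done against the size-biased empirical distribution rather than bounded crudely, otherwise one picks up spurious factors like $k^k$ or $(\max_i d_i)^k$ that ruin the bound — this is why the proof is deferred to Appendix \ref{subsec:pfof 1cyc}).

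\textbf{Step 3 (summation).} Since $(C_3 d)^k$ is increasing in $k$, summing over $1\le k\le K:=4r+1$ and using $C_3 d\le d^2$ for $d\ge d_0$,
\begin{equation*}
\sum_{k\le K}\mathcal N_k \;\le\; (\log n)^{O(1)}\,\frac{(C_3 d)^{K}}{n}\;\le\;(\log n)^{O(1)}\,\frac{d^{\,2K}}{n}.
\end{equation*}
Now $2K=8r+2=8\gamma\log n+2$, so $d^{\,2K}=d^{8\gamma\log n}\cdot d^2=n^{8\gamma\log d}\cdot d^2=n^{4/5}\,d^2$, because $8\gamma\log d=8/10=4/5$ by the choice $\gamma=1/(10\log d)$. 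Therefore $\sum_{k\le K}\mathcal N_k\le d^2\,(\log n)^{O(1)}\,n^{-1/5}=o(1)$. By Markov's inequality, \whp\ $G_n$ contains no connected subgraph with $k$ vertices and $k+1$ edges for $k\le 4r+1$, which by Step 1 yields $\P\big(G_n\in\mathcal E^C_n(1/(10\log d))\big)=1-o(1)$, i.e.\ \eqref{eq:1cyc:in lem}. The main obstacle, as noted, is the first-moment estimate of Step 2; everything else is bookkeeping and an elementary choice of $d_0$.
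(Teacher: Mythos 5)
Your argument is correct in outline, but it takes a genuinely different route from the paper. The paper does not count subgraphs at all: in Appendix \ref{subsec:pfof 1cyc} it first proves Lemma \ref{lem:gw size}, a super-polynomial bound $\P(|\mathcal T|\ge n^{1/5})\le e^{-n^{1/20}}$ on the total size of the depth-$\frac1{10}\log_d n$ Galton--Watson tree with the augmented offspring laws $\mu^\sharp,\widetilde\mu^\sharp$ (via exponential moments of the generation sizes), and then feeds this neighborhood-size bound into the exploration/collision argument of \cite{BNNASurvival}, Lemma 4.5: conditionally on $|N(v,\tfrac1{10}\log_d n)|\le n^{1/5}$, having two cycle-creating half-edge collisions during the exploration of a single neighborhood has probability $n^{-8/5+o(1)}$, and one union-bounds over the $n$ choices of $v$. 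You instead make an annealed first-moment count of connected subgraphs with excess two (your Step 1 reduction, including $|E(H)|=|V(H)|+1$ and $|V(H)|\le 4r+1$, is correct), and win because $\widetilde d\le 5d$ and $8\gamma\log d=4/5<1$. Your approach buys self-containedness: no augmented distribution, no stochastic-domination lemma, no appeal to the earlier paper. The paper's approach reduces the hard work to a one-dimensional Chernoff-type bound on Galton--Watson progeny rather than a configuration-model subgraph count, and the intermediate size bound on $N(v,\cdot)$ is precisely what upgrades the earlier, $\gamma$-implicit version of the lemma to the explicit $\gamma=1/(10\log d)$ needed here.

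Two soft spots in your Step 2 should be made explicit if you carry this out. First, the per-vertex factor in the exploration is the \emph{empirical} size-biased mean $\sum_i d_i(d_i-1)/\sum_i d_i$, so you need a \whp\ statement that this is $O(d)$; with the exponential tail it follows from $\max_i d_i=O(\log n)$ together with a Bernstein/Chernoff bound for $\sum_i d_i^2$, but it is not automatic from $\E_{\mu} D^2\le 5d^2$ alone. Second, the factor you attach to each extra edge, and to the vertices of $H$-degree $3$ or $4$ (the root $v$ and the endpoints of $e_1,e_2$), is not really $\widetilde d^{\,2}/(dn)$: those vertices contribute sums such as $\sum_i d_i^{3}$, i.e.\ additional $(\log n)^{O(1)}$ factors rather than powers of $\widetilde d$. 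This is harmless since your final bound has an $n^{-1/5}$ margin and Step 3 already allows $(\log n)^{O(1)}$ slack, but the formula as written is not quite what the computation produces.
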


\begin{remark}
	In \cite{BNNASurvival}, Lemma 4.5, it was proven that
	$\P(G_n \in \mathcal{E}^C_n (\gamma)) = 1-o(1)$ for some constant $\gamma$ depending on $d$ in a rather implicit sense. Here, we have an additional assumption of (\ref{eq:concentration condition}) and it makes it possible to deduce Lemma \ref{lem:1cyc}, a stronger result. The improved information on the constant $\gamma$ turns out to be crucial in Section \ref{subsec:pf of thm2} when proving Theorem \ref{thm:lowerbd of lamdac}.
\end{remark}

Set $\gamma_1 = \frac{1}{10}$ and $l_n:= \frac{\gamma_1}{2}\log_d n$. The remaining task of this subsection is to define the Galton-Watson type process which will be coupled with the local neighborhoods $N(v,l_n)$. The notion was already introduced in \cite{BNNASurvival}, Definition 4.7.

\begin{definition}[Edge-added Galton-Watson process, \cite{BNNASurvival}]
	Let $h,m,l$ be nonnegative integers such that $m\geq 2$ and $l\geq h+1$, and let $\nu$ be a probability distribution on $\mathbb{N}$. We define the \textit{Edge-added Galton-Watson process} (in short, \egw-\textit{process}), denoted by $\egw(\nu;h,m)^l$ as follows.
	\begin{itemize}
		\item [1.] Generate a $\gw(\nu)^l$ tree, conditioned on survival until depth $l$. The root $\rho$ of this tree is also the root of $\egw(\nu;h,m)^l$.

		\item [2.] At each vertex $u$ at depth $h$, add an independent $\gwc^2(\nu,m)^{l-h}$ process (see Definition \ref{def:gwc2} rooted at $u$. Here we preserve the existing subtrees at $u$ which comes from $\gw(\nu)^l$ tree from Step 1.
	\end{itemize}
	Let $\nu'$ be another probability measure on $\mathbb{N}$. Then, $\egw(\nu,\nu';h,m)^l$ denotes the \egw-process whose root has offspring distribution $\nu$ while all other descendants have $\nu'$. Here we also add $\gwc^2(\nu',m)^{l-h}$ in Step 2 of the definition. We also remark that  $\egw(\nu;0,m)^l= \gwc^1(\nu,m)^l.$
\end{definition}

We develop a coupling between $N(v,\gamma_1 \log_d n)$ and the \egw-processes. To this end, we define the notion of stochastic domination between two probability measures $\eta, \eta'$ over rooted graphs: we write $\eta \leq_{\textsc{st}} \eta'$ if there exists a coupling $((\mathcal{G},\rho), (\mathcal{G}',\rho'))$ such that  $(\mathcal{G},\rho) \sim \eta$, $(\mathcal{G}',\rho')\sim\eta'$ and $\mathcal{G}\subset \mathcal{G}'$, that is, there is a graph isomorphism that maps $\rho$ to $\rho'$ and  embeds $\mathcal{G}$ into $\mathcal{G}'$.

Let $v$ be a fixed vertex in $G_n\sim \mathcal{G}(n,\mu)$, and recall that $\gamma_1=\frac{1}{10}$ and $l_n= \frac{\gamma_1}{2} \log_d n$. We also abbreviate $\mathcal{E}^C_n = \mathcal{E}^C_n(\gamma_1/\log d)$. In addition to $\mathcal{E}^C_n$, we define $\mathcal{B}_{h,m}(v)$ to be the subevent of $\mathcal{E}^C_n$ such that $N(v,l_n)$ contains a cycle of length $m$ that is at distance $h$ from $v$.

Moreover, for a given $\ep\in(0,1)$ we set ${\mu}^\sharp=\mu^\sharp_\ep$ to be the augmented distribution of $\mu$, and $\widetilde{\mu}^\sharp$ to be the size-biased distribution of $\mu^\sharp.$ Further, let $\eta$, $\eta_0$ and $\eta_{h,m}$ denote the law of $N(v,l_n)$, $\gw(\mu^\sharp, \widetilde{\mu}^\sharp)^{l_n}$ and $\egw(\mu^\sharp, \widetilde{\mu}^\sharp;h,m)^{l_n}$, respectively. Then, Lemma 4.8 of \cite{BNNASurvival} provides us the following coupling lemma, which plays a crucial role in settling the lower bound of Theorem \ref{thm:phasetransition:graph} in the following subsection.

\begin{lemma}[Lemma 4.8, \cite{BNNASurvival}]\label{lem:couplinglocalnbd}
	Under the above setting, let $b_{h,m}:=\P(\mathcal{B}_{h,m}(v)) $ and $b_0 := 1-\sum_{h,m}b_{h,m}$. Then we have
	\begin{equation*}
	\eta \one_{\mathcal{E}^C_n} \leq_{\textsc{st}}
	b_0\eta_0 + \sum_{h\geq 0, m\geq 2} \eta_{h,m}.
	\end{equation*}
\end{lemma}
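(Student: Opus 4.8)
The plan is to condition on the cycle structure of $N(v,l_n)$ inside $\mathcal{E}^C_n$ and, on each resulting piece, to build an explicit step-by-step coupling between the configuration-model exploration and the relevant Galton--Watson-type process, using the augmented distribution to absorb the ``sampling without replacement'' discrepancy. On $\mathcal{E}^C_n$ the neighborhood $N(v,l_n)$ has at most one cycle, which gives the disjoint decomposition $\mathcal{E}^C_n=\bigl(\mathcal{E}^C_n\cap\{N(v,l_n)\text{ is a tree}\}\bigr)\sqcup\bigsqcup_{h\ge 0,\,m\ge 2}\mathcal{B}_{h,m}(v)$, where on $\mathcal{B}_{h,m}(v)$ the unique cycle has length $m$ and its vertex closest to $v$ lies at distance $h$. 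It then suffices to exhibit couplings realizing $\eta\,\one_{\mathcal{E}^C_n\cap\{\mathrm{tree}\}}\le_{\textsc{st}}b_0\eta_0$ and $\eta\,\one_{\mathcal{B}_{h,m}(v)}\le_{\textsc{st}}\eta_{h,m}$ for every $(h,m)$ and to add them; both are a little wasteful (since $b_0=1-\sum b_{h,m}\ge\P(\mathcal{E}^C_n\cap\{\mathrm{tree}\})$ and $\eta\,\one_{\mathcal{B}_{h,m}(v)}$ has mass $b_{h,m}\le 1$), but dominating with extra mass is harmless.

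First I would set up the breadth-first exploration of $N(v,l_n)$ through the half-edge pairing: start from $v$ with its $D_v$ half-edges and repeatedly reveal the partner of a boundary half-edge uniformly among the currently unpaired ones, absorbing each newly discovered vertex with its remaining half-edges. Whenever the partner lies on a fresh vertex, the number of \emph{new} half-edges it contributes is the corresponding size-biased remaining degree; by Lemma~\ref{lem:aug properties}(3) there is a high-probability event $\mathcal{G}_n$ on the degree sequence on which, as long as at most $n^{1/2}$ vertices have been revealed, this count is stochastically dominated by an independent sample of $\widetilde{\mu}^\sharp$ (and $D_v$ by $\mu^\sharp$), independently of the past. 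A preliminary union bound over $v$ --- dominating the exploration by a Galton--Watson tree with offspring mean $(1+\tfrac{\ep}{9})d\le 2d$ (Lemma~\ref{lem:aug properties}(1)) and applying a tail estimate for the size of such a tree at depth $l_n=\tfrac{\gamma_1}{2}\log_d n$, for which the exponential offspring tail furnished by \eqref{eq:concentration condition} suffices --- shows that whp $|N(v,l_n)|\le n^{0.6}$ for all $v$ simultaneously, so the $n^{1/2}$ cap is never reached and the $\widetilde{\mu}^\sharp$-domination stays in force throughout every exploration. All couplings below live on $\mathcal{G}_n\cap\mathcal{E}^C_n$ intersected with this size event, whose complement has probability $o(1)$.

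On $\mathcal{E}^C_n\cap\{N(v,l_n)\text{ is a tree}\}$ the exploration closes no cycle, so generation by generation it couples with a $\gw(\mu^\sharp,\widetilde{\mu}^\sharp)^{l_n}$ tree in which $N(v,l_n)$ embeds as a rooted subgraph, giving $\eta\,\one_{\mathcal{E}^C_n\cap\{\mathrm{tree}\}}\le_{\textsc{st}}b_0\eta_0$. On $\mathcal{B}_{h,m}(v)$, delete one edge of the cycle $C=w_1\cdots w_mw_1$ (with $w_1$ the vertex of $C$ nearest $v$, at distance $h$) to obtain a spanning tree $T$ of $N(v,l_n)$; exactly one revealed pairing closes $C$, while every other revealed vertex still contributes at most $\widetilde{\mu}^\sharp$-many new half-edges. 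I would couple with $\egw(\mu^\sharp,\widetilde{\mu}^\sharp;h,m)^{l_n}$ by first generating its underlying $\gw(\mu^\sharp,\widetilde{\mu}^\sharp)^{l_n}$ tree, conditioned to survive to depth $l_n$ (hence possessing depth-$h$ vertices), and embedding $T$ into it with $w_1$ sent to some depth-$h$ vertex $u$; the $\gwc^2(\widetilde{\mu}^\sharp,m)^{l_n-h}$ gadget attached at $u$ has a cycle of length exactly $m$, which receives $C$ together with the subtrees of $N(v,l_n)$ hanging off $w_2,\dots,w_m$, while the subtrees hanging off $w_1$ itself are absorbed by the preserved $\gw$-subtrees at $u$. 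This yields $\eta\,\one_{\mathcal{B}_{h,m}(v)}\le_{\textsc{st}}\eta_{h,m}$, and summing over $(h,m)$ proves the lemma.

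The main obstacle is the unicyclic coupling: one must verify that the $\gwc^2$ gadget, whose root carries no attached subtree, genuinely has room for everything incident to $w_1,\dots,w_m$ in $N(v,l_n)$; that the single cycle-closing pairing can be inserted without spoiling the step-by-step stochastic domination delivered by the augmented distribution; and that the distance-$h$, length-$m$ labels of the cycle can always be matched to a depth-$h$ vertex of the $\egw$ process --- which is exactly where the ``conditioned on survival to depth $l_n$'' in the definition of $\egw$ is used. Once Lemma~\ref{lem:aug properties}(3) and the a-priori bound $|N(v,l_n)|\le n^{0.6}$ are available, the remainder is routine bookkeeping.
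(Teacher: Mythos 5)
Your overall architecture --- split $\mathcal{E}^C_n$ according to the cycle structure of $N(v,l_n)$, run the half-edge exploration, dominate each offspring count by $\widetilde{\mu}^\sharp$ via Lemma \ref{lem:aug properties}(3), and embed the (at most unicyclic) neighborhood into $\gw(\mu^\sharp,\widetilde{\mu}^\sharp)^{l_n}$ resp.\ $\egw(\mu^\sharp,\widetilde{\mu}^\sharp;h,m)^{l_n}$, using the survival conditioning of the \egw{} tree to guarantee a depth-$h$ attachment point --- is the same route as the proof this paper imports from \cite{BNNASurvival}, Lemma 4.8 (the paper itself does not reprove the lemma). So the comparison is about completeness, and there is one genuine gap.

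The gap is the factor $b_0$. An ``on-event'' coupling, i.e.\ a joint construction of $(G_n,\mathcal{T})$ with $\mathcal{T}\sim\eta_0$ and $N(v,l_n)\subseteq\mathcal{T}$ whenever the tree event holds, only yields $\P\left(N(v,l_n)\in U,\ \mathcal{E}^C_n\cap\{\textnormal{tree}\}\right)\leq\eta_0(U)$ for increasing $U$; the lemma demands the stronger bound $b_0\,\eta_0(U)$. Your remark that ``dominating with extra mass is harmless'' points the wrong way here: $b_0\eta_0$ has \emph{less} mass than $\eta_0$, and to recover the factor $b_0$ you must prove the conditional domination $\P\left(N(v,l_n)\in U\mid \textnormal{tree event}\right)\leq\eta_0(U)$, i.e.\ that the step-by-step $\widetilde{\mu}^\sharp$-domination of the exploration survives conditioning on the global, future-measurable event that no cycle is ever closed. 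That conditioning biases the exploration (plausibly downward, but this needs an argument --- e.g.\ a sequential construction in which the ``fresh vertex vs.\ boundary'' decision is revealed before the new degree, together with the uniformity in $\Delta$ of Lemma \ref{lem:aug properties}(3)), and it is precisely the nontrivial content of the cited proof; your write-up does not engage with it. The same issue reappears for every $(h,m)$ piece in the form of the lemma that is actually used downstream in (\ref{eq:blockSM tail}), where the summands must carry the weights $b_{h,m}$ (with unit weights, as printed, the sum of the tail bounds over all admissible $(h,m)$ is uncontrolled), so the conditional-domination step cannot be avoided by ``wastefulness.'' Two smaller points: Lemma \ref{lem:aug properties}(3) and your a priori bound $|N(v,l_n)|\leq n^{0.6}$ only hold \whp{} over the degree sequence, so your construction proves domination of $\eta\one_{\mathcal{E}^C_n\cap\mathcal{G}_n}$ rather than of $\eta\one_{\mathcal{E}^C_n}$ as stated (harmless for the applications, but it should be said); and the claim that the $\gw$ tree conditioned on reaching depth $l_n$ stochastically dominates the unconditioned one, which your embedding implicitly uses, deserves a one-line justification (it is an increasing-event conditioning of independent offspring variables, so Harris/FKG applies).
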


So far, we have collected almost all the elements we need in the proof of Theorem \ref{thm:phasetransition:graph}. The last thing we want is the tail probability estimates for \egw-processes. To be concrete, we begin with defining the quantities of interest.

\begin{definition}[Excursion time and total infections at leaves for \egw]\label{def:SMegw}
	Let $h,m,l$ be nonnegative integers with $m\geq 2$, $\nu, \nu'$ be probability measures on $\mathbb{N}$, and $\mathcal{H}\sim \egw(\nu,\nu';h,m)^l$. We connect a permanently infected parent $\rho^+$ to the root $\rho$ of $\mathcal{H}$, and the contact process on the resulting graph is the \textit{root-added contact process} $\cp^\lambda_{\rho^+}(\mathcal{H};x)$ on $\mathcal{H}$ with initial condition $x\in\{0,1 \}^\mathcal{H}$.
	\begin{itemize}
		\item   The \textit{excursion time} $\textbf{S}(\mathcal{H})$ is the first time when $\cp^\lambda_{\rho^+}(\mathcal{H};\one_\rho)$ becomes all-healthy on $\mathcal{H}$, and  $S(\mathcal{H}) = \E_{\textsc{cp}} \textbf{S}(\mathcal{H})$ denotes the \textit{expected excursion time} on $\mathcal{H}$.
		
		\item Let $\mathcal{L}$ be the collection of \textit{bottom leaves} of $\mathcal{H}$, that is, denoting $\{C_j\}$ to be the length-$m$ cycles in $\mathcal{H}$,
		\begin{equation*}
		\mathcal{L}= \{v\in\mathcal{H}: \textnormal{dist}(v,\rho)\geq l \textnormal{ and dist}(v,C_j)\geq l-h \textnormal{ for all }j \}.
		\end{equation*}
		
		\item Let $v\in \mathcal{L}$. Denoting $(X_t)\sim \cp^\lambda_{\rho^+} (\mathcal{H};\one_\rho)$, we define
		the \textit{total infections at} $v$ by
		\begin{equation*}
		\textbf{M}^l_v(\mathcal{H}):=
		\left|\left\{s \in [0, \textbf{S}(\mathcal{H})]:\;  
		X_s(v)=1 \textnormal{ and } X_{s-}(v)=0
		\right\}\right|.
		\end{equation*}
		Then, we set  the \textit{total infections  at depth-$l$ leaves}  as
		\begin{equation*}
		\textbf{M}^l(\mathcal{H}) := \sum_{v\in\mathcal{L}} \textbf{M}^l_v(\mathcal{H}).
		\end{equation*}

		\item We also let $M^l(\mathcal{H}):=\E_{\textsc{cp}} \textbf{M}^l(\mathcal{H})$ to be the \textit{expected total infections at leaves}.
	\end{itemize}
\end{definition}

For an \egw-process $\mathcal{H}$, the tail probabilities of $S(\mathcal{H})$ and $M^l(\mathcal{H})$ can be estimated using Theorems \ref{thm:tail bound of S}, \ref{thm:M tail bd}, and Proposition \ref{prop:SMunicyclic}, which can be described as follows.

\begin{proposition}\label{prop:egw tail}
	Let $m,l,h$ be nonnegative integers such that $m\geq 2$ and $l\geq h+1$.
	Also, let	$\ep\in(0,1)$ and $\mathfrak{c}=\{c_\delta \}_{\delta\in (0,1]}$ be a collection of positive constants. Then, there exists $d_0(\ep, \mathfrak{c}) >0$ such that the following holds true. For any $\mu$ that satisfies $d:=\E \xi \geq d_0$ and (\ref{eq:concentration condition}) with $\mathfrak{c}$, let $\mu^\sharp:=\mu^\sharp_\ep$, and set $\widetilde{\mu}^\sharp$ to be  its size-biased distribution  (Definition \ref{def:aug}). Then, we have for $\lambda=(1-\ep) d^{-1}$ and $\mathcal{H}\sim \egw(\mu^\sharp, \widetilde{\mu}^\sharp;h,m)^l$ that
	\begin{equation}\label{eq:egw tail:in prop}
	\begin{split}
	\P_{\textsc{gw}} \left(S(\mathcal{H}) \geq  t \right) &\leq 
	3t^{-\sqrt{d}} (\log t)^{-2} ~~~\textnormal{for all }t\geq \frac{2}{\ep};
	\\
	\P_{\textsc{gw}} \left(M^l(\mathcal{H}) \geq \left(1- \frac{\ep}{10} \right)^l t \right) &\leq 
	3t^{-\sqrt{d}} (\log t)^{-2} ~~~\textnormal{for all }t\geq 2,
	\end{split}
	\end{equation}
	where $S(\mathcal{H}^{m,l})$, $M^l(\mathcal{H}^{m,l})$ are given as in Definition .
\end{proposition}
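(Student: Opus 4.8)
The plan is to bootstrap the tree estimates (Theorems~\ref{thm:tail bound of S}, \ref{thm:M tail bd}) and the unicyclic estimate (Proposition~\ref{prop:SMunicyclic}) through the tree part of the \egw-process, by induction on the attachment depth $h$. The structural point is that every vertex at depth $<h$ is a cut vertex of $\mathcal H\sim\egw(\mu^\sharp,\widetilde{\mu}^\sharp;h,m)^l$ whose removal splits $\mathcal H$ into the subgraphs rooted at its children, since no cycle meets depth $\le h-1$. Consequently the recursions (\ref{eq:recursion eq atypical}), (\ref{eq:recursion eq typical}), (\ref{eq:Mrecursion atypical}), (\ref{eq:Mrecursion typical}) --- whose proofs (Propositions~\ref{prop:Srecursion atypical}, \ref{prop:recursion of R}, \ref{prop:S vs R}, \ref{prop:Mrecursion atypcial} and Corollaries~\ref{cor:recursion Mprime}, \ref{cor:MvsMprime}) use only the graphical representation of the contact process and the local splitting at the root --- apply verbatim to $\mathcal H$ whenever $h\ge 1$. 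Splitting the root of $\egw(\mu^\sharp,\widetilde{\mu}^\sharp;h,m)^l$ produces children rooting (independent copies of, modulo the survival conditioning discussed below) $\egw(\widetilde{\mu}^\sharp;h-1,m)^{l-1}$, with the bottom leaves of $\mathcal H$ being bottom leaves of these at depth $l-1$; since $\egw(\nu;0,m)^{l'}=\gwc^1(\nu,m)^{l'}$, the recursion bottoms out at the unicyclic case, where the required estimate is exactly Proposition~\ref{prop:SMunicyclic} (the case $h=0$, in which the root carries $\mu^\sharp$ rather than $\widetilde{\mu}^\sharp$, being covered by the mild generalization of that proposition allowing the root's offspring law to differ, as its proof only uses that both satisfy (\ref{eq:concentration condition}), which holds by Lemma~\ref{lem:aug properties}).

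The inductive step $h-1\Rightarrow h$ (for $h\ge1$) is the three-regime argument of Theorems~\ref{thm:tail bound of S} and \ref{thm:M tail bd}, essentially verbatim. Writing $D\sim\mu^\sharp$ for the root degree and $\mathcal H_1,\dots,\mathcal H_D$ for the child-subgraphs, the inductive hypothesis gives (\ref{eq:egw tail:in prop}) at depth $l-1$ for $S(\mathcal H_i)$ and $M^{l-1}(\mathcal H_i)$, hence for $W_i:=\max\{(1-\ep/10)^{l-1}M^{l-1}(\mathcal H_i),\,S(\mathcal H_i)\}$, whose tail constant is at most $6$ --- harmless, since in Lemmas~\ref{lem:sum Si small t}, \ref{lem:recursion principle atypical} and Corollary~\ref{cor:recursion principle atypical} this constant only enters through factors $C^k$ dominated by the $K^{\sqrt d}(\log d)^{-\sqrt d}$ weights. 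In the small regime ($2/\ep\le t\le d^{1/10}$) one applies (\ref{eq:recursion eq typical}) and (\ref{eq:Mrecursion typical}) together with Lemma~\ref{lem:sum Si small t}; in the intermediate and large regimes one applies (\ref{eq:recursion eq atypical}) and (\ref{eq:Mrecursion atypical}) --- reducing $M^l$ to a single product via $\lambda\sum_i(1-\ep/10)^{l-1}M^{l-1}(\mathcal H_i)\prod_{j\neq i}(1+\lambda S(\mathcal H_j))\le\prod_i(1+2\lambda W_i)$ as in Section~\ref{subsubsec:M2} --- together with Lemma~\ref{lem:recursion principle atypical} and Corollary~\ref{cor:recursion principle atypical}, the upper tail of $D$ being controlled by the concentration estimate (\ref{eq:concentrationcond2}) for $\mu^\sharp$. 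By Lemma~\ref{lem:aug properties} the means of $\mu^\sharp$ and $\widetilde{\mu}^\sharp$ are at most $(1+\ep/9)d$, so the reproduction parameter is $\le(1-\ep)(1+\ep/9)<1-\ep/2$ and every inequality closes with the usual $\ep\mapsto\ep/2$ bookkeeping; exactly as in (\ref{eq:constant infrontofthe tail}), each such step resets the leading constant to at most $1$, so the output obeys (\ref{eq:egw tail:in prop}) with the constant $3$ to spare.

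Two technical points remain. The survival conditioning in Step~1 of the \egw-process (present when $h\ge1$) breaks the exact independence of $\mathcal H_1,\dots,\mathcal H_D$; following \cite{BNNASurvival} I would first prove the bounds for the \emph{unconditioned} \egw-process, where the recursion above is literally valid with i.i.d.\ children, and then note that conditioning on survival to depth $l$ inflates every probability by at most $1/\P_{\textsc{gw}}(\text{survival to depth }l)\le 1+e^{-\Omega(d)}$, since the extinction probability of a Galton--Watson tree with mean $\ge d_0$ is exponentially small; this factor is absorbed into the constant. Second, the quantities $S(\mathcal H)$, $M^l(\mathcal H)$ are those attached to the root-added contact process on $\mathcal H$ with bottom leaves as in Definition~\ref{def:SMegw}, so the decomposition of bottom leaves across the $\mathcal H_i$ used above is precisely the one needed, and the $(1-\ep/10)^l$ depth-decay factor is preserved by the induction because the unicyclic and $\gwc^2$ inputs (Proposition~\ref{prop:SMunicyclic}, Corollary~\ref{cor:SMgwc2}) carry the same factor. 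The main obstacle is really the \emph{bookkeeping}: combining the cyclic geometry, the augmented distributions with slightly shifted means, the survival conditioning, and the three distinct constants ($1$, $3$, $4$) coming out of Theorems~\ref{thm:tail bound of S}--\ref{thm:M tail bd}, Proposition~\ref{prop:SMunicyclic} and Corollary~\ref{cor:SMgwc2}, all while keeping the final constant equal to $3$. No new analytic ingredient is needed --- all the genuine work is contained in Proposition~\ref{prop:SMunicyclic} and Corollary~\ref{cor:SMgwc2}, whose proofs are deferred to the appendix.
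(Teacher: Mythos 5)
Your proposal follows essentially the same route as the paper's proof: induction on the attachment depth $h$, with the base case $h=0$ reduced to Proposition \ref{prop:SMunicyclic} (with the root law $\mu^\sharp$, justified via Lemma \ref{lem:aug properties}), and the inductive step obtained by rerunning the three-regime arguments of Theorems \ref{thm:tail bound of S} and \ref{thm:M tail bd} on the child subgraphs distributed as $\egw(\widetilde{\mu}^\sharp;h,m)^{l-1}$. The only point where you go beyond the paper is the survival conditioning in Step 1 of the \egw-definition, which the paper's proof passes over silently; your qualitative fix (prove the bounds for the unconditioned process, then dominate the conditioned law by the unconditioned one divided by the survival probability) is reasonable, but the claim that this factor is $1+e^{-\Omega(d)}$ does not follow from (\ref{eq:concentration condition}), which controls only the upper tail: $\P(\xi=0)$, and hence the extinction probability $q$, can be of order $1/d$ or a small constant, so what you actually get is a factor $1/(1-q)$ with $q$ merely small for $d\geq d_0$, and absorbing it requires pointing to the genuine slack in the constants rather than to an exponentially small error.
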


\begin{proof}
	Along with (\ref{eq:egw tail:in prop}), we establish the same inequalities for $\mathcal{H}\sim \egw( \widetilde{\mu}^\sharp;h,m)^l$ in tandem, by an induction on $h$. When $h=0$, we have (\ref{eq:egw tail:in prop}) for both $\egw(\widetilde{\mu}^\sharp;0,m)^l$ and $\egw(\mu^\sharp,\widetilde{\mu}^\sharp;h,m)^l$,
	since the former one is the same as $ \gwc^1(\widetilde{\mu}^\sharp,m)^l$ and the latter is
	$\mathcal{H}\sim \gwc^1 (\mu^\sharp,\widetilde{\mu}^\sharp,m)^l$, the $\gwc^1$-process in which only the root of the Galton-Watson tree rooted at $v_1$ has offspring $\mu^\sharp$ and all others have offspring $\widetilde{\mu}^\sharp$. By Lemma \ref{lem:aug properties}, the mean $\tilde{d}:=\E_{D^\sharp\sim \widetilde{\mu}^\sharp} D^\sharp$ of $\widetilde{\mu}^\sharp$ is such that
	\begin{equation*}
	\tilde{d} \leq \left(1+\frac{\ep}{9}\right)d,
	\end{equation*}
	and $\widetilde{\mu}^\sharp$ also satisfies  (\ref{eq:concentrationcond2}) with $\mathfrak{c}'$ as in the lemma. Note that this implies (\ref{eq:concentration condition}) for $\widetilde{\mu}^\sharp.$ 
	Therefore, Proposition \ref{prop:SMunicyclic} gives (\ref{eq:egw tail:in prop}) for $h=0$ and all $m\geq 2$, $l\geq 0$. 
	
	Suppose that we have (\ref{eq:egw tail:in prop}) for $\mathcal{H}'\sim \egw(\widetilde{\mu}^\sharp;h,m)^l$ with $m\geq 2$, $h\geq 0$ and $l\geq h+1$. Let $\rho_1$ be the root of $\mathcal{H}_1\sim \egw(\widetilde{\mu};h+1,m)^l$ and $D_1\sim \widetilde{\mu}^\sharp$ be its degree. Then the subgraphs $H_1^1,\ldots , H_{D_1}^1$  rooted at each child of $v_1$  are i.i.d. $\egw(\widetilde{\mu}^\sharp;h,m)^{l-1}$. Since we have the tail probability estimates for $H_1^1,\ldots,H_{D_1}^1$ by the induction hypothesis, we follow the proof of Theorems \ref{thm:tail bound of S} (Sections \ref{subsubsection1}---\ref{subsubsection3}) and
	\ref{thm:M tail bd} (Sections \ref{subsubsec:M1}---\ref{subsubsec:M3}) to deduce (\ref{eq:egw tail:in prop}) for $\mathcal{H}_1$. The result for $\mathcal{H}_2 \sim \egw(\mu^\sharp,\widetilde{\mu}^\sharp;h+1,m)^l$ follows similarly, where the only difference  is that $\mathcal{H}_2$ has subgraphs $H^2_1,\ldots, H^2_{D_2} \sim$ i.i.d. $\egw(\widetilde{\mu^\sharp}; h,m)^{l-1}$ with $D_2 \sim \mu^\sharp.$ 
\end{proof}

\subsection{Proof of Theorem \ref{thm:phasetransition:graph}}\label{subsec:pf of thm2}

In this subsection, we conclude the proof of the lower bound of Theorem \ref{thm:phasetransition:graph}, by showing Theorem \ref{thm:lowerbd of lamdac}.

Let $\ep \in(0,1)$ and a collection of positive constants $\mathfrak{c}=\{c_\delta \}_{\delta \in (0,1]}$ be given, and let $d_0=d_0(\ep, \mathfrak{c})$ be the maximal  among those from Theorems \ref{thm:tail bound of S}, \ref{thm:M tail bd} and Proposition \ref{prop:egw tail}. For $\mu$ and its size-biased distribution with mean $d:= \E_{D\sim \mu'}$, let $\gamma_1 \in(0, \frac{1}{10})$ be a constant satisfying Lemma \ref{lem:1cyc}, that is,
\begin{equation}\label{eq:gamma1}
\P_{G_n \sim \mathcal{G}(n,\mu)} \left(G_n \in \mathcal{E}^C_n \right) = 1-o(1), \quad \textnormal{with } \mathcal{E}^C_n = \mathcal{E}^C_n\left(\frac{\gamma_1}{\log d} \right) \textnormal{ as in the lemma,}
\end{equation}
and set $l_n = \lfloor\frac{\gamma_1}{2} \log_d n\rfloor$.

In the remaining of this section, we assume that $G_n\sim \mathcal{G}(n,\mu)$ satisfies $\mathcal{E}^C_n$. For each vertex $v\in G_n $, we define the \textit{block} $B_v$ at $v$ as follows.
\begin{itemize}
	\item $B_v := N(v,l_n)$, if $N(v,l_n)$ is a tree.
	
	\item If $N(v,l_n)$ contains the unique cycle $C$ at distance $h$ from $v$, then
	$$B_v:= N(v,l_n)  \cup \left[\bigcup_{u\in C}  N(u, l_n -h) \right]. $$
\end{itemize}
Note that we always have $B_v\subset N(v,\gamma_1 \log_d n)$, so that $B_v$ can only have at most one cycle. 

Let $\mu^\sharp=\mu^\sharp_\ep$ be the augmented distribution (Definition \ref{def:aug}) of $\mu$ and $\widetilde{\mu}^\sharp$ be its size-biased distribution. Also, set $\eta_0, \eta_{h,m}$ to be the laws of $\gw(\mu^\sharp, \widetilde{\mu}^\sharp)^{l_n}$, $\egw(\mu^\sharp,\widetilde{\mu}^\sharp; h,m)^{l_n}$, respectively, and let  $\mathcal{H}$ be a sample from 
$$\mathcal{H} \;\sim\;\; b_0\eta_0 + \sum_{h\geq 0, m\geq 2} \eta_{h,m}, $$
where the constants $b_0$ and $\{b_{h,m}\}_{h\geq 0, m\geq 2} $ are given as in Lemma \ref{lem:couplinglocalnbd}. Then, Lemma \ref{lem:couplinglocalnbd} tells us that for fixed $v$, we can couple $N(v, l_n)$ and $\mathcal{H}$ so that $N(v, l_n) \subset \mathcal{H}$ (in terms of isomorphic embeddings). Furthermore, the same proof as in the lemma gives  the coupling between $B_v$ and $\mathcal{H}$ as follows. We refer \cite{BNNASurvival}, Lemma 4.8 for its proof.

\begin{corollary}\label{cor:couplingblock}
	Under the above setting, let $v$ be a fixed vertex in $G_n$ and $\eta'$ denote the law of $B_v$. Then, we have
	\begin{equation}\label{eq:couplingblock:in cor}
	\eta' \one_{\mathcal{E}^C_n} \leq_{\textsc{st}} b_0\eta_0 + \sum_{h\geq 0, m\geq 2} \eta_{h,m}.
	\end{equation}
\end{corollary}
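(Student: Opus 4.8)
The plan is to repeat, almost verbatim, the exploration argument behind Lemma \ref{lem:couplinglocalnbd} (Lemma 4.8 of \cite{BNNASurvival}) and simply run the exploration a little longer. Recall that one reveals $N(v,l_n)$ in $G_n\sim\mathcal{G}(n,\mu)$ by a breadth-first exploration of half-edges: at each step one picks an unmatched half-edge in the currently explored set, reveals its partner uniformly among all unmatched half-edges, and, whenever a new vertex is absorbed, adds its remaining half-edges to the explored set. By Lemma \ref{lem:aug properties}(3), as long as fewer than $n^{1/2}$ vertices have been absorbed, the empirical distribution of the degrees not yet seen is stochastically dominated by $\mu^\sharp$; consequently the number of half-edges exposed at the root step is dominated by $\mu^\sharp$ and at every later step by the size-biased law $\widetilde{\mu}^\sharp$ (apart from the occasional surplus half-edge created when the revealed partner already lies in the explored set, which is exactly what produces cycles). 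On $\mathcal{E}^C_n$ we have $B_v\subset N(v,\gamma_1\log_d n)$ with $\gamma_1<\tfrac{1}{10}$, so the total number of vertices ever absorbed while forming $B_v$ is at most $|N(v,\gamma_1\log_d n)|\le n^{\gamma_1+o(1)}\ll n^{1/2}$, and the domination of Lemma \ref{lem:aug properties}(3) stays in force throughout.

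First I would handle the tree case: on the sub-event of $\mathcal{E}^C_n$ where $N(v,l_n)$ is a tree we have $B_v=N(v,l_n)$, the exploration never collides, and the coupling of Lemma \ref{lem:couplinglocalnbd} already embeds $B_v$ into a $\gw(\mu^\sharp,\widetilde{\mu}^\sharp)^{l_n}$ tree; this contributes the $b_0\eta_0$ term, where $b_0=1-\sum_{h,m}b_{h,m}$ is the probability of this sub-event. For the cyclic case, on $\mathcal{B}_{h,m}(v)$ — the sub-event of $\mathcal{E}^C_n$ that $N(v,l_n)$ contains a unique cycle $C$ of length $m$ at distance $h$ from $v$ — I would first expose $N(v,l_n)$ as above, and then \emph{continue} the same exploration from the half-edges on $C$ so as to reveal $N(u,l_n-h)$ for each $u\in C$. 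Because on $\mathcal{E}^C_n$ no further collision occurs and the already-revealed portion of $B_v$ (the skeleton between $v$ and $C$, together with $C$ itself) is frozen, each continuation is dominated by an independent $\gw(\widetilde{\mu}^\sharp)^{l_n-h}$ tree rooted at the corresponding vertex of $C$; assembling these with the $\gw(\mu^\sharp,\widetilde{\mu}^\sharp)^{l_n}$-dominated skeleton and the length-$m$ cycle produces precisely a sample of $\egw(\mu^\sharp,\widetilde{\mu}^\sharp;h,m)^{l_n}$ into which $B_v$ embeds. This gives the conditional domination of $\eta'$ restricted to $\mathcal{B}_{h,m}(v)$ by $\eta_{h,m}$, with weight $b_{h,m}=\P(\mathcal{B}_{h,m}(v))$.

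Since the events $\{N(v,l_n)\text{ is a tree}\}$ and $\{\mathcal{B}_{h,m}(v)\}_{h\ge0,\,m\ge2}$ partition $\mathcal{E}^C_n$, summing the per-event couplings yields
\begin{equation*}
\eta'\one_{\mathcal{E}^C_n}\;\leq_{\textsc{st}}\;b_0\eta_0+\sum_{h\ge0,\,m\ge2}\eta_{h,m},
\end{equation*}
which is the claim. The one genuinely delicate point is the assembly step just described: one must check that the extra neighborhoods $\{N(u,l_n-h)\}_{u\in C}$, which overlap one another and the rest of $N(v,l_n)$ near the cycle, fit together consistently into a single \egw-process and that the half-edge-by-half-edge domination is not disturbed by this overlap. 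This is exactly the bookkeeping carried out in the proof of Lemma 4.8 of \cite{BNNASurvival}, and because the entire exploration of $B_v$ stays inside $N(v,\gamma_1\log_d n)$, hence touches $\ll n^{1/2}$ vertices, that argument goes through here without change.
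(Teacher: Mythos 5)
Your proposal is correct and follows essentially the same route as the paper, which proves the corollary by observing that the exploration/domination argument of Lemma \ref{lem:couplinglocalnbd} (Lemma 4.8 of \cite{BNNASurvival}) extends unchanged: one continues the half-edge exploration beyond $N(v,l_n)$ to reveal the extra balls $N(u,l_n-h)$ around the cycle, and since $B_v\subset N(v,\gamma_1\log_d n)$ keeps the number of exposed vertices far below $n^{1/2}$, the augmented-distribution domination of Lemma \ref{lem:aug properties} remains valid throughout, yielding the embedding into the \egw-mixture exactly as you describe.
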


Further, let us consider the the collections of the \textit{bottom leaves} of $B_v$. To this end, let $C$ denote the cycle in $B_v$ if exists, and let 
\begin{equation}\label{eq:def blockbottomleaves}
\mathcal{L}(B_v) := \begin{cases}
\{u\in B_v : \textnormal{dist}(u,v) \geq l_n \} &\textnormal{if } C \textnormal{ does not exist};\\
\{u\in B_v : \textnormal{dist}(u,v) \geq l_n \textnormal{ and } \textnormal{dist}(u,C) \geq l_n-h \} &\textnormal{if } C \textnormal{ exists}, 
\end{cases}
\end{equation}
where $h =\textnormal{dist}(v,C)$. Also, set $\mathcal{L}(\mathcal{H})$ to be the bottom leaves of $\mathcal{H}$, where we follow  Definition \ref{def:SMegw} if $\mathcal{H}$ is an \egw-process. An important thing to note is that if $B_v \subset \mathcal{H}$, then we have $$\mathcal{L}(B_v) \subset \mathcal{L}(\mathcal{H}), $$
in terms of isomorphic embedding. That is, there exists an isomorphism $\phi$ mapping $B_v$ into $\mathcal{H}$ such that $\phi(v) = \rho$, $\phi(\mathcal{L}(B_v)) \subset \mathcal{L}(\mathcal{H})$. 

Since $\mathcal{H}$ is a mixture of Galton-Watson and $\egw$-processes, Theorems \ref{thm:tail bound of S}, \ref{thm:M tail bd} and Proposition \ref{prop:egw tail} imply the tail probabilities of $S(\mathcal{H})$ and $M^{l_n}(\mathcal{H})$. Thus, $R(B_v)$ and $\bar{M}^{l_n}(B_v)$, the expected survival time and the expected total infections at leaves with respect to $\cp^\lambda(B_v;\one_v)$ (Definition \ref{def:SMegw}), satisfy
\begin{equation}\label{eq:blockSM tail}
\begin{split}
&\P (R(B_v)\geq t ) \leq 
\P \left(S(\mathcal{H}) \geq t \right) \leq 3t^{-\sqrt{d}}(\log t)^{-2}, \quad \textnormal{for all } t\geq \frac{2}{\ep};\\
&\P\left(\bar{M}^{l_n}(B_v) \geq \left(1-\frac{\ep}{10} \right)^{l_n} t\right) \\
&\leq
\P \left(M^{l_n}(\mathcal{H}) \geq \left(1-\frac{\ep}{10} \right)^{l_n} t \right)
\leq 
3t^{-\sqrt{d}} (\log t)^{-2}, \quad \textnormal{for all }t\geq 2.
\end{split}
\end{equation}

Based on the above discussions, we can deduce the following lemma.

\begin{lemma}\label{lem:RMblocks}
	Under the above setting, define the events $\mathcal{E}^R_n$ and $\mathcal{E}^M_n$ over the graphs with $n$ vertices by
	\begin{equation*}
	\begin{split}
	\mathcal{E}^R_n &:= 
	\{ R(B_v) \leq n^{2/\sqrt{d}}, \textnormal{ for all } v\in G_n
	\}\\
	\mathcal{E}^M_n
	&:=
	\{\bar{M}^{l_n}(B_v) \leq (\log n)^{-1}, \textnormal{ for all } v\in G_n
	\}.
	\end{split}
	\end{equation*}
	Then, $\P(G_n \in \mathcal{E}^R_n \cap \mathcal{E}^M_n) = 1-o(1).$
\end{lemma}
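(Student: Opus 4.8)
The plan is a straightforward union bound using the tail estimates \eqref{eq:blockSM tail} together with the fact that the relevant thresholds are chosen so that each individual bad event has probability $o(n^{-1})$, leaving room to sum over the $n$ vertices of $G_n$. First I would handle $\mathcal{E}^R_n$. By \eqref{eq:blockSM tail}, for a fixed vertex $v$ we have $\P(R(B_v) \geq n^{2/\sqrt d}) \leq 3 n^{-2}(\log n^{2/\sqrt d})^{-2} = 3n^{-2}(\tfrac{2}{\sqrt d}\log n)^{-2}$, which is $o(n^{-1})$ provided $n^{2/\sqrt d} \geq 2/\ep$; the latter holds for all large $n$ since $d \geq d_0$ is fixed. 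Actually one should be slightly careful: \eqref{eq:blockSM tail} is a statement about the law $\eta'$ of $B_v$ which was derived via the stochastic domination of Corollary \ref{cor:couplingblock}, and it is valid only on the event $\mathcal{E}^C_n$; so the precise statement I would use is that there is a coupling under which, on $\mathcal{E}^C_n$, $R(B_v) \leq S(\mathcal{H})$ pointwise (since $B_v \subset \mathcal{H}$ implies $R(B_v) \leq S(\mathcal{H})$ by the graphical-representation monotonicity of Definition \ref{def:excursion and survival time}), and $S(\mathcal{H})$ has the tail bound from Proposition \ref{prop:egw tail} combined with Theorems \ref{thm:tail bound of S} and \ref{thm:M tail bd} for the pure-tree component. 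Summing $\P(R(B_v) \geq n^{2/\sqrt d})$ over all $v \in G_n$ and adding $\P(G_n \notin \mathcal{E}^C_n) = o(1)$ from \eqref{eq:gamma1} gives $\P(G_n \notin \mathcal{E}^R_n) = o(1)$.

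For $\mathcal{E}^M_n$ I would argue similarly, but now I need to track the factor $(1-\tfrac{\ep}{10})^{l_n}$. On $\mathcal{E}^C_n$ and under the coupling, $\bar{M}^{l_n}(B_v) \leq M^{l_n}(\mathcal{H})$ pointwise, since $\mathcal{L}(B_v) \subset \mathcal{L}(\mathcal{H})$ and the root-added process dominates the ordinary one (the bar-version is with respect to $\cp^\lambda(B_v;\one_v)$, which is dominated by $\cp^\lambda_{\rho^+}(\mathcal{H};\one_\rho)$). I want $\bar{M}^{l_n}(B_v) \leq (\log n)^{-1}$, so I set $(1-\tfrac{\ep}{10})^{l_n} t = (\log n)^{-1}$, i.e. $t = (\log n)^{-1}(1-\tfrac{\ep}{10})^{-l_n}$. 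Recalling $l_n = \lfloor \tfrac{\gamma_1}{2}\log_d n\rfloor$ with $\gamma_1 < \tfrac{1}{10}$, we get $(1-\tfrac{\ep}{10})^{-l_n} = \exp(l_n \log\tfrac{1}{1-\ep/10}) \leq n^{\gamma_1 \log(1/(1-\ep/10)) / (2\log d)}$, which is $n^{o(1)}$ — in fact an arbitrarily small power of $n$ once $d$ is large, since $\log(1/(1-\ep/10))$ is a constant while $\log d \to \infty$. Hence $t \geq 2$ for large $n$, and \eqref{eq:blockSM tail} gives $\P(\bar{M}^{l_n}(B_v) \geq (\log n)^{-1}) \leq 3 t^{-\sqrt d}(\log t)^{-2} \leq 3\,(\log n)^{\sqrt d} n^{-\sqrt d \cdot \gamma_1 \log(1/(1-\ep/10))/(2\log d)}(\log t)^{-2}$. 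The key point is that the exponent of $n$ here is $-\sqrt d$ times a positive constant (independent of $n$), which dominates the $(\log n)^{\sqrt d}$ factor and is certainly $o(n^{-1})$; alternatively, even more crudely, $t^{-\sqrt d} \leq (\log n)^{-\sqrt d} (1-\tfrac{\ep}{10})^{\sqrt d\, l_n}$, and $(1-\tfrac{\ep}{10})^{\sqrt d\, l_n}$ decays faster than any polynomial in $n$ as soon as $\sqrt d\, l_n \gg \log n$, which holds since $l_n \asymp \log_d n$ and $\sqrt d / \log d \to \infty$. A union bound over $v \in G_n$ then yields $\P(G_n \notin \mathcal{E}^M_n) = o(1)$.

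Combining, $\P(G_n \in \mathcal{E}^R_n \cap \mathcal{E}^M_n) \geq 1 - \P(G_n \notin \mathcal{E}^C_n) - \sum_{v}\P(R(B_v)\geq n^{2/\sqrt d}) - \sum_v \P(\bar M^{l_n}(B_v) \geq (\log n)^{-1}) = 1 - o(1)$, which is the claim. The only genuinely delicate point — and the one I would write out carefully — is the bookkeeping that makes the union bound legitimate: namely, that the tail bounds \eqref{eq:blockSM tail}, which a priori hold for the \emph{law of a single block} $B_v$ via Corollary \ref{cor:couplingblock}, can be applied simultaneously to all $n$ blocks. This is not automatic because the blocks are correlated. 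The resolution is the standard one used in \cite{BNNASurvival}: one does not need joint independence, only that for each fixed $v$ the marginal law of $B_v$ is stochastically dominated (on $\mathcal{E}^C_n$) by $b_0\eta_0 + \sum_{h,m}\eta_{h,m}$, and then the union bound $\P(\exists v:\,R(B_v) > \cdots) \leq \sum_v \P(R(B_v) > \cdots)$ requires only the marginals. So the correlation between blocks is harmless here, and the proof reduces to the elementary estimates above. This is also essentially why the thresholds $n^{2/\sqrt d}$ and $(\log n)^{-1}$ were chosen as they were — they sit comfortably inside the regime where the per-vertex failure probability beats $n^{-1}$.
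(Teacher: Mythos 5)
Your proposal is correct and follows essentially the same route as the paper: apply the per-block tail bounds \eqref{eq:blockSM tail} with $t=n^{2/\sqrt d}$ for $R(B_v)$ and $t=(1-\tfrac{\ep}{10})^{-l_n}/\log n$ for $\bar M^{l_n}(B_v)$, then take a union bound over the $n$ vertices (which, as you note, only needs the marginal domination of Corollary \ref{cor:couplingblock} on $\mathcal{E}^C_n$). The only caveat, which the paper states explicitly and your ``once $d$ is large'' phrasing should make precise, is that the exponent $\tfrac{\ep\gamma_1\sqrt d}{O(\log d)}$ beats $1$ only for $d$ larger than a constant depending on $\ep$, a requirement absorbed into the standing assumption $d\ge d_0(\ep,\mathfrak{c})$.
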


\begin{proof}
	Note that (\ref{eq:blockSM tail}) implies 
	\begin{equation*}
	\begin{split}
	\P \left(R(B_v) \geq n^{2/\sqrt{d}} \right) \leq n^{-2},
	\end{split}
	\end{equation*}
	and also
	\begin{equation*}
	\begin{split}
	\P \left(\bar{M}^{l_n}(B_v) \geq \frac{1}{\log n} \right)
	&\leq \P \left( M^{l_n}(\mathcal{H}) \geq \left(1-\frac{\ep}{10}\right)^{l_n} \frac{\exp(\frac{\ep\gamma_1}{20} \log_{d}n)}{\log n} \right)\\
	&\leq \P \left( M^{l_n}(\mathcal{H}) \geq \left(1-\frac{\ep}{10}\right)^{l_n} \exp\left(\frac{\ep\gamma_1}{30} \log_{d}n\right) \right) \\
	&\leq \exp\left(-\frac{\ep\gamma_1\sqrt{d}}{30\log d }\log n \right) \leq n^{-2},
	\end{split}
	\end{equation*}
	where the second and the last inequalities are true for $d$  larger than some  constant depending on $\ep$ (note that $\gamma_1$ was an absolute constant). Therefore, the conclusion follows from a  union bound over all vertices.
\end{proof}



We conclude this subsection by establishing  Theorem \ref{thm:lowerbd of lamdac}.

\begin{proof}[Proof of Theorem \ref{thm:lowerbd of lamdac}]

	Let $\mathcal{E}^C_n, \mathcal{E}^R_n$ and $\mathcal{E}^M_n$ be the events over graphs of $n$ vertices defined in Lemmas \ref{lem:1cyc} and \ref{lem:RMblocks}. Further, 
	set $\gamma_1\in(0,\frac{1}{10})$ as in (\ref{eq:gamma1}) and define 
	$$\mathcal{E}_n := \mathcal{E}^C_n \cap \mathcal{E}^R_n \cap \mathcal{E}^M_n .$$
	Then, Lemmas \ref{lem:1cyc} and \ref{lem:RMblocks}  tell us that
	$$\P(G_n \in \mathcal{E}_n) = 1-o(1).$$
	In the remaining proof, we settle (\ref{eq:R on G:in thm}) for $\mathcal{E}_n$ as above.
	
	Let  $\textbf{R}_v(G_n)$ be the survival time of  $\cp^\lambda(G_n;\one_v)$. Then, the standard coupling argument (also called graphical representation, see, for instance, Section 2 of \cite{BNNASurvival} for a brief review, and \cite{liggett:ips} for a detailed introduction) tells us that
	\begin{equation}\label{eq:RleqsumRv}
	\textbf{R}(G_n) = \max\{\textbf{R}_v(G_n) : v\in G_n \}\leq \sum_{v\in G_n} \textbf{R}_v(G_n) .
	\end{equation}
	
	To investigate $\textbf{R}_v(G_n)$, we introduce a \textit{decomposition} of $\cp^\lambda(G_n;\one_v)$ using the \textit{blocks} $\{B_u\}_u$.  Recall the graphical representation from Section \ref{subsubsec:cpprelim} and let $\{N_x(t)\}_{x\in V(G_n)}$ and $\{N_{\vec{xy}}(t)\}_{\vec{xy}\in \vec{E}(G_n)}$ be the Poisson processes defining the recoveries and infections of $\cp^\lambda(G_n;\one_{v})$, respectively. Also, recall the definition of the bottom leaves $\mathcal{L}(B_u)$ defined in (\ref{eq:def blockbottomleaves}).
	
	 \begin{definition}[Decomposition]\label{def:decomposition}
		Let $G_n$ and $\{B_u\}_{u\in G_n}$ be as above. The \textit{decomposition} of $\cp^\lambda(G_n;\one_{v})$ by $\{B_u\}_u$ is the coupled process generated  as follows.
	
		\begin{itemize}
			\item [1.] Initially, run $\cp^\lambda(B_v;\one_v)$ whose recoveries and infections are given by $\{(N_x(s))_{s\geq 0}\}_{x\in V(B_v)}$ and $\{(N_{\vec{xy}}(s))_{s\geq 0}\}_{\vec{xy}\in \vec{E}(B_v)}$.
			
			\item[2.] In $B_v$, when some $u\in \mathcal{L}(B_v)$ becomes infected in $B_v$ at time $t$ (and has been healthy until time $t-$), initiate a copy of $\cp^\lambda(B_u;\one_u)$ with event times given by $\{(N_x(s))_{s\geq t}\}_{x\in V(B_u)}$ and $\{(N_{\vec{xy}}(s))_{s\geq t}\}_{\vec{xy}\in \vec{E}(B_u)}$.
			
			\item[3.] Repeat Step 2 to every running copies of $\{\cp^\lambda(B_u;\one_u)\}_{u\in G_n}$ until the process terminates, that is, when all vertices in every generated copy are healthy.
		\end{itemize}
	\end{definition}
	
	 Since the copies generated in this definition shares the event times with the original process, it is just another way of interpreting $\cp^\lambda(G_n;\one_{v})$. In particular, both $\cp^\lambda(G_n;\one_{v})$ and its decomposition terminate at the same time. Also, even though the generated copies can be highly correlated with each other, the law of each copy is the same as the contact process with intensity $\lambda$. 	In this approach, we can see that controlling the number of infections at $\mathcal{L}(B_u)$ is crucial, which is done by the second and third lines of (\ref{eq:blockSM tail}).
	
	Let $\widetilde{\textbf{R}}_v(G_n)$ denote the termination time of the decomposed process. Since the copies  in the above procedure shares the event times with the original process, we see that $\widetilde{\textbf{R}}_v(G_n) = \textbf{R}_v(G_n)$ and hence
	\begin{equation}\label{eq:Rtilde}
	R_v(G_n):= \E_{\textsc{cp}} \textbf{R}_v(G_n) = \widetilde{R}_v(G_n) := \E_{\textsc{cp}} \widetilde{\textbf{R}}_v(G_n).
	\end{equation} 
	Further, let $\textbf{U}_v(G_n)$ be the enumeration of vertices $u$ of which a copy of $\cp^\lambda(B_u;\one_u)$ has been generated during this process. If the copies of $\cp^\lambda(B_u;\one_u)$ for $u$ appeared multiple times, we include $u$ multiple times in $\textbf{U}_v(G_n)$, respecting the multiplicities. Then, we observe that
	\begin{equation}\label{eq:Rtilde2}
	\widetilde{\textbf{R}}_v(G_n) \leq \sum_{u\in \textbf{U}_v(G_n)} \textbf{R}_u(B_u),
	\end{equation}
	where we have a slight abuse of notation in the r.h.s.$\;$since we omitted the initiation times of the copies $\cp^\lambda(B_u;\one_u)$, $u\in \textbf{U}_v(G_n)$.
	
	On the event $\mathcal{E}^M_n$, the expected number of new copies generated in Step 2 and 3 from a single $\cp^\lambda(B_u;\one_u)$ is at most $(\log n)^{-1}$, uniformly in $u$. This gives that $$\E_{\textsc{cp}} \textbf{U}_v(G_n) \leq \sum_{k=0}^\infty (\log n)^{-k} \leq 2,$$
	for large enough $n$. Moreover, on $\mathcal{E}^R_n$, each generated copy survives at most $n^{2/\sqrt{d}}$-time in expectation, so we have from (\ref{eq:Rtilde}) and (\ref{eq:Rtilde2}) that for all $v\in G_n$,
	\begin{equation*}
	R_v(G_n) \leq \E_{\textsc{cp}} \widetilde{\textbf{R}}_v(G_n) \leq 2n^{2/\sqrt{d}}.
	\end{equation*}
	Thus, (\ref{eq:RleqsumRv}) gives $R(G_n) \leq 2n^{1+2d^{-1/2}}$, and hence by Markov's inequality we get
	\begin{equation*}
	\Pcp \left(\left. \textbf{R}(G_n) \geq n^2 \,\right|\,G_n\in\mathcal{E}_n \right) 
	\leq \frac{1}{\sqrt{n}} = o(1),
	\end{equation*}
	for any $d$ larger than some absolute constant. This finishes the proof of Theorem \ref{thm:lowerbd of lamdac}.
\end{proof}

\section{Supercritical phase for general distribution: proof of Theorems \ref{thm:supercritical:general} and \ref{thm:super:onevertex}}\label{sec:supercrit} 

In this section, we establish Theorems \ref{thm:supercritical:general} and \ref{thm:super:onevertex}. We first observe that the phase transitions for $\gw(\widetilde{\mu})$ and $\gw(\mu, \widetilde{ \mu})$ occur at the same intensity (Lemma \ref{lm:lambda1}) and then show the second inequality in \eqref{eq:thm:super:ER} (Lemma \ref{lem:super:2ndineq}).  After verifying the two lemmas, we prove the first inequality in \eqref{eq:thm:super:ER} and Theorem \ref{thm:super:onevertex}, which is the main goals of this section. Finally, in Section \ref{subsec:finale}, we combine the main results obtained throughout the paper and conclude the proof of Theorems \ref{thm:phasetransition:tree} and \ref{thm:phasetransition:graph}.

\begin{lemma} \label{lm:lambda1}
	We have
	\begin{equation}\label{key}
	\lambda_1^{\textsc{gw}}(\widetilde{\mu}) = \lambda_1^{\textsc{gw}}(\mu, \widetilde{\mu}).\nonumber
	\end{equation}
\end{lemma}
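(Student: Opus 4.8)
The plan is to identify both $\lambda_1^{\textsc{gw}}(\widetilde\mu)$ and $\lambda_1^{\textsc{gw}}(\mu,\widetilde\mu)$ by comparing the two trees through the subtrees hanging from the root. Condition each tree on being infinite (a positive‑probability event by the giant component hypothesis), and recall that $\lambda_1$ is a.s.\ constant under either conditional law, so the two numbers are well defined. The structural observation driving everything is that in $\gw(\mu,\widetilde\mu)$ every non‑root vertex has offspring law $\widetilde\mu$, so the subtree rooted at any child of the root is an (unconditioned) $\gw(\widetilde\mu)$ tree; the same holds trivially in $\gw(\widetilde\mu)$. Hence, conditioned on being infinite, each of the two trees is a root $\rho$ to which finitely many independent $\gw(\widetilde\mu)$ trees are attached, at least one of them infinite — the only difference being the law of the number of attached subtrees ($\mu$ versus $\widetilde\mu$). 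In particular every infinite attached subtree $T_w$ has $\lambda_1(T_w)=\lambda_1^{\textsc{gw}}(\widetilde\mu)$ a.s.

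First I would prove $\lambda_1^{\textsc{gw}}(\mu,\widetilde\mu)\le\lambda_1^{\textsc{gw}}(\widetilde\mu)$. Fix $\lambda>\lambda_1^{\textsc{gw}}(\widetilde\mu)$ and work on $\gw(\mu,\widetilde\mu)$ conditioned infinite; pick a child $w$ of $\rho$ with $T_w$ infinite, so $\lambda_1(T_w)=\lambda_1^{\textsc{gw}}(\widetilde\mu)<\lambda$ and $\cp^\lambda(T_w;\one_w)$ survives with positive probability. Running $\cp^\lambda$ from $\one_\rho$, with probability at least $\lambda/(1+\lambda\deg\rho)>0$ the first event at $\rho$ is a transmission to $w$; by the strong Markov property and the monotone coupling via the graphical representation (later infections of $w$ coming back from $\rho$ only help), the process then survives with probability at least that of $\cp^\lambda(T_w;\one_w)$. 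Thus $\cp^\lambda$ survives on $\gw(\mu,\widetilde\mu)$ conditioned infinite, so $\lambda\ge\lambda_1^{\textsc{gw}}(\mu,\widetilde\mu)$; let $\lambda\downarrow\lambda_1^{\textsc{gw}}(\widetilde\mu)$. (Equivalently: $\gw(\widetilde\mu)$ conditioned infinite embeds as a root‑subtree of $\gw(\mu,\widetilde\mu)$ conditioned infinite, and survival on a subtree forces survival on the ambient tree.)

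The substantive half is $\lambda_1^{\textsc{gw}}(\widetilde\mu)\le\lambda_1^{\textsc{gw}}(\mu,\widetilde\mu)$, which I would reduce to the following statement for a general locally finite tree $T$ rooted at $\rho$ with finitely many children $w_1,\dots,w_k$ and root‑subtrees $T_1,\dots,T_k$: if $\cp^\lambda(T_i;\one_{w_i})$ dies out a.s.\ for every $i$, then $\cp^\lambda(T;\one_\rho)$ dies out a.s.; i.e.\ $\lambda_1(T)=\min_i\lambda_1(T_i)$. Granting this, take $\lambda<\lambda_1^{\textsc{gw}}(\widetilde\mu)$ and apply it to $T=\gw(\mu,\widetilde\mu)$ conditioned infinite: each $T_i\sim\gw(\widetilde\mu)$ satisfies $\lambda<\lambda_1(T_i)$ when infinite (and dies trivially when finite), so $\cp^\lambda(T_i;\one_{w_i})$ dies a.s., whence $\cp^\lambda(\gw(\mu,\widetilde\mu)$ cond.\ infinite$;\one_\rho)$ dies a.s., i.e.\ $\lambda\le\lambda_1^{\textsc{gw}}(\mu,\widetilde\mu)$; let $\lambda\uparrow\lambda_1^{\textsc{gw}}(\widetilde\mu)$. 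To prove the reduced statement, suppose $\cp^\lambda(T;\one_\rho)$ survives with positive probability. On the survival event either (i) $\rho$ is infected only finitely often, in which case after some time the infection is confined to one $T_i$ and evolves there as a genuine $\cp^\lambda(T_i;\cdot)$ surviving from a finite configuration, hence by connectedness of $T_i$ and the graphical coupling $\cp^\lambda(T_i;\one_{w_i})$ survives with positive probability, contradicting the hypothesis; or (ii) $\rho$ is infected infinitely often, in which case the infection restricted to some fixed $T_i$ persists at arbitrarily large times, so the root‑added process $\cp^\lambda_\rho(T_i;\one_{w_i})$, which dominates that restriction, has infinite excursion time with positive probability. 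I would exclude (ii) using that for $\lambda$ strictly below $\lambda_1(T_i)$ the excursion time of the root‑added process on $T_i$ is finite a.s.\ — equivalently, that the survival threshold of the contact process on an infinite tree is unchanged by attaching one permanently infected vertex at the root.

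I expect case (ii) — controlling the feedback of infections relayed through $\rho$ when the root is reinfected infinitely often — to be the main obstacle: one must rule out that several strictly subcritical subtrees sharing a common root conspire to sustain the infection. The natural route is to analyse the ``return‑to‑$\rho$'' renewal/branching structure and show that strict subcriticality of each $T_i$ keeps the associated branching mechanism (sub)critical, or alternatively to invoke the insensitivity of $\lambda_1$ to bounded modifications near the root as a known feature of the contact process on trees (cf.\ \cite{p92,liggett:ips}).
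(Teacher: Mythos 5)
Your first direction (transmit to a child whose subtree is infinite, then use survival on that $\gw(\widetilde\mu)$ subtree) is exactly the paper's argument and is fine, as is the reduction of the other direction to the deterministic statement "if every root-subtree $T_i$ dies out a.s.\ at rate $\lambda$ and the root has finitely many children, then $\cp^\lambda(T;\one_\rho)$ dies out a.s." (your case (i) is also handled correctly). The genuine gap is your treatment of case (ii), and it is twofold. First, the deduction is invalid: from "$\rho$ is reinfected infinitely often via some fixed $T_i$" you only get that the infection inside $T_i$ is nonempty at arbitrarily large times, hence that the root-added process $\cp^\lambda_{\rho}(T_i;\cdot)$ dominating it is nonempty at arbitrarily large times --- but that is automatic, since the permanently infected parent re-seeds $T_i$ forever. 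It does not follow that a single excursion of the root-added process is infinite: the restriction to $T_i$ may die out in $T_i$ infinitely often and be re-seeded infinitely often (this "bouncing between subtrees through $\rho$" scenario is precisely what must be excluded, and your argument never engages it). Second, the lemma you would invoke --- for every $\lambda<\lambda_1(T_i)$ the excursion time of the root-added process is a.s.\ finite --- is not a known fact and is not "equivalent to insensitivity of the threshold to attaching a permanently infected vertex"; it is strictly stronger than subcriticality. A.s.\ extinction from a single seed does not imply that a process receiving fresh seeds at $\rho$ at rate $\lambda$ forever ever returns to all-healthy: by the M/G/$\infty$ busy-period heuristic the natural sufficient condition is finiteness of the \emph{expected} survival time $R(T_i)$, which is not implied by $\lambda<\lambda_1(T_i)$ (indeed the present paper only controls expected excursion times for $\lambda\le(1-\ep)/d$, via the machinery of Propositions \ref{prop:Srecursion atypical}--\ref{prop:S vs R}, not for all subcritical $\lambda$).

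For comparison, the paper proves the same reduced statement by a softer route that avoids analyzing the feedback at all: conditioned on the process being alive at any time $t$, there is a positive conditional probability that no further transmissions across the root--child edges occur during an a.s.\ finite random horizon $\tau'$ (the maximum of the root's remaining infection time and the extinction times of the $m$ subtree processes started from their current finite configurations, each a.s.\ finite because $\lambda<\lambda_1^{\textsc{gw}}(\widetilde\mu)$), in which case everything dies; since this chance of extinction is conditionally positive at all times, extinction happens a.s.\ (an "infinitely many chances" / L\'evy 0--1 law argument). The only inputs are a.s.\ finiteness of the subtree survival times and finiteness of the root degree, so no control of excursion times or of the number of reinfections of $\rho$ is needed. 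To repair your proof you would either adopt this argument for case (ii), or genuinely prove a quantitative statement about the flow of infections back to $\rho$ (e.g.\ via expected excursion/return counts), which is substantially harder.
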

\begin{proof} Let $\lambda>\lambda_1^{\textsc{gw}}(\widetilde{\mu})$. The contact process on $\GW(\mu, \widetilde{\mu})$ with rate $\lambda$ and the root initially infected has a positive probability of reaching a vertex in the first generation. Once such a vertex is infected, the contact process on its subtree has a positive probability to survive forever. Thus, $\lambda>\lambda_1^{\textsc{gw}}(\mu, \widetilde{\mu})$. 
	
	On the other hand, let $\lambda<\lambda_1^{\textsc{gw}}(\widetilde{\mu})$. Let $\tau$ be the (random) survival time of the contact process on $\GW(\widetilde{\mu})$ with the root initially infected. We have
	$$\P(\tau<\infty) = 1.$$
	Let $m\sim \mu$ be the number of vertices in the first generation of $\T \sim \GW(\mu, \widetilde{\mu})$. Let $(X_t)$ be the contact process with rate $\lambda$ on $\T$ and with the root initially infected. Conditioned on $m$, if $\T$ remains infected at time $t$, the probability that $\T$ is healed in a finite time before any infection between the root and its children happens is at least
	$$\P_{\tau'}\left (\textbf{Ber}(e^{-2 m\lambda \tau'})\right )>0$$
	where $\tau' = \max\{\tau_0, \tau_1, \dots, \tau_m\} <\infty$ a.s., $\tau_0\sim \exp(1)$ upper bounds the survival time of the root and $\tau_1, \dots, \tau_m \overset{iid}{\sim}\tau$ upper bound the survival time of the contact process on the $m$ subtrees rooted at the vertices in the first generation after time $t$. Thus, $(X_t)$ dies out with probability 1, proving $\lambda<\lambda_1^{\textsc{gw}}(\mu, \widetilde{\mu})$ and completing the proof.
\end{proof}

So, for the rest of the proof, we simply write $\lambda_1$ for $\lambda_1^{\textsc{gw}}(\widetilde{\mu})$ and $\lambda_1^{\textsc{gw}}(\mu, \widetilde{\mu})$.  Next, we prove the second inequality in \eqref{eq:thm:super:ER}.

\begin{lemma}\label{lem:super:2ndineq}
	Let $\mu$ be a degree distribution satisfying the giant component condition \eqref{eq:condition on mu}, and let $\widetilde{\mu}$ be its size-biased distribution with $d:=\E_{D\sim\widetilde{\mu}}D$. We have
	\begin{equation*}
	\lambda_1^{\textsc{gw}}(\widetilde{ \mu}) \leq \frac{1}{d-1}.
	\end{equation*}
\end{lemma}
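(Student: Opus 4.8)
The plan is to show that for every $\lambda>\frac{1}{d-1}$ the contact process $\cp^\lambda(\T;\one_\rho)$ on $\T\sim\gw(\widetilde{\mu})$ survives with positive probability; since $\lambda_1^{\textsc{gw}}(\widetilde{\mu})$ is the infimum of all such $\lambda$, this yields $\lambda_1^{\textsc{gw}}(\widetilde{\mu})\le\frac{1}{d-1}$. Note first that a Galton--Watson tree with offspring distribution $\widetilde{\mu}$ is supercritical exactly when $d=\E_{D\sim\widetilde{\mu}}D>1$, which, via the standard identity $d=\E_{D\sim\mu}[D(D-1)]/\E_{D\sim\mu}D$, is equivalent to the giant component condition \eqref{eq:condition on mu}; hence $\frac{1}{d-1}>0$ and $\lambda(d-1)>1$ for the $\lambda$ under consideration. (If $d=\infty$ the bound reads $\lambda_1^{\textsc{gw}}(\widetilde{\mu})\le 0$, which is trivial, so we assume $d<\infty$.)

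The key step is to embed a supercritical Galton--Watson process into the infection, using the graphical representation of Section~\ref{subsubsec:cpprelim}. Infect $\rho$ at time $\sigma_\rho:=0$, and proceed recursively down the tree: whenever a vertex $u$ at depth $n$ becomes infected for the first time, at a time $\sigma_u<\infty$, let $\tau_u$ be the waiting time from $\sigma_u$ to the next recovery ring of $u$; then $\tau_u\sim\Exp(1)$, $u$ is infected throughout $[\sigma_u,\sigma_u+\tau_u)$, and — crucially, because $\T$ is a tree so the only path from $\rho$ to a child $w$ of $u$ passes through $u$ — every such $w$ is still healthy on $[0,\sigma_u)$. Conditionally on $\tau_u$ and on $\deg^+(u)$, the number of children of $u$, each child is therefore independently infected by $u$ within $[\sigma_u,\sigma_u+\tau_u)$ with probability $1-e^{-\lambda\tau_u}$; call such a child \emph{activated} and let $\sigma_w$ be the first such infection time. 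Writing $Z_n$ for the number of activated vertices at depth $n$, the sequence $Z_0=1,Z_1,Z_2,\dots$ is a Galton--Watson process whose offspring law is that of $\mathrm{Bin}(\deg^+,1-e^{-\lambda\tau})$ with $\deg^+\sim\widetilde{\mu}$ and $\tau\sim\Exp(1)$ independent; these offspring counts are i.i.d.\ across activated vertices because $\deg^+$ is a fresh sample of $\widetilde{\mu}$, $\tau$ is fresh by memorylessness of the recovery clock at $u$, and the $u\to w$ infection clocks after time $\sigma_u$ are independent of everything used earlier. The mean offspring equals
\[
\E\!\left[\mathrm{Bin}\!\big(\deg^+,1-e^{-\lambda\tau}\big)\right]
=\E[\deg^+]\cdot\E_{\tau\sim\Exp(1)}\!\big[1-e^{-\lambda\tau}\big]
=d\left(1-\tfrac{1}{1+\lambda}\right)=\frac{\lambda d}{1+\lambda},
\]
which is $>1$ if and only if $\lambda(d-1)>1$, i.e.\ $\lambda>\frac{1}{d-1}$. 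Thus $(Z_n)$ is supercritical and survives with positive probability.

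It remains to upgrade this to survival of the contact process. On the event $\{Z_n\ge 1\ \forall n\}$ there is an infinite ray of activated vertices $\rho=u_0,u_1,u_2,\dots$ with $\sigma_{u_0}<\sigma_{u_1}<\cdots$ and $\sigma_{u_{i+1}}<\sigma_{u_i}+\tau_{u_i}$, so that $u_i$ is infected on all of $[\sigma_{u_i},\sigma_{u_{i+1}}]$. Since the contact process on a locally finite graph performs only finitely many transitions in any bounded time interval (non-explosion; see \cite{liggett:ips}), the strictly increasing sequence $(\sigma_{u_i})$ must tend to $\infty$, so the intervals $[\sigma_{u_i},\sigma_{u_{i+1}}]$ cover $[0,\infty)$ and at every time $t\ge 0$ some vertex is infected. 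Hence $\cp^\lambda(\T;\one_\rho)$ survives with positive probability whenever $\lambda>\frac{1}{d-1}$, which finishes the proof. The only genuinely delicate point is the claim that the activated counts form an honest i.i.d.-offspring Galton--Watson process: this requires care with the graphical representation to verify that generating the offspring of $u$ consumes only Poisson clocks independent of those used before, and it is precisely here that the tree structure (guaranteeing the children of $u$ are still healthy at time $\sigma_u$) is used.
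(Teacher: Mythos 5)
Your proposal is correct and follows essentially the same route as the paper: both embed into the infection a Galton--Watson process of children first infected by their parent before its recovery, compute the mean offspring $\lambda d/(1+\lambda)>1$ for $\lambda>\frac{1}{d-1}$, and conclude survival with positive probability. Your write-up is merely more explicit about the mixed-binomial offspring law, the independence bookkeeping in the graphical representation, and the non-explosion step upgrading survival of the embedded tree to survival of the contact process, all of which the paper's shorter argument takes for granted.
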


\begin{proof}

Let $\lambda>\frac{1}{d-1}$. Let $\T\sim \gw(\widetilde{\mu})$ and $(X_t)$ be the contact process on $\T$ with the root $\rho$ initially infected and infection rate $\lambda$. Construct a subtree $\T'$ rooted at $\rho$ level by level as follows. Let $v$ be a vertex of $\T$ and let $u$ be its parent. The vertex $v$ belongs to $\T'$ if and only if $u\in X_t\cap \T'$ for some $t$ and after the first time that $u$ is infected, it passes its infection to $v$ before being healed. Observe that $\T'$ is a Galton-Watson tree with branching rate $\frac{\lambda d}{1+\lambda}$ and so, as this branching rate is greater than 1, $\T'$ is infinite with positive probability which means that $(X_t)$ lasts forever with positive probability. In other words, $\lambda>\lambda_1^{\textsc{gw}}(\widetilde{\mu})$. Hence, we obtain the conclusion.
\end{proof}

The rest of this section is devoted to the proof of the first inequality in \eqref{eq:thm:super:ER} and the proof of Theorem \ref{thm:super:onevertex}. Let $\lambda>\lambda_1^{\textsc{gw}}(\widetilde{\mu})$. We want to show that \textsf{whp} in $G_n\sim \G(n, \mu)$, the contact process on $G_n$ starting with all vertices infected survives until time $e^{\Theta(n)}$ \textsf{whp} and the contact process starting with one random vertex infected survives until time $e^{\Theta(n)}$ with positive probability. Our strategy is to show that \whp, if there are many ``good" vertices (Definition \ref{def:good:vertex}) infected at certain time $t$, then there are at least twice as many good vertices infected at a later time. Roughly speaking, a good vertex $v$ has an \textit{well-expanding} neighborhood that looks like a Galton-Watson tree. Since $\lambda$ is in the supercritical regime of the tree, with positive probability, the contact process starting from $v$ infects many vertices on the boundary of that neighborhood.

\subsection{Preprocessing the graph} We first preprocess the graph $G_n$ to get rid of high degree vertices. This will be useful to control the number of explored vertices during our exploration of $G_n$ (we refer to the paragraph following Definition \ref{def:aug} for a formal discussion of the term \textit{exploration}). If $\mu$ has an infinite support, we let $\eta_0$, $j_0$, and $j_1$ be positive constants such that $\mu(j_0)>0$, $j_1\in [j_0, e^{j_0/10})$, and
\begin{equation}\label{def:j0}
\mu(j_0, \infty) = \P_{D\sim \mu} \left (D> j_0\right ) \ge 10 \E _{D\sim \mu} D\textbf{1}_{\{D> j_1\}}=: 10\eta_0.
\end{equation}
Note that the requirement $j_1<e^{j_0/10}$ is guaranteed by the assumption that $\mu$ has an exponential tail. If $\mu$ has a finite support, we let $j_0=j_1$ be the largest number in the support of $\mu$ and $\eta_0$ be a sufficiently small constant satisfying 
\begin{equation}\label{def:mu0}
\mu(j_0)\ge 10\eta_0.
\end{equation}

Sample the degrees $D_1, D_2, \dots, D_n$ of $G_n$ independently according to the measure $\mu$. Let $\nu$ be the empirical measure of this sequence $(D_i)$. We have \whp,
\begin{equation}\label{eq:nu:mu:eta0}
|\nu[0, l]-\mu[0, l]|\le \eta_0, \quad\forall l\le j_1 \quad\text{ by Chernoff inequality}
\end{equation}
and 
\begin{equation}\label{eq:sumd:j0}
\sum_{i=1}^{n} D_i \textbf{1}_{\{D_i> j_1\}}\le 2\eta_0 n \quad\text{ by Chebyshev's inequality}.
\end{equation}

Consider the subgraph $G_n'$ obtained from $G_n$ by deleting all vertices with degree greater than $j_1$ together with their matched half-edges. The remaining randomness is the random perfect matching of the remaining half-edges. Let $n'$ be the number of vertices of $G_n'$. By \eqref{eq:sumd:j0}, the number of vertices that are affected by this removal is at most $3\eta_0 n$ and in particular, $n'\ge n-3\eta_0 n$. Let $\nu'$ be the empirical measure of the degree sequence of $G_n'$. We have 
$$|\nu'[0, l]-\nu[0, l]|\le 3\eta_0, \quad\forall l.$$
Combining this with \eqref{eq:nu:mu:eta0} gives 
$$\whp, \quad |\nu'[0, l]-\mu[0, l]|\le 4\eta_0, \quad\forall l\le j_1.$$
Let $\mu_{\eta_0, j_0}$ be the probability measure given by 
\begin{equation}\label{key}
\mu_{\eta_0, j_0}(A) := \begin{cases}
(1-4\eta_0)\frac{\mu(A\cap [0, j_0]) }{\mu[0, j_0]}+ 4\eta_0\textbf{1}_{\{0\in A\}} \quad\text{if $\mu$ has an infinite support},\\
\mu(A\cap[0, j_0))+(\mu(j_0) - 4\eta_0) \text{1}_{j_0\in A} + 4\eta_0\textbf{1}_{\{0\in A\}}\quad\text{if $\mu$ has a finite support}.
\end{cases}
\end{equation}
By \eqref{def:j0} and \eqref{def:mu0}, we have for every $l\le j_0$,
$$\nu'[0, l]\le \max\{\mu[0, l]+4\eta_0, 1\}\le \mu_{\eta_0, j_0}[0, l]$$
and so
\begin{equation}\label{eq:nu':ge:mu}
\whp, \quad \nu'\ge_{st} \mu_{\eta_0, j_0}.  
\end{equation}


Similarly, we have
\begin{lemma}\label{lm:dominate:GW}
	With high probability, for any subset $\Delta$ of vertices in $G_n'$ with at most $\eta_0 n$ vertices, the empirical measure $\nu''$ of the degree sequence in $G_n'\setminus \Delta$ also satisfies $\nu''\ge_{st} \mu_{\eta_0, j_0}$. 
\end{lemma}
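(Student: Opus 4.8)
The plan is to reduce the statement to a union bound over all subsets $\Delta$ of size at most $\eta_0 n$, using the fact that deleting a small set of vertices can only shift the empirical degree distribution by a controlled amount. First I would recall that $G_n'$ has $n' \ge (1-3\eta_0)n$ vertices, with degree sequence obtained from $(D_i)_{i\in[n]}$ by discarding the (at most $3\eta_0 n$) vertices of degree exceeding $j_1$; in particular every vertex of $G_n'$ has degree at most $j_1$. Fix a subset $\Delta \subseteq V(G_n')$ with $|\Delta| \le \eta_0 n$. For each level $l \le j_0$, removing $\Delta$ can only remove vertices, so the count $|\{i \in V(G_n')\setminus \Delta : D_i \le l\}|$ differs from $|\{i \in V(G_n') : D_i \le l\}|$ by at most $|\Delta| \le \eta_0 n$. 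Hence, writing $\nu''$ for the empirical measure on $V(G_n')\setminus \Delta$ (which has $n'' \ge n' - \eta_0 n \ge (1-4\eta_0)n$ vertices),
\begin{equation*}
\nu''[0,l] \;=\; \frac{|\{i\in V(G_n')\setminus\Delta : D_i \le l\}|}{n''} \;\le\; \frac{\nu'[0,l]\, n' }{n''} \;\le\; \frac{\nu'[0,l]\,n'}{n' - \eta_0 n}.
\end{equation*}

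Next I would combine this with the bound $\nu'[0,l] \le \mu[0,l] + 4\eta_0$ for all $l \le j_1$, which holds \whp{} by \eqref{eq:nu:mu:eta0} together with $|\nu'[0,l]-\nu[0,l]|\le 3\eta_0$. Since $n' \ge (1-3\eta_0)n$, we get $n'/(n'-\eta_0 n) \le (1-3\eta_0)/(1-4\eta_0) = 1 + O(\eta_0)$, so that for $l \le j_0$,
\begin{equation*}
\nu''[0,l] \;\le\; \bigl(\mu[0,l] + 4\eta_0\bigr)\bigl(1 + C\eta_0\bigr) \;\le\; \mu[0,l] + C'\eta_0
\end{equation*}
for absolute constants $C, C'$ (using $\mu[0,l] \le 1$). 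The point of the definitions \eqref{def:j0}--\eqref{def:mu0} is precisely that $\mu_{\eta_0,j_0}[0,l] \ge \mu[0,l] + c\eta_0$ for a constant $c$ that beats $C'\eta_0$ once $\eta_0$ is chosen small enough — here I would either reabsorb the loss by replacing $4\eta_0$ with a larger multiple of $\eta_0$ throughout (the choice of the constant $4$ in \eqref{eq:nu':ge:mu} was arbitrary), or simply shrink $\eta_0$. Either way one concludes $\nu''[0,l] \le \mu_{\eta_0,j_0}[0,l]$ for all $l \le j_0$; for $l \ge j_0$ both sides equal the total mass on $[0,l]$ restricted appropriately and the inequality is trivial since $\mu_{\eta_0,j_0}$ is supported on $[0,j_0]$ while every degree in $G_n'$ is $\le j_1$ — one checks the cumulative inequality directly from the definition of $\mu_{\eta_0,j_0}$. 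This yields $\nu'' \ge_{\textsc{st}} \mu_{\eta_0,j_0}$.

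The final point is that the estimate just obtained is \emph{deterministic} once we condition on the \whp{} event that \eqref{eq:nu:mu:eta0} and \eqref{eq:sumd:j0} hold for the original degree sequence $(D_i)$: the bound on $\nu''[0,l]$ derived above used only $\nu'[0,l] \le \mu[0,l]+O(\eta_0)$ and the cardinality bound $|\Delta| \le \eta_0 n$, neither of which involves the random matching, and both of which are uniform over $\Delta$. Therefore no union bound over the exponentially many sets $\Delta$ is actually needed — the conclusion holds simultaneously for all such $\Delta$ on a single \whp{} event. I expect the main (minor) obstacle to be purely bookkeeping: making sure the cascade of $O(\eta_0)$ losses — from $\mu$ to $\nu$, from $\nu$ to $\nu'$, from $\nu'$ to $\nu''$, and from the renormalization by $n''$ — stays strictly below the gap $c\eta_0$ built into $\mu_{\eta_0,j_0}$, which is handled by choosing the slack constant in \eqref{eq:nu:mu:eta0}--\eqref{eq:nu':ge:mu} generously or $\eta_0$ small, exactly as in the proof of \eqref{eq:nu':ge:mu}.
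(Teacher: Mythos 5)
Your overall route is the intended one: the paper gives no separate proof of Lemma \ref{lm:dominate:GW} (it is stated as following ``similarly'' to \eqref{eq:nu':ge:mu}), and the intended argument is exactly your deterministic perturbation bound --- on the single \whp{} event where \eqref{eq:nu:mu:eta0} and \eqref{eq:sumd:j0} hold, deleting any $\Delta$ with $|\Delta|\le\eta_0 n$ can only keep the numerator count at each level $l$ and shrink the denominator, and this is uniform in $\Delta$, so no union bound over the exponentially many sets is needed. That structural point, and the observation that only levels $l\le j_0$ need checking, are correct.

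The one genuine soft spot is the constant bookkeeping at the end and the remedies you propose for it. The slack built into $\mu_{\eta_0,j_0}$ is not a gap $c\eta_0$ with $c$ that you can beat ``once $\eta_0$ is small'': in the infinite-support case $\mu_{\eta_0,j_0}[0,l]-\mu[0,l]=\mu[0,l]\bigl(\tfrac{1-4\eta_0}{\mu[0,j_0]}-1\bigr)+4\eta_0\ge 6\eta_0\,\mu[0,l]+4\eta_0$ by \eqref{def:j0}, so at levels where $\mu[0,l]$ is tiny the available room is essentially $4\eta_0$, while your chain $(\mu[0,l]+4\eta_0)\cdot n'/(n'-\eta_0 n)$ already exceeds $4\eta_0$ there by a $\Theta(\eta_0^2)$ term; since every error and every slack scales linearly in $\eta_0$, shrinking $\eta_0$ does not help, and enlarging the constant $4$ changes the definition of $\mu_{\eta_0,j_0}$ that is used elsewhere (harmless only while the atom stays below the $10\eta_0$ guaranteed by \eqref{def:j0}, and it must then be propagated through \eqref{eq:nu':ge:mu}). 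The clean way to close the argument with the paper's definition is not to route everything through the lossy bound $\nu'[0,l]\le\mu[0,l]+4\eta_0$: the Chernoff deviation in \eqref{eq:nu:mu:eta0} is really $o(1)$, the passage from $\nu$ to $\nu'$ raises the numerator count at level $l\le j_1$ by at most the $\le 2\eta_0 n$ vertices whose degrees were reduced (by \eqref{eq:sumd:j0}), and removing $\Delta$ only lowers the denominator, so for $l\le j_0$,
\[
\nu''[0,l]\;\le\;\frac{\mu[0,l]+2\eta_0+o(1)}{1-4\eta_0}\;\le\;\frac{1-4\eta_0}{1-10\eta_0}\,\mu[0,l]+4\eta_0\;\le\;\mu_{\eta_0,j_0}[0,l],
\]
where the middle inequality is checked by comparing the coefficient of $\mu[0,l]$ and the constant term separately, and the last uses $\mu[0,j_0]\le 1-10\eta_0$. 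The finite-support case should be treated separately (there the gap is exactly $4\eta_0$ for $l<j_0$), but it is easier since then $G_n'=G_n$ and no degrees are reduced, so the only losses are the $o(1)$ deviation and the denominator shrinkage by $\eta_0 n$. With this adjustment your proof is complete; the uniformity-in-$\Delta$ argument and the trivial case $l>j_0$ are fine as you wrote them.
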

Therefore, as long as we have explored at most $\eta_0 n$ vertices, we can use $\mu_{\eta_0, j_0}$ to bound from below the the neighborhood of an unexplored vertex.

Let $\widetilde{\mu}_{\eta_0, j_0}$ be the size-biased distribution of $\mu_{\eta_0, j_0}$.
As $\eta_0\to 0$ (and $j_0\to \infty$ when $\mu$ has an infinite support), $\widetilde{\mu}_{\eta_0, j_0}$ approaches $\widetilde{\mu}$. Let $\tilde d$ be the mean of $\widetilde{\mu}_{\eta_0, j_0}$.

\subsection{Good trees} We define a notion of ``good" trees to describe those trees on which the contact process spreads well: the number of infected vertices grows exponentially. As $\lambda$ is in the supercritical regime of the Galton-Watson tree $ \gw(\widetilde{\mu})$, we then show that with positive probability, a random tree is good.

\begin{definition}[Good trees] \label{def:good:tree}
	A tree $T$ with root $\rho$ is said to be $(s_0, L_0, k, \theta, \ep)$-good if 
	\begin{equation}\label{key}
	\P_{\subcp}\left (|X^{k L_0}_{k s_0}\cap T_{k L_0}| \ge \theta^{k}\right )>\ep
	\end{equation}
	where $\left (X^{k L_0}_{t}\right )\sim \cp^\lambda(T_{\le k L_0}; \one_\rho)$ and $T_{k L_0}$ is the set of vertices at the $k L_0$-generation.
\end{definition}

The following lemma asserts that there is a positive chance for a random tree to be good.
\begin{lemma}\label{lm:good:tree}
	Let $\T\sim \gw(\widetilde{\mu}_{\eta_0, j_0})$. There exist constants $s_0, L_0, \ep>0$ and $\theta>1$ such that
	\begin{equation}\label{eq:good:tree}
	\P_{\subgw}(\T \text{ is $(s_0, L_0, k, \theta, \ep)$-good})>\ep
	\end{equation}
	for all $k$ sufficiently large where the probability is taken over the random graph and the contact process.
\end{lemma}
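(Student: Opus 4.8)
The plan is to establish Lemma~\ref{lm:good:tree} in three steps: reduce to the contact process on the exact (truncated) Galton--Watson tree, prove that its expected infected boundary grows exponentially, and then run a block renormalization that identifies this boundary with a supercritical Galton--Watson process.

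\emph{Reduction.} First I would choose $\eta_0$ small and $j_0$ large enough (harmless in the intended applications, Theorems~\ref{thm:supercritical:general} and \ref{thm:super:onevertex}, where one lets $\eta_0\to 0$) so that one still has $\lambda>\lambda_1^{\textsc{gw}}(\widetilde{\mu}_{\eta_0,j_0})$. This uses that $\widetilde{\mu}_{\eta_0,j_0}$ is stochastically dominated by $\widetilde{\mu}$ up to the truncation and converges to it as $\eta_0\to 0$, together with monotonicity of the survival probability in the offspring law and in $\lambda$; here it is convenient to invoke Lemma~\ref{lm:lambda1} so that the hypothesis only needs to be phrased for $\gw(\widetilde{\mu}_{\eta_0,j_0})$. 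Write $\nu:=\widetilde{\mu}_{\eta_0,j_0}$, which has bounded support (at most $j_0-1$) and mean $\widetilde d>1$; thus $\cp^\lambda(\T;\one_\rho)$ on $\T\sim\gw(\nu)$ survives forever with some probability $q>0$ (over both the tree and the process).

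\emph{Main step (the crux).} The heart of the argument is to produce a depth $L_0\in\mathbb N$ and a time $s_0>0$ such that, with $(X^{L_0}_t)\sim\cp^\lambda(\T_{\le L_0};\one_\rho)$ and $\T\sim\gw(\nu)$,
\[
\mathbb E\big[\,|X^{L_0}_{s_0}\cap\T_{L_0}|\,\big]>1,
\]
equivalently: the expected number of infected vertices at the far boundary of $\gw(\nu)$ grows exponentially in the depth. This is the only genuinely hard point. The plan is a restart/renormalization argument powered by $\lambda>\lambda_1^{\textsc{gw}}(\nu)$: positive survival probability, together with the facts that a finite subtree sustains the contact process for only finite time a.s.\ (so on the survival event the infection reaches every depth) and that it propagates at bounded speed (here the bounded support of $\nu$ is used to control how many slabs it can cross by time $s_0$), can be bootstrapped to show that from enough simultaneously infected vertices at depth $jL_0$ one recovers, with probability bounded away from $0$, strictly more than as many at depth $(j+1)L_0$ at time $(j+1)s_0$; choosing $L_0,s_0$ large makes the expected one-step multiplication factor exceed $1$. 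Converting the qualitative hypothesis $\lambda>\lambda_1$ into this quantitative exponential lower bound on the boundary size is exactly where the difficulty concentrates, and it cannot be obtained by taking $\lambda$ large as in \cite{BNNASurvival}; I expect this to be the main obstacle.

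\emph{From the main step to the lemma.} Granting the claim, fix such $L_0,s_0$ and couple $\cp^\lambda(\T_{\le kL_0};\one_\rho)$ with a Galton--Watson process $(Z_k)_{k\ge 0}$, $Z_0=1$, by the standard layered construction: cut $\T_{\le kL_0}$ into slabs of $L_0$ generations, run the contact process slab by slab on disjoint portions of the graphical representation, and at each slab-boundary vertex $v$ that is infected at the block time $js_0$ restart, on an independent part of the graphical representation (legitimate since the subtrees hanging below distinct slab-boundary vertices are edge-disjoint), a contact process from $\one_v$ on the next $L_0$ generations below $v$. Monotonicity via the graphical representation gives $|X^{kL_0}_{ks_0}\cap\T_{kL_0}|\ge Z_k$ pathwise, and since every vertex of $\T\sim\gw(\nu)$ has offspring law $\nu$, $(Z_k)$ is an honest Galton--Watson process whose offspring law $\zeta$ is the law of $|X^{L_0}_{s_0}\cap\T_{L_0}|$, which by the main step has $\mathbb E\zeta>1$ and bounded support. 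By the Kesten--Stigum theorem $Z_k/(\mathbb E\zeta)^k\to W$ a.s.\ with $\P(W>0)=:q'>0$ the non-extinction probability, so for any fixed $\theta\in(1,\mathbb E\zeta)$ Fatou's lemma yields $\liminf_k\P(Z_k\ge\theta^k)\ge q'$, hence $\P\big(|X^{kL_0}_{ks_0}\cap\T_{kL_0}|\ge\theta^k\big)\ge q'/2$ for all large $k$ (over both tree and process). Finally, to separate the two layers of randomness in Definition~\ref{def:good:tree}, observe that if $\mathbb E_{\textsc{gw}}\big[\Pcp(A_k)\big]\ge q'/2$ then $\Pgw\big(\Pcp(A_k)>q'/4\big)\ge q'/4$ (reverse Markov), where $A_k=\{|X^{kL_0}_{ks_0}\cap\T_{kL_0}|\ge\theta^k\}$; thus $\T\sim\gw(\nu)$ is $(s_0,L_0,k,\theta,\ep)$-good with $\Pgw$-probability exceeding $\ep$ for all large $k$, taking $\ep:=q'/5$, which completes the proof.
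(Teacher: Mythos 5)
Your overall architecture is the same as the paper's: establish that the expected number of infected vertices at depth $L_0$ at time $s_0$ exceeds $1$, dominate the level-$kL_0$ infected set by a supercritical Galton--Watson process via a block/restart coupling on disjoint parts of the graphical representation, use the martingale limit to get $\P(Z_k\ge\theta^k)$ bounded below, and split the two layers of randomness by a reverse-Markov argument. That second half of your write-up is correct and in fact more detailed than the paper's (the paper compresses it into ``$(W_k)$ dominates a Galton--Watson tree with branching rate $\theta$'' plus the same $\E_{\subgw}F(\T)>\ep\Rightarrow\Pgw(F(\T)>\ep/2)>\ep/2$ step).

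The genuine gap is the step you yourself flag as ``the crux'': you never prove that $\E\bigl[\,|X^{L_0}_{s_0}\cap\T_{L_0}|\,\bigr]>1$. This is exactly the paper's Lemma~\ref{lm:good:tree:1}, and it is the hard content of the whole argument. Your sketched plan (positive survival probability, finite subtrees die a.s., bounded propagation speed, then ``bootstrap'' from many infected vertices at one slab to more at the next) does not work as stated: having many infected vertices at depth $jL_0$ with positive probability is precisely the exponential-growth statement you are trying to establish, so the bootstrap is circular, and nothing in survival alone rules out that the infection reaches each level through a single thin ``pipe,'' in which case the expected boundary count could stay bounded. The paper's proof needs two nontrivial ideas you do not have: (i) run at a strictly smaller rate $\lambda'\in(\lambda_1,\lambda)$ and use the slack as a recovery-clock censoring probability $\ep_1$, so that boundary vertices of the infection witness subtree $W_{s_0,L_0}$ can each be forced to push the infection to a fresh child with probability bounded below; and (ii) the structural estimate \eqref{eq:A:L_0} (the ``thin vertices'' argument bounding $\P(\mathcal B_{L_0})$) showing that on the survival-to-depth-$L_0$ event the witness subtree has boundary of size at least an arbitrarily large constant $C$, which is what makes the expected count in \eqref{eq:def:C:use} exceed $1$. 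A secondary, smaller issue: your reduction fixes $\eta_0,j_0$ so that $\lambda>\lambda_1^{\textsc{gw}}(\widetilde{\mu}_{\eta_0,j_0})$, which tacitly assumes (semi)continuity of $\lambda_1$ under the truncation; the paper avoids this by proving the estimate for $\widetilde\mu$ itself and invoking $\widetilde{\mu}_{\eta_0,j_0}\to\widetilde{\mu}$. If you instead cite Lemma~\ref{lm:good:tree:1} as a black box, the rest of your argument does go through and coincides with the paper's proof.
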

For the proof of Lemma \ref{lm:good:tree}, we first show that 
\begin{lemma}\label{lm:good:tree:1}
	Let $\T\sim \gw(\widetilde{\mu})$. There exist constants $s_0, L_0, \ep>0$ and $\theta>1$ such that
	\begin{equation}\label{eq:good:tree:1}
	\E (|X^{L_0}_{s_0}\cap \T_{L_0}|)\ge \theta
	\end{equation}
	where  $(X^{L_0}_{t})\sim \cp^\lambda(\T_{\le L_0}; \one_\rho)$ and the expectation is taken over the random graph and the contact process.
\end{lemma}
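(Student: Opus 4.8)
The plan is to derive the lemma from the single fact that $\lambda>\lambda_1^{\textsc{gw}}(\widetilde\mu)$, which means the contact process $\cp^\lambda(\T;\one_\rho)$ on $\T\sim\gw(\widetilde\mu)$ survives forever with some probability $\beta>0$, the probability being over both the tree and the process (positivity uses $\P(|\T|=\infty)>0$ under the giant component condition \eqref{eq:condition on mu} together with the definition of $\lambda_1^{\textsc{gw}}$, and Lemma \ref{lm:lambda1} if one prefers to phrase things via $\gw(\mu,\widetilde\mu)$). First I would extract two soft consequences of survival. Since a contact process confined to a finite vertex set hits the empty configuration almost surely, on the survival event the infected set is unbounded; as the set of ever-infected vertices is connected, the infection then reaches every level. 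Hence, writing $\tau_L$ for the first time a vertex of $\T_L$ becomes infected, $\P(\tau_L<\infty)\ge\beta$ for all $L$, and by monotone convergence one may pick, for each $L$, a time $s(L)$ with $\P(\tau_L\le s(L))\ge\beta/2$.

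The substance of the lemma lies in upgrading ``the infection reaches level $L_0$'' to ``the \emph{expected} number of infected vertices at level $L_0$ at a fixed time exceeds $1$''. This is where one must use more than first-generation bookkeeping: for $\lambda$ just above $\lambda_1^{\textsc{gw}}(\widetilde\mu)$ the naive estimate, in which the root infects its children only while it is itself alive, has expected growth factor $d\lambda/(1+\lambda)\le 1$ whenever $\lambda\le\tfrac1{d-1}$, and $\lambda_1^{\textsc{gw}}(\widetilde\mu)$ may be strictly below $\tfrac1{d-1}$ (it is, for regular trees, by \eqref{phase:d:tree} and Lemma \ref{lem:super:2ndineq}). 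The mechanism I would use instead is a restart/regeneration argument combined with the self-similarity of $\gw(\widetilde\mu)$, every descendant subtree being an independent copy of $\gw(\widetilde\mu)$: because the process survives with positive probability from a single site, a comparison with supercritical oriented percolation gives a finite local seed configuration and a finite time $T$ such that, starting from that seed, at time $T$ there are, with probability close to $1$, at least two vertices deeper in the tree each carrying a translate of the seed inside a subtree that is, conditionally on the revealed history, a fresh $\gw(\widetilde\mu)$. Iterating over blocks of a fixed depth $L_0$ within a fixed time budget $s_0$ then produces an embedded supercritical branching process with mean offspring $>1$; taking expectations, and noting that all the events involved only see vertices of depth $\le L_0$ (so one may pass freely between the full tree and the truncated process $\cp^\lambda(\T_{\le L_0};\one_\rho)$ of the statement by monotonicity of the graphical representation), yields $\E[|X^{L_0}_{s_0}\cap\T_{L_0}|]\ge\theta$ for a suitable $\theta>1$.

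The step I expect to be the main obstacle is precisely this quantitative restart: converting ``survives with positive probability'' into ``after a fixed time, from a finite seed, at least two well-separated fresh copies of the seed reappear with probability close to $1$''. The standard route — running the process for a long fixed time and then truncating it in both space and time so that the resulting event depends on only finitely many points of the graphical representation, after which the survival probability from a sufficiently fat seed can be pushed past any prescribed level — carries over here, with the one extra bookkeeping point that on a Galton--Watson tree the ``copies'' are required to live in distinct descendant subtrees, which are independent and again distributed as $\gw(\widetilde\mu)$. Once this is in hand, the remaining ingredients (the two soft consequences of survival, the choice of $s(L)$, and the passage from the block comparison to the first-moment inequality) are routine.
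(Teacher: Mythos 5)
You have correctly located the crux (the naive one-generation first moment fails for $\lambda$ just above $\lambda_1^{\textsc{gw}}(\widetilde\mu)$), but the step you yourself flag as the main obstacle is exactly where the content of the lemma lies, and appealing to ``the standard route'' does not close it. The Bezuidenhout--Grimmett seed/block construction you invoke is not an off-the-shelf tool in this setting: it uses translation invariance of $\Z^d$ in an essential way (to make sense of ``a translate of the seed'' and to run the comparison with supercritical oriented percolation), while on a Galton--Watson tree the environment is inhomogeneous and random, degrees are unbounded, and a ``seed'' has to be defined measurably with respect to the explored region so that the descendant subtrees attached to it are conditionally fresh copies of $\gw(\widetilde\mu)$. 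More importantly, the quantitative claim you need --- starting from a finite seed, \emph{at a fixed deterministic time} $T$ there are, \emph{with probability close to $1$}, two new seeds in distinct untouched descendant subtrees at a fixed depth --- is not a soft consequence of survival. Survival only forces the frontier to advance eventually; it does not by itself exclude, at a prescribed time, configurations in which the process lingers in the already-explored region with few infected frontier vertices. Ruling this out at fixed times is precisely the hard part of the $\Z^d$ argument, and no adaptation to random trees is provided or cited; a full proof of such a regeneration statement would be a substantial piece of work (and would in fact give considerably more than the lemma asks). Several conversions at the end (seeds sit at varying depths and are produced over a time window, whereas the lemma counts vertices at level exactly $L_0$ at time exactly $s_0$) are also glossed, though these are the fixable part.

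For comparison, the paper proves Lemma \ref{lm:good:tree:1} without any block construction or probability-close-to-one event. It writes the rate-$\lambda$ process as a rate-$\lambda'$ process with $\lambda_1<\lambda'<\lambda$ whose recovery marks may be censored independently with probability $\ep_1=1-\lambda'/\lambda$; it proves a purely structural Galton--Watson estimate, \eqref{eq:A:L_0}, that any infected cluster joining the root to level $L_0$ inside $\T_{\le L_0}$ has outer boundary of size larger than any prescribed constant $C$ with probability tending to $1$ (via the ``thin vertex'' count); each boundary vertex then, using the censoring slack and the factor $\lambda/(1+\lambda)$, passes the infection to a child whose subtree is an independent fresh $\gw(\widetilde\mu)$, which by \eqref{eq:GW:AtL} reaches level $L_0$ with probability at least $\ep$. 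Choosing $C$ large makes the expected cumulative count \eqref{eq:good:tree:Y} exceed $1$, and a pigeonhole over unit time intervals turns this into a bound at a single time. If you want to keep your outline, you must supply a complete proof of the fixed-time seed-regeneration statement on $\gw(\widetilde\mu)$; otherwise it is much lighter to argue, as the paper does, at the level of expectations, where one only needs each of many independent fresh subtrees to succeed with a fixed small probability rather than a renormalized event of probability close to one.
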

Assuming Lemma \ref{lm:good:tree:1}, we prove Lemma \ref{lm:good:tree}.
\begin{proof} [Proof of Lemma \ref{lm:good:tree}] As $\widetilde{\mu}_{\eta_0, j_0}\to \widetilde{\mu}$, it suffices to prove \eqref{eq:good:tree} for $\T\sim \GW(\widetilde{\mu})$.
	
	Let $W_k = |X^{k L_0}_{ ks}\cap \T_{k L_0}|$. Then $(W_k)$ dominates a Galton-Watson tree with branching rate $\theta>1$. Recall that for a Galton-Watson tree $(Z_k)$ with branching rate $\theta>1$, there exists a nonzero random variable $Z$ such that 
	$$\frac{|Z_k|}{\theta^{k}}\to Z\quad a.e.$$ 
	Thus, there exists an $\ep>0$ such that for sufficiently large $k$, 
	\begin{equation}\label{eq:Wk:ep}
	\P\left (W_k> \ep\theta^{k}\right )>\ep.
	\end{equation}
	Let $F(\T) = \P_{\subcp}\left (|X^{k L_0}_{ kt}\cap \T_{k L_0}|\ge \ep\theta^{k}  \right )$. We have
	$$\E_{\subgw} F(\T) >\ep.$$
	Since $F(\T)$ is a random variable taking values in $[0,1]$, the above inequality implies \eqref{eq:good:tree} which is 
	$$\P_{\subgw}(F(\T)>\ep/2)>\ep/2$$
	because otherwise, 
	$$\E_{\subgw} F(\T) \le \E_{\subgw} F(\T) \textbf{1}_{F(\T)\ge \ep/2} + \ep/2 \le \ep/2+\ep/2 = \ep$$
	a contradiction.
\end{proof}

\begin{proof}[Proof of Lemma \ref{lm:good:tree:1}]
	Let $\lambda'\in (\lambda_1, \lambda)$. Note that $(X_t)$ has the same distribution, up to rescaling the time axis, as a contact process $(Y_t)$ with infection rate $\lambda'$ and recovery rate $\frac{\lambda'}{\lambda} =: 1-\ep_1$. Let $(Y^{L_0}_{t})$ be the contact process $(Y_t)$ restricted to $\T_{\le L_0}$, namely, it only uses the infection and recovery events inside $\T_{\le L_0}$. It suffices to show that for some $s_0, L_0$,
	\begin{equation}\label{eq:good:tree:Y}
	\E_{\T}(|Y^{L_0}_{\le s_0}\cap \T_L|)\ge \theta
	\end{equation}
	where $Y^{L_0}_{\le s_0}:=\bigcup_{t\le s_0} Y^{L_0}_{t}$. Indeed, assuming \eqref{eq:good:tree:Y}, we have
	\begin{equation} 
	\E (|X^{L_0}_{\le s_0}\cap \T_L|)\ge \theta\nonumber.
	\end{equation}
	Let $W_k = |X^{k L_0}_{\le ks}\cap \T_{k L_0}|$ then by the same argument that derives \eqref{eq:Wk:ep}, we have
	\begin{equation}\label{eq:Wk:ep:2}
	\P\left (W_k> \ep\theta^{k}\right )>\ep
	\end{equation}
	for all sufficiently large $k$. As a consequence, 
	$$\E |W_k|\ge \ep^{2}\theta^{k}.$$
	Thus, there exists $t\in [0, ks]$ such that 
	$$\E |X^{k L_0}_{[t, t+1]}\cap \T_{k L_0}|\ge \ep^{2}\theta^{k}/ks.$$
	Hence, by choosing $k$ large, we have
	$$\E |X^{k L_0}_{t+1}\cap \T_{k L_0}|\ge e^{-1}\ep^{2}\theta^{k}/ks>1$$
	which proves \eqref{eq:good:tree:1} by letting the new $s_0$ to be $t+1$ and the new $L_0$ to be $k L_0$.
	
	Let $(Y'_t)$ be the contact process on $\T$ with the root initially infected and with infection rate $\lambda'$ and recovery rate 1.
	
	Since $\lambda'>\lambda_1$, the contact process $(Y'_t)$ survives forever with positive probability and in such event, there is a path of infection from the root to infinity with positive probability. Let $\A_{s_0, L_0}$ be the event that there is a path of infection of $(Y'_t)_{t\le s_0}$ from the root to a leaf of the $L_0$-generation that lies entirely in $\T_{\le L_0}$. We have
	$$\P_{\T, Y'}\left (\bigcup _{s_0\in (0, \infty)}\A_{s_0, L_0}\neq \emptyset \quad \forall L_0\right )>\ep$$
	for some constant $\ep>0$. 
	Thus, for all $L_0$, 
	\begin{equation}\label{key}
	\P_{\T, Y'}\left (\bigcup _{s_0\in (0, \infty)}\A_{s_0, L_0}\neq \emptyset\right )>\ep.\nonumber
	\end{equation}
	We fix $L_0$ to be a sufficiently large constant. Since $\A_{s_0, L_0}$ is an increasing event in $s_0$, there exists $s_0\ge 1$ such that
	\begin{equation}\label{key}
	\P_{\T, Y'}\left (\A_{s_0, L_0}\neq \emptyset\right )>\ep.\nonumber
	\end{equation}
	Note that $\A_{s_0, L_0}\subset \A_{s_0, L_0'}$ for any $L_0'\le L_0$ and hence
	\begin{equation}\label{eq:GW:AtL}
	\P_{\T, Y'}\left (\A_{s_0, L_0'}\neq \emptyset\right )>\ep, \quad \forall L_0'\le L_0.
	\end{equation}
	Conditioned on the event $\A_{s_0, L_0}$, let $W_{s_0,L_0}$ be the set of vertices in $\T_{\le L_0}$ that are infected by time $s_0$ by an infection path that lies entirely in $\T_{\le L_0}$ then $W_{s_0, L_0}$ is a subtree of $\T$ that contains $\rho$ and at least one vertex of depth $L_0$. Let $\partial W_{s_0, L_0}$ be the set of vertices of $W_{s_0, L_0}$ that either belong to $\T_{L_0}$ or has at least one child not in $W_{s_0, L_0}$.  
	
	Let $C$ be such that 
	\begin{equation}\label{eq:def:C}
	C\ep^{2} \ep_1  \frac{\lambda}{1+\lambda} = 10
	\end{equation}
	where we recall that $\ep_1 =1-  \frac{\lambda'}{\lambda}$. The reason for this choice of $C$ will be clear in \eqref{eq:def:C:use}.

	Let $\mathcal B_{L_0}$ be the event that there exists a subtree $W$ of $\T_{\le L_0}$ containing the root, intersecting $\T_{L_0}$ and satisfying $|\partial W|\le C$. Note that this event $\mathcal B_{L_0}$ depends only on the randomness of the tree, but not on the contact process. We shall show later that for any fixed constant $C$,
	\begin{equation}\label{eq:A:L_0}
	\lim_{L_0\to \infty} \P\left (\mathcal B_{L_0}\right ) = 0.
	\end{equation}
	
	Assuming \eqref{eq:A:L_0}, we let $L_0$ be sufficiently large so that $\P(\mathcal B_{L_0})\le \ep/2$.

	We now consider the event $\A_{s_0, L_0}\setminus \mathcal B_{L_0}$ (which happens with probability at least $\ep/2$). The process $(Y_t)$ can be obtained from $(Y'_t)$ by censoring each recovery clock with probability $1-\ep_1$; in other words, with probability $1-\ep_1$, a recovery clock of $(Y'_t)$ is kept and with probability $\ep_1$, it is ignored. For each vertex $v\in \partial W _{s_0, L_0} \cap \T_{<L_0}$ with a child $u\notin W_{s_0, L_0}$, if $v\in Y'_{s_0}$ then with probability at least $(1-e^{-s_0})\frac{\lambda}{1+\lambda}$, $u\in Y_{\le 2 s_0}$. If $v\notin Y'_{s_0}$, then there is at least one recovery clock at $v$ in time $[0, s_0]$. By censoring these recovery clocks, with probability at least $\ep_1 (1-e^{-s_0})\frac{\lambda}{1+\lambda}$, we have $u\in Y_{\le 2 s_0}$. Consider the subtree $T_u \subset \T$ rooted at $u$. We observe that the trees $T_u$ are disjoint and are independent of $W_{s_0, L_0}$ and $(Y'_t)_{t\le s_0}$. By \eqref{eq:GW:AtL}, if $u$ is infected at time $t$, with probability at least $\ep$, there is a path of infection from $u$ to a leaf of the $L_0$-generation of $\T$ by time $s_0+t$. In particular, the expected number of infected descendants of $u$ that belong to the $L_0$-generation of $\T$ by time $s_0+t$ is at least $\ep$.  
	
	Hence, by the choice of $C$ in \eqref{eq:def:C},
	\begin{eqnarray} 
	\E  (|Y^{L_0}_{\le 3 s_0}\cap \T_{L_0}|) &\ge&  \P\left (\A_{s_0, L_0}\setminus \mathcal B_{L_0}\right ) \E\left (|\partial W_{s_0, L_0}|\bigg| \A_{s_0, L_0}\setminus \mathcal B_{L_0}\right ) \ep_1    (1-e^{-s_0})\frac{\lambda}{1+\lambda} \ep \nonumber\\
	&\ge&  \frac{\ep}{2}C \ep_1 \frac{1}{2} \frac{\lambda}{1+\lambda} \ep >1\label{eq:def:C:use} 
	\end{eqnarray}
	proving \eqref{eq:good:tree:Y} by setting $3 s_0$ to be the new $s_0$.

	It remains to prove \eqref{eq:A:L_0}. Assume that such a $W$ exists. Let $P$ be a path of length $L_0$ in $W$ connecting $\rho$ and a vertex in the $L_0$-generation of $\T$. Let $v$ be a vertex in $P$ followed by a child $u\in P$ and consider the subtree $\T_{P}(v)$ consisting of $v$ and its descendants who are not descendants of $u$. If this subtree intersects $\T_{L_0}$ then it must contains a vertex in $\partial W$. Observe also that these subtrees $\T_{P}(v)$ are disjoint for all $v\in P$. Therefore, there are at most $C$ vertices $v$ in $P$ whose $\T_{P}(v)$ intersects $\T_{L_0}$. Assume that the depth of these vertices is $p_1< \dots < p_k$ ($k\le C$).
	
	There are at most $C(L_0+1)^{C}$ ways to choose the sequence $p_1<\dots < p_k$. For each such choice, it suffices to show that the probability that there exists a $W$ corresponding to this sequence is exponentially small in $L_0$, namely there exists a positive constant $c$ such that
	\begin{equation}\label{eq:A:L_0:k:p}
	\P\left (\mathcal B_{L_0, p_1, \dots, p_k}\right ) \le  e^{-c L_0}
	\end{equation}
	where $\mathcal B_{L_0, p_1, \dots, p_k}$ is the event that there exists a path $P = (\rho, v_1, \dots, v_{L_0})$ of length $L_0$ in $\T_{\le L_0}$ connecting the root and a vertex of depth $L_0$ such that for all vertices $v\in P$ except those at depths $p_1, \dots, p_k$, $\T_{P}(v)$ does not intersect $\T_{L_0}$. Since 
	$$\mathcal B_{L_0} \subset \bigcup_{p_1, \dots, p_k} \mathcal B_{L_0, p_1, \dots, p_k},$$
	by the union bound, we get \eqref{eq:A:L_0}.

	It is left to prove \eqref{eq:A:L_0:k:p}. We say that a vertex $v$ is \textit{thin} if it has exactly one child with at least one descendant in the $L_0$-generation of the tree. Let $\ep_2>0$ be the probability that the root of $\GW(\widetilde{\mu})$ has at least 2 children with infinitely many descendants then for a given vertex $v$, the probability that $v$ is thin  is at most $1 - \ep_2$ (with or without conditioning on the event that $v$ has a descendant in the $L_0$-generation). We have
	\begin{equation} \label{eq:A:L_0:k:p:1}
	\P\left (\mathcal B_{L_0, p_1, \dots, p_k}\right ) \le  (1-\ep_2)^{L_0-k} d^{k}\ep_2^{-k}\le (1-\ep_2)^{L_0} \left ((1-\ep_2)^{-1} d\ep_2^{-1}\right )^{C}
	\end{equation}
	which gives \eqref{eq:A:L_0:k:p} for sufficiently large $L_0$.
	
	To see \eqref{eq:A:L_0:k:p:1}, take for an example that the sequence $p_1, \dots, p_k$ is $2, 3$. The probability that $\rho$ is thin is at most $1 - \ep_2$. Conditioned on this event, there is only one choice for $v_1$, which is the only child of $\rho$ with a descendant in the $L_0$-generation. The probability that $v_1$ is thin is again at most $1 - \ep_2$ and conditioned on this event, there is only one choice for $v_2$. Now, $v_2$ is not thin and there can be many choices for $v_3$ among the children of $v_2$. Likewise, there can be many choices for $v_4$ among the children of $v_3$. Given $v_2$, the expected number of choices for $v_4$ is the same as 
	\begin{equation}\label{key}
	\E_{\T\sim \GW(\widetilde{\mu})} \left (|\T_{2}|\big| \T_{L_0-2}\neq \emptyset\right )\le \frac{d^{2}}{\P_{\T\sim \GW(\widetilde{\mu})}\left (\T_{L_0-2}\neq \emptyset\right )}\le d^{2}\ep_2^{-1}.
	\end{equation}
	Repeating this argument for the chance that $v_4, \dots, v_{L_0-1}$ are thin, we get 
	\begin{equation} \label{eq:A:L_0:k:p:2}
	\P\left (\mathcal B_{L_0, 2, 3}\right ) \le  (1-\ep_2)^{L_0-2} d^{2} \ep_2^{-1}\le (1-\ep_2)^{L_0-2} d^{2} \ep_2^{-2}.\nonumber
	\end{equation}
	The proof of \eqref{eq:A:L_0:k:p:1} for a general sequence $p_1, \dots, p_k$ follows by the same reasoning. We leave the details to the interested reader.
\end{proof}

\subsection{Good vertices} We are now ready to define the notion of ``good" vertices as mentioned earlier. Let $s_0, L_0, \theta$ be the constants in Lemma \ref{lm:good:tree}. 


Let $c_0$ be a small constant such that for $\T\sim \GW(\widetilde{\mu}_{\eta_0, j_0})$ and for all $h\ge c_0^{-1}$, 
\begin{equation}\label{def:c0}
\P_{\subgw}\left (|\T_{h}|\ge c_0 \tilde d^{h}\right )\ge 2c_0.
\end{equation}
The existence of $c_0$ follows from the fact that $\frac{|\T_{h}|}{d^{h}}$ converges to a nonzero random variable almost surely and that $\widetilde{\mu}_{\eta_0, j_0}$ converges $\widetilde{\mu}$ and $\tilde d$ converges to $d$.

For a set of $\kappa$ ``good" vertices, we shall show that there are a lot more vertices infected at a later time (Lemma \ref{lm:supercritical:iteration}). Generally, a good vertex is a vertex with an expanding neighborhood, namely a distance-$h$ neighborhood contains at least $c_0 \tilde d^{h}$ vertices (similar to $\T$ as in \eqref{def:c0}).

For the proof of Theorem \ref{thm:supercritical:general}, we only need to take $\kappa=\delta n$ while for Theorem \ref{thm:super:onevertex}, we shall need the range of $\kappa$ to be in $[(\log n)^{C}, \delta n]$ where $C$ is a large constant and $\delta$ is a small constant to be chosen. Thus, for the reader who is only interested in Theorem \ref{thm:supercritical:general}, it suffices to assume that $\kappa = \delta n$ for the rest of the proof. Here, we write for any $\kappa\in [(\log n)^{C}, \delta n]$. 

Let 
\begin{equation}\label{def:l1:kappa}
l_1(\kappa) = 2\log_{\tilde d} \log \frac{n}{\kappa}
\end{equation}
be the depth (up to an additive constant) of the neighborhood for the definition of good vertices. Roughly speaking, the reason for this choice is to have
\begin{equation}\label{eq:d:l}
\tilde d^{l_1(\kappa)}\gg \log \frac{n}{\kappa} \quad\text{and}\quad j_1^{C l_1(\kappa)}\kappa \le \eta_0 n  
\end{equation} 
where the second inequality ensures the hypothesis of Lemma \ref{lm:dominate:GW} and the first inequality, whose purpose will be clear later in Lemma \ref{lm:graphproperty}, is to facilitate the union bound over choices of $\kappa$ vertices.
\begin{definition}[\textit{Good} vertices] \label{def:good:vertex}
	A vertex $v\in G_n'$ is said to be \textit{$\kappa$-good} if 
	\begin{itemize}
		\item $N(v, 3)$ has $j_0 (j_0-1)^{3}$ out-going half-edges,
		\item Each out-going half-edge of $N(v, 3)$ expands well to depth $l_1(\kappa)$, namely, it is matched to a half-edge of a vertex $u$ and the number of out-going half-edges of $N(u, l_1(\kappa))\setminus N(v, 3+l_1(\kappa))$ is at least $c_0 \tilde d^{l_1(\kappa)}$.
	\end{itemize}
\end{definition}
By \eqref{def:c0}, we have for a uniformly chosen vertex $v$,
\begin{equation}\label{eq:good:p0}
\P\left (v \text{ is good}\right )\ge  (\mu_{\eta_0, j_0}(j_0))^{j_0^4} c_0^{j_0(j_0-1)^{3}}=:p_0
\end{equation}
where the term $ (\mu_{\eta_0, j_0}(j_0))^{j_0^4}$ bounds the probability that the first condition in Definition \ref{def:good:vertex} happens with $j_0^{4}$ bounding the total number of vertices in $N(v, 3)$ and the term $c_0^{j_0(j_0-1)^{3}}$ bounds the probability of the second condition for which we use Lemma \ref{lm:dominate:GW} and \eqref{def:c0}.

\subsection{A graph property}
Let 
\begin{equation}\label{def:l2:kappa}
l_2(\kappa) = A l_1(\kappa)
\end{equation}
where $A$ is defined by the equation
\begin{equation}\label{key}
(\lambda \tilde d/2) \theta^{A/2}=2>1.\nonumber
\end{equation}
Intuitively, if there are $\kappa$ infected vertices that are $\kappa$-good, at distance $4+l_1(\kappa)$ from these vertices, there are about $\kappa \tilde d^{l_1(\kappa)}$ vertices by the Definition \ref{def:good:vertex}. The number of these vertices that are infected at some point in the future is at least $\kappa \tilde d^{l_1(\kappa)} (\lambda/2)^{l_1(\kappa)}$. A decent portion of these vertices expands good trees of depth $l_2(\kappa)L_0$ by Lemma \ref{lm:good:tree} and so the number of infected vertices at depth $4+l_1(\kappa)+ l_2(\kappa) L_0$ is about  $\kappa \tilde d^{l_1(\kappa)} (\lambda/2)^{l_1(\kappa)} \theta^{l_2(\kappa)} \ge\kappa 2^{l_1(\kappa)}\gg \kappa$. The rest of this section is to make this heuristic rigorous.

Note that $A$ is independent of $C$ and $\delta$.
Let 
\begin{equation}\label{def:A:l1:l2}
l(\kappa) =4+l_1(\kappa)+ l_2(\kappa) L_0.
\end{equation}

Following the above intuition, we consider a sequence of events on the random graphs $G'_n$.
\begin{itemize}
	\item $\mathcal B$ is the event that for every $\kappa\in [(\log n)^{C}, \delta n]$, for every set $D$ of $\kappa$ vertices of $G_n'$ that are $\kappa$-\textit{good} and whose 3-neighborhoods $N(\cdot, 3)$ are all disjoint, it holds that
	\begin{enumerate}
		\item there are at least $c_0 \tilde d^{l_1(\kappa)}\kappa$ out-going half-edges from $\partial N(D, 4+l_1(\kappa))$,
		\item at least $c_0 \ep \tilde d^{l_1(\kappa)}\kappa/4$ of these half-edges expands to an $(s_0, L_0, l_2(\kappa), \theta, \ep)$-good tree of depth $l_2(\kappa)L_0$. Moreover, these good trees are disjoint. 
	\end{enumerate}

	\item Let $C_1$ be a large constant that does not depend on $\delta$ and $C$ (in other words, we shall choose $C\gg C_1$ and $\delta^{-1}\gg C_1$). Let $\mathcal C$ be the event that the following holds for every $\kappa\in [(\log n)^{C}, \delta n]$, every set $D$ of at most $\kappa$ vertices in $G'_n$, every $a \ge p_0^{-1}\kappa \log \frac{n}{\kappa}+ (\log n)^{C_1}$ and $L_1\le C\log\log \frac{n}{\kappa}$. Let $(X_t)$ be the contact process on $G_n'$ with $D$ initially infected and let $s$ be any positive number. Let $D_{s}$ be the set of vertices in $ \partial N(D, L_1)$ that are infected at time $X_s$ using only the infection and recovery events within $N(D, L_1)$ and let $\mathcal F_{\kappa}$ be the set of $\kappa$-good vertices in $G'_n$.  Then  
	\begin{equation}\label{eq:event:C}
	\P_{\subcp}\left (|D_s\cap \mathcal F_{\kappa}|\le a p_0/2\bigg| |D_s|\ge a \right )\le  \exp\left (-3\kappa\log\frac{n}{\kappa}\right ).
	\end{equation}
\end{itemize}
Under the randomness of the perfect matching of half-edges in $G_n'$, we show in Lemmas \ref{lm:graphproperty} and \ref{lm:graphproperty:2} that $\mathcal B$ and $\mathcal C$ happen \whp, respectively.
\begin{lemma} \label{lm:graphproperty}
	We have
	\begin{equation}\label{key}
	\P(G_n'\in \mathcal B) = 1-o(1).
	\end{equation}
\end{lemma}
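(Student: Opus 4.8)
The statement $\P(G_n' \in \mathcal{B}) = 1-o(1)$ is a union-bound/first-moment estimate over all admissible choices of a $\kappa$-good set $D$, and the key is that the choices of $\kappa$ and the threshold $l_1(\kappa) = 2\log_{\tilde d}\log(n/\kappa)$ were calibrated precisely so the union bound closes. First I would fix $\kappa \in [(\log n)^C, \delta n]$ and a set $D$ of $\kappa$ vertices that are $\kappa$-good with disjoint $3$-neighborhoods, and reveal the half-edge pairings layer by layer in the breadth-first exploration of $N(D, 4+l_1(\kappa))$, stopping when we have exposed $\partial N(D, 4+l_1(\kappa))$. Because each $v \in D$ is $\kappa$-good, by Definition \ref{def:good:vertex} each of its $j_0(j_0-1)^3$ out-going half-edges at level $3$ expands to depth $l_1(\kappa)$ with at least $c_0 \tilde d^{l_1(\kappa)}$ out-going half-edges; summing over $D$ and using disjointness gives part (1), i.e. at least $c_0 \tilde d^{l_1(\kappa)}\kappa$ out-going half-edges from $\partial N(D, 4+l_1(\kappa))$ — this part is essentially deterministic given that the $v$'s are good, modulo checking that the exploration has revealed at most $j_1^{O(l_1(\kappa))}\kappa \le \eta_0 n$ vertices so that Lemma \ref{lm:dominate:GW} applies throughout (this is exactly the second inequality in \eqref{eq:d:l}).

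For part (2), I would condition on the exposed neighborhood $N(D, 4+l_1(\kappa))$ and then reveal, one at a time, the pairings of the out-going half-edges. Each such half-edge attaches to a fresh vertex whose forward subtree (truncated at depth $l_2(\kappa)L_0$) stochastically dominates $\gw(\widetilde{\mu}_{\eta_0,j_0})^{l_2(\kappa)L_0}$ by Lemma \ref{lm:dominate:GW}, since we have still explored $o(n)$ vertices. By Lemma \ref{lm:good:tree}, each such subtree is $(s_0, L_0, l_2(\kappa), \theta, \ep)$-good independently with probability at least $\ep$, and these events are independent across distinct half-edges because the subtrees are vertex-disjoint (this is where I must be careful: in the configuration model the subtrees could a priori collide, so I should expose them sequentially and note that collisions occur with probability $o(1)$ per step, or work on the event that no collisions happen among the first $c_0\tilde d^{l_1(\kappa)}\kappa$ explored subtrees — the total explored size is still $\le \eta_0 n \ll \sqrt{n} \cdot (\text{stuff})$, so collision probability is negligible). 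Given roughly $c_0\tilde d^{l_1(\kappa)}\kappa$ Bernoulli($\ge\ep$) trials, a Chernoff bound gives that at least $c_0\ep\tilde d^{l_1(\kappa)}\kappa/4$ of them succeed, except with probability $\exp(-c\, \tilde d^{l_1(\kappa)}\kappa)$ for some $c = c(c_0,\ep) > 0$.

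The final step is the union bound. The number of ways to choose an ordered sequence $p_1 < \dots$? — no, here the number of candidate sets $D$ of size $\kappa$ among $n$ vertices is at most $\binom{n}{\kappa} \le (en/\kappa)^\kappa = \exp(\kappa \log(en/\kappa))$, and summing the failure probability over $\kappa \in [(\log n)^C, \delta n]$ contributes at most a factor $n$. So it suffices that
\begin{equation*}
\exp\Big(\kappa \log\tfrac{en}{\kappa}\Big) \cdot \exp\big(-c\, \tilde d^{\,l_1(\kappa)}\kappa\big) = o(n^{-1}),
\end{equation*}
and this holds because $\tilde d^{\,l_1(\kappa)} = (\log(n/\kappa))^{2} \gg \log(n/\kappa) \gtrsim \log(en/\kappa)$ — precisely the first inequality in \eqref{eq:d:l}, the purpose of the choice $l_1(\kappa) = 2\log_{\tilde d}\log(n/\kappa)$. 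One must also carry the preprocessing events from \eqref{eq:nu':ge:mu}, Lemma \ref{lm:dominate:GW}, and the bound $n' \ge (1-3\eta_0)n$, all of which hold whp, so they can be intersected at no cost.

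\textbf{Main obstacle.} The genuinely delicate point is controlling the dependence structure in the configuration model during the sequential exploration: I need the "good-tree" Bernoulli trials to be (close to) independent, which requires that the $\le c_0\tilde d^{l_1(\kappa)}\kappa$ depth-$(l_2(\kappa)L_0)$ subtrees hanging off $\partial N(D,4+l_1(\kappa))$ be vertex-disjoint and each stochastically dominate an independent copy of $\gw(\widetilde{\mu}_{\eta_0,j_0})$. Both facts rely on the total number of explored vertices staying below $\eta_0 n$ (for Lemma \ref{lm:dominate:GW}) and, more quantitatively, below $o(\sqrt{n})$-type thresholds to kill cycle/collision probabilities among the explored subtrees; verifying $j_1^{\,O(l_1(\kappa))+l_2(\kappa)L_0}\kappa \le \eta_0 n$ and that pairwise collisions among the revealed subtrees happen with total probability $o(n^{-1})$ after the union bound is the bookkeeping that the constants $l_1, l_2, A, C$ were tuned to make work, and is where most of the care (rather than cleverness) goes.
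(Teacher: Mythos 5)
Your overall scaffolding (explore the neighborhood, invoke Lemma \ref{lm:dominate:GW} while fewer than $\eta_0 n$ vertices are revealed, turn Lemma \ref{lm:good:tree} into Bernoulli$(\ge \ep/2)$ trials per boundary half-edge, Chernoff with failure probability $\exp(-\Omega(\tilde d^{\,l_1(\kappa)}\kappa))$, and a union bound over $\binom{n}{\kappa}$ sets using $\tilde d^{\,l_1(\kappa)}=(\log\frac n\kappa)^2\gg\log\frac n\kappa$) is exactly the paper's plan for part (2) of the event $\mathcal B$. The genuine gap is in part (1), which you declare ``essentially deterministic given that the $v$'s are good.'' It is not: $\kappa$-goodness of each $v\in D$ is a statement about $v$'s own neighborhood in isolation, and the only disjointness you are handed is that of the $3$-neighborhoods. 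The depth-$l_1(\kappa)$ expansions hanging off different vertices of $D$ (and off different half-edges of the same vertex) may overlap, and an ``out-going'' half-edge of one expansion may be matched inside another expansion; in the extreme the boundary of $N(D,4+l_1(\kappa))$ could carry far fewer than $c_0\tilde d^{\,l_1(\kappa)}\kappa$ out-going half-edges. So part (1) is itself a probabilistic statement that must hold uniformly over all $\binom n\kappa$ choices of $D$, i.e.\ with per-$D$ failure probability of order $\exp(-(1+c)\kappa\log\frac n\kappa)$; citing Lemma \ref{lm:dominate:GW} (a degree-domination statement) does nothing to rule out these collisions. This is precisely where the paper spends its effort: it runs the cut-off line algorithm (Definition \ref{def:cut-off line}), notes that every half-edge consumed by the exploration lies above the final cut-off line, bounds the line's displacement by $\gamma^{0.99}$ with $\gamma=\kappa/n$, and then bounds, uniformly over $D$, the number $n_i$ of vertices with $i\ge2$ half-edges above the line, obtaining $\sum_{i\ge2} i\,n_i\le 1.1\kappa(2+\log j_1)$ as in \eqref{eq:sum:ni:i}. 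Only then does the slack factor $j_0(j_0-1)^3$ in the number of level-$3$ half-edges absorb the at most $1.1\kappa(2+\log j_1)$ spoiled expansions and yield the stated lower bound. Your proposal contains no mechanism of this kind, and without one the union bound for part (1) does not close.

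A secondary inaccuracy: your claim that ``collisions occur with probability $o(1)$ per step,'' and the fallback of working on the event of \emph{no} collisions among the $c_0\tilde d^{\,l_1(\kappa)}\kappa$ subtrees, both fail when $\kappa=\Theta(n)$, since the explored set then has size $\Theta(n)$ and collisions are not rare events (the birthday-type $\sqrt n$ heuristic is inapplicable). What is actually needed, and what the paper uses, is weaker: conditioned on everything previously explored (still of size at most $\eta_0 n$), each new half-edge expands to a good tree \emph{disjoint from the explored region} with probability at least $\ep/2$, which suffices for the binomial bound. Your per-step sequential version can be repaired to say exactly this, but as written the quantitative claim is wrong in the regime $\kappa\asymp n$ that matters for Theorem \ref{thm:supercritical:general}.
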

For the proof of this lemma, we shall use the cut-off line algorithm introduced in \cite{k06} to find a uniform perfect matching of the half-edges in $G'_n$.	
\begin{definition}[Cut-off line algorithm]\label{def:cut-off line}
	A perfect matching of the half-edges of $G'_n$ is obtained through the following algorithm.
	\begin{itemize}
		\item Each half-edge of a vertex $v$ is assigned a height uniformly chosen in $[0, 1]$ and is placed on the line of vertex $v$.
		\item The cut-off line is initially set at height 1.
		\item Pick an unmatched half-edge independent of the heights of all unmatched half-edges and match it to the highest unmatched half-edge. Move the cut-off line to the height of the latter half-edge.
	\end{itemize}
\end{definition}
Figure \ref{fig2} illustrates the algorithm.
\begin{figure}[H]
	\centering
	\begin{tikzpicture}[thick,scale=1, every node/.style={transform shape}]
	
\node at (2,-0.4) {$v_3$};
		\node at (4,-0.4) {$\dots$};
	\draw[black] (0,0)--(8,0);

	\node at (1,-0.4) {$v_2$};

	\node at (8,-0.4) {$v_n$};
		\node at (6,-0.4) {$\dots$};
	\draw[red] (-1,3.2)--(9,3.2);
\draw (2, 0.5) node[black]{\textsf{o}};
\foreach \x in {0,1, 2, 4, 6, 8}{
	\draw[black] (\x,0) -- (\x,4.8);
}
	\draw[red, dashed] (-1,2.8)--(9,2.8);
	\node at (10.4,2.8) {new cut-off line}; 
	\draw (0, 3.2) node[black]{\textsf{o}};
	
	\draw (4, 4) node[black]{\textsf{o}};

		\draw (2,1.6) node[cross=2.2pt,black]{};
	\draw (0, 0.5) node[cross=2.2pt,black]{}; 
	\draw (0,2.2) node[cross=2.2pt,black]{};

	\draw (4,1.36) node[cross=2.2pt,black]{};
	\draw (4,2.8) node[cross=2.2pt,red]{};
		\node at (10,3.2) {cut-off line}; 
	\draw (6,0.3) node[cross=2.2pt,black]{};
		\draw (1,2.4) node[cross=2.2pt,black]{};
		
	\draw (8, 3.7) node[black]{\textsf{o}};
	
	\draw (6,2.5) node[cross=2.2pt,black]{};
	
		\node at (0,-0.4) {$v_1$};

	\draw[->, blue] (5.8,1.6) -- (4.2,2.7);
	
	\draw (8, 3) node[black]{\textsf{o}};
		\draw (8,.88) node[cross=2.2pt,black]{};
	\draw (6,1.5) node[cross=2.2pt,blue]{};
	\draw (6, 3.5) node[black]{\textsf{o}};
	\end{tikzpicture}
	\caption{The circles `o' represent matched half-edges and the crosses `$\times$' represent unmatched half-edges. The blue half-edge is chosen and matched to the red half-edge which is the highest unmatched half-edge. Then the cut-off line is moved to the new cut-off line (the dashed line).} \label{fig2}
\end{figure}
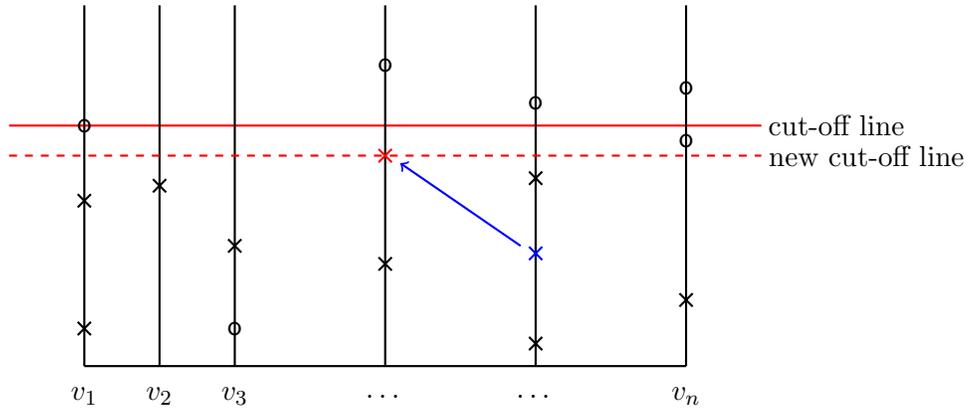
\begin{proof}[Proof of Lemma \ref{lm:graphproperty}] 
	Let $D$ be a set of $\kappa$ vertices.
	
	Perform the cut-off line algorithm to explore $N(D, 4+l_1(\kappa))$: in other words, we run the cut-off line algorithm to match the half-edges of $D$ and then $\partial N(D, 1), \dots, \partial N(D, 3+l_1(\kappa))$. Starting with the cut-off line at 1, let $1-a$ be the position of the cut-off line after this exploration.

	Let 
	$$\gamma = \frac{\kappa}{n}\le \delta.$$
	
	There are at most $2j_1^{4+l_1(\kappa)}\kappa$ half-edges of $G_n'$ that lie above this cut-off line while \textsf{whp}, there are more than $n$ half-edges in $G_n'$. Hence, by Chernoff inequality,
	\begin{equation}\label{key}
	\P(a\ge \gamma^{0.99}) \le \P\left (\Bin (n, \gamma^{0.99})\le 2j_1^{4+l_1(\kappa)}\kappa \right ) \le \exp\left (- \gamma^{0.99} n/12 \right )\le \exp(-1.01   \kappa \log \gamma) .\nonumber
	\end{equation}
	For each $j\ge 2$, let $n_j$ be the number of vertices in $G'_n$ with $j$ half-edges above the cutoff line. 
	Under the event that $a\le \gamma^{0.99}$, if $\sum_{ j\ge i} n_j \ge 1.09  \kappa/(i-1)$ for some $i$ then there are $\ge 1.09   \kappa/(i-1)$ vertices in $G_n'$ with $\ge i$ half-edges above the line $1- \gamma^{0.99}$. This happens with probability at most
	$$\P(\Bin(n, j_1^{i}\gamma^{0.99 i})\ge 1.09   \kappa/(i-1))$$
	which is at most 
	$$\exp\left (-0.99\times 1.09 \frac{  \kappa}{i-1} \log \frac{1.09\gamma ^{-0.99 i+1}}{j_1^{i} (i-1)}\right ) \le \exp\left (-0.99\times 1.09 \frac{  \kappa}{i-1} \log \frac{1.09\gamma ^{-0.99 i+1}}{j_1^{j_1+1}}\right ) $$
	by the following Chernoff inequality for binomial distribution
	\begin{equation}\label{chernoff:bin}
	\P\left (\Bin(N, p)\ge x Np\right )\le \exp\left (-xNp\log x+xNp\right ),\quad\forall N, p>0, x>1.
	\end{equation}
	
	Since $\gamma\le \delta$ is sufficiently small compared to $j_1$, the above exponential is at most $\exp\left (-1.01   \kappa\log \gamma\right )$ for $i\ge 2$.
	
	Taking union bound over $i$ and $D$ and noting that ${n\choose {  \kappa}} = o\left (\exp\left (1.01   \kappa\log \gamma\right )\right )$, we conclude that \whp, for all $D$ and all $2\le i\le j_1$, we have $\sum_{j\ge i} n_j \le 1.09  \kappa/(i-1)$. Thus
	\begin{equation}\label{eq:sum:ni:i}
	\sum_{i=2}^{j_1} in_i\le 1.09   \kappa\left (\frac{j_1}{j_1-1}+\sum_{i=2}^{j_1-1}\left (\frac{1}{i-1}-\frac{1}{i}\right )i\right )\le 1.1   \kappa(2+ \log j_1).
	\end{equation}
	
	Now, assume that $D$ consists of $\kappa$-good vertices with disjoint 3-neighborhoods. By definition, the number of out-doing half-edges of $N(D, 3)$ is $j_0(j_0-1)^{3}\kappa$. By \eqref{eq:sum:ni:i}, all but at most $1.1   \kappa(2+ \log j_1)$ of these half-edges expand disjoint $l_1(\kappa)$-neighborhoods. By the definition of $\kappa$-good vertices, each of these neighborhoods has at least $c_0 \tilde d^{l_1(\kappa)}$ out-going half-edges. Therefore, the number of out-going half-edges from $\partial N(D, 4+l_1)$ is at least 
	\begin{equation}\label{key}
	\left (j_0(j_0-1)^{3}\kappa-  1.1   \kappa(2+ \log j_1)\right ) c_0 \tilde d^{l_1(\kappa)}\ge c_0 \tilde d^{l_1(\kappa)}\kappa.
	\end{equation}
	This establishes the first part in the definition of $\mathcal B$. For the second part, we enumerate these half-edges by $e_1, e_2, \dots, e_{m}$ and explore their $l_2(\kappa)L_0$ neighborhoods one by one. Note that since the maximal degree is $j_1$, at any point during the exploration, the number of explored vertices is at most 
	$\kappa j_1^{5+l_1(\kappa)+l_2(\kappa)L_0} = \kappa \left (\log \frac{n}{\kappa}\right )^{A_1}$ where $A_1$ is a constant independent of $\delta$ and $C$. This number is at most $\eta_0 n$ for sufficiently small $\delta$. Thus, by Lemma \ref{lm:dominate:GW} and Lemma \ref{lm:good:tree}, for each $k\in [1, m]$, the probability that $e_k$ expands to an $(s_0, L_0, l_2(\kappa), \theta, \ep)$-good tree that does not intersect the previously explored neighborhoods (i.e., $N(D, 4+l_1(\kappa))\cup \bigcup _{i=1}^{k-1} N(e_i, l_2(\kappa) L_0)$) is at least $\ep/2$, conditioned on these previously explored neighborhoods. Hence, the number of half-edges that expand to disjoint $(s_0, L_0, l_2(\kappa), \theta, \ep)$-good trees is at least $\Bin(c_0 \tilde d^{l_1(\kappa)}\kappa, \ep/2)$. By Chernoff inequality, 
	\begin{equation}\label{key}
	\P\left (\Bin(c_0 \tilde d^{l_1(\kappa)}\kappa, \ep/2)\le c_0 \tilde d^{l_1(\kappa)}\kappa \ep/4\right ) \le \exp\left (-\Omega\left (\tilde d^{l_1(\kappa)} \kappa\right )\right ) \le \exp\left (-1.1 \kappa\log\frac{n}{\kappa}\right )
	\end{equation}
	by \eqref{eq:d:l}. By taking the union bound over the choices of $D$, this completes the proof. 	 
\end{proof}
\begin{lemma}\label{lm:graphproperty:2}
	We have
	\begin{equation}\label{key}
	\P \left (G'_n\in \mathcal C\right ) = 1-o(1). \nonumber 
	\end{equation}
\end{lemma}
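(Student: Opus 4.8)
The plan is to reveal the random perfect matching of the half-edges of $G_n'$ in two stages, interleaved with the restricted contact process, so that the \emph{goodness} of the infected boundary vertices becomes fresh randomness, and then to conclude by a Chernoff bound followed by a union bound over $\kappa,D,a,L_1$.

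\textbf{Stage one.} Fix $\kappa\in[(\log n)^{C},\delta n]$, a set $D$ of at most $\kappa$ vertices of $G_n'$, and $L_1\le C\log\log\frac n\kappa$. Using the cut-off line algorithm (Definition \ref{def:cut-off line}), explore $N(D,L_1)$, i.e.\ match only the half-edges incident to $D,\partial N(D,1),\dots,\partial N(D,L_1-1)$; since $G_n'$ has maximal degree $j_1$ and $L_1$ is of order $\log\log\frac n\kappa$, the number of vertices revealed here is at most $\kappa\,j_1^{L_1}=\kappa(\log\frac n\kappa)^{O(1)}$. This quantity is increasing in $\kappa$ over the whole range, hence is maximized at $\kappa=\delta n$, where it equals $\delta n(\log\frac1\delta)^{O(1)}\le\tfrac12\eta_0 n$ once $\delta$ is small enough depending on $\eta_0,j_1,C$. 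Now run the contact process on $G_n'$ \emph{restricted to} $N(D,L_1)$, using only the Poisson clocks lying inside $N(D,L_1)$; this produces the sets $D_s\subseteq\partial N(D,L_1)$, which are functions of the stage-one revelation and of those clocks only, hence are independent of the matching outside $N(D,L_1)$.

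\textbf{Stage two.} Condition on stage one and on $\{|D_s|\ge a\}$, pick $a$ of the vertices of $D_s$, and---continuing the cut-off line algorithm---reveal, one vertex at a time, the radius-$(3+l_1(\kappa))$ neighborhood needed to decide $\kappa$-goodness (Definition \ref{def:good:vertex}). Since $l_1(\kappa)=2\log_{\tilde d}\log\frac n\kappa$ is again polylogarithmic in $\frac n\kappa$ and $a\le|\partial N(D,L_1)|$, the total number of revealed vertices stays below $\eta_0 n$ throughout (by the same increasing-in-$\kappa$ computation, for $\delta$ small). Hence Lemma \ref{lm:dominate:GW} applies at every step: the empirical degree measure of the unexplored part stochastically dominates $\mu_{\eta_0,j_0}$, so each newly examined boundary vertex is $\kappa$-good with conditional probability at least $p_0$ given the past, exactly as in the derivation of \eqref{eq:good:p0} (the two requirements in Definition \ref{def:good:vertex} are lower-bounded using Lemma \ref{lm:dominate:GW} and \eqref{def:c0}). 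Consequently the number of $\kappa$-good vertices among those $a$ vertices stochastically dominates $\Bin(a,p_0)$, and---crucially---this domination holds conditionally on the contact-process realization, since goodness was revealed in fresh randomness independent of it. Therefore
\[
\P_{\subcp}\big(|D_s\cap\mathcal F_{\kappa}|\le \tfrac{ap_0}{2}\ \big|\ |D_s|\ge a\big)\ \le\ \P\big(\Bin(a,p_0)\le \tfrac{ap_0}{2}\big)\ \le\ e^{-cap_0}
\]
for an absolute constant $c>0$. Recalling $a\ge p_0^{-1}\kappa\log\frac n\kappa+(\log n)^{C_1}$, so that $ap_0$ is at least $\kappa\log\frac n\kappa$ (and also at least $(\log n)^{C_1}$), the right-hand side is at most $e^{-3\kappa\log\frac n\kappa}$ once the constants $C_1$ and $\delta$ are chosen appropriately; this is precisely \eqref{eq:event:C}.

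\textbf{Conclusion and main obstacle.} Finally I would union-bound $\mathcal C^c$ over $\kappa$ (at most $n$ values), over $D$ with $|D|\le\kappa$ (at most $\binom{n}{\le\kappa}\le e^{O(\kappa\log(n/\kappa))}$ choices), over $a$ (at most $n$ values), and over $L_1$ (at most $O(\log\log n)$ values); since each bad event has probability at most $e^{-3\kappa\log\frac n\kappa}$ and $3\kappa\log\frac n\kappa$ exceeds $O(\kappa\log\frac n\kappa)+O(\log n)$ with room to spare for every $\kappa$ in the range (using $\kappa\ge(\log n)^{C}$ at one end and $\kappa\le\delta n$ with $\delta$ small at the other), the total is $o(1)$, which gives $\P(G_n'\in\mathcal C)=1-o(1)$. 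The main obstacle is not any individual estimate but the bookkeeping of the exploration: one must reveal half-edges in an order (via the cut-off line algorithm) for which, simultaneously, (i) $D_s$ depends only on what has already been revealed, so that the goodness of the vertices of $D_s$ is genuinely independent of the contact process, and (ii) the \emph{total} number of vertices ever revealed stays below $\eta_0 n$ \emph{uniformly} over all $\kappa\in[(\log n)^{C},\delta n]$ and all sets $D$, so that Lemma \ref{lm:dominate:GW} is always applicable. Requirement (ii) is exactly what forces $L_1\le C\log\log\frac n\kappa$, $l_1(\kappa)=2\log_{\tilde d}\log\frac n\kappa$ and $\delta$ small, and is also the reason the statement concerns a conditional probability rather than the contact process on all of $G_n'$.
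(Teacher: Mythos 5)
The central gap is that your argument establishes an \emph{annealed} bound, while the event $\mathcal C$ is a \emph{quenched} statement about the graph. By definition, $G_n'\in\mathcal C$ requires that for every admissible $(\kappa, D, a, L_1)$ the probability over the contact process alone, with the graph fixed, satisfies $\P_{\subcp}\left(|D_s\cap\mathcal F_\kappa|\le ap_0/2\,\middle|\,|D_s|\ge a\right)\le \exp\left(-3\kappa\log\frac n\kappa\right)$. For a fixed realization of $G_n'$ the set $\mathcal F_\kappa$ of $\kappa$-good vertices is deterministic, so in the quenched setting there is no ``fresh randomness'' in the goodness of the vertices of $D_s$; your two-stage revelation, in which the matching outside $N(D,L_1)$ is still unrevealed when goodness is examined, only controls the joint probability $\P_{\subrg,\subcp}$. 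Consequently your final union bound is over the wrong objects: $\mathcal C^c$ is a union of \emph{graph} events of the form $\left\{\P_{\subcp}(\cdots\mid G_n')> e^{-3\kappa\log(n/\kappa)}\right\}$, and your argument never bounds the probability of any such event. The paper bridges exactly this gap with a first-moment/Markov step: it first proves the stronger annealed bound $\P_{\subrg,\subcp}(\cdot)\le\exp\left(-5\kappa\log\frac n\kappa\right)$ (via the expectation bound $\E_{\subrg,\subcp}[|D_s\cap\mathcal F_\kappa|\mid |D_s|\ge a]\ge ap_0$ together with Azuma's inequality), then writes $\P_{\subrg,\subcp}(\cdot)=\E_{\subrg}\left[\P_{\subcp}(\cdot\mid G_n')\right]$ and applies Markov's inequality, so that the quenched probability exceeds the threshold $e^{-3\kappa\log(n/\kappa)}$ only with graph-probability $\le e^{-2\kappa\log(n/\kappa)}$, which is what survives the union bound over the $\sim\binom{n}{\kappa}$ choices of $D$. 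Note also that the expectation-plus-Azuma route is what handles dependence: distinct vertices of $D_s$ can have overlapping $(3+l_1(\kappa))$-neighborhoods, so your claim that each newly examined vertex is good with conditional probability $\ge p_0$ given the past (hence domination by $\Bin(a,p_0)$) is not justified as stated — a previously revealed overlap can make the conditional probability drop below $p_0$, even to $0$.

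Separately, even read purely as an annealed estimate your bound is quantitatively too weak. From $a\ge p_0^{-1}\kappa\log\frac n\kappa+(\log n)^{C_1}$ you only get $ap_0\ge\kappa\log\frac n\kappa+p_0(\log n)^{C_1}$, so a Chernoff bound $\P\left(\Bin(a,p_0)\le ap_0/2\right)\le e^{-c\,ap_0}$ with an absolute constant $c<1$ yields roughly $e^{-c\,\kappa\log(n/\kappa)}$. No choice of $C_1$ or $\delta$ upgrades this to $e^{-3\kappa\log(n/\kappa)}$: at $\kappa=\delta n$ the deficit is a fixed constant factor in the exponent, unaffected by shrinking $\delta$ or enlarging $C_1$, and in particular the bound cannot absorb the union bound over $\binom{n}{\kappa}\approx e^{(1+o(1))\kappa\log(n/\kappa)}$ sets $D$ — a loss that is unavoidable whichever way one converts to the quenched statement. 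So both the structure (the missing annealed-to-quenched conversion) and the numerics need repair; the paper's Azuma-plus-Markov argument supplies both at once.
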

\begin{proof}[Proof of Lemma \ref{lm:graphproperty:2}]
	The event $|D_s|\ge a$ depends only on the $L_1$-neighborhood of $D$. Thus, conditioning on this event, we expose the complement of this neighborhood to find the number of $\kappa$-good vertices in $D_s$. Since the probability for a vertex to be $\kappa$-good is at least $p_0$, 
	\begin{equation}\label{key}
	\E _{\subrg, \subcp}\left (|D_s\cap \mathcal F_\kappa|\bigg | |D_s|\ge a\right ) \ge a p_0.\nonumber
	\end{equation}
	By Azuma's inequality and noting that $|D_s\cap \mathcal F_\kappa| $ depends only on $N(D_s, 4+l_1(\kappa))$ whose size at most $j_1^{5+l_1(\kappa)}$,
	\begin{equation}\label{key}
	\P _{\subrg, \subcp}\left (|D_s\cap \mathcal F_\kappa| \le ap_0/2\bigg | |D_s|\ge a\right ) \le \exp\left (-\frac{(ap_0/2)^{2}}{2 a j_1^{10+ 2  l_1(\kappa)}}\right )    \le \exp\left ( -5 \kappa \log \frac{n}{\kappa} \right ).
	\end{equation}
	where we chose $C_1$ much larger than $A$ in \eqref{def:l2:kappa}.
	
	Writing $\P_{\subrg, \subcp}( \cdot)$ as $\E_{\subrg}\left (\P_{\subcp} (\cdot)\bigg| G'_n\right )$ and applying Markov's inequality, we yield from the above inequality that
	\begin{eqnarray}\label{key}
	&&\P_{\subrg} \left (\P_{(X_t)} \left (|D_s\cap \mathcal F_\kappa| \le ap_0/2 \right ) \ge  \exp\left ( -3 \kappa \log \frac{n}{\kappa} \right )\bigg | |D_s|\ge a\right )\nonumber\\
	&& \qquad  \le \exp\left ( -2 \kappa \log \frac{n}{\kappa} \right ) = o\left ({n\choose \kappa}^{-1}\right ).\nonumber
	\end{eqnarray}
	Taking union bound over at most $\kappa {n\choose \kappa}$ choices of $D$, at most $n$ choices of $a$ and over $\kappa$ and $L_1$, we obtain
	$$\P_{\subrg}\left (\mathcal C^{c}\right )\le  \sum_{\kappa=(\log n)^{C}}^{\delta n} \exp\left ( -\kappa \log \frac{n}{\kappa} \right )=o(1).$$
\end{proof}

 Let $\mathcal E$ be the event that the following holds for all $\kappa\in [\log^{C}n, \delta n]$. Let $\mathcal F_\kappa$ be the set of $\kappa$-good vertices in $G'_n$. For any set $D$ of $\kappa$ vertices in $G'_n$ that are $\kappa$-good with disjoint $3$-neighborhoods, there exists an $s\ge \ep$ such that
\begin{equation}\label{eq:pf:thm:supercritical:pr}
\P_{\subcp}\left (\left |D_s\cap \mathcal F_{2\kappa}\right |\le \kappa\log\frac{n}{\kappa} \right )\le e^{-\kappa}
\end{equation}
where $(X_t)$ is the contact process on $G_n'$ with $D$ initially infected and $D_{s}$ is the set of vertices in $ \partial N(D, l(\kappa))$ that are infected at time $X_s$ using only the infection and recovery events within $N(D, l(\kappa))$.
\begin{lemma}\label{lm:supercritical:iteration}
	We have for a sufficiently large constant $C$,
	$$\P\left (G'_n\in \mathcal E\right ) = 1-o(1).$$
\end{lemma}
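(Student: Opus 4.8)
The plan is to prove Lemma~\ref{lm:supercritical:iteration} by combining the structural event $\mathcal B$ (from Lemma~\ref{lm:graphproperty}) and the concentration event $\mathcal C$ (from Lemma~\ref{lm:graphproperty:2}), which both hold \whp, and showing that on $\mathcal B\cap\mathcal C$ the event $\mathcal E$ holds deterministically up to the exponentially small error already built into \eqref{eq:event:C}. First I would fix $\kappa\in[\log^C n,\delta n]$ and a set $D$ of $\kappa$-good vertices with disjoint $3$-neighborhoods, and run the contact process $(X_t)$ on $G_n'$ from $D$ restricted to the neighborhood $N(D,l(\kappa))$, where $l(\kappa)=4+l_1(\kappa)+l_2(\kappa)L_0$ as in \eqref{def:A:l1:l2}. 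The argument follows the heuristic in the paragraph after \eqref{def:l2:kappa}: on $\mathcal B$, from $\partial N(D,4+l_1(\kappa))$ there are at least $c_0\ep\tilde d^{l_1(\kappa)}\kappa/4$ half-edges expanding to disjoint $(s_0,L_0,l_2(\kappa),\theta,\ep)$-good trees of depth $l_2(\kappa)L_0$. The infection started at $D$ reaches level $4+l_1(\kappa)$: along each of the $j_0(j_0-1)^3$ out-going half-edges at each good vertex the infection propagates one more level with probability at least $\lambda/(1+\lambda)\ge\lambda/2$ each step (using that degrees are bounded by $j_1$), so after $l_1(\kappa)$ steps, by a Chernoff/Azuma bound over the (independent across disjoint sub-neighborhoods) coin flips, with probability $1-e^{-\Omega(\tilde d^{l_1(\kappa)}\kappa)}$ at least $(\lambda/2)^{l_1(\kappa)}$-fraction of the $c_0\tilde d^{l_1(\kappa)}\kappa$ boundary half-edges carry an infection; by \eqref{eq:d:l} this error is $\le e^{-\kappa}$.

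Next I would condition on this many infected roots of good trees and apply the definition of good trees (Definition~\ref{def:good:tree}): each infected root of a good tree independently has probability $>\ep$ of producing at least $\theta^{l_2(\kappa)}$ infected vertices at depth $l_2(\kappa)L_0$ within time $l_2(\kappa)s_0$ (using only the events inside that subtree, which are disjoint hence independent). By a further Chernoff bound over these at least $c_0\ep\tilde d^{l_1(\kappa)}\kappa/4\cdot(\lambda/2)^{l_1(\kappa)}$ independent Bernoulli-$\ep$ trials, with probability $1-e^{-\kappa}$ a $\ep/2$-fraction succeed, so the number of infected vertices at level $l(\kappa)$ is at least
\[
\tfrac{\ep}{2}\cdot\tfrac{c_0\ep}{4}\,\tilde d^{l_1(\kappa)}\kappa\,(\lambda/2)^{l_1(\kappa)}\,\theta^{l_2(\kappa)}
\;=\;\Omega\!\left((\lambda\tilde d/2)^{l_1(\kappa)}\theta^{A l_1(\kappa)}\kappa\right)
\;=\;\Omega\!\left(2^{l_1(\kappa)}\kappa\right),
\]
using the defining equation $(\lambda\tilde d/2)\theta^{A/2}=2$ (so actually $\theta^{l_2(\kappa)}=\theta^{Al_1(\kappa)}$ and one sees $(\lambda\tilde d/2)^{l_1}\theta^{Al_1}\ge 2^{l_1}$ after checking $\theta^{A/2}\le\theta^A$ and $\lambda\tilde d/2$ might be $<1$ — this is exactly why $A$ is chosen via $(\lambda\tilde d/2)\theta^{A/2}=2$, i.e.\ one groups $l_1$ factors of $\lambda\tilde d/2$ with $l_1$ factors of $\theta^{A/2}=\theta^{l_2/(2l_1)\cdot l_1}$, hmm). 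Concretely I would just write $(\lambda\tilde d/2)^{l_1}\theta^{l_2}=\big((\lambda\tilde d/2)\theta^{A/2}\big)^{l_1}\theta^{l_2-Al_1/2}=2^{l_1}\theta^{Al_1/2}\ge 2^{l_1}$ since $\theta>1$. As $l_1(\kappa)=2\log_{\tilde d}\log(n/\kappa)\to\infty$, for $d_0$ (hence $\tilde d$) large and $n$ large we get $2^{l_1(\kappa)}\kappa\ge\kappa\log(n/\kappa)$, which is the desired bound, and the total failure probability is $\le 2e^{-\kappa}+e^{-\Omega(\tilde d^{l_1}\kappa)}\le e^{-\kappa}$ after adjusting constants. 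Taking $s=l(\kappa)s_0\ge\ep$ (valid since $l(\kappa)\ge1$ and $s_0\ge\ep$, or simply rescale) completes the bound \eqref{eq:pf:thm:supercritical:pr} deterministically on $\mathcal B\cap\mathcal C$, and since $\P(G_n'\in\mathcal B\cap\mathcal C)=1-o(1)$ we conclude $\P(G_n'\in\mathcal E)=1-o(1)$.

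The role of $\mathcal C$ deserves a separate remark in the write-up: the event $\mathcal C$ is what guarantees that among the $|D_s|\ge a$ infected vertices at an intermediate level, a positive $p_0/2$-fraction are themselves $\kappa$- (or $2\kappa$-)good, except with probability $e^{-3\kappa\log(n/\kappa)}$; this is needed so that the output set $D_s\cap\mathcal F_{2\kappa}$ in \eqref{eq:pf:thm:supercritical:pr} really consists of good vertices (with disjoint $3$-neighborhoods — this last point uses that the $l(\kappa)$-exploration has revealed pairwise disjoint branches, or one prunes to an independent subfamily losing only a bounded factor), so that the iteration in the next section can be applied again. I would weave the application of \eqref{eq:event:C} into the last step above, with $a$ of order $\tilde d^{l_1(\kappa)}\kappa\gg p_0^{-1}\kappa\log(n/\kappa)+(\log n)^{C_1}$, and $L_1=l(\kappa)\le C\log\log(n/\kappa)$ for $C$ chosen large enough relative to $A,L_0$.

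The main obstacle I anticipate is bookkeeping the independence and disjointness carefully through the three stages (propagation to depth $4+l_1(\kappa)$, the good-tree expansion, and the extraction of good vertices via $\mathcal C$): one must ensure that each Chernoff/Azuma application sees genuinely independent randomness, which requires that the sub-neighborhoods explored at each stage are pairwise disjoint (guaranteed by $\mathcal B$ for the trees, and by bounded-degree/pruning arguments for the rest) and that the contact-process events used are confined to the revealed region. A second, more quantitative subtlety is verifying that all the error terms — $e^{-\Omega(\tilde d^{l_1(\kappa)}\kappa)}$, the two $e^{-\kappa}$ from the binomial concentrations, and $e^{-3\kappa\log(n/\kappa)}$ from $\mathcal C$ — are all dominated by $e^{-\kappa}$ uniformly over $\kappa\in[\log^C n,\delta n]$ and over the $\binom{n}{\kappa}$ choices of $D$ absorbed earlier into the definitions of $\mathcal B,\mathcal C$; here the choice $l_1(\kappa)=2\log_{\tilde d}\log(n/\kappa)$ and the inequality $\tilde d^{l_1(\kappa)}=(\log(n/\kappa))^2\gg\log(n/\kappa)$ from \eqref{eq:d:l} are exactly what make everything fit.
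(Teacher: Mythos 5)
Your overall architecture (work on $\mathcal B\cap\mathcal C$, push the infection to $\partial N(D,4+l_1(\kappa))$, expand through the disjoint good trees, then use \eqref{eq:event:C} to extract good vertices) matches the heuristic, but the concentration step you use to implement it is a genuine gap, and it is exactly the step where the paper takes a different route. You claim that, by a Chernoff/Azuma bound ``over the independent across disjoint sub-neighborhoods coin flips,'' a $(\lambda/2)^{l_1(\kappa)}$-fraction of the $c_0\tilde d^{l_1(\kappa)}\kappa$ boundary half-edges is infected except with probability $e^{-\Omega(\tilde d^{l_1(\kappa)}\kappa)}$. There is no such independence: only the $3$-neighborhoods of the vertices of $D$ are assumed disjoint, so the $(4+l_1(\kappa))$-neighborhoods may overlap, and even inside a single vertex's neighborhood the infection indicators at different boundary half-edges are strongly positively correlated because their infection paths share initial segments. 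Worse, the claimed error rate is quantitatively impossible: with probability at least $\prod_{v\in D}(1+\lambda\deg v)^{-1}\ge (1+\lambda j_1)^{-\kappa}=e^{-\Theta(\kappa)}$ every initially infected vertex recovers before transmitting anything, in which case the infected boundary count is $0$; so no lower-tail bound better than $e^{-\Theta(\kappa)}$ can hold, let alone $e^{-\Omega(\kappa\log^2(n/\kappa))}$ (recall $\tilde d^{l_1(\kappa)}=\log^2(n/\kappa)$ by \eqref{def:l1:kappa}). The subsequent ``independent Bernoulli-$\ep$ trials'' step inherits the problem, since the number of infected good-tree roots is exactly the quantity you failed to control. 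This is why the paper does \emph{not} argue stage-by-stage with independence: it proves only a first-moment bound \eqref{eq:pf:thm:supercritical:ex} (linearity of expectation needs no disjointness), pigeonholes over unit time intervals to find a single time $r+1$ at which the expected boundary count exceeds $4p_0^{-1}\kappa\log\frac n\kappa$, and then applies one Azuma/bounded-differences estimate \eqref{eq:x:nd} to the boundary count at that fixed time, with martingale increments bounded by $j_1^{O(l(\kappa))}=\mathrm{polylog}(n/\kappa)$, before invoking \eqref{eq:event:C}.

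A secondary but real error is the final arithmetic: $2^{l_1(\kappa)}=\exp\bigl(2\log 2\cdot\log_{\tilde d}\log\frac n\kappa\bigr)=(\log\frac n\kappa)^{2\log 2/\log\tilde d}$, which is \emph{smaller} than $\log\frac n\kappa$ as soon as $\tilde d>4$; so $2^{l_1(\kappa)}\kappa\ge\kappa\log\frac n\kappa$ does not follow, and your remark that taking $d_0$ (hence $\tilde d$) large helps is backwards --- increasing $\tilde d$ shrinks $2^{l_1(\kappa)}$. In the paper the quantity $\kappa\log\frac n\kappa$ is produced not by this exponentiation but by combining the expectation bound with the event $\mathcal C$ (the threshold $a\,p_0/2$ in \eqref{eq:event:C} with $a\ge 2p_0^{-1}\kappa\log\frac n\kappa$), so your write-up would need to be reorganized around that mechanism rather than around the product $(\lambda\tilde d/2)^{l_1}\theta^{l_2}$ alone.
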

\begin{proof} Under the event $\mathcal B$ in Lemma \ref{lm:graphproperty}, there exists a constant $t>0$ such that  
	\begin{equation} \label{eq:pf:thm:supercritical:ex}
	\E_{\subcp}\left (|X_{\le t l_2(\kappa)}\cap \partial N(D, l(\kappa))|  \right ) \ge \frac{\ep}{4} \kappa c_0 \tilde d^{l_1(\kappa)} \left (\frac{(1-e^{-1})\lambda}{1+\lambda}\right )^{4+l_1(\kappa)}\theta^{l_2(\kappa)} \ge \theta^{l_2(\kappa)/2}  \kappa
	\end{equation}
	where the last inequality follows from \eqref{def:A:l1:l2}.
	Dividing the time interval $[0, t l_2(\kappa)]$ into intervals of length $1$, there exists an interval $[r, r+1]$ such that 
	\begin{equation}\label{}
	\E_{\subcp}\left (|X_{[r, r+1]} \cap\partial N(D, l(\kappa))| \right ) \ge   \theta^{l_2(\kappa)/2} \kappa/(t l_2(\kappa))\nonumber
	\end{equation}
	where $X_{[r, r+1]} := \bigcup_{x\in [r, r+1]} X_{x}$. 
	Since 
	$$\E \textbf{1}_{v\in X_{r, r+1}}\le e \E \textbf{1}_{v\in X_{r+1}},$$
	we obtain
	\begin{equation}\label{}
	\E_{\subcp}\left (|X_{r+1} \cap\partial N(D, l(\kappa))|\right ) \ge e \theta^{l_2(\kappa)/2} \kappa/(t l_2(\kappa))\ge 4p_0^{-1}\kappa \log \frac{n}{\kappa}.\nonumber
	\end{equation}
	By Azuma's inequality, 
	\begin{equation}\label{eq:x:nd}
	\P_{\subcp}\left (|X_{r+1} \cap\partial N(D, l(\kappa))|\le 2 p_0^{-1}\kappa \log \frac{n}{\kappa} \right )\le \exp\left ( - \frac{\kappa^{2} \log^{2} \frac{n}{\kappa}}{\kappa j_1^{3l(\kappa)}}\right )\le  \exp\left (-3\kappa\log\frac{n}{\kappa}\right ).
	\end{equation}
	In the notations of Lemma \ref{lm:graphproperty:2}, $X_{r+1} \cap\partial N(D, l(\kappa))$ is $D_{r}$. Thus, \eqref{eq:x:nd} together with \eqref{eq:event:C} give 
	\begin{equation} 
	\P_{\subcp}\left (|D_s\cap \mathcal F_\kappa|\le \kappa \log \frac{n}{\kappa}  \right )\le  2\exp\left (-3\kappa\log\frac{n}{\kappa}\right )\le e^{-\kappa}\nonumber
	\end{equation}
	completing the proof.
\end{proof}
\subsection{Finishing the proof of Theorems \ref{thm:supercritical:general} and \ref{thm:super:onevertex}} We are now ready to finish the proof of the first inequality in \eqref{eq:thm:super:ER} and Theorem \ref{thm:super:onevertex}.
Let $G_n'$ be a graph belonging to both events $\mathcal B$ and $\mathcal C$ in Lemmas \ref{lm:graphproperty} and \ref{lm:graphproperty:2}.

Let $v$ be a uniformly chosen random vertex in $G'_n$. By Lemma \ref{lm:dominate:GW}, \textsf{whp} over $G'_n$, for any constant $C_2$, $N(v, C_2\log \log n)\ge_{st} \GW(\mu_{\eta_0, j_0}, \widetilde{\mu}_{\eta_0, j_0})$. And so, by Lemma \ref{lm:good:tree}, there exist constants $s_0, L_0, \ep>0$ and $\theta>1$ such that
\begin{equation} 
\P(N(v, C_2\log \log n) \text{ contains an $(s_0, L_0, C_2\log \log n/L_0, \theta, \ep)$-good tree})>\ep\nonumber
\end{equation}
where the probability is over the randomness of $v$ and the randomness of the perfect matching of half-edges in $G'_n$.
This implies that the set, denoted by $\mathcal E_1$, of vertices $v$ such that $N(v, C_2\log \log n)$ contains an $(s_0, L_0, C_2\log \log n/L_0, \theta, \ep)$-good tree has expectation at least $\ep n'$ where $n' = |G'_n|\ge n/2$. By Azuma's inequality, 
$$\P \left (|\mathcal E_1|\le \ep n/4\right )\le \exp\left (-\frac{\ep^{2}n}{32 j_1^{2C_2\log \log n}}\right ) = o(1).$$

Thus, with high probability over $G'_n$, $|\mathcal E_1|\ge \ep n/4$ which means that with positive probability over the choice of $v$, $N(v, C_2\log \log n)$ contains an $(s_0, L_0, C_2\log \log n/L_0, \theta, \ep)$-good tree. 

Let $(X_t)$ be the contact process on $G'_n$ starting with $v$ infected. By the definition \ref{def:good:tree} of good trees, with probability at least $\ep$ over the contact process, there exists a time at which the number of infected vertices is at least 
$$\theta^{C_2\log \log n/L_0}=( \log n)^{C_2\log \theta/L_0}.$$

Let 
$$\kappa:=( \log n)^{C_2\log \theta/(2L_0)}p_0/(2j_1^{4}).$$
As we are under the event $\mathcal C$ in Lemma \ref{lm:graphproperty:2}, among these infected vertices, there are at least $( \log n)^{C_2\log \theta/L_0}p_0/2 \ge \kappa j_1^{4}$ vertices that are $\kappa$-good.

Once there are at least $\kappa j_1^{4}$ infected vertices that are $\kappa$-good, we observe that a 3-neighborhood contains at most $j_1^{4}$ vertices and so at least $\kappa$ of these vertices have disjoint $3$-neighborhoods. Applying Lemma \ref{lm:supercritical:iteration}, we find that with probability at least $e^{-\kappa}$, at a later time, there are at least $2\kappa j_1^{4}$ infected vertices that are $2\kappa$-good. Repeating this argument for $2\kappa, 4\kappa, \dots$, we see that with probability at least $\ep/2$, there exists a time at which there are $\ge \delta n$ infected vertices that are $\delta n$-good. Now, applying Lemma \ref{lm:supercritical:iteration} for $e^{\delta n/2}$ times with $\kappa = \delta n$ gives that with probability at least $\ep/4$, the contact process survives up to time $\ep e^{\delta n/2}$, concluding the proof of Theorem \ref{thm:super:onevertex}. If all vertices are initially infected, then the first inequality in \eqref{eq:thm:super:ER} of Theorem \ref{thm:supercritical:general} follows directed by applying Lemma \ref{lm:supercritical:iteration} for $e^{\delta n/2}$ times with $\kappa = \delta n$ as above.

\subsection{Finishing the proof of Theorems \ref{thm:phasetransition:tree} and \ref{thm:phasetransition:graph}}\label{subsec:finale}

Finally, we combine the results obtained in this article and establish Theorems \ref{thm:phasetransition:tree} and \ref{thm:phasetransition:graph}. Let us begin with Theorem \ref{thm:phasetransition:tree}. Its lower bound 
$$\lim_{k\to\infty} \lambda_1^{\textsc{gw}}(\xi_k)d_k \geq 1 $$
was proven at the end of Section \ref{subsec:tail estimate survival time}. Also, the other inequality was obtained by Theorem \ref{thm:supercritical:general}. For Theorem \ref{thm:phasetransition:graph}, Theorem \ref{thm:lowerbd of lamdac} implies the lower bound
$$ \lim_{k\to\infty}\lambda_c^- (\mu_k) d_k \geq 1. $$
Then the upper bound is again given by Theorem \ref{thm:supercritical:general}. Thus, we conclude the proofs of Theorems \ref{thm:phasetransition:tree} and \ref{thm:phasetransition:graph}. \qed

 \section*{Acknowledgements} We are grateful to Rick Durrett and Xiangying Huang for fruitful discussions on Theorem \ref{thm:supercritical:general}.
 
 \bibliographystyle{plain}
 \bibliography{reference}
 
 \vspace{15mm}

 \appendix

 \section{Recursive analysis for unicyclic graphs}\label{sec:unicyclic}
 
Appendix \ref{sec:unicyclic} is devoted to the proof of Proposition \ref{prop:SMunicyclic} and Corollary \ref{cor:SMgwc2}. The main technical work needed here is to  carry out the tree recursions as in Sections \ref{sec:subcritical trees} and \ref{sec:numberofhits} despite the presence of a cycle. As before, our approach is based on an induction argument, which differs depending on whether $t$ in (\ref{eq:unicyc tail:in prop}) is small or large.
 
 In Section \ref{subsec:unic prop small t1}, we introduce an appropriate way of covering a unicyclic graph by trees. Based on this notion, in Section \ref{subsec:unic prop small t2} we  can establish (\ref{eq:unicyc tail:in prop}) for \textit{small} values of $t$. Then, we conclude the proof of Proposition \ref{prop:SMunicyclic} and Corollary \ref{cor:SMgwc2} in tandem inductively in Section \ref{subsec:unic prop large t}.

 
 \subsection{Reduction of unicyclic graphs to trees}\label{subsec:unic prop small t1}
 
 Let $H$ be a graph that consists of a length-$m$ cycle $C=v_1v_2\ldots v_m v_1$ and  depth $\leq l$ trees $T_1,\ldots,T_m$ rooted at $v_1,\ldots,v_m$, respectively. The goal of this subsection is to introduce an approach that interprets the contact process on $H$ by decomposing it into processes on trees. To this end, we begin with defining the \textit{cover} of $H$.

 \begin{definition}[Cover of $H$]\label{def:cover of unic}
 	Let $m' = \lceil \frac{m+1}{2} \rceil$ and $H$ be a unicyclic graph defined as above. The \textit{cover of} $H$ is a pair of graphs $A_1$ and $A_2$ defined as follows (an illustration of the graphs can be found in Figure \ref{fig:cover}):
 	\begin{itemize}
 		\item $A_1$ consists of a length-$m$ line $v_{m'} \ldots v_m v_1 v_2 \ldots v_{m'-1} \tilde{v}_{m'}$ and the trees $T_j$  rooted at $v_j$ except at the two endpoints $v_{m'}$, $\tilde{v}_{m'}$. The root of $A_1$ is $v_1$.
 		
 		\item $A_2$ consists of a length-$m$ line $v_{1} \ldots v_m \tilde{v}_1$ and the trees $T_j$ rooted at $v_j$ except at the two endpoints $v_{1}$, $\tilde{v}_1$. The root of $A_2$ is $v_{m'}$.
 	\end{itemize} 
 \end{definition}
 
 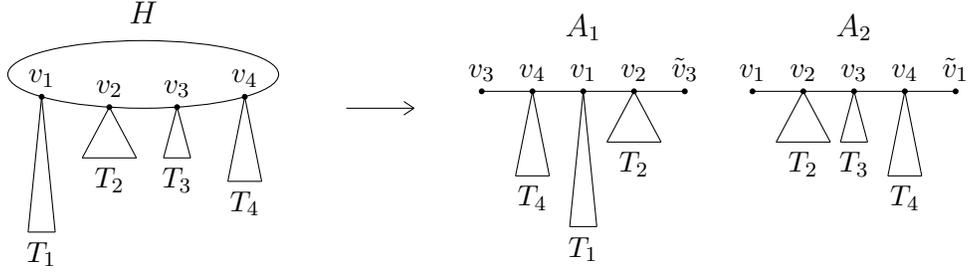
\begin{figure}
 	\centering
 	\begin{tikzpicture}[scale=0.45]
 	\draw \boundellipse{0,0}{4}{1};
 	\filldraw[black] (-3,-2.64575131106/4) circle (2pt) node[black, anchor=south]{$v_1 $};
 	\filldraw[black] (3,-2.64575131106/4) circle (2pt) node[black, anchor=south]{$v_4 $};
 	\filldraw[black] (1,-3.87298334621/4) circle (2pt) node[black, anchor=south]{$v_3 $};
 	\filldraw[black] (-1,-3.87298334621/4) circle (2pt) node[black, anchor=south]{$v_2 $};
 	
 	\draw[black] (-3,-2.64575131106/4)--(-3.4,-2.64575131106/4-4);
 	\draw[black] (-3,-2.64575131106/4)--(-3+.4,-2.64575131106/4-4);
 	\draw[black] (-3.4,-2.64575131106/4-4)--(-3+.4,-2.64575131106/4-4);
 	\filldraw[black] (-3,-2.64575131106/4-4) circle (0pt) node[black, anchor=north]{$T_1 $};
 	
 	\draw[black] (-1,-3.87298334621/4)--(-1.8,-3.87298334621/4-1.5);
 	\draw[black] (-1,-3.87298334621/4)--(-1+.8,-3.87298334621/4-1.5);
 	\draw[black] (-1.8,-3.87298334621/4-1.5)--(-1+.8,-3.87298334621/4-1.5);
 	\filldraw[black] (-1,-3.87298334621/4-1.5) circle (0pt) node[black, anchor=north]{$T_2 $};
 	
 	\draw[black] (1,-3.87298334621/4)--(1.4,-3.87298334621/4-1.5);
 	\draw[black] (1,-3.87298334621/4)--(1-.4,-3.87298334621/4-1.5);
 	\draw[black] (1.4,-3.87298334621/4-1.5)--(1-.4,-3.87298334621/4-1.5);
 	\filldraw[black] (1,-3.87298334621/4-1.5) circle (0pt) node[black, anchor=north]{$T_3 $};
 	
 	\draw[black] (3,-2.64575131106/4)--(3.5,-2.64575131106/4-2.5);
 	\draw[black] (3,-2.64575131106/4)--(2.5,-2.64575131106/4-2.5);
 	\draw[black] (3.5,-2.64575131106/4-2.5)--(2.5,-2.64575131106/4-2.5);
 	\filldraw[black] (3,-2.64575131106/4-2.5) circle (0pt) node[black, anchor=north]{$T_4 $};
 	
 	\filldraw[black] (0,1.2) circle (0pt) node[black, anchor=south]{\large{$H$} };
 	\draw[black] (6,-1) -- (8,-1);
 	\draw[black] (8,-1) -- (7.7,-.8);
 	\draw[black] (8,-1) -- (7.7,-1.2);
 	
 	\draw[black] (10,-.5) -- (16,-0.5);
 	\draw[black] (18,-0.5) -- (24,-0.5);
 	\filldraw[black] (10,-0.5) circle (2pt) node[black,anchor=south]{$v_3$};
 	\filldraw[black] (11.5,-0.5) circle (2pt) node[black,anchor=south]{$v_4$};
 	\filldraw[black] (13,-0.5) circle (2pt) node[black,anchor=south]{$v_1$};
 	\filldraw[black] (14.5,-0.5) circle (2pt) node[black,anchor=south]{$v_2$};
 	\filldraw[black] (16,-0.5) circle (2pt) node[black,anchor=south]{$\tilde{v}_3$};
 	\filldraw[black] (13,0.7) circle (0pt) node[black,anchor=south]{\large{$A_1$}};
 	
 	\filldraw[black] (21,0.7) circle (0pt) node[black,anchor=south]{\large{$A_2$}};
 	\filldraw[black] (18,-0.5) circle (2pt) node[black,anchor=south]{$v_1$};
 	\filldraw[black] (19.5,-0.5) circle (2pt) node[black,anchor=south]{$v_2$};
 	\filldraw[black] (21,-0.5) circle (2pt) node[black,anchor=south]{$v_3$};
 	\filldraw[black] (22.5,-0.5) circle (2pt) node[black,anchor=south]{$v_4$};
 	\filldraw[black] (24,-0.5) circle (2pt) node[black,anchor=south]{$\tilde{v}_1$};
 	
 	\draw[black] (11.5,-0.5)--(12,-0.5-2.5);
 	\draw[black] (11.5,-0.5)--(11,-0.5-2.5);
 	\draw[black] (12,-0.5-2.5)--(11,-0.5-2.5);
 	\filldraw[black] (11.5,-0.5-2.5) circle (0pt) node[black, anchor=north]{$T_4 $};
 	
 	\draw[black] (22.5,-0.5)--(23,-0.5-2.5);
 	\draw[black] (22.5,-0.5)--(22,-0.5-2.5);
 	\draw[black] (23,-0.5-2.5)--(22,-0.5-2.5);
 	\filldraw[black] (22.5,-0.5-2.5) circle (0pt) node[black, anchor=north]{$T_4 $};
 	
 	\draw[black] (13,-0.5)--(12.6,-0.5-4);
 	\draw[black] (13,-0.5)--(13.4,-0.5-4);
 	\draw[black] (13.4,-0.5-4)--(12.6,-0.5-4);
 	\filldraw[black] (13,-0.5-4) circle (0pt) node[black, anchor=north]{$T_1 $};
 	
 	\draw[black] (14.5,-0.5)--(14.5-.8,-0.5-1.5);
 	\draw[black] (14.5,-0.5)--(14.5+.8,-0.5-1.5);
 	\draw[black] (14.5-.8,-0.5-1.5)--(14.5+.8,-0.5-1.5);
 	\filldraw[black] (14.5,-0.5-1.5) circle (0pt) node[black, anchor=north]{$T_2 $};
 	
 	\draw[black] (19.5,-0.5)--(19.5-.8,-0.5-1.5);
 	\draw[black] (19.5,-0.5)--(19.5+.8,-0.5-1.5);
 	\draw[black] (19.5-.8,-0.5-1.5)--(19.5+.8,-0.5-1.5);
 	\filldraw[black] (19.5,-0.5-1.5) circle (0pt) node[black, anchor=north]{$T_2 $};
 	
 	\draw[black] (21,-0.5)--(21-.4,-0.5-1.5);
 	\draw[black] (21,-0.5)--(21+.4,-0.5-1.5);
 	\draw[black] (21-.4,-0.5-1.5)--(21+.4,-0.5-1.5);
 	\filldraw[black] (21,-0.5-1.5) circle (0pt) node[black, anchor=north]{$T_3 $};
 	\end{tikzpicture}
 	\caption{An example of the cover $(A_1,A_2)$ of the unicyclic graph $H$ with $m=4$.}\label{fig:cover}
 \end{figure}

 \noindent Using the cover $A_1$ and $A_2$ of $H$, we  define the \textit{decomposition} of  $\cp^\lambda(H;\one_{v_1})$, similarly as Definition \ref{def:decomposition}. Recall the graphical representation of the contact process discussed in Section \ref{subsubsec:cpprelim}, and let $\{N_v(t) \}_{v\in V(H)}$ and $\{N_{\vec{uv}}(t) \}_{\vec{uv}\in \vec{E}(H)}$ be the Poisson processes that define the recoveries and infections of $\cp^\lambda(H;\one_{v_1})$.

 	\begin{enumerate}
 		\item [1.] Initially, run $ \cp^\lambda(A_1;\one_{v_1})$, whose recoveries and infections are given by $\{(N_v(s))_{s\geq 0} \}_{v\in V(A_1)} $ and $\{(N_{\vec{uv}}(s))_{s\geq 0} \}_{\vec{uv}\in \vec{E}(A_1) } $, respectively. 
 		
 		\item[2.] In $A_1$, if either $v_{m'}$ or $\tilde{v}_{m'}$ becomes infected at time $t$ (and has been healthy until time $t-$), we start running a copy  of $ \cp^\lambda(A_2;\one_{v_{m'}})$ whose recoveries and infections are defined by $\{(N_v(s))_{s\geq t} \}_{v\in V(A_2)} $ and $\{(N_{\vec{uv}}(s))_{s\geq t} \}_{\vec{uv}\in \vec{E}(A_2) } $, respectively.
 		
 		\item [3.] When either $v_1$ or $\tilde{v}_1$ becomes infected at time $t'$ (while it has been healthy until $t'-$) in  a copy of $\cp^\lambda(A_2;\one_{v_{m'}})$, initiate a copy of $\cp^\lambda (A_1;\one_{v_1})$ that has $\{(N_v(s))_{s\geq t'} \}_{v\in V(A_1)} $ and $\{(N_{\vec{uv}}(s))_{s\geq t'} \}_{\vec{uv}\in \vec{E}(A_1) } $ as its event times, and return to Step 2.
 		
 		\item [4.] The process terminates when all vertices in all copies of two processes are healthy.
 	\end{enumerate}

Recalling the explanation below Definition \ref{def:decomposition}, we need to control the number of infections at the  endpoints $v_{m'},\tilde{v}_{m'}$ and $v_1, \tilde{v}_1$ of $A_1$ and $A_2$, respectively. Therefore, we first consider the following graphs $F^{m,l}$ and $\mathcal{F}^{m,l}$ which are basically a one-sided version of $A_1$ and $A_2$.
 
 \begin{itemize}
 	\item $F^{m,l}$ consists of a length-$(m-1)$ line $v_1v_2\ldots v_m$ and  depth $\leq l$ trees $T_j$  rooted at  $v_j$ except at $v_m$. $\rho=v_1$ is designated as its root.
 	
 	\item $\mathcal{F}^{m,l}$ denotes the above graph when $T_j, \; j\in [m-1]$ are i.i.d. $\gw(\xi)^l$. In this case, we write $\mathcal{T}_j$ instead of $T_j$ for each tree rooted at $v_i$.
 	
 	\item If $m=1$, we set $\mathcal{F}^{1,l}\sim \gw(\xi)^l$ to be a single vertex $v_1$.
 \end{itemize}
 One can also define the root-added contact process  $(X_t) \sim \cp^\lambda_{v_1^+}( F^{m,l}; \one_{v_1})$ by adding a permanently infected parent $v_1^+$ to $v_1$, and set the quantities $S(F^{m,l})$ and $M^l(F^{m,l})$ as Definition \ref{def:SMunicyclic}. In addition, we consider $B(F^{m,l})$, the  \textit{total infections at the end} and its expectation as follows.
 \begin{equation}\label{eq:def:BofF}
 \textbf{B}(F^{m,l}) :=
 \left|\left\{s \in  [0,\textbf{S}(F^{m,l})]: \; X_s(v_m)=1 \textnormal{ and } X_{s-}(v_m)=0 \right\}\right| ,
 \end{equation}
 and 
 $B(F^{m,l}) :=\E_{\textsc{cp}} \textbf{B}(F^{m,l})$. Then, we have the following lemma.

 \begin{lemma}\label{lem:recursion for F}
 	Let $m,l\geq 1$ be integers, 	$\ep\in(0,1)$ and $\mathfrak{c}=\{c_\delta \}_{\delta\in (0,1]}$ be a collection of positive constants. Then there exists $d_0(\ep, \mathfrak{c}) >0$ such that the following holds true. For any $\xi$ that satisfies $d:=\E \xi \geq d_0$ and (\ref{eq:concentration condition}) with $\mathfrak{c}$, we have for $\lambda=(1-\ep) d^{-1}$ and $\mathcal{F}^{m,l}$ as above that
 	\begin{eqnarray}
 	\P_{\textsc{gw}} \left(S(\mathcal{F}^{m,l}) \geq  t \right) &\leq 
 	t^{-\sqrt{d}} (\log t)^{-2} ~~~\textnormal{for all }t\geq \frac{2}{\ep}; \label{eq:unicycF tail S}
 	\\
 	\P_{\textsc{gw}} \left(M^l(\mathcal{F}^{m,l}) \geq \left(1- \frac{\ep}{10} \right)^l t \right) &\leq 
 	t^{-\sqrt{d}} (\log t)^{-2} ~~~\textnormal{for all }t\geq 2;\label{eq:unicycF tail M}
 	\\
 	\P_{\textsc{gw}} \left(B(\mathcal{F}^{m,l}) \geq d^{-\frac{3}{4}(m-1) } t \right) &\leq 
 	t^{-\sqrt{d}} (\log t)^{-2} ~~~\textnormal{for all }t\geq 2,\label{eq:unicycF tail B}
 	\end{eqnarray}
 	where $S(\mathcal{F}^{m,l})$, $M^l(\mathcal{F}^{m,l})$ and $B(\mathcal{F}^{m,l}) $  are given as above.
 \end{lemma}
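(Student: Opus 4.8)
The plan is to induct on the spine length $m$, proving the three tail bounds~\eqref{eq:unicycF tail S}--\eqref{eq:unicycF tail B} for $\mathcal{F}^{m,l}$ simultaneously for all $l\geq 1$ and in this order (so that the $M^l$- and $B$-recursions can invoke the already-proved bound on $S$). The case $m=1$ is trivial since $\mathcal{F}^{1,l}$ is the single vertex $v_1$, with $S\equiv 1$, $M^l\equiv 0$ and $\textbf{B}\equiv 1$. The structural observation driving the inductive step is that $F^{m,l}$ is a tree whose root $v_1$ is at once the root of the hanging tree $T_1\sim\gw(\xi)^l$ and the first spine vertex; hence in $\cp^\lambda_{v_1^+}(\mathcal{F}^{m,l};\one_{v_1})$ the children of $v_1$ are the $\xi_1\sim\xi$ children of the root of $T_1$, whose subtrees are i.i.d.\ $\gw(\xi)^{l-1}$, together with $v_2$, whose subtree is $\mathcal{F}^{m-1,l}$ (rooted at $v_2$, with depth parameter still $l$ since the hanging trees are rooted at the spine vertices, and with $v_m$ as its far endpoint), all mutually independent. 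Thus Theorems~\ref{thm:tail bound of S} and~\ref{thm:M tail bd} control the $\gw(\xi)^{l-1}$-subtrees and the induction hypothesis controls $\mathcal{F}^{m-1,l}$. Running the graphical-representation arguments of Propositions~\ref{prop:Srecursion atypical},~\ref{prop:recursion of R},~\ref{prop:S vs R},~\ref{prop:Mrecursion atypcial} and Corollaries~\ref{cor:recursion Mprime},~\ref{cor:MvsMprime} verbatim at $v_1$ then produces, with $\hat T_i\sim\gw(\xi)^{l-1}$ i.i.d.\ and $\Sigma:=S(\mathcal{F}^{m-1,l})+\sum_iS(\hat T_i)$,
\begin{equation*}
S(\mathcal{F}^{m,l})\leq\bigl(1+\lambda S(\mathcal{F}^{m-1,l})\bigr)\prod_{i=1}^{\xi_1}\bigl(1+\lambda S(\hat T_i)\bigr),\qquad S(\mathcal{F}^{m,l})\leq\frac{1+\lambda\Sigma}{1-\lambda-2\lambda^2\Sigma},
\end{equation*}
the analogous pair for $M^l(\mathcal{F}^{m,l})$ (in which each $\hat T_i$ contributes $M^{l-1}(\hat T_i)$ and $\mathcal{F}^{m-1,l}$ contributes $M^l(\mathcal{F}^{m-1,l})$), and the pair $B(\mathcal{F}^{m,l})\leq\lambda B(\mathcal{F}^{m-1,l})\prod_i(1+\lambda S(\hat T_i))$ and $B(\mathcal{F}^{m,l})\leq\lambda B(\mathcal{F}^{m-1,l})/(1-\lambda-2\lambda^2\Sigma)$.

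For $S$ and $M^l$ these are, apart from the single extra child $v_2$, precisely the recursions of Theorems~\ref{thm:tail bound of S} and~\ref{thm:M tail bd}. Writing $\widetilde M$ for the $(1-\tfrac{\ep}{10})^{(\cdot)}$-normalized expected total infections and $W_\ast=\widetilde M(\mathcal{F}^{m-1,l})\vee S(\mathcal{F}^{m-1,l})$, $W_i=\widetilde M(\hat T_i)\vee S(\hat T_i)$, the inequality $\lambda\sum_iW_i\prod_{i'\neq i}(1+\lambda W_{i'})\leq\prod_i(1+2\lambda W_i)$ from Section~\ref{subsubsec:M2} collapses the $M^l$-recursion to $\widetilde M(\mathcal{F}^{m,l})\leq(1+2\lambda W_\ast)\prod_{i=1}^{\xi_1}(1+2\lambda W_i)$, a product of $\xi_1+1$ factors $(1+2\lambda W)$ whose $W$'s all obey $\P(W\geq s)\leq 5s^{-\sqrt d}(\log s)^{-2}$ for $s\geq 2/\ep$ (by the induction hypothesis together with Theorems~\ref{thm:tail bound of S},~\ref{thm:M tail bd}); similarly $S(\mathcal{F}^{m,l})\leq\prod_{i=1}^{\xi_1+1}(1+\lambda W_i')$. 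Since $\deg(v_1)=\xi_1+1$ is $\xi{+}1$-distributed it still satisfies~\eqref{eq:concentration condition} with slightly adjusted positive constants, so the three-regime induction of Sections~\ref{subsubsection1}--\ref{subsubsection3} and~\ref{subsubsec:M1}--\ref{subsubsec:M3} (Lemmas~\ref{lem:sum Si small t},~\ref{lem:recursion principle atypical} and Corollary~\ref{cor:recursion principle atypical}) goes through with only cosmetic changes to the numerical thresholds and yields~\eqref{eq:unicycF tail S} and~\eqref{eq:unicycF tail M}.

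The delicate step is~\eqref{eq:unicycF tail B}, where $B(\mathcal{F}^{m-1,l})$ enters with a bare factor $\lambda=(1-\ep)d^{-1}$ rather than inside a $(1+\lambda(\cdot))$ factor. Since $-1-\tfrac34(m-2)=-\tfrac34(m-1)-\tfrac14$, putting $R_m:=d^{3(m-1)/4}B(\mathcal{F}^{m,l})$ and $P:=\prod_{i=1}^{\xi_1}(1+\lambda S(\hat T_i))$ turns the atypical recursion into $R_m\leq d^{-1/4}R_{m-1}P$, and from the intermediate/large-$t$ part of the proof of Theorem~\ref{thm:tail bound of S} (namely~\eqref{eq:tail bd of S intermed t 0},~\eqref{eq:tail bd atypical generalized form} and the bounds on $\P(D\geq r)$) one extracts $\P(P\geq s)\leq 2s^{-\sqrt d}(\log s)^{-2}$ for $s\geq d^{1/10}$, whence $V:=d^{-1/4}P$ has the \emph{boosted} tail $\P(V\geq v)\leq 2d^{-\sqrt d/4}v^{-\sqrt d}(\log v)^{-2}$ for $v\geq 2$ and $\E[P^{\sqrt d}]\leq 2d^{\sqrt d/10}$. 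The plan is then to write $\P(R_m\geq t)=\E[\P(R_{m-1}\geq t/V\mid V)]$, split on $\{V\leq 1\}$, $\{1<V\leq t/2\}$ and $\{V>t/2\}$, and on the middle event control $\E[V^{\sqrt d}(\log(t/V))^{-2}\textbf{1}_{\{1<V\leq t/2\}}]$ by the layer-cake formula, being careful to retain \emph{both} logarithmic factors and to use $\log(t/V)\geq\tfrac12\log t$ (resp.\ $\geq\log 2$) on the appropriate subranges; then each contribution comes out $\leq\tfrac14 t^{-\sqrt d}(\log t)^{-2}$ once $d$ is large, the $d^{-\sqrt d/4}$-boost swallowing $\E[P^{\sqrt d}]$ with room to spare. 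The main obstacle is precisely this last estimate: the crude split $\P(XY\geq t)\leq\P(X\geq\sqrt t)+\P(Y\geq\sqrt t)$ would halve the exponent for large $t$, and it is the slack factor $d^{-1/4}$ generated at each recursion level that must be tracked carefully to compensate. Granting~\eqref{eq:unicycF tail B}, the induction on $m$ closes and Lemma~\ref{lem:recursion for F} follows.
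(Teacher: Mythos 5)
Your proof follows essentially the same route as the paper's: the same induction on $m$ with the decomposition of $\mathcal{F}^{m,l}$ at $v_1$ into $D$ i.i.d.\ $\gw(\xi)^{l-1}$ subtrees plus an independent copy of $\mathcal{F}^{m-1,l}$ hanging below $v_2$, the same typical/atypical pairs of recursions for $S$, $M^l$ and $B$, the reuse of the three-regime machinery of Theorems~\ref{thm:tail bound of S} and~\ref{thm:M tail bd} (Lemmas~\ref{lem:sum Si small t},~\ref{lem:recursion principle atypical}, Corollary~\ref{cor:recursion principle atypical}) for~\eqref{eq:unicycF tail S}--\eqref{eq:unicycF tail M}, and for~\eqref{eq:unicycF tail B} the reduction to a product-tail bound for $d^{-1/4}\,\widetilde{B}\cdot P$ in which the slack $d^{-1/4}$ per level is the engine. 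The only place you deviate is the product-tail estimate itself: the paper proves this as Claim~\ref{claim:B tail large t} via the geometric splitting~(\ref{eq:BtimesS recursion}) with ratio $\alpha=d^{1/10}$, keeping the squared logarithm from \emph{both} tails in every term and summing them with the convolution bound of Lemma~\ref{lem:sum for induction log term}, whereas you condition on $V=d^{-1/4}P$ and integrate. That version works, but only if you do the same convolution: on your subrange $\sqrt{t}<V\le t/2$, replacing $\log(t/V)$ by $\log 2$ and then bounding $\E[V^{\sqrt d}\one_{V>\sqrt t}]$ by layer cake leaves a residual of order $d^{-\sqrt d/4}\sqrt{d}\,t^{-\sqrt d}(\log t)^{-1}$, which exceeds the target $t^{-\sqrt d}(\log t)^{-2}$ once $\log t\gg d^{\sqrt d/4}/\sqrt d$, so the claimed bound for \emph{all} $t\ge 2$ does not follow from the argument as written. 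The fix is exactly the paper's: keep $(\log(t/v))^{-2}(\log v)^{-2}$ inside the integral over $v$ and use the Lemma~\ref{lem:sum for induction log term}-type estimate, which returns the full $(\log t)^{-2}$ with a harmless $d^{-c\sqrt d}$ prefactor. A second, smaller remark: your product approach actually covers the small-$t$ regime $2\le t\le d^{1/10}$ as well (the paper instead runs a separate Case 1 through the typical recursion~(\ref{eq:Brecursion on F typical})), and your treatment of the extra spine factor $1+\lambda S(\mathcal{F}^{m-1,l})$ as one more independent factor with the same marginal tail is fine, since Lemma~\ref{lem:recursion principle atypical} only uses independence and the marginal tail bounds.
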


 \begin{proof}
 	We prove this lemma by an induction on $m$, for each fixed $l$. Establishing (\ref{eq:unicycF tail S}) and (\ref{eq:unicycF tail M}) is along the same lines as (\ref{thm:tail bound of S}) and (\ref{thm:M tail bd}), so we postpone those parts of the proof to Section \ref{subsec:pfof recursion for F} in the Appendix.
 	
 	In the remaining proof, we show  (\ref{eq:unicycF tail B}). 
 	When $m=1$, $\mathcal{F}^{1,l}$ is a single vertex $v_1$ and we have the conclusion. Suppose that (\ref{eq:unicycF tail B}) holds for $m$ and we want to prove the same thing for $\mathcal{F}^{m+1,l}$.  Let $\{\mathcal{T}_j\}_{j=1}^{m-1}$ be i.i.d. $\gw(\xi)^l$ trees in $\mathcal{F}^{m,l}$, each rooted at $v_i$.
 	Dividing $\mathcal{F}^{m+1,l}$ into subgraphs rooted at each child of $v_1$, we have the following description:
 	\begin{itemize}
 		\item Let $u_1,\ldots, u_D$ be the children of $v_1$ inside $\mathcal{T}_1$ and $T_1,\ldots, T_D$ be the subtrees in $\mathcal{T}_1$ rooted at each of them, respectively. Hence, $T_1,\dots,T_D \sim$ i.i.d. $\gw(\xi)^{l-1}$ given $D$.
 		
 		\item Let $F$ be the subtree of $\mathcal{F}^{m+1,l}$ rooted at $v_2$. That is, $F$ is the graph consists with the length-$(m-1)$ line  $v_2\ldots v_{m+1}$ with each $v_i, \;2\leq i\leq m$ being a root of the tree $\mathcal{T}_i$. Hence, $F \sim \mathcal{F}^{m,l}$.
 	\end{itemize}
 	Based on this decomposition of $\mathcal{F}= \mathcal{F}^{m+1,l}$, we can establish the two recursive inequalities as before. 
 	To derive the first one, let $(\bar{X}_t)\sim \cp^\lambda(\mathcal{F},\one_{v_1})$ and
 	\begin{equation*}
 	\bar{\textbf{B}}(\mathcal{F}) := 
 	\left|\left\{s \in [0, \textbf{R}(\mathcal{F})]:\; \bar{X}_s(v_{m+1})=1 \textnormal{ and } \bar{X}_{s-}(v_{m+1})=0 \right\}\right| ,
 	\end{equation*}
 	where $\textbf{R}(\mathcal{F})$ is the survival time of $(\bar{X}_t)$.
 	We also set $\bar{B}(\mathcal{F}) :=\E_{\textsc{cp}} \bar{\textbf{B}}(\mathcal{F})$, and let $\bar{\textbf{B}}(F)$ and $\bar{B}(F)$ be the analogue for $F$.
 	Then, based on the same idea as Propositions \ref{prop:recursion of R}, we have
 	\begin{equation*}
 	\bar{B}(\mathcal{F}^{m+1,l}) \leq \frac{\lambda \bar{B}(F)}{1-\lambda^2 \left(R(F) + \sum_{i=1}^D R(T_i)\right)}, \quad \textnormal{if } \lambda^2 \left(R(F) + \sum_{i=1}^D R(T_i)\right)<1.
 	\end{equation*}
 	Thus, Proposition \ref{prop:S vs R} and (\ref{eq:recursion eq typical}) tell us that
 	\begin{equation}\label{eq:Brecursion on F typical}
 	B(\mathcal{F}^{m+1,l}) \leq \frac{\lambda B(F)}{1-\lambda-2\lambda^2 \left(S(F) + \sum_{i=1}^D S(T_i)\right)}, \quad \textnormal{if } \lambda+2\lambda^2 \left(S(F) + \sum_{i=1}^D S(T_i)\right)<1,
 	\end{equation}
 	and hence we obtain the first recursive inequality.
 	
 	The second inequality comes from the same idea as in Proposition \ref{prop:Mrecursion atypcial}:
 	\begin{equation}\label{eq:Brecursion on F atypical}
 	B(\mathcal{F}^{m+1,l}) \leq \lambda B(F) \prod_{i=1}^D (1+\lambda S(T_i)).
 	\end{equation}
 	
 	As before, we  show (\ref{eq:unicycF tail B})  separately on different regimes of $t$. Details in the proof are also similar to the proof of Theorem \ref{thm:tail bound of S}.
 	
 	\vspace{2mm}
 	\noindent \textbf{Case 1.} For $t\leq d^{\frac{1}{10}}.$
 	
 	Suppose that 
 	\begin{equation*}
 	B(F) \leq d^{-\frac{3}{4}(m-1) +\frac{1}{2}}, \quad
 	\textnormal{and}
 	\quad
 	S(F)+\sum_{i=1}^D S(T_i) \leq \frac{2d}{\ep}\left(1+\frac{\ep}{3} \right).
 	\end{equation*}
 	Then, by (\ref{eq:Brecursion on F typical}), $B(\mathcal{F}^{m+1,l}) \leq d^{-\frac{3m}{4}}$. Therefore, for any $t\geq 1$,
 	\begin{equation}\label{eq:Brecursion on F split small t}
 	\begin{split}
 	\Pgw \left(B(\mathcal{F}^{m+1,l}) \geq d^{-\frac{3m}{4}}t \right)
 	&\leq
 	\P\left(D \geq \left(1+\frac{\ep}{6} \right)d \right)
 	+
 	\P\left(B(F) \geq d^{-\frac{3}{4}(m-1)+\frac{1}{2} } \right)
 	\\
 	&+
 	\Pgw \left(S(F) + \sum_{i=1}^D S(T_i) \geq \frac{2d}{\ep} \left(1+\frac{\ep}{3} \right) \right).
 	\end{split} 	
 	\end{equation}
 	By following the logics from Section \ref{subsubsection1} and relying on the induction hypothesis that tells us the tail probability of $B(F)$, we see that the r.h.s. of (\ref{eq:Brecursion on F split small t}) is at most $t^{-\sqrt{d}} (\log t)^{-2}$ for all $2 \leq t\leq d^{\frac{1}{10}}.$
 	
 	\vspace{2mm}
 	\noindent \textbf{Case 2.} For $t\geq d^{\frac{1}{10}}$.
 	
 	For convenience, set
 	$$\widehat{S} = \prod_{i=1}^D (1+\lambda S(T_i)),
 	\quad \textnormal{and}\quad
 	\widetilde{B} = d^{\frac{3}{4}(m-1)} B(F).
 	$$
 	We use (\ref{eq:Brecursion on F atypical}) and hence prove the following. 
 	
 	\begin{claim}\label{claim:B tail large t}
 		Under the above setting, we have
 		\begin{equation}\label{eq:Brecursion:in claim}
 		\Pgw \left( B(\mathcal{F}^{m+1,l}) \geq d^{-\frac{3m}{4}} t \right)
 		\leq
 		\Pgw \left(\widetilde{B}\cdot \widehat{S} \geq
 		\frac{1}{2}d^{\frac{1}{4}}t
 		\right)
 		\leq 
 		t^{-\sqrt{d}} (\log t)^{-2}, \quad \textnormal{for all } t\geq d^{\frac{1}{10}}.
 		\end{equation}
 		Note that the first inequality is just a rewriting of (\ref{eq:Brecursion on F atypical}).
 	\end{claim}
 	
 	The idea is to split the probability as follows: let $\alpha = d^{\frac{1}{10}}$, and we see that
 	\begin{equation}\label{eq:BtimesS recursion}
 	\begin{split}
 	\Pgw \left(\widetilde{B}\cdot \widehat{S} \geq
 	t
 	\right)
 	&\leq 
 	\Pgw \left(\widehat{S} \geq t\alpha \right) +\Pgw \left(\widetilde{B} \geq \frac{t}{2\alpha^2} d^{\frac{1}{4}}  \right)\\
 	&+
 	\sum_{k=0}^{k_0} \Pgw\left(\widehat{S} \geq t\alpha^{-k} \right) \cdot
 	\Pgw \left(\widetilde{B} \geq \frac{\alpha^{k-1} }{2}d^{\frac{1}{4}} \right),
 	\end{split}
 	\end{equation}
 	where we set $k_0 = \lfloor\frac{\log t}{\log \alpha}\rfloor$-1, the point where $\alpha^{k+1}\leq t$ is the closest to $t$. Similar ideas as Lemma \ref{lem:recursion principle atypical} yields the conclusion (\ref{eq:Brecursion:in claim}), but the computation is much simpler. We postpone the remaining details of Claim \ref{claim:B tail large t} to Section \ref{subsec:pfof claim} in the Appendix.
 \end{proof}

 \subsection{Back to unicyclic graphs and the  proof of Proposition \ref{prop:SMunicyclic} for small $t$}\label{subsec:unic prop small t2}
 
 In the previous subsection, the graph $\mathcal{F}^{m,l}$ is introduced to describe a way of decomposing  $\gwc^1$-processes. Now we combine the pieces together and derive a recursive tail probability estimates for $\gwc^1$-processes.
 
 For integers $m_1,m_2\geq 1$ and $l\geq 0$, let $\underbar{m} = (m_1,m_2)$ and define the random graph $\mathcal{A}^{\underbar{m},l}$ as 
 \begin{itemize}
 	\item $\mathcal{A}^{\underbar{m},l}$ consists of a length-($m_1+m_2$) line $v_{-m_1}\ldots v_{-1}v_0 v_1\ldots v_{m_2}$ and i.i.d.$\;$trees $\mathcal{T}_j \sim \gw(\xi)^l$  rooted at $v_j$ for $j\in \{-m_1+1,\ldots, m_2-1\}$. We designate $\rho=v_0$ as the root of $\mathcal{A}^{\underbar{m},l}$.
 \end{itemize}
 
 We define the \textit{root-added contact process} on $\mathcal{A}^{\underbar{m},l}$ by adding a permanently infected parent $\rho^+$ whose only connection is to $\rho=v_0$. Further, we define $S(\mathcal{A}^{\underbar{m},l})$ and $M^l(\mathcal{A}^{\underbar{m},l})$ as Definition \ref{def:SMunicyclic}, and $B(\mathcal{A}^{\underbar{m},l})$ as  (\ref{eq:def:BofF}) but considering the infections at both endpoints $v_{-m_1}$, $v_{m_2}$ (not just a single endpoint as in (\ref{eq:def:BofF})).  Under this setting, we first establish the tail probability estimates for $\mathcal{A}^{\underbar{m},l}$.
 
 \begin{lemma}\label{lem:Drecursion}
 	Let $m_1, m_2,l\geq 1$ be integers, 	$\ep\in(0,1)$ and $\mathfrak{c}=\{c_\delta \}_{\delta\in (0,1]}$ be a collection of positive constants. Then there exists $d_0(\ep, \mathfrak{c}) >0$ such that the following holds true. For any $\xi$ that satisfies $d:=\E \xi \geq d_0$ and (\ref{eq:concentration condition}) with $\mathfrak{c}$, we have for $\lambda=(1-\ep) d^{-1}$ and $\mathcal{A}^{\underbar{m},l}$ as above that
 	\begin{eqnarray}
 	\P_{\textsc{gw}} \left(S(\mathcal{A}^{\underbar{m},l}) \geq  t \right) &\leq 
 	t^{-\sqrt{d}} (\log t)^{-2} ~~~\textnormal{for all }t\geq \frac{2}{\ep}; \label{eq:unicycD tail S}
 	\\
 	\P_{\textsc{gw}} \left(M^l(\mathcal{A}^{\underbar{m},l}) \geq \left(1- \frac{\ep}{10} \right)^l t \right) &\leq 
 	t^{-\sqrt{d}} (\log t)^{-2} ~~~\textnormal{for all }t\geq 2;\label{eq:unicycD tail M}
 	\\
 	\P_{\textsc{gw}} \left(B(\mathcal{A}^{\underbar{m},l}) \geq d^{-\frac{3}{4}(m_1 \wedge m_2) } t \right) &\leq 
 	t^{-\sqrt{d}} (\log t)^{-2} ~~~\textnormal{for all }t\geq 2,\label{eq:unicycD tail B}
 	\end{eqnarray}
 	where $S(\mathcal{A}^{\underbar{m},l})$, $M^l(\mathcal{A}^{\underbar{m},l})$ and $B(\mathcal{A}^{\underbar{m},l}) $  are given as above.	
 \end{lemma}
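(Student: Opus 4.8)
The plan is to view the two-sided path graph $\mathcal{A}^{\underbar{m},l}$, rooted at $v_0$, as a single step of the tree recursions from Sections \ref{sec:subcritical trees} and \ref{sec:numberofhits}, with all the tail bounds needed for the ``children'' already available. When $\mathcal{A}^{\underbar{m},l}$ is rooted at $\rho := v_0$, the root has $D_0 + 2$ subtrees: writing $D_0 := \deg_{\mathcal{T}_0}(v_0) \overset{\textnormal{d}}{=} \xi$ for the number of children of $v_0$ inside $\mathcal{T}_0$, exactly $D_0$ of the subtrees are i.i.d.\ $\gw(\xi)^{l-1}$, while the two remaining ones are the half-paths rooted at $v_{-1}$ and $v_1$, which are distributed as $\mathcal{F}^{m_1,l}$ and $\mathcal{F}^{m_2,l}$ respectively, and all $D_0 + 2$ subtrees are mutually independent. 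The inputs are the tail bounds on $S$ and $M^l$ for $\gw(\xi)^{l-1}$-trees from Theorems \ref{thm:tail bound of S} and \ref{thm:M tail bd}, and on $S$, $M^l$, $B$ for the $\mathcal{F}$-pieces from Lemma \ref{lem:recursion for F}.

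First I would treat (\ref{eq:unicycD tail S}) and (\ref{eq:unicycD tail M}). Applying Propositions \ref{prop:Srecursion atypical} and \ref{prop:Mrecursion atypcial} and the typical recursions (\ref{eq:recursion eq typical}), (\ref{eq:Mrecursion typical}) with root $\rho = v_0$ and $D = D_0 + 2$ gives, for instance,
\begin{equation*}
S(\mathcal{A}^{\underbar{m},l}) \leq \bigl(1+\lambda S(\mathcal{F}^{m_1,l})\bigr)\bigl(1+\lambda S(\mathcal{F}^{m_2,l})\bigr)\prod_{i=1}^{D_0}\bigl(1+\lambda S(T_i)\bigr),
\end{equation*}
together with its typical counterpart, and similarly for $\widetilde{M}(\mathcal{A}^{\underbar{m},l})$ using the normalisation in (\ref{eq:tildeMdef}). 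One then repeats the small/intermediate/large-$t$ argument of Theorems \ref{thm:tail bound of S} and \ref{thm:M tail bd} essentially verbatim: in the small regime using the typical recursion and Lemma \ref{lem:sum Si small t}, and in the other two regimes the atypical recursion together with Lemma \ref{lem:recursion principle atypical} and Corollary \ref{cor:recursion principle atypical}. The only changes are cosmetic: there are two extra summands $S(\mathcal{F}^{m_1,l})$, $S(\mathcal{F}^{m_2,l})$ (resp.\ their $M^l$-analogues), which obey the same marginal tail bound as the GW-summands and are independent of everything else, so a union bound over these two terms absorbs them; and the root degree $D_0 + 2$ still satisfies (\ref{eq:concentration condition}) after shrinking $\delta$ slightly, which is harmless once $d_0$ is large. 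This yields (\ref{eq:unicycD tail S})--(\ref{eq:unicycD tail M}) with the stated constant $1$.

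For (\ref{eq:unicycD tail B}) I would write $B(\mathcal{A}^{\underbar{m},l}) = B_L + B_R$, where $B_L$ (resp.\ $B_R$) is the expected number of infections at the endpoint $v_{-m_1}$ (resp.\ $v_{m_2}$) during a single excursion. Since $\mathcal{F}^{m_1,l}$ is attached to $\rho = v_0$ by the single edge $v_0 v_{-1}$ and $v_{-m_1}$ is precisely the ``end'' of $\mathcal{F}^{m_1,l}$, the first-round/feedback arguments behind Proposition \ref{prop:recursion of R} and Corollary \ref{cor:recursion Mprime}, combined with Proposition \ref{prop:S vs R} and (\ref{eq:recursion eq typical}) exactly as in (\ref{eq:Brecursion on F typical}), give a typical recursion bounding $B_L$ by $\lambda B(\mathcal{F}^{m_1,l})$ divided by $1 - \lambda - 2\lambda^2$ times the sum of $S$ over all $D_0 + 2$ subtrees of $v_0$, together with the atypical version
\begin{equation*}
B_L \leq \lambda\, B(\mathcal{F}^{m_1,l})\,\bigl(1+\lambda S(\mathcal{F}^{m_2,l})\bigr)\prod_{i=1}^{D_0}\bigl(1+\lambda S(T_i)\bigr)
\end{equation*}
in the spirit of (\ref{eq:Brecursion on F atypical}). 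The extra factor $\lambda \leq d^{-1} \leq d^{-3/4}$ turns the prefactor $d^{-\frac34(m_1-1)}$ of (\ref{eq:unicycF tail B}) into $d^{-\frac34 m_1}$ with room to spare, so rerunning the three-regime computation behind Claim \ref{claim:B tail large t} and its small-$t$ analogue gives $\Pgw(B_L \geq d^{-\frac34 m_1} t) \leq \tfrac12 t^{-\sqrt d}(\log t)^{-2}$, and symmetrically for $B_R$ with $m_2$. Since $d^{-\frac34(m_1\wedge m_2)} \geq d^{-\frac34 m_i}$ for $i=1,2$, a union bound with $t_L = t_R = t/2$ then yields $\Pgw(B(\mathcal{A}^{\underbar{m},l}) \geq d^{-\frac34(m_1\wedge m_2)} t) \leq t^{-\sqrt d}(\log t)^{-2}$, the $2^{\sqrt d}$-type loss being absorbed for $d$ large.

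I expect the main obstacle to lie in the bookkeeping of the $B$-recursion through the root: one must check that the infections pushed back from the endpoints $v_{\pm m_i}$ into $\mathcal{A}^{\underbar{m},l}$ contribute exactly the excursion-time correction terms $S(\mathcal{F}^{m_j,l})$ and $S(T_i)$ that appear in the denominator, so that (\ref{eq:recursion eq typical}) may be invoked, and that the single factor of $\lambda$ gained at the root edge cleanly upgrades the power of $d^{-3/4}$ from $m_i - 1$ to the required $m_1 \wedge m_2$. The $S$- and $M^l$-parts are genuine re-runs of Theorems \ref{thm:tail bound of S}--\ref{thm:M tail bd} with two additional independent summands of the right type and a root degree inflated by two, so apart from re-choosing $d_0$ to swallow these constants no new ideas are needed there.
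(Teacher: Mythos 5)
Your proposal follows essentially the same route as the paper's proof in Appendix \ref{subsec:pfof Drecursion}: root at $v_0$, treat the two half-paths $F_1\sim\mathcal{F}^{m_1,l}$, $F_2\sim\mathcal{F}^{m_2,l}$ as two extra subtrees alongside the $D$ i.i.d.\ $\gw(\xi)^{l-1}$ children, write down the typical and atypical recursions (\ref{eq:Drecursion typical})--(\ref{eq:Drecursion atyipical}) for $S$, $M^l$ and $B$, and rerun the three-regime tail argument using Theorems \ref{thm:tail bound of S}, \ref{thm:M tail bd}, Lemma \ref{lem:recursion for F}, Lemmas \ref{lem:sum Si small t}, \ref{lem:recursion principle atypical}, Corollary \ref{cor:recursion principle atypical} and Claim \ref{claim:B tail large t}, with the extra factor $\lambda$ at the root edge supplying the $d^{-1/4}$ slack that upgrades the exponent from $\frac34(m_i-1)$ to $\frac34(m_1\wedge m_2)$ and absorbs the $2^{\sqrt d}$-type losses. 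Your splitting of $B$ into $B_L+B_R$ with a union bound is only cosmetically different from the paper's handling of $\lambda(B(F_1)+B(F_2))$ within one recursion, so the argument is correct and matches the paper's.
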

 
 We can prove this lemma based on Lemma \ref{lem:recursion for F}, by a straight-forward applications of the recursive argument from Theorems \ref{thm:tail bound of S} and \ref{thm:M tail bd}. To prevent repeating the same argument many times, we postpone the proof until Section \ref{subsec:pfof Drecursion} in the Appendix. 
 
 We conclude this subsection by settling the following lemma, which establishes Proposition \ref{prop:SMunicyclic} for $t\leq d^{\frac{1}{10}}$.
 
 \begin{lemma}\label{lem:unicyclic recursion small t}
 	Let $m\geq 2$ and $l\geq 1$ be integers, $\ep \in (0,1)$ and $\mathfrak{c}=\{c_\delta \}_{\delta \in (0,1]}$ be positive constants. Then, under the setting of Proposition \ref{prop:SMunicyclic}, we have
 	\begin{equation}\label{eq:unicyc tail:small t}
 	\begin{split}
 	\P_{\textsc{gw}} \left(S(\mathcal{H}^{m,l}) \geq  t \right) &\leq 
 	3t^{-\sqrt{d}} (\log t)^{-2} ~~~\textnormal{for all } \frac{2}{\ep} \leq t \leq d^{\frac{1}{10}};
 	\\
 	\P_{\textsc{gw}} \left(M^l(\mathcal{H}^{m,l}) \geq \left(1- \frac{\ep}{10} \right)^l t \right) &\leq 
 	3t^{-\sqrt{d}} (\log t)^{-2} ~~~\textnormal{for all } 2\leq t \leq d^{\frac{1}{10}}.
 	\end{split}
 	\end{equation}
 \end{lemma}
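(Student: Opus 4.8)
The plan is to reduce the root-added contact process on $\mathcal{H}^{m,l}$ to contact processes on the tree-decorated lines $\mathcal{A}^{\underbar m,l}$ handled in Lemma~\ref{lem:Drecursion}, using the cover $(A_1,A_2)$ of Definition~\ref{def:cover of unic} and the decomposition described right after it. First I would record that, with $m'=\lceil(m+1)/2\rceil$, the cover pieces have law $A_1\sim\mathcal{A}^{(m-m'+1,\,m'-1),\,l}$ and $A_2\sim\mathcal{A}^{(m'-1,\,m-m'+1),\,l}$ (no trees at the two endpoints), and — this is exactly where the hypothesis $m\ge 2$ is used — both coordinates of the pair $\underbar m$ are at least $1$. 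Hence Lemma~\ref{lem:Drecursion} applies to $A_1$ and $A_2$ with $m_1\wedge m_2\ge 1$: it yields the tails (\ref{eq:unicycD tail S})--(\ref{eq:unicycD tail M}) for $S(A_i)$ and $M^l(A_i)$, and the endpoint estimate $\P_{\textsc{gw}}(B(A_i)\ge d^{-3/4}t)\le t^{-\sqrt d}(\log t)^{-2}$.

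Next I would run the decomposition of $\cp^\lambda_{v_1^+}(\mathcal{H}^{m,l};\one_{v_1})$ exactly as in the procedure following Definition~\ref{def:cover of unic}, with the one change that the initial copy is the root-added process $\cp^\lambda_{v_1^+}(A_1;\one_{v_1})$ (inheriting the permanent parent), while every copy spawned afterwards is a plain contact process on $A_1$ or $A_2$ seeded at its root. Since $\mathcal{H}^{m,l}$ carries no other permanent infection, the decomposition reconstructs the process, so by the usual graphical-representation coupling
\begin{equation*}
\mathbf{S}(\mathcal{H}^{m,l})\le\mathbf{S}\big(A_1^{(1)}\big)+\sum_{c}\mathbf{R}(\text{copy}_c),\qquad
\mathbf{M}^l(\mathcal{H}^{m,l})\le\mathbf{M}^l\big(A_1^{(1)}\big)+\sum_{c}\bar{\mathbf{M}}^l(\text{copy}_c),
\end{equation*}
the sums over the plain copies. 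A copy of $A_i$ spawns one new copy of the other type for each new infection at an endpoint, so the expected number of spawns is the relevant $B$; since $B(A_i)$ is tiny the number of copies is governed by a deeply subcritical branching process. Making this precise by the Wald-type bookkeeping already used in the proof of Theorem~\ref{thm:lowerbd of lamdac}, on the event $\mathcal{G}_0:=\{B(A_1)\vee B(A_2)\le d^{-3/4+\eta}\}$ with a fixed $\eta\in(1/10,1/4)$, the expected numbers of spawned $A_2$- and of extra $A_1$-copies are at most $2d^{-3/4+\eta}$ and $2d^{-3/2+2\eta}$; taking $\E_{\textsc{cp}}$ and using $R(A_i)\le S(A_i)$, $\bar M^l(A_i)\le M^l(A_i)$, we obtain on $\mathcal{G}_0$
\begin{equation*}
S(\mathcal{H}^{m,l})\le(1+2d^{-3/2+2\eta})S(A_1)+2d^{-3/4+\eta}S(A_2),\qquad
M^l(\mathcal{H}^{m,l})\le(1+2d^{-3/2+2\eta})M^l(A_1)+2d^{-3/4+\eta}M^l(A_2).
\end{equation*}

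To conclude, the $B$-tail gives $\P_{\textsc{gw}}(\mathcal{G}_0^c)\le 2d^{-\eta\sqrt d}(\tfrac1\eta\log d)^{-2}$, which is $o\big(t^{-\sqrt d}(\log t)^{-2}\big)$ uniformly on $t\le d^{1/10}$ because $\eta>1/10$. On $\mathcal{G}_0$ I would split the two displayed bounds by allocating a budget $\beta t$ with $d^{-3/4+\eta}\ll\beta\ll d^{-1/2}$ (possible precisely because $3/4>1/2$, e.g.\ $\beta=d^{-0.52}$ when $\eta=1/5$) to the $A_2$-term and the rest to the $A_1$-term: since $\beta$ exceeds the coefficient $2d^{-3/4+\eta}$, the $A_2$-term contributes at most $(2d^{-3/4+\eta}/\beta)^{\sqrt d}t^{-\sqrt d}(\log t)^{-2}\to0$, and since $\beta\sqrt d\to0$ and $d^{-3/2+2\eta}\sqrt d\to0$, the $A_1$-term contributes at most $(1+o(1))t^{-\sqrt d}(\log t)^{-2}$ by (\ref{eq:unicycD tail S})/(\ref{eq:unicycD tail M}). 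Summing the three pieces yields $\le 3t^{-\sqrt d}(\log t)^{-2}$ for all large $d$, which is (\ref{eq:unicyc tail:small t}); the mild distortion of the $(\log t)^{-2}$ factors under the split is absorbed into the constant.

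The main obstacle is quantitative, not structural: every tail is of the form $s^{-\sqrt d}$, so a multiplicative distortion $(1+\delta)$ of the threshold blows up to $(1+\delta)^{\sqrt d}$ and subtracting $\beta t$ costs $(1-\beta)^{-\sqrt d}$, so the decomposition must be controlled tightly enough that all corrections become $o(1)$ after exponentiation. This is made possible exactly by (a) the exponent $3/4>1/2$ in the estimate $B(\mathcal{A}^{\underbar m,l})\le d^{-\frac34(m_1\wedge m_2)}t$ of Lemma~\ref{lem:Drecursion}, which leaves simultaneous room for the threshold defining $\mathcal{G}_0$ and for the splitting budget, and (b) the fact that the cover of a cycle of length $m\ge2$ always cuts it into two arcs of length at least one, forcing $m_1\wedge m_2\ge1$. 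The remaining care lies in making the count of spawned copies rigorous (the copies share a single graphical representation, so one argues as in Definition~\ref{def:decomposition}) and in keeping the $(\log t)^{-2}$ factors aligned across the budget split.
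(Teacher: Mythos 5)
Your overall architecture (cover $(A_1,A_2)$ with $m_1\wedge m_2\ge 1$, Lemma \ref{lem:Drecursion} for $S$, $M^l$ and the endpoint quantity $B$, then a threshold/budget split in which the $d^{-3/4}$ decay of $B$ leaves room against the $(\,\cdot\,)^{-\sqrt d}$ tails) is exactly the paper's, but there is a genuine gap in the step that converts the decomposition into the two displayed inequalities. You take the initial copy to be the \emph{root-added} process $\cp^\lambda_{v_1^+}(A_1;\one_{v_1})$ and claim $\mathbf{S}(\mathcal{H}^{m,l})\le \mathbf{S}(A_1^{(1)})+\sum_c\mathbf{R}(\mathrm{copy}_c)$. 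This is not a valid domination: the excursion of $\mathcal{H}^{m,l}$ ends only when \emph{all} of $\mathcal{H}^{m,l}$ is healthy, and during the interval after the first $A_1$-excursion has terminated but while a spawned $A_2$-copy (hence possibly the true process) is still alive, the permanent parent $\rho^+$ keeps firing arrows into $v_1$ at rate $\lambda$. Such a re-infection restarts activity in the $A_1$-part of $\mathcal{H}^{m,l}$ and can prolong the true excursion and generate further leaf infections, yet none of your copies covers it: plain copies are only spawned at endpoint infections occurring \emph{inside} existing copies, and $\mathbf{S}(A_1^{(1)})$ only accounts for the first excursion of the initial copy. The same defect affects your bound for $\mathbf{M}^l(\mathcal{H}^{m,l})$. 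Consequently the inequalities $S(\mathcal{H}^{m,l})\le(1+2d^{-3/2+2\eta})S(A_1)+2d^{-3/4+\eta}S(A_2)$ (and its $M^l$ analogue) are unproved as stated; note they are also missing the correction of size $\approx\lambda S$ that any correct accounting of parent re-infections must produce.

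The repair is either to spawn an additional plain $A_1$-copy at every parent arrow $\rho^+\to v_1$ that lands while some copy is alive (which yields a self-consistent inequality, since the expected number of such arrows is $\lambda$ times the expected total alive time), or, as the paper does, to run the cover decomposition for the \emph{plain} process to get $R(\mathcal{H}^{m,l})\le\frac{R(A_1)+B(A_1)R(A_2)}{1-B(A_1)B(A_2)}$ and the analogous bound for $\bar M^l$, and then pass to $S$ and $M^l$ via Proposition \ref{prop:S vs R} and Corollary \ref{cor:MvsMprime}; this is precisely where the extra terms $-\lambda S(A_1)-\lambda B(A_1)S(A_2)$ in the paper's denominators come from. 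Quantitatively the omission is harmless in your regime ($\lambda S(A_1)=O(d^{-9/10})$ for $t\le d^{1/10}$, so its effect on the threshold is $(1+o(1))$ after raising to the power $\sqrt d$), and the rest of your argument — the tail bound on the event $B(A_1)\vee B(A_2)\le d^{-3/4+\eta}$ with $\eta>1/10$, the budget split with $d^{-3/4+\eta}\ll\beta\ll d^{-1/2}$, and the Wald-type count of spawned copies (acceptable at the level of rigor the paper itself uses in the proof of Theorem \ref{thm:lowerbd of lamdac}) — goes through once the decomposition inequality is corrected.
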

 
 \begin{proof}
 	Let $\mathcal{H}^{m,l}\sim \gwc^1 (\xi,m)^l$, set $m' = \lceil \frac{m+1}{2}\rceil$ and  consider the cover $A_1$ and $A_2$ of $\mathcal{H}^{m,l}$ defined in Definition \ref{def:cover of unic}. Set $m_1 = m+1-m'$ and $m_2 = m'-1$. Then, we can see that $m_1, m_2 \geq 1 $ for $m\geq 2$ and $A_1, A_2 \sim \mathcal{A}^{\underbar{m},l}$ for $\underbar{m}=(m_1,m_2)$, with $\mathcal{A}^{\underbar{m},l}$ defined as in the beginning of Section \ref{subsec:unic prop small t2}. Thus, we have (\ref{eq:unicycD tail S}), (\ref{eq:unicycD tail M}) and (\ref{eq:unicycD tail B}), where the last one in particular implies that
 	\begin{equation*}
 	\Pgw \left(B(A_1) \geq d^{-\frac{3}{4}}t \right) \leq t^{-\sqrt{d}} (\log t)^{-2}, \quad \textnormal{for all }t\geq 2,
 	\end{equation*}
 	and the same thing for $A_2$ as well.
 	
 	Recall the decomposition of the contact process on $\mathcal{H}^{m,l}$ by its cover (Definition \ref{def:decomposition}). According to its formulation, the quantities $R(\mathcal{H}^{m,l})$ and $\bar{M}^l(\mathcal{H}^{m,l})$ (see Definition \ref{def:SMunicyclic} for their definitions) can be estimated as follows.
 	\begin{equation*}
 	\begin{split}
 	&R(\mathcal{H}^{m,l}) 
 	\leq 
 	R(A_1) + B(A_1)R(A_2) + B(A_1)B(A_2)R(\mathcal{H}^{m,l}),
 	\;\; \textnormal{and hence}\\
 	&R(\mathcal{H}^{m,l}) \leq \frac{R(A_1)+ B(A_1)R(A_2)}{1-B(A_1)B(A_2)}.
 	\end{split}
 	\end{equation*}
 	We have the same thing for $\bar{M}^l(\mathcal{H}^{m,l})$, namely,
 	\begin{equation*}
 	\bar{M}^l(\mathcal{H}^{m,l}) \leq \frac{\bar{M}^l(A_1)+ B(A_1)\bar{M}^l(A_2)}{1-B(A_1)B(A_2)}.
 	\end{equation*}
 	Observe that  Proposition \ref{prop:S vs R} and Corollary \ref{cor:MvsMprime} can be applied to $\mathcal{H}^{m,l}$ so that
 	\begin{equation*}
 	\frac{S(\mathcal{H}^{m,l})}{1+\lambda S(\mathcal{H}^{m,l})} \leq R(\mathcal{H}^{m,l}), \quad \textnormal{and} \quad 
 	M^l(\mathcal{H}^{m,l})\leq (1+\lambda S(\mathcal{H}^{m,l})) \bar{M}^l(\mathcal{H}^{m,l}),
 	\end{equation*}
 	and hence we have
 	\begin{equation*}
 	\begin{split}
 	S(\mathcal{H}^{m,l}) 
 	&\leq 
 	\frac{S(A_1)+ B(A_1)S(A_2)}{1-B(A_1)B(A_2) -\lambda S(A_1)- \lambda B(A_1)S(A_2) }\\
 	M^l(\mathcal{H}^{m,l})
 	&\leq 	 
 	\frac{M^l(A_1)+ B(A_1)M^l(A_2)}{1-B(A_1)B(A_2) -\lambda S(A_1)- \lambda B(A_1)S(A_2) },
 	\end{split}
 	\end{equation*}
 	To establish the first inequality of (\ref{eq:unicyc tail:small t}), we first observe that if
 	\begin{equation*}
 	t\leq d^{\frac{1}{10}},\quad S(A_1) , \; S(A_2) \leq \left(1-d^{-\frac{5}{9}}  \right)t, \quad
 	\textnormal{and} \quad B(A_1) ,\; B(A_2) \leq d^{-\frac{5}{8}}, 
 	\end{equation*}
 	then $S(\mathcal{H}^{m,l}) \leq t$. Therefore, for $t\leq d^{\frac{1}{2}}$, we have
 	\begin{equation*}
 	\begin{split}
 	\P_{\textsc{gw}} \left(S(\mathcal{H}^{m,l}) \geq  t \right)
 	&\leq
 	\Pgw\left(S(A_1) \geq \left(1-d^{-\frac{5}{9}}  \right)t \right) +
 	\Pgw \left(S(A_2) \geq \left(1-d^{-\frac{5}{9}}  \right)t \right)\\
 	&+
 	\Pgw\left(B(A_1) \geq d^{-\frac{5}{8}} \right)
 	+
 	\Pgw \left(B(A_2) \geq d^{-\frac{5}{8}} \right).
 	\end{split}
 	\end{equation*}
 	Since $(1-d^{-\frac{5}{9}})^{-\sqrt{d}}\leq 1+2d^{-\frac{1}{18}}$, applying Lemma \ref{lem:Drecursion} gives us that the r.h.s.$\;$of the above is bounded by $3t^{-\sqrt{d}}(\log t)^{-2}$ for $2/\ep \leq t\leq d^{\frac{1}{10}}$. We can also do the same thing for $M^l(\mathcal{H}^{m,l})$ to deduce the conclusion. 
 \end{proof}

 \subsection{Proof of Proposition \ref{prop:SMunicyclic} for large $t$}\label{subsec:unic prop large t}
 
 In this section, we conclude the proof of Proposition  \ref{prop:SMunicyclic}. Consider $\mathcal{H}^{m,l} \sim \gwc^1(\xi,m)^l$.
 
 \begin{figure}
 	\centering
 	\begin{tikzpicture}[scale=0.45]
 	\draw \boundellipse{0,0}{4}{1};
 	\filldraw[black] (-3,-2.64575131106/4) circle (2pt) node[black, anchor=south]{$v_1 $};
 	\filldraw[black] (3,-2.64575131106/4) circle (2pt) node[black, anchor=south]{$v_4 $};
 	\filldraw[black] (1,-3.87298334621/4) circle (2pt) node[black, anchor=south]{$v_3 $};
 	\filldraw[black] (-1,-3.87298334621/4) circle (2pt) node[black, anchor=south]{$v_2 $};

 	\draw[black] (-1,-3.87298334621/4)--(-1.8,-3.87298334621/4-1.5);
 	\draw[black] (-1,-3.87298334621/4)--(-1+.8,-3.87298334621/4-1.5);
 	\draw[black] (-1.8,-3.87298334621/4-1.5)--(-1+.8,-3.87298334621/4-1.5);
 	\filldraw[black] (-1,-3.87298334621/4-1.5) circle (0pt) node[black, anchor=north]{$\mathcal{T}_2 $};
 	
 	\draw[black] (1,-3.87298334621/4)--(1.4,-3.87298334621/4-1.5);
 	\draw[black] (1,-3.87298334621/4)--(1-.4,-3.87298334621/4-1.5);
 	\draw[black] (1.4,-3.87298334621/4-1.5)--(1-.4,-3.87298334621/4-1.5);
 	\filldraw[black] (1,-3.87298334621/4-1.5) circle (0pt) node[black, anchor=north]{$\mathcal{T}_3 $};
 	
 	\draw[black] (3,-2.64575131106/4)--(3.5,-2.64575131106/4-2.5);
 	\draw[black] (3,-2.64575131106/4)--(2.5,-2.64575131106/4-2.5);
 	\draw[black] (3.5,-2.64575131106/4-2.5)--(2.5,-2.64575131106/4-2.5);
 	\filldraw[black] (3,-2.64575131106/4-2.5) circle (0pt) node[black, anchor=north]{$\mathcal{T}_4 $};
 	
 	\draw[black] (-3,-2.64575131106/4)--(-6,-2.64575131106/4-1);
 	\draw[black] (-3,-2.64575131106/4)--(-4.5,-2.64575131106/4-0.8);
 	\draw[black] (-3,-2.64575131106/4)--(-3,-2.64575131106/4-0.6);
 	
 	\filldraw[black] (-6,-2.64575131106/4-1) circle (2pt);
 	\filldraw[black] (-6,-2.64575131106/4-1.4) circle (0pt) node[black,anchor=west]{$u_1$};
 	\filldraw[black] (-4.5,-2.64575131106/4-1+.2) circle (2pt);
 	\filldraw[black] (-4.5,-2.64575131106/4-1-.25) circle (0pt) node[black,anchor=west]{$u_2$};		
 	\filldraw[black] (-3,-2.64575131106/4-1+.4) circle (2pt);
 	\filldraw[black] (-3,-2.64575131106/4-1-.1) circle (0pt) node[black,anchor=west]{$u_3$};
 	
 	\draw[black]
 	(-6,-2.64575131106/4-1) -- (-6.2,-2.64575131106/4-3) --
 	(-5.8,-2.64575131106/4-3) -- (-6,-2.64575131106/4-1);
 	\filldraw[black] (-6,-2.64575131106/4-3) circle (0pt) node[black,anchor=north]{$T_1$};
 	
 	\draw[black] (-4.5,-2.64575131106/4-1+.2) -- (-4.8,-2.64575131106/4-3.5+.2) -- (-4.2,-2.64575131106/4-3.5+.2) -- (-4.5,-2.64575131106/4-1+.2);
 	\filldraw[black] (-4.5,-2.64575131106/4-3.5+.2) circle (0pt) node[black,anchor=north]{$T_2$};
 	
 	\draw[black] (-3,-2.64575131106/4-1+.4) -- (-3.3,-2.64575131106/4-4+.4) -- (-2.7,-2.64575131106/4-4+.4) -- (-3,-2.64575131106/4-1+.4);
 	\filldraw[black] (-3,-2.64575131106/4+.4-4) circle (0pt) node[black,anchor=north]{$T_3$};

 	\filldraw[black] (0,1.2) circle (0pt) node[black, anchor=south]{\large{$\mathcal{H}^{m,l}$} };
 	\draw[black] (6,-1) -- (8,-1);
 	\draw[black] (8,-1) -- (7.7,-.8);
 	\draw[black] (8,-1) -- (7.7,-1.2);
 	
 	\filldraw[black] (10,-.5) circle (2pt) node[black, anchor=south]{$u_1$};
 	\filldraw[black] (12,-.5) circle (2pt) node[black, anchor=south]{$u_2$};
 	\filldraw[black] (14,-.5) circle (2pt) node[black, anchor=south]{$u_3$};
 	
 	\draw[black] (10,-.5) -- (10.2, -.5-2) -- (9.8,-.5-2) -- (10,-.5);
 	\filldraw[black] (10,-2.5) circle (0pt) node[black,anchor=north]{$T_1$};
 	
 	\draw[black] (12,-.5) -- (12.3,-.5-2.5) -- (11.7,-.5-2.5) -- (12,-.5);
 	\filldraw[black] (12,-3) circle (0pt) node[black,anchor=north]{$T_2$};
 	
 	\draw[black] (14,-.5) -- (14.3,-.5-3) -- (13.7,-.5-3) -- (14,-.5);
 	\filldraw[black] (14,-3.5) circle (0pt) node[black,anchor=north]{$T_3$};
 	
 	
 	\draw \boundellipse{20,0}{4}{1};
 	\filldraw[black] (20-3,-2.64575131106/4) circle (2pt) node[black, anchor=south]{$v_1 $};
 	\filldraw[black] (23,-2.64575131106/4) circle (2pt) node[black, anchor=south]{$v_4 $};
 	\filldraw[black] (21,-3.87298334621/4) circle (2pt) node[black, anchor=south]{$v_3 $};
 	\filldraw[black] (20-1,-3.87298334621/4) circle (2pt) node[black, anchor=south]{$v_2 $};

 	\draw[black] (20-1,-3.87298334621/4)--(20-1.8,-3.87298334621/4-1.5);
 	\draw[black] (20-1,-3.87298334621/4)--(20-1+.8,-3.87298334621/4-1.5);
 	\draw[black] (20-1.8,-3.87298334621/4-1.5)--(20-1+.8,-3.87298334621/4-1.5);
 	\filldraw[black] (20-1,-3.87298334621/4-1.5) circle (0pt) node[black, anchor=north]{$\mathcal{T}_2 $};
 	
 	\draw[black] (21,-3.87298334621/4)--(21.4,-3.87298334621/4-1.5);
 	\draw[black] (21,-3.87298334621/4)--(21-.4,-3.87298334621/4-1.5);
 	\draw[black] (21.4,-3.87298334621/4-1.5)--(21-.4,-3.87298334621/4-1.5);
 	\filldraw[black] (21,-3.87298334621/4-1.5) circle (0pt) node[black, anchor=north]{$\mathcal{T}_3 $};
 	
 	\draw[black] (23,-2.64575131106/4)--(23.5,-2.64575131106/4-2.5);
 	\draw[black] (23,-2.64575131106/4)--(22.5,-2.64575131106/4-2.5);
 	\draw[black] (23.5,-2.64575131106/4-2.5)--(22.5,-2.64575131106/4-2.5);
 	\filldraw[black] (23,-2.64575131106/4-2.5) circle (0pt) node[black, anchor=north]{$\mathcal{T}_4 $};
 	
 	\filldraw[black]  (20,1.2) circle (0pt) node[black,anchor=south]{\large{$\dot{\mathcal{H}}$}};

 	\end{tikzpicture}
 	\caption{An illustration of dividing  $\mathcal{H}^{m,l}\sim \gwc^1(\xi,m)^l$ into   $T_1,T_2,T_3 \sim \gw(\xi)^{l-1}$ and $\dot{ \mathcal{H}}\sim \gwc^2(\xi,m)^l$, given that $m=4$ and $D=3$.}\label{fig:gwc}
 \end{figure}
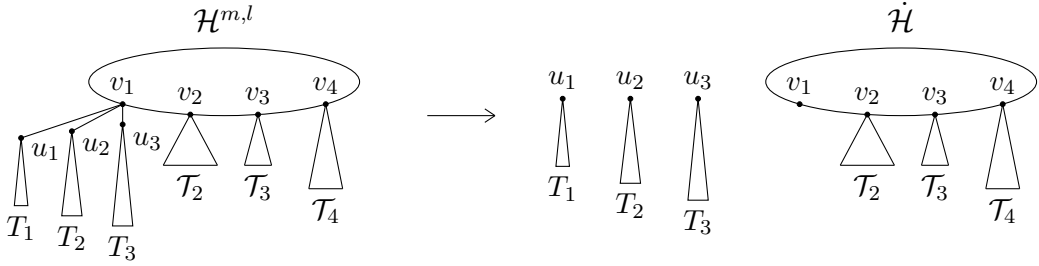
 
 For a $\gwc^1$-process $\mathcal{H}^{m,l}\sim \gwc^1 (\xi,m)^l$, we can derive a recursive inequality on $\mathcal{H}^{m,l}$ similar to (\ref{eq:recursion eq atypical}) using $\gwc^2$-process. As described in Figure \ref{fig:gwc} let $v_1\ldots v_m v_1$ be its cycle part, and let $\mathcal{T}_j, \; j\in[m]$ be i.i.d. $\gw(\xi)^l$ rooted at each $v_i$. Let $D$ be the degree of $v_1$ in $\mathcal{T}_1$, and let $T_1,\ldots,T_D$ be the subtrees rooted at $u_1,\ldots u_D$, the children of $v_1$ in $\mathcal{T}_1$. Further, let $\dot{\mathcal{H}}$ be the graph that consists of the cycle $v_1\ldots v_m v_1$ and the trees $\mathcal{T}_j$ rooted at $v_j$ for $j\in\{2,\ldots,m\}$, which is $\dot{\mathcal{H}} \sim \gwc^2(\xi,m)^l$. Also, note that $T_1,\ldots , T_D \sim $ i.i.d.$\;\gw(\xi)^{l-1}$. Then, the following lemma shows the connection between $\gwc^1$- and $\gwc^2$-processes. Its proof is based on the same ideas as Proposition \ref{prop:Srecursion atypical}, and it will be presented in Section \ref{subsubsec:unic2}.
 
 \begin{lemma}\label{lem:gwc1 recursion atypical}
 	Under the above setting, we have
 	\begin{equation}\label{eq:gwc1 Srecursion atypical}
 	S(\mathcal{H}^{m,l}) \leq (1+2\lambda S(\dot{\mathcal{H}})) \prod_{i=1}^D (1+\lambda S(T_i)).
 	\end{equation}
 \end{lemma}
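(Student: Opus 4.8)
The plan is to follow the proof of Proposition \ref{prop:Srecursion atypical} almost verbatim, treating the root $v_1$ of $\mathcal{H}^{m,l}$ as having $D+2$ ``downward'' connections: the $D$ edges $v_1u_1,\ldots,v_1u_D$ leading into the subtrees $T_1,\ldots,T_D$, together with the two edges $v_1v_2$ and $v_1v_m$ leading into the $\gwc^2$-block $\dot{\mathcal{H}}$. (Here $m\ge2$, so $\dot{\mathcal{H}}\sim\gwc^2(\xi,m)^l$ is well defined; the case $m=1$ is a tree, covered by Proposition \ref{prop:Srecursion atypical}.) The key structural fact is that removing $v_1$ disconnects $\mathcal{H}^{m,l}$ into the $D$ trees $T_i$ and the single connected block $\dot{\mathcal{H}}\setminus\{v_1\}$, so that once $v_1$ is held infected the configuration on the rest of the graph evolves as a product chain; the only new feature compared with Proposition \ref{prop:Srecursion atypical} is that $\dot{\mathcal{H}}$ is attached by two edges rather than one, which is exactly what turns a factor $(1+\lambda S(\dot{\mathcal{H}}))$ into $(1+2\lambda S(\dot{\mathcal{H}}))$.

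First I would introduce the modified process $(X^\sharp_t)$ on $\mathcal{H}^{m,l}$ that obeys the dynamics of $\cp^\lambda_{v_1^+}(\mathcal{H}^{m,l};\one_{v_1})$ except that a recovery clock of $v_1$ is honored only from the state $\one_{v_1}$ (so $v_1$ is never healed while some other vertex is infected). The graphical-representation coupling gives $X^\sharp_t\ge X_t$ pointwise, hence $S(\mathcal{H}^{m,l})\le S^\sharp(\mathcal{H}^{m,l})$, where $S^\sharp$ denotes the expected excursion time of $(X^\sharp_t)$. Next, let $\cp^{\boxtimes}$ be the chain on $V(\mathcal{H}^{m,l})\setminus\{v_1\}$ obtained, with $v_1$ held permanently infected, by running independent copies of the root-added processes $\cp^\lambda_{v_1}(T_i;\cdot)$ for $i\in[D]$ and of $\cp^\lambda_{v_1}(\dot{\mathcal{H}};\cdot)$ (Definition \ref{def:SMdef gwc2}); the factorization is valid because these blocks are precisely the connected components of $\mathcal{H}^{m,l}-v_1$. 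Writing $S^{\boxtimes}$ for the mean length of an excursion of $\cp^{\boxtimes}$ away from the all-healthy state $\zero$, averaged over which of the $D+2$ edges at $v_1$ fires first, an excursion of $(X^\sharp_t)$ decomposes exactly as around \eqref{eq:Sbound geometric trials} into a geometric number (success probability $(1+\lambda(D+2))^{-1}$) of sub-excursions, each of which first waits a mean time $(1+\lambda(D+2))^{-1}$ for an event at $v_1$ and then, if that event is an infection, runs a $\cp^{\boxtimes}$ excursion of mean length $S^{\boxtimes}$. Summing the geometric series yields $S^\sharp(\mathcal{H}^{m,l})=1+\lambda(D+2)\,S^{\boxtimes}$.

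It remains to evaluate $1+\lambda(D+2)S^{\boxtimes}$, which I would do via the stationary laws exactly as in \eqref{eq:stationary dist1}--\eqref{eq:Sotimes product}. Viewing $\cp^{\boxtimes}$ as a single chain, the rate of leaving $\zero$ is $\lambda(D+2)$ and the mean return time is $S^{\boxtimes}$, so its stationary mass at $\zero$ is $\pi^{\boxtimes}(\zero)=(1+\lambda(D+2)S^{\boxtimes})^{-1}$. Since $\cp^{\boxtimes}$ is a product of independent chains, $\pi^{\boxtimes}(\zero)=\bigl(\prod_{i=1}^D\pi_i(\zero)\bigr)\pi^{\dot{\mathcal{H}}}(\zero)$, where $\pi_i(\zero)=(1+\lambda S(T_i))^{-1}$ by \eqref{eq:stationary dist1}, and $\pi^{\dot{\mathcal{H}}}(\zero)=(1+2\lambda S(\dot{\mathcal{H}}))^{-1}$: in the block $\dot{\mathcal{H}}\setminus\{v_1\}$ the permanently infected $v_1$ pushes infection out along the two edges $v_1v_2$ and $v_1v_m$, so the escape rate from $\zero$ there is $2\lambda$ and the mean return time is $\tfrac12\bigl(S_2(\dot{\mathcal{H}})+S_m(\dot{\mathcal{H}})\bigr)=S(\dot{\mathcal{H}})$, which is exactly the averaging built into Definition \ref{def:SMdef gwc2}. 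Equating the two expressions for $\pi^{\boxtimes}(\zero)$ gives $1+\lambda(D+2)S^{\boxtimes}=(1+2\lambda S(\dot{\mathcal{H}}))\prod_{i=1}^D(1+\lambda S(T_i))$, and combining this with $S(\mathcal{H}^{m,l})\le S^\sharp(\mathcal{H}^{m,l})=1+\lambda(D+2)S^{\boxtimes}$ proves \eqref{eq:gwc1 Srecursion atypical}.

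The main obstacle here is bookkeeping rather than a genuinely new idea: one must check carefully that the two ``entry'' events $v_1\to v_2$ and $v_1\to v_m$ feed the \emph{same} component $\dot{\mathcal{H}}\setminus\{v_1\}$ (hence contribute a combined escape rate $2\lambda$ from $\zero$ and lead to the symmetric initial conditions $\one_{v_2}$ and $\one_{v_m}$ underlying $S(\dot{\mathcal{H}})$), and that the quantity $S^{\boxtimes}$ appearing in the geometric sum for $S^\sharp$ is literally the same weighted average of excursion lengths that appears in the renewal identity for $\pi^{\boxtimes}(\zero)$. The degenerate case $m=2$, where $v_2=v_m$ and the two edges form a double edge at $v_1$, is handled by the identical formulas and needs only a one-line remark.
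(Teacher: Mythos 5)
Your proposal is correct and follows essentially the same route as the paper's proof: the paper likewise ignores recoveries at $v_1$ except from the state $\one_{v_1}$, views the rest of $\mathcal{H}^{m,l}$ as the product of the $D$ root-added tree chains and the root-added $\gwc^2$-chain on $\dot{\mathcal{H}}$, and then repeats the geometric-trial and stationary-distribution computations of Proposition \ref{prop:Srecursion atypical}, with the block $\dot{\mathcal{H}}\setminus\{v_1\}$ having escape rate $2\lambda$ and mean excursion $S(\dot{\mathcal{H}})=\tfrac12(S_2(\dot{\mathcal{H}})+S_m(\dot{\mathcal{H}}))$, which is exactly your factor $(1+2\lambda S(\dot{\mathcal{H}}))$. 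Your write-up is in fact a more detailed version of the paper's argument, including the correct treatment of the $m=2$ double-edge case.
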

 
 This shows that understanding the tail probabilities on $\gwc^2$-processes is helpful in estimating those on $\gwc^1$-processes. The following proposition gives necessary estimates for $\gwc^2$-processes.

 We prove Proposition \ref{prop:SMunicyclic} and Corollary \ref{cor:SMgwc2} in tandem. This is done by an inductive argument which we detail below. Let $l\geq 1$ be a fixed integer. We  establish both (\ref{eq:unicyc tail:in prop}) and (\ref{eq:gwc2 tail:in prop}) for all $m$ via the following steps.
 \begin{itemize}[leftmargin=20mm]
 	\item [Step 1.]  Show that (\ref{eq:unicyc tail:in prop}) (resp., (\ref{eq:gwc2 tail:in prop})) are true for $m=1$ (resp., $m=2$).
 	
 	\item [Step 2.] For each integer $m\geq 2$, prove that if (\ref{eq:gwc2 tail:in prop}) is true for $m$, then we have  (\ref{eq:unicyc tail:in prop}) for $m$. 
 	
 	\item [Step 3.] For each integer $m\geq 2$, prove that if (\ref{eq:unicyc tail:in prop}) and (\ref{eq:gwc2 tail:in prop}) are true for $m$, then we have (\ref{eq:gwc2 tail:in prop}) for $m+1$. 
 	
 	\item [3-1.] Show that (\ref{eq:gwc2 tail:in prop}) holds when $t\leq d^{\frac{1}{10}}$ (for $m+1$).
 	
 	\item [3-2.] Show that (\ref{eq:gwc2 tail:in prop}) holds when $t\geq  d^{\frac{1}{10}}$ (for $m+1$).
 \end{itemize}

 \subsubsection{Proof of Proposition \ref{prop:SMunicyclic} and Corollary \ref{cor:SMgwc2}, Step 1}\label{subsubsec:unic1}

 Proving (\ref{eq:unicyc tail:in prop}) for $m=1$ is straight-forward, since $\mathcal{H}^{1,l} \sim \gw(\xi)^l$ and the result follows from Theorems \ref{thm:tail bound of S} and \ref{thm:M tail bd}. To establish (\ref{eq:gwc2 tail:in prop}) for $m=2$, we divide (\ref{eq:gwc2 tail:in prop}) into two cases, $t\leq d^{\frac{1}{10}}$ and $t\geq d^{\frac{1}{10}}$. 	Observe that $\dot{\mathcal{H}}\sim \gwc^2(\xi,2)^l$ is the same as $(\mathcal{T}_2,v_2) \sim \gw(\xi)^l$ with a parent $\rho = v_1$ connected to $v_2$ via a pair of edges.
 
 \vspace{2mm}
 \noindent \textbf{Case 1.} $t\leq d^{\frac{1}{10}}$.
 \vspace{1mm}
 
 Letting $S(\dot{\mathcal{H}})= S_2(\dot{\mathcal{H}})$, $M^l(\dot{\mathcal{H}})=M^l_2(\dot{\mathcal{H}})$ as Definition \ref{def:SMdef gwc2}, an analog of Proposition \ref{prop:S vs R} tells us that
 \begin{equation}\label{eq:Sunic tail initialcase}
 \frac{S(\dot{\mathcal{H}})}{1+2\lambda S(\dot{\mathcal{H}})} \leq R(\mathcal{T}_2) \leq S(\mathcal{T}_2), \quad 
 \textnormal{and} \quad 
 \frac{M^l(\dot{\mathcal{H}})}{1+2\lambda S(\dot{\mathcal{H}})}
 \leq \bar{M}^l(\mathcal{T}_2) \leq M^l(\mathcal{T}_2).
 \end{equation}
 The left inequality can be rewritten as
 \begin{equation*}
 S(\dot{ \mathcal{H}}) \leq \frac{S(\mathcal{T}_2)}{1-2\lambda S(\mathcal{T}_2)}.
 \end{equation*}
 Based on this information and on Theorems \ref{thm:tail bound of S} and \ref{thm:M tail bd}, we deduce that for $3/\ep \leq  t\leq d^{\frac{1}{10}}$,
 \begin{equation}\label{eq:Munic tail initialcase}
 \Pgw \left(S(\dot{\mathcal{H}}) \geq t \right)
 \leq 
 \Pgw \left(S(\mathcal{T}_2) \geq \left(1-d^{-\frac{3}{4}} \right)t \right)
 \leq 2 t^{-\sqrt{d}} (\log t)^{-2},
 \end{equation}
 since $(1-d^{-\frac{3}{4}})^{-\sqrt{d}}<\frac{3}{2}$ and $|\log(1-d^{-\frac{3}{4}})| \leq \frac{1}{100} \log t$ for large $d$. For the same reason, we obtain that
 \begin{equation*}
 \begin{split}
 \Pgw \left(M^l(\dot{\mathcal{H}}) \geq \left(1-\frac{\ep}{10}\right)^l t \right)
 &\leq\Pgw\left(M^l(\mathcal{T}_2) \geq \left(1-d^{-\frac{3}{4}} \right) \left(1-\frac{\ep}{10} \right)^l t \right)\\
 &\;\;+\Pgw \left( S(\mathcal{T}_2) \geq d^{\frac{1}{5}} \right),
 \end{split}
 \end{equation*}
 and the r.h.s.$\;$is bounded by $2t^{-\sqrt{d}} (\log t)^{-2}$.
 
 \vspace{2mm}
 \noindent \textbf{Case 2.} $t\geq d^{\frac{1}{10}}.$	
 \vspace{1mm}
 
 We can deduce the conclusion by exactly the same argument as Theorem \ref{thm:tail bound of S} (Sections \ref{subsubsection2}, \ref{subsubsection3}) and Theorem \ref{thm:M tail bd} (Sections \ref{subsubsec:M2}, \ref{subsubsec:M3}). Indeed, the recursive inequalities (\ref{eq:recursion eq atypical}), (\ref{eq:Mrecursion atypical}) do not change for $\dot{\mathcal{H}}$ since they are independent of the number of connections between $v_1$ and $v_2$.

 \subsubsection{Proof of Proposition \ref{prop:SMunicyclic} and Corollary \ref{cor:SMgwc2}, Step 2}\label{subsubsec:unic2}
 
 We have already established (\ref{eq:unicyc tail:in prop}) for $t\leq d^{\frac{1}{10}}$ in Lemma \ref{lem:unicyclic recursion small t}. Hence, we focus on settling the other part, $t\geq d^{\frac{1}{10}}$.		We begin with presenting the proof of Lemma \ref{lem:gwc1 recursion atypical}, which shows the relation between $\gwc^1$- and $\gwc^2$-processes.
 
 \begin{proof}[Proof of Lemma \ref{lem:gwc1 recursion atypical}]
 	Let $\mathcal{H}^{m,l}$, $\dot{\mathcal{H}}$, $D$,  $\{T_i\}_{i=1}^D$ and $\{u_i\}_{i=1}^D$ be as in the statement of the lemma. As we run $\cp^\lambda_{v_1^+}(\mathcal{H}^{m,l};\one_{v_1})$, we adopt the analogous idea as Proposition \ref{prop:Srecursion atypical} by ignoring the recoveries at $v_1$ except when all the other vertices are healthy. This translates the original process $\cp^\lambda_{v_1^+}(\mathcal{H}^{m,l};\one_{v_1})$ to the product chain of $\cp^\lambda_{v_1}(T_i ), \; i\in[D]$ and $\cp^\lambda_{v_2}(\dot{\mathcal{H}})$ in the perspectives of Proposition \ref{prop:Srecursion atypical}. In $\cp^\lambda_{v_2} (\dot{\mathcal{H}};\zero)$, the first infection from $v_2$ happens with rate $2\lambda$ which is passed to either $v_3$ or $v_{m+1}$, each with probability $\frac{1}{2}$. Therefore, the expected excursion time for this chain is $S(\dot{\mathcal{H}})=\frac{1}{2}(S_3(\dot{\mathcal{H}})+S_{m+1}(\dot{\mathcal{H}}))$. Based on these observation, we follow the logic (\ref{eq:Sbound geometric trials}), (\ref{eq:stationary dist1}) and (\ref{eq:stationary dist prop}) from Proposition \ref{prop:Srecursion atypical} to obtain (\ref{eq:gwc1 Srecursion atypical}).
 \end{proof}
 
 Having Lemma \ref{lem:gwc1 recursion atypical} and (\ref{eq:gwc2 tail:in prop}) for $m$, we can establish the first inequality of (\ref{eq:unicyc tail:in prop}) by the same reasoning as Sections \ref{subsubsection2} and \ref{subsubsection3} (Lemma \ref{lem:recursion principle atypical} and Corollary \ref{cor:recursion principle atypical}). 
 
 For the second line of (\ref{eq:unicyc tail:in prop}), we have the following recursive inequality for $M^l(\mathcal{H}^{m,l})$ which can be derived analogously as Proposition \ref{prop:Mrecursion atypcial}.
 \begin{equation}\label{eq:Mrecursion H atypical}
 M^l(\mathcal{H}^{m,l}) \leq 
 2\lambda M^l(\dot{\mathcal{H}}) \prod_{i=1}^D(1+\lambda S(T_i)) + \lambda(1+2\lambda S(\dot{\mathcal{H}})) \sum_{i=1}^D M^{l-1}(T_i) \prod_{\substack{j=1\\j\neq i} }^{D} (1+\lambda S(T_j)).
 \end{equation}
 
 As done in Section \ref{subsubsec:M2}, we set 
 \begin{equation*}
 T_0:= \dot{\mathcal{H}}, \quad \widetilde{M}_i := \left(1-\frac{\ep}{10} \right)^{-(l-1)} M^{l-1}(T_i),\quad \textnormal{and} \quad 
 W_i := \widetilde{M}_i \vee S(T_i).
 \end{equation*}
 Moreover, let $\widetilde{M} = (1-\frac{\ep}{10})^{-l} M^l(\mathcal{H}^{m,l})$. Then, we can  derive the following from the above inequality.
 \begin{equation*}
 \left(1-\frac{\ep}{10} \right)\widetilde{M} \leq \prod_{i=0}^{D} (1+ 4\lambda W_i). 
 \end{equation*}
 Also, the assumption (\ref{eq:gwc2 tail:in prop}) and Theorem \ref{thm:M tail bd} tell us the tail bound on $W_i$, namely,
 $$\Pgw\left(W_i \geq t \right)\leq 4t^{-\sqrt{d}}(\log t)^{-2} + t^{-\sqrt{d}} (\log t)^{-2} \leq 5t^{-\sqrt{d}}(\log t)^{-2}, $$
 which falls into the assumption of Lemma \ref{lem:recursion principle atypical} and Corollary \ref{cor:recursion principle atypical}.	 Therefore, we conclude the proof of (\ref{eq:unicyc tail:in prop}) by obtaining that	 for all $t\geq d^{\frac{1}{10}}$,
 \begin{equation*}
 \Pgw \left(M^l(\dot{\mathcal{H}}) \geq \left(1-\frac{\ep}{10} \right)^l t \right) 
 \leq
 \Pgw\left(2\prod_{i=0}^{D} (1+ 4\lambda W_i) \geq t \right) \leq 4t^{-\sqrt{d}} (\log t)^{-2}.
 \end{equation*}

 \subsubsection{Proof of Proposition \ref{prop:SMunicyclic} and Corollary \ref{cor:SMgwc2}, Step 3-1}\label{subsubsec:unic3}
 
 Let $\dot{\mathcal{H}} \sim \gwc^2 (\xi, m+1)^l$, $v_1v_2 \ldots v_{m+1}v_1$ be its cycle part with root $\rho=v_1$ and $\mathcal{T}_2, \ldots ,\mathcal{T}_{m+1}$ be the i.i.d. $\gw(\xi)^l$ trees rooted at $v_2 ,\ldots,v_{m+1}$, respectively. As described in Figure \ref{fig:HandHbar}, let $H$ and $\bar{H}$ be the graph defined as follows:
 \begin{itemize}
 	\item $H$ consists of a cycle $v_2 \ldots v_{m+1}v_2$ and the trees $\mathcal{T}_j$  rooted at $v_j$, $j\in\{2,\ldots,m+1\}$. 
 	
 	\item $\bar{H}$ is obtained by removing the edge $(v_2v_{m+1})$ from $H$. In other words, it consists of a length-$(m-1)$ path $v_2 \ldots v_{m+1}$ and the trees $\mathcal{T}_j$  rooted at $v_j$, $j\in\{2,\ldots,m+1\}$. 
 \end{itemize}

 \begin{figure}
 	\centering
 	\begin{tikzpicture}[scale=0.46]
 	\draw \boundellipse{0,0}{4}{1};
 	\filldraw[black] (-3,-2.64575131106/4) circle (2pt) node[black, anchor=south]{$v_2 $};
 	\filldraw[black] (3,-2.64575131106/4) circle (2pt) node[black, anchor=south]{$v_5 $};
 	\filldraw[black] (1,-3.87298334621/4) circle (2pt) node[black, anchor=south]{$v_4 $};
 	\filldraw[black] (-1,-3.87298334621/4) circle (2pt) node[black, anchor=south]{$v_3 $};
 	\filldraw[black] (0,1) circle (2pt) node[black, anchor=north]{$v_1$};
 	
 	\draw[black] (-3,-2.64575131106/4)--(-3.4,-2.64575131106/4-4);
 	\draw[black] (-3,-2.64575131106/4)--(-3+.4,-2.64575131106/4-4);
 	\draw[black] (-3.4,-2.64575131106/4-4)--(-3+.4,-2.64575131106/4-4);
 	\filldraw[black] (-3,-2.64575131106/4-4) circle (0pt) node[black, anchor=north]{$\T_2 $};
 	
 	\draw[black] (-1,-3.87298334621/4)--(-1.8,-3.87298334621/4-1.5);
 	\draw[black] (-1,-3.87298334621/4)--(-1+.8,-3.87298334621/4-1.5);
 	\draw[black] (-1.8,-3.87298334621/4-1.5)--(-1+.8,-3.87298334621/4-1.5);
 	\filldraw[black] (-1,-3.87298334621/4-1.5) circle (0pt) node[black, anchor=north]{$\T_3 $};
 	
 	\draw[black] (1,-3.87298334621/4)--(1.4,-3.87298334621/4-1.5);
 	\draw[black] (1,-3.87298334621/4)--(1-.4,-3.87298334621/4-1.5);
 	\draw[black] (1.4,-3.87298334621/4-1.5)--(1-.4,-3.87298334621/4-1.5);
 	\filldraw[black] (1,-3.87298334621/4-1.5) circle (0pt) node[black, anchor=north]{$\T_4 $};
 	
 	\draw[black] (3,-2.64575131106/4)--(3.5,-2.64575131106/4-2.5);
 	\draw[black] (3,-2.64575131106/4)--(2.5,-2.64575131106/4-2.5);
 	\draw[black] (3.5,-2.64575131106/4-2.5)--(2.5,-2.64575131106/4-2.5);
 	\filldraw[black] (3,-2.64575131106/4-2.5) circle (0pt) node[black, anchor=north]{$\T_5 $};
 	
 	\filldraw[black] (0,1.2) circle (0pt) node[black, anchor=south]{\large{$\dot{\mathcal{H}}$} };
 	\draw[black] (5,-1) -- (7,-1);
 	\draw[black] (7,-1) -- (6.7,-.8);
 	\draw[black] (7,-1) -- (6.7,-1.2);

 	\draw \boundellipse{12,0}{4}{1};
 	\filldraw[black] (9,-2.64575131106/4) circle (2pt) node[black, anchor=south]{$v_2 $};
 	\filldraw[black] (15,-2.64575131106/4) circle (2pt) node[black, anchor=south]{$v_5 $};
 	\filldraw[black] (13,-3.87298334621/4) circle (2pt) node[black, anchor=south]{$v_4 $};
 	\filldraw[black] (11,-3.87298334621/4) circle (2pt) node[black, anchor=south]{$v_3 $};
 	
 	\draw[black] (9,-2.64575131106/4)--(9.4,-2.64575131106/4-4);
 	\draw[black] (9,-2.64575131106/4)--(9-.4,-2.64575131106/4-4);
 	\draw[black] (9-.4,-2.64575131106/4-4)--(9+.4,-2.64575131106/4-4);
 	\filldraw[black] (9,-2.64575131106/4-4) circle (0pt) node[black, anchor=north]{$\T_2 $};
 	
 	\draw[black] (11,-3.87298334621/4)--(11-.8,-3.87298334621/4-1.5);
 	\draw[black] (11,-3.87298334621/4)--(11+.8,-3.87298334621/4-1.5);
 	\draw[black] (11-.8,-3.87298334621/4-1.5)--(11+.8,-3.87298334621/4-1.5);
 	\filldraw[black] (11,-3.87298334621/4-1.5) circle (0pt) node[black, anchor=north]{$\T_3 $};
 	
 	\draw[black] (13,-3.87298334621/4)--(13.4,-3.87298334621/4-1.5);
 	\draw[black] (13,-3.87298334621/4)--(13-.4,-3.87298334621/4-1.5);
 	\draw[black] (13.4,-3.87298334621/4-1.5)--(13-.4,-3.87298334621/4-1.5);
 	\filldraw[black] (13,-3.87298334621/4-1.5) circle (0pt) node[black, anchor=north]{$\T_4 $};
 	
 	\draw[black] (15,-2.64575131106/4)--(15.5,-2.64575131106/4-2.5);
 	\draw[black] (15,-2.64575131106/4)--(14.5,-2.64575131106/4-2.5);
 	\draw[black] (15.5,-2.64575131106/4-2.5)--(14.5,-2.64575131106/4-2.5);
 	\filldraw[black] (15,-2.64575131106/4-2.5) circle (0pt) node[black, anchor=north]{$\T_5 $};
 	
 	\filldraw[black] (12,1.2) circle (0pt) node[black, anchor=south]{\large{$H$} };

 	
 	\draw[black] (18,-0.5) -- (22.5,-0.5);
 	
 	\filldraw[black] (20.25,0.7) circle (0pt) node[black,anchor=south]{\large{$\bar{H}$}};
 	\filldraw[black] (18,-0.5) circle (2pt) node[black,anchor=south]{$v_2$};
 	\filldraw[black] (19.5,-0.5) circle (2pt) node[black,anchor=south]{$v_3$};
 	\filldraw[black] (21,-0.5) circle (2pt) node[black,anchor=south]{$v_4$};
 	\filldraw[black] (22.5,-0.5) circle (2pt) node[black,anchor=south]{$v_5$};
 	
 	\draw[black] (22.5,-0.5)--(23,-0.5-2.5);
 	\draw[black] (22.5,-0.5)--(22,-0.5-2.5);
 	\draw[black] (23,-0.5-2.5)--(22,-0.5-2.5);
 	\filldraw[black] (22.5,-0.5-2.5) circle (0pt) node[black, anchor=north]{$\T_5 $};


 	\draw[black] (19.5,-0.5)--(19.5-.8,-0.5-1.5);
 	\draw[black] (19.5,-0.5)--(19.5+.8,-0.5-1.5);
 	\draw[black] (19.5-.8,-0.5-1.5)--(19.5+.8,-0.5-1.5);
 	\filldraw[black] (19.5,-0.5-1.5) circle (0pt) node[black, anchor=north]{$\T_3 $};
 	
 	\draw[black] (21,-0.5)--(21-.4,-0.5-1.5);
 	\draw[black] (21,-0.5)--(21+.4,-0.5-1.5);
 	\draw[black] (21-.4,-0.5-1.5)--(21+.4,-0.5-1.5);
 	\filldraw[black] (21,-0.5-1.5) circle (0pt) node[black, anchor=north]{$T_4 $};
 	
 	\draw[black] (18,-.5)--(17.6,-4.5)--(18.4,-4.5)--(18,-.5);
 	\filldraw[black] (18,-4.5) circle (0pt) node[black,anchor=north]{$\T_2$};
 	\end{tikzpicture}
 	\caption{A description of $\dot{\mathcal{H}}\sim \gwc^2(\xi,m+1)^l$, $H$ and $\bar{H}$ in Section \ref{subsubsec:unic3} with $m=4$.}\label{fig:HandHbar}
 \end{figure}
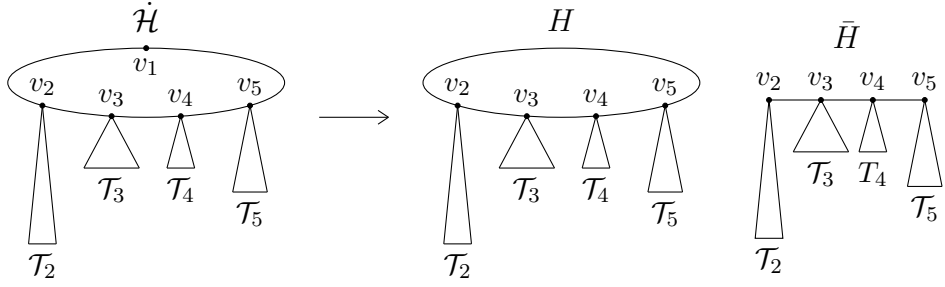

 Also, let $R_j(\bar{H})$ (resp., $\bar{M}^l_j(\bar{H})$)  denote the expected survival time (resp., expected total infections at leaves) of the (usual) contact process $\cp^\lambda(\bar{H};\one_{v_j})$. 
 
 In the process $\cp^\lambda_{v_1} (\dot{\mathcal{H}};\zero)$, the permanently infected root $v_1$ infects $v_2$ and $v_{m+1}$ with rate $\lambda$ each. Hence, the process can be interpreted similarly as in Definition \ref{def:decomposition}:
 \begin{itemize}
 	\item At rate $2\lambda$, initiate a copy of either $\cp^\lambda(\bar{H};\one_{v_2})$ or $\cp^\lambda(\bar{H};\one_{v_{m+1}})$, each chosen with probability $\frac{1}{2}$ and whose event times are coupled with $\cp^\lambda_{v_1}(\dot{ \mathcal{H}},\zero)$.
 \end{itemize}
 Therefore, for $S(\dot{\mathcal{H}}), M^l(\dot{\mathcal{H}})$ defined as Definition \ref{def:SMdef gwc2}, the argument from  Proposition \ref{prop:S vs R} and Corollary \ref{cor:MvsMprime} implies
 \begin{equation}\label{eq:gwc2 SvsR}
 \frac{S(\dot{\mathcal{H}})}{1+2\lambda S(\dot{\mathcal{H}})} \leq \frac{R_2(\bar{H})+R_{m+1}(\bar{H})}{2}, \quad
 \textnormal{and} \quad
 \frac{M^l(\dot{\mathcal{H}})}{1+2\lambda S(\dot{\mathcal{H}})} 
 \leq \frac{\bar{M}^l_2(\bar{H} )+ \bar{M}^l_{m+1}(\bar{H})}{2}.
 \end{equation}
 
 Note that $H\sim \gwc^1(\xi,m)^l$, and let $S_i(H)$ and $M^l_i(H)$ be defined as Definition \ref{def:SMunicyclic}, with respect to the root-added process $\cp^\lambda_{v_i^+}(H;\one_{v_i})$. Also, we set 
 $$S(H)= \frac{S_2(H)+ S_{m+1}(H)}{2}, \quad \textnormal{and} \quad M^l(H) = \frac{M^l_2(H) +M^l_{m+1}(H)}{2}. $$ 
 Then, since $H'\subset H$, we use $R_i(H')\leq S_i(H)$ and $M'_i(H') \leq M_i(H)$ in (\ref{eq:gwc2 SvsR}) to see that
 \begin{equation*}
 S(\dot{\mathcal{H}})\leq \frac{ S(H)}{1-2\lambda S(H)}, \quad
 \textnormal{and} \quad
 \frac{M^l(\dot{\mathcal{H}})}{1+2\lambda S(\dot{\mathcal{H}})} 
 \leq M^l(H).
 \end{equation*}
 Now, we go through the same computations as (\ref{eq:Sunic tail initialcase}) and (\ref{eq:Munic tail initialcase}). Namely, we deduce that for $3/\ep \leq  t\leq d^{\frac{1}{10}}$,
 \begin{equation*}
 \Pgw \left(S(\dot{\mathcal{H}}) \geq t \right)
 \leq 
 \Pgw \left(S(H) \geq \left(1-d^{-\frac{3}{4}} \right)t \right)
 \leq 4 t^{-\sqrt{d}} (\log t)^{-2},
 \end{equation*}
 since $(1-d^{-\frac{3}{4}})^{-\sqrt{d}}<\frac{5}{4}$ and $|\log(1-d^{-\frac{3}{4}})| \leq \frac{1}{100} \log t$. For the same reason, we obtain that
 \begin{equation*}
 \begin{split}
 \Pgw \left(M^l(\dot{\mathcal{H}}) \geq \left(1-\frac{\ep}{10}\right)^l t \right)
 &\leq\Pgw\left(M^l(H) \geq \left(1-d^{-\frac{3}{4}} \right) \left(1-\frac{\ep}{10} \right)^l t \right)\\
 &\;\;+\Pgw \left( S(H) \geq d^{\frac{1}{5}} \right),
 \end{split}
 \end{equation*}
 and the r.h.s. is bounded by $4t^{-\sqrt{d}} (\log t)^{-2}$. Hence, we finish the proof for Step 3-1.

 \subsubsection{Proof of Proposition \ref{prop:SMunicyclic} and Corollary \ref{cor:SMgwc2}, Step 3-2}\label{subsubsec:unic4}
 
 Let $ \dot{\mathcal{H}}\sim \gwc^2(\xi,m+1)^l$ and let $\mathcal{T}_2,\ldots , \mathcal{T}_{m+1}$ as above. Consider the graphs $H_2$ and $H_{m+1}$ defined as follows.
 \begin{itemize}
 	\item $H_2$ consists of a cycle $v_2\ldots v_{m+1}$ and the trees $\mathcal{T}_3,\ldots,\mathcal{T}_{m+1}$ rooted at $v_3,\ldots,v_{m+1}$.  The vertex $\rho=v_2$ is designated as the root.
 	
 	\item $H_{m+1}$ consists of a cycle $v_2 \ldots v_{m+1}$ and the trees $\mathcal{T}_2,\ldots,\mathcal{T}_m$ rooted at $v_2,\ldots,v_m$. The vertex $\rho=v_{m+1}$ is designated as the root.
 \end{itemize}		
 
 Note that $H_2 \sim \gwc^2(\xi,m)^l$, and we let $S_3(H_2), S_{m+1}(H_2)$ as Definition \ref{def:SMdef gwc2} with $S(H_2) := \frac{1}{2}(S_3(H_2)+ S_{m+1}(H_2))$. Similarly, define $M^l(H_2)$, $S(H_{m+1})$ and $M^l(H_{m+1})$. Further, let $D_2$ (resp. $D_{m+1}$) be the degree of $v_2$ in the tree $\mathcal{T}_2$, and denote the subtrees at each child of $v_2$ in $\mathcal{T}_2$ by $T^2_i,$ $i\in[D_2]$. Define $D_{m+1}$ and $T^{m+1}_i$, $i\in[D_{m+1}]$ analogously with respect to $\mathcal{T}_{m+1}$.
 
 Then, an analogous idea as Lemma \ref{lem:gwc1 recursion atypical} shows that
 \begin{equation}\label{eq:Srecursion gwc2 atypical}
 \begin{split}
 S_2(\dot{\mathcal{H}}) 
 &\leq
 (1+2\lambda S(H_2)) \prod_{i=1}^{D_2} (1+\lambda S(T^2_i));\\
 S_{m+1} (\dot{\mathcal{H}}) 
 &\leq
 (1+2\lambda S(H_{m+1})) \prod_{i=1}^{D_{m+1}} (1+\lambda S(T^{m+1}_i)).
 \end{split}
 \end{equation}	
 (Indeed, this was also proven in \cite{BNNASurvival}, 	Lemma 4.11, and its verification is based on the same idea as Proposition \ref{prop:Srecursion atypical}.) Note that we are assuming the tail probabilities for $S(H_2)$ and $S(H_{m+1}) $ satisfy (\ref{eq:gwc2 tail:in prop}), while those for $S(T_i)$ are given by Theorem \ref{thm:tail bound of S}. Based on this observation, we can follow the same analysis done in Lemma \ref{lem:recursion principle atypical}, Corollary \ref{cor:recursion principle atypical} and Sections \ref{subsubsection2}, \ref{subsubsection3} to see that for $t\geq d^{\frac{1}{10}},$
 \begin{equation*}
 \begin{split}
 \Pgw \left(S(\dot{\mathcal{H}}) \geq t \right)
 &\leq 
 \Pgw \left((1+2\lambda S(H_2)) \prod_{i=1}^{D_2} (1+\lambda S(T^2_i)) \geq t\right)\\
 &\;\;+
 \Pgw \left(	(1+2\lambda S(H_{m+1})) \prod_{i=1}^{D_{m+1}} (1+\lambda S(T^{m+1}_i))  \geq t \right)\\
 &\leq 
 2t^{-\sqrt{d}} (\log t)^{-2} + 2 t^{-\sqrt{d}} (\log t)^{-2} = 4t^{-\sqrt{d}} (\log t)^{-2}.
 \end{split}
 \end{equation*}
 
 The logic to derive (\ref{eq:gwc2 tail:in prop}) for $M^l(\dot{\mathcal{H}})$ is similar. We first derive the recursive inequalities, based on (\ref{eq:Srecursion gwc2 atypical}) and the ideas from Proposition \ref{prop:Mrecursion atypcial}, We have 
 \begin{equation*}
 \begin{split}
 M^l_a(\dot{\mathcal{H}}) 
 \leq 
 2\lambda M^l(H_a) \prod_{i=1}^{D_a} (1+\lambda S(T^a_i))
 + \lambda(1+2\lambda S(H_a))\sum_{i=1}^{D_a} M^{l-1}(T^a_i) \prod_{\substack{j=1 \\ j\neq i}}^{D_a} (1+\lambda S(T^a_j)),
 \end{split}
 \end{equation*}
 for $a\in \{2, m+1\}$. As done in Section \ref{subsubsec:M2}, we set 
 \begin{equation*}
 T^a_0:= H_a, \quad \widetilde{M}^a_i := \left(1-\frac{\ep}{10} \right)^{-(l-1)} M^{l-1}(T^a_i),\quad \textnormal{and} \quad 
 W^a_i := \widetilde{M}^a_i \vee S(T^a_i),
 \end{equation*}
 and derive from the above inequality that with $\widetilde{M}_a = (1-\frac{\ep}{10})^{-l} M_a(\dot{\mathcal{H}})$,
 \begin{equation*}
 \left(1-\frac{\ep}{10} \right)\widetilde{M}_a \leq \prod_{i=0}^{D_a} (1+ 4\lambda W^a_i), \quad a\in \{2,\; m+1\}. 
 \end{equation*}
 Then, the rest of the proof goes the same as Section \ref{subsubsec:unic2}, yielding that
 \begin{equation*}
 \Pgw \left(M^l(\dot{\mathcal{H}}) \geq \left(1-\frac{\ep}{10} \right)^l t \right) 
 \leq
 \sum_{a\in\{2,m+1\} } \Pgw\left(2\prod_{i=0}^{D_a} (1+ 4\lambda W^a_i) \geq t \right) \leq 4t^{-\sqrt{d}} (\log t)^{-2},
 \end{equation*}
 for all $t\geq d^{\frac{1}{10}}$, hence establishing Step 3-2. Combining the argument in Sections \ref{subsubsec:unic1}---\ref{subsubsec:unic4}, we conclude the proof of Proposition \ref{prop:SMunicyclic} and Corollary \ref{cor:SMgwc2}. \qed

 \section{Proof of technical lemmas}

 \subsection{Remaining proof of Lemma \ref{lem:recursion for F}}\label{subsec:pfof recursion for F}
 
 We finish the proof of Lemma \ref{lem:recursion for F} by establishing (\ref{eq:unicycF tail S}) and (\ref{eq:unicycF tail M}). The ideas are very similar as in the proofs of Theorems \ref{thm:tail bound of S} and \ref{thm:M tail bd}.

 \begin{proof}[Remaining proof of Lemma \ref{lem:recursion for F}]
 	  Let $\mathcal{F}^{m,l}$, $\{v_j \}_{j=1}^m$ and $\{\mathcal{T}_j \}_{j=1}^m$  be as in the definition of $\mathcal{F}^{m,l}$. Further, recall the definitions of $D$, $\{u_i\}_{i=1}^D$, $\{T_i\}_{i=1}^D$ and $F$ from the proof of Lemma \ref{lem:recursion for F}. 
 	  
 	  For any fixed $l$, assume that (\ref{eq:unicycF tail S}) and (\ref{eq:unicycF tail M}) are true for $m$. We show that under this assumption, they are true for $m+1$. (Note that the case $m=1$ is trivial, since $\mathcal{F}^{1,l}$ is a single vertex.)
 	  We begin with presenting the recursion inequalities which follow from straight-forward generalizations of previous propositions.
 	  
 	  We first have 
 	  \begin{equation*}
 	  \begin{split}
 	  S(\mathcal{F}^{m,l}) 
 	  &\leq \frac{1+\lambda \left(S(F) +\sum_{i=1}^D S(T_i \right)}{1-\lambda -2\lambda^2 \left( S(F)+\sum_{i=1}^DS(T_i) \right)};\\
 	  M(\mathcal{F}^{m,l})
 	  &\leq
 	  \frac{\lambda \left(M(F)+ \sum_{i=1}^D M(T_i) \right)}{1-\lambda-2\lambda^2 \left(S(F) +\sum_{i=1}^D S(T_i) \right)},
 	  \end{split}
 	  \end{equation*}
 	  which are basically rephrasings of (\ref{eq:recursion eq typical}) and (\ref{eq:Mrecursion typical}), respectively. Based on these inequalities, we can repeat the argument from Sections \ref{subsubsection1} and \ref{subsubsec:M1} to deduce (\ref{eq:unicycF tail S}) and (\ref{eq:unicycF tail M}) for $t\leq d^{\frac{1}{10}}$ and $m+1$.
 	  
 	  For the other case, $t\geq d^{\frac{1}{10}}$, we observe that
 	  \begin{equation*}
 	  \begin{split}
 	  S(\mathcal{F}^{m,l}) 
 	  &\leq (1+\lambda S(F))\prod_{i=1}^D (1+\lambda S(T_i));\\
 	  M(\mathcal{F}^{m,l})
 	  &\leq
 	  \lambda M(F) \prod_{i=1}^D (1+\lambda S(T_i)) + \lambda(1+\lambda S(F)) \sum_{i=1}^D M(T_i) \prod_{\substack{j=1\\ j\neq i}}^{D}(1+\lambda S(T_j)),
 	  \end{split}
 	  \end{equation*}
 	  which come from (\ref{eq:recursion eq atypical}) and (\ref{eq:Mrecursion atypical}). Note that the inequality for $M(\mathcal{F}^{m,l})$ is also reminiscent of (\ref{eq:Mrecursion H atypical}). Then, we follow the logic from Sections \ref{subsubsection2}, \ref{subsubsection3}, \ref{subsubsec:M2} and \ref{subsubsec:M3} to obtain (\ref{eq:unicycF tail S}) and (\ref{eq:unicycF tail M}) for $t\geq d^{\frac{1}{10}}$ and $m+1$.
 \end{proof}

 \subsection{Proof of Claim \ref{claim:B tail large t}}\label{subsec:pfof claim}
 
 In this subsection, we finish the remaining proof of Claim \ref{claim:B tail large t}
 
 \begin{proof}[Remaining proof of Claim \ref{claim:B tail large t}]
 	Recall the definitions of $\widetilde{B}$ and $\widehat{S}$ in (\ref{eq:Brecursion:in claim}). In the setting of Claim \ref{claim:B tail large t}, we are assuming (\ref{eq:unicycF tail B}) for $m$, which is
 	\begin{equation}\label{eq:claimpf:Bassumption}
 	\Pgw \left(\widetilde{B} \geq t \right) \leq  t^{-\sqrt{d}} (\log t)^{-2}, \quad \textnormal{for all }t\geq 2.
 	\end{equation}
 	
 	The bound on $\widehat{S}$ was derived in Sections \ref{subsubsection2} and \ref{subsubsection3} which yielded that
 	\begin{equation}\label{eq:claimpf:Sassumption}
 	\Pgw \left(\widehat{S} \geq t \right) \leq t^{-\sqrt{d}} (\log t)^{-2}, \quad \textnormal{for all }t\geq d^{\frac{1}{10}}.
 	\end{equation}
 	
 	Set $\alpha = d^{\frac{1}{10}}$ and $k_0 = \lfloor \frac{\log t}{\log \alpha} \rfloor -1$. The rest of the proof is analyzing (\ref{eq:BtimesS recursion}). The assumptions (\ref{eq:claimpf:Bassumption}) and (\ref{eq:claimpf:Sassumption}) give that each of the first two terms in its r.h.s. is smaller than $\frac{1}{4} t^{-\sqrt{d}} (\log t)^{-2}$. Then, the third term can also be controlled based on observing that for $k\in [k_0]$ and $t\geq d^{\frac{1}{10}},$
 	\begin{equation}\label{eq:claimpf:beforesum}
 	\begin{split}
 	&\Pgw\left(\widehat{S} \geq t\alpha^{-k} \right) \cdot
 	\Pgw \left(\widetilde{B} \geq \frac{\alpha^{k-1} }{2}d^{\frac{1}{4}} \right)\\
 	&\leq 
 	\left(\frac{\alpha^k}{t} \cdot \frac{2}{d^{\frac{1}{4}}\alpha^{k-1}} \right)^{\sqrt{d}} \left(\frac{1}{\log\left(t\alpha^{-k} \right) } \right)^{2} \left(\frac{1}{\log \left(\frac{1}{2}\alpha^{k-1} d^{\frac{1}{4}} \right)} \right)^2\\
 	&\leq
 	\left(d^{\frac{1}{8}} t \right)^{-\sqrt{d}} 
 	\frac{1}{
 		(\log t - k\log \alpha)^2
 	}
 	\frac{1}{
 		\left(\frac{1}{8}\log d + k\log \alpha \right)^2
 	}.
 	\end{split}
 	\end{equation}
 	Observing that $\log t- k_0 \log \alpha \geq \log \alpha = \frac{1}{10} \log d$, we can apply Lemma \ref{lem:sum for induction log term} in order to control the sum of (\ref{eq:claimpf:beforesum}) over $k$. All in all, we deduce that (\ref{eq:Brecursion:in claim}) holds true for $t\geq d^{\frac{1}{10}}.$ 
 \end{proof}

 \subsection{Proof of Lemma \ref{lem:Drecursion}}\label{subsec:pfof Drecursion}
 
 The goal of this subsection is to present the proof of Lemma \ref{lem:Drecursion}. 
 
 \begin{proof}[Proof of Lemma \ref{lem:Drecursion}]
 	 We begin with establishing the recursive inequalities for $S(\mathcal{A}^{\underbar{m},l})$, $M(\mathcal{A}^{\underbar{m},l})$ and $B(\mathcal{A}^{\underbar{m},l})$. To this end, we first define some subgraphs of $\mathcal{A}^{\underbar{m},l}$ to generate the recursions. Recall the definitions of $\{v_j \}_{j=-m_1}^{m_2}$ and $\{\mathcal{T}_j \}_{j=-m_1+1}^{m_2-1}$ from the beginning of Section \ref{subsec:unic prop small t2}.
 	 \begin{itemize}
 	 	\item Let $D$ be the degree of $v_0$ inside $\mathcal{T}_0$, and let $T_1,\ldots,T_D$ be the subtrees of $\mathcal{T}_0$ rooted at each child of $v_0$ in $\mathcal{T}_0$.
 	 	
 	 	\item Let $F_1$ be the graph consists of the length-$(m_1-1)$ line $v_{-1} \ldots v_{-m_1}$ and the trees $\mathcal{T}_{-1},\ldots , \mathcal{T}_{-m_1+1}$ rooted at $v_{-1}, \ldots, v_{-m_1+1}$, respectively.
 	 	
 	 	\item Let $F_2$ be defined similarly in the other branch of $\mathcal{A}^{\underbar{m},l}$. That is, it consists of the length-$(m_2-1)$ line $v_{1} \ldots v_{m_2}$ and the trees $\mathcal{T}_{1},\ldots , \mathcal{T}_{m_2-1}$ rooted at $v_{1}, \ldots, v_{m_2-1}$, respectively.
 	 	
 	 	\item For convenience, we set $T_{D+1} := F_1$ and $T_{D+2} := F_2.$ Note that the graphs $T_1,\ldots, T_{D+2}$ are all trees.
 	 \end{itemize}
 	 
 	 Based on the same ideas as the derivation of (\ref{eq:recursion eq typical}), (\ref{eq:Mrecursion typical}) and (\ref{eq:Brecursion on F typical}), we have the first recursive inequalities as follows.
 	 \begin{equation}\label{eq:Drecursion typical}
 	 \begin{split}
 	 S(\mathcal{A}^{\underbar{m},l}) 
 	 &\leq
 	 \frac{1+ \sum_{i=1}^{D+2} S(T_i)}{
 	 	1-\lambda -2\lambda^2 \sum_{i=1}^{D+2} S(T_i)
 	 	};\quad 
 	 M(\mathcal{A}^{\underbar{m},l})
 	 \leq
 	 \frac{\lambda \sum_{i=1}^{D+2} M(T_i)}{
 	 	1-\lambda -2\lambda^2 \sum_{i=1}^{D+2} S(T_i)
 	 	}	;\\
 	 B(\mathcal{A}^{\underbar{m},l})
 	 &\leq
 	 \frac{\lambda(B(T_{D+1} )+ B(T_{D+2}) )   }{ 
 		1-\lambda -2\lambda^2 \sum_{i=1}^{D+2} S(T_i) 	}	.
 	 \end{split}
 	 \end{equation}
 	 Moreover, 
 	 the second recursive inequalities are obtained from Propositions \ref{prop:Srecursion atypical}, \ref{prop:Mrecursion atypcial} and from (\ref{eq:Brecursion on F atypical}), which read
 	 \begin{equation}\label{eq:Drecursion atyipical}
 	 \begin{split}
 	 S(\mathcal{A}^{\underbar{m},l})
 	 &\leq
 	 \prod_{i=1}^{D+2} (1+\lambda S(T_i)); \quad 
 	 M(\mathcal{A}^{\underbar{m},l})
 	 \leq
 	 \lambda \sum_{i=1}^{D+2} M(T_i) \prod_{\substack{j=1\\j\neq i}}^{D+2} (1+\lambda S(T_i));\\
 	 B(\mathcal{A}^{\underbar{m},l})
 	 &\leq
 	\lambda  \sum_{i=D+1}^{D+2} B(T_i) \prod_{\substack{j=1\\j\neq i}}^{D+2} (1+\lambda S(T_i)).
 	 \end{split}
 	 \end{equation}
 	 
 	Note that we know the tail probabilities for $S(T_i),\; i\in[D+2]$ by Theorems \ref{thm:tail bound of S}, \ref{thm:M tail bd} and Lemma \ref{lem:recursion for F}. Therefore, we can deduce (\ref{eq:unicycD tail S}) and (\ref{eq:unicycD tail M}) by following  Sections \ref{subsubsection1}---\ref{subsubsection3} and \ref{subsubsec:M1}---\ref{subsubsec:M3}. 
 	
 	To see (\ref{eq:unicycD tail B}), we rely on Lemma \ref{lem:recursion for F}. For $t\leq d^{\frac{1}{10}}$, we have the following inequality reminiscent of (\ref{eq:Brecursion on F split small t}).
 	\begin{equation*}
 	\begin{split}
 		\Pgw \left(B(\mathcal{A}^{\underbar{m},l}) \geq d^{\frac{3m_1 \wedge m_2}{4}}t \right)
 		&\leq
 		\P\left(D \geq \left(1+\frac{\ep}{6} \right)d \right)
 		+
 		\Pgw \left( \sum_{i=1}^{D+2} S(T_i) \geq \frac{2d}{\ep} \left(1+\frac{\ep}{3} \right) \right)\\
 		&\;\;+
 		P\left(B(F_1) \geq d^{-\frac{3}{4}(m_1-1)+\frac{1}{2} } \right)
		+
		P\left(B(F_2) \geq d^{-\frac{3}{4}(m_2-1)+\frac{1}{2} }. \right)
 	\end{split}
 	\end{equation*}
 	By estimating the terms in the r.h.s. as in Case 1 of the proof of Lemma \ref{lem:recursion for F} and Section \ref{subsubsection1}, we see that the above is smaller than $t^{-\sqrt{d}}(\log t)^{-2}$ for $t\leq d^{\frac{1}{10}}.$
 	
 	For $t\geq d^{\frac{1}{10}}$, we set 
 \begin{equation*}
 \widetilde{B}(\mathcal{A}^{\underbar{m},l}) := d^{\frac{3}{4}(m_1 \wedge m_2)} B(\mathcal{A}^{\underbar{m},l}), 
 \quad 
 \textnormal{and} \quad
 \widetilde{B}_a := d^{\frac{3}{4}(m_a-1)} B(F_a), \;\; a\in\{1,2\},
 \end{equation*}
and   observe that by (\ref{eq:Drecursion atyipical}),
 	\begin{equation*}
 \Pgw \left( \widetilde{B}(\mathcal{A}^{\underbar{m},l}) \geq t \right)
 	\leq
 	\sum_{a\in \{1,2\}}	\Pgw\left( \widetilde{B}_a \prod_{\substack{j=1\\ j\neq D+a}}^{D+2} (1+\lambda S(T_i)) 
 	\geq 
 	\frac{d^{\frac{1}{4}}t}{3} \right).
 	\end{equation*}
 	Then, the summand in the r.h.s. can be bounded by applying Claim \ref{claim:B tail large t} and implies (\ref{eq:unicycD tail B}) for $t\geq d^{\frac{1}{10}}.$
 \end{proof}

\subsection{Proof of Lemma \ref{lem:aug properties}}\label{subsec:pfof aug}

Here, we establish Lemma \ref{lem:aug properties}. Indeed, the third statement in the lemma was proven in Lemma 4.3 of \cite{BNNASurvival}. Therefore, we focus on the first two statements of the lemma.

\begin{proof}[Proof of Lemma \ref{lem:aug properties}]
	We begin with establishing the first statement. We only show
	$$d^\sharp \leq \left(1+\frac{\ep}{9} \right)d, $$
	since $\widetilde{d}^\sharp \leq (1+\frac{\ep}{9})\widetilde{d}$ follows analogously.
	 Let $Z$ be as in Definition \ref{def:aug}. We first note that $$ Z \geq \sum_k \left(1-\frac{\ep}{10} \right) p_k =1-\frac{\ep}{10}.$$ 
	Moreover, when $k_0<k_{\textnormal{max}}$, 
	$$\sum_{k\leq k_0} k \left(1-\frac{\ep}{10} \right)p_k + \sum_{k>k_0} k \sqrt{p_k} 
	\leq \sum_{k} k \left(1-\frac{\ep}{10} \right)p_k + \frac{\ep}{10} \leq \left(1-\frac{\ep}{10} \right)d + \frac{\ep}{10} 
	$$
	This gives us  $d^\sharp\leq \left(1+\frac{\ep}{9} \right)d$, for all $d$ larger than some absolute constant. In the other case when $k_0=k_{\textnormal{max}}$, we are done if $k_0 \leq (1+\frac{\ep}{10})d$. If not, the concentration condition gives that
	$$p_{k_0} \leq \exp(- c_{\ep'} d)$$
	with $\ep' = \frac{\ep}{10}$, and hence for $d$ larger than some constant depending on $\ep$ and $\mathfrak{c}$, we obtain
	$$\sum_{k<k_0} kp_k + k_0\sqrt{p_{k_0}} \leq \left(1-\frac{\ep}{10} \right) d + \ep, $$
	which again implies $d^\sharp\leq \left(1+\frac{\ep}{9} \right)d.$
	
	For the second, we only establish the first of  (\ref{eq:concentrationcond2}), since the second line follows analogously. Let $\ep' \in[\frac{\ep}{10},1]$. For any integer $k\in[(1+\ep')d , 2d]$, we have
	\begin{equation*}
	\mu^\sharp(k) \leq \frac{\sqrt{p_k}}{Z} \leq 2\exp\left(-\frac{c_{\ep'}}{2}d\right).
	\end{equation*} 
	Therefore, we obtain that
	\begin{equation*}
	\begin{split}
	\P_{D^\sharp \sim\mu^\sharp} \left( D^\sharp \geq (1+ \ep')d \right)
	&\leq \sum_{k=\lceil (1+\ep')d \rceil }^{\lfloor 2d\rfloor} 2\exp\left(-\frac{c_{\ep'}}{2}d\right)
	+
	\sum_{k\geq \lceil 2d \rceil } 2\exp\left(-\frac{c_1k}{2} \right)\\
	&\leq 2d \exp\left({-\frac{c_{\ep'}}{2}d}\right)  + \frac{2e^{-c_1 d}}{1-e^{-\frac{c_1}{2}}} 
	\leq
	\exp\left(-\frac{c_{\ep'} \wedge c_1}{3}d \right),
	\end{split}
	\end{equation*}
	where the last inequality holds for any $d$ larger than some constant $d_0(\mathfrak{c})$. Therefore, we can set $c'_{\ep'} = \frac{1}{3} (c_{\ep'} \wedge c_1)$ and obtain the first line of (\ref{eq:concentrationcond2}).
\end{proof}

 \subsection{Proof of Lemma \ref{lem:1cyc}}\label{subsec:pfof 1cyc}
 
 Lemma 4.5 of \cite{BNNASurvival} had a very similar statement as Lemma \ref{lem:1cyc}, but it was weaker in terms of determining the scope of $\gamma$. It turns out that in order to prove the stronger version, Lemma \ref{lem:1cyc},  the following bound on the size of $N(v,l)$ is needed. 
 
 \begin{lemma}\label{lem:gw size}
 	Under the setting of Lemma \ref{lem:1cyc}, let $\mu^\sharp := \mu^\sharp_\ep$ be the augmented distribution of $\mu$ and $\widetilde{\mu}^\sharp$ be its size-biased distribution. Further, let  $L_n:= \frac{1}{10} \log_d n$ and $\mathcal{T} \sim  \gw(\mu^\sharp,\widetilde{\mu}^\sharp)^{L_n}$, where $d$ is the mean of $\widetilde{\mu}$, the size-biased distribution of $\nu$. Then, there exists $d_0(\ep, \mathfrak{c})>0$ such that if $d\geq d_0$, then we have
 	\begin{equation*}
 	\P\left(|\mathcal{T}| \geq n^{\frac{1}{5}} \right) \leq \exp\left(-n^{\frac{1}{20}} \right)
 	\end{equation*}	
 \end{lemma}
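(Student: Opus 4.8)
\emph{The plan} is to bound $|\mathcal{T}|$ by a Chernoff argument, controlling the moment generating function $\E e^{s|\mathcal{T}|}$ through a recursion on the depth and then optimizing $s$ against $L_n=\tfrac1{10}\log_d n$. First I would record the two inputs from Lemma \ref{lem:aug properties}: both $\mu^\sharp$ and its size-biased version $\widetilde{\mu}^\sharp$ have mean at most $(1+\tfrac{\ep}{9})d$ (using that size-biasing only raises the mean, so $\E_{D\sim\mu}D\le d$), and both satisfy the concentration condition \eqref{eq:concentration condition} with constants depending only on $\mathfrak{c}$; in particular there is $c_\star=c_\star(\mathfrak{c})>0$ with $\P(X\ge(1+a)d)\le e^{-c_\star ad}$ for all $a\ge1$, whenever $X\sim\mu^\sharp$ or $X\sim\widetilde{\mu}^\sharp$. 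Combining this with the second order Taylor estimate $e^{sx}\le 1+sx+\tfrac{s^2x^2}{2}e^{sx}$ (valid for $x,s\ge0$), I would deduce that there is $a_0=a_0(\mathfrak{c})>0$ such that, for all $d\ge d_0$ and $0\le a\le a_0$,
\begin{equation}\label{eq:mgf-pointwise}
\phi(a/d):=\E e^{(a/d)X}\le e^{3a},\qquad X\sim\mu^\sharp\ \text{or}\ \widetilde{\mu}^\sharp.
\end{equation}
Indeed the linear term contributes at most $2a$ (mean $\le 2d$) and the quadratic term at most $Ca^2\le a$ after choosing $a_0$ small, where the bound $\E[X^2e^{sX}]\le Cd^2$ for $s\le a_0/d$ is an easy consequence of the exponential tail (split the expectation at $X=2d$).

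Next, for $j\ge0$ let $g_j(s):=\E\exp(s|\mathcal{T}_j|)$, where $\mathcal{T}_j$ is a Galton--Watson tree of depth $j$ all of whose vertices (including the root) have offspring distribution $\widetilde{\mu}^\sharp$. Decomposing a depth-$j$ tree at its root gives $g_0(s)=e^s$ and $g_j(s)=e^s\,\phi(\log g_{j-1}(s))$. Writing $b_j:=\log g_j(s)$, the estimate \eqref{eq:mgf-pointwise} gives $b_j\le s+3d\,b_{j-1}$ as long as $b_{j-1}\le a_0/d$, hence by induction $b_j\le s(4d)^j$ whenever $b_0,\dots,b_{j-1}\le a_0/d$. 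I would therefore set
\[
s:=\frac{a_0}{d\,(4d)^{L_n}},
\]
so that $s(4d)^{L_n}=a_0/d$ and the hypothesis $b_{j-1}\le a_0/d$ propagates all the way to $j=L_n$; this yields $b_{L_n}\le a_0/d\le 1$, i.e.\ $g_{L_n}(s)\le e$. Applying the same root-decomposition once more to the genuine tree $\mathcal{T}$ (whose root has offspring $\mu^\sharp$, to which \eqref{eq:mgf-pointwise} also applies) gives $\E e^{s|\mathcal{T}|}=e^s\,\phi^\sharp(\log g_{L_n-1}(s))\le e^{b_{L_n}}\le e$, where $\phi^\sharp$ is the moment generating function of $\mu^\sharp$.

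Finally, since $d^{L_n}=n^{1/10}$ and $4^{L_n}=n^{(\log4)/(10\log d)}\le n^{1/100}$ once $d_0$ is large enough, we have $s\ge \tfrac{a_0}{d}\,n^{-11/100}$, so Markov's inequality gives
\[
\P\bigl(|\mathcal{T}|\ge n^{1/5}\bigr)\le e^{\,1-s\,n^{1/5}}\le \exp\Bigl(1-\tfrac{a_0}{d}\,n^{9/100}\Bigr)\le e^{-n^{1/20}}
\]
for all sufficiently large $n$ (using $9/100>1/20$ and that $d$ is a fixed constant), which is the assertion; the bound is anyway only claimed, and only needed, as $n\to\infty$. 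The main obstacle is the bookkeeping around \eqref{eq:mgf-pointwise}: verifying the uniform-in-$d$ bound $\E[X^2e^{sX}]=O(d^2)$ from the concentration hypothesis, and checking that the choice $s(4d)^{L_n}=a_0/d$ indeed keeps every $b_j$ below the radius $a_0/d$ on which \eqref{eq:mgf-pointwise} is valid; beyond that it is a standard branching-process Chernoff bound.
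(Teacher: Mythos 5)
Your proposal is correct and follows essentially the same route as the paper's proof: a one-step moment-generating-function bound for the offspring law (the paper obtains it from $\E e^{(c_1'/2)D}\le e^{3d}$ together with the H\"older interpolation $\E e^{aX}\le(\E e^{X})^a$, you from a Taylor expansion plus truncation at $2d$), iterated over the $L_n$ generations to control an exponential moment of the tree size at scale $s\asymp(\mathrm{const}\cdot d)^{-L_n}\ge n^{-1/10-o(1)}$, and finished with Markov's inequality. The only blemishes are cosmetic: with the convention \eqref{eq:def:sizebiased} the size-biased law has mean $\E_\mu D^2/\E_\mu D-1$, so one only gets $\E_{D\sim\mu}D\le d+1$ rather than $\le d$ (harmless, since you only need the means to be at most $2d$), and your final comparison $\frac{a_0}{d}n^{9/100}\ge 1+n^{1/20}$ holds for $n$ large, which matches the implicitly asymptotic nature of the paper's own last step.
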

 
 \begin{proof}
 	Let $c'_1 \in \mathfrak{c}'$ be the constant in Lemma \ref{lem:aug properties}. Then, we first note that the second statement of Lemma \ref{lem:aug properties} tells us the following: 
 	\begin{equation}\label{eq:Dsharp exp moment}
 	\E_{D\sim \widetilde{\mu}^\sharp} \left[\exp\left(\frac{c'_1}{2} D \right) \right]
 	\leq
 	\exp\left(2d \right) + \sum_{k\geq 2d} e^{\frac{c'_1k}{2}} e^{-c'_1k} \leq e^{3d}, 
 	\end{equation}
 	where the last inequality holds if $d$ is larger than some constant depending on $c_1'.$ Moreover, it is straight-forward to see that the same bound holds for ${\mu}^\sharp$ as well. We also note that 
 	\begin{equation}\label{eq:dpower}
 	\left(\frac{6d}{c'_0}\right)^{L_n} \leq n^{\frac{1}{9}} ,
 	\end{equation}
 	 if $d$ is larger than some constant depending on $c'_0$.
 	
 	Let $Y_h$ be the number of vertices at depth $h$ of $\mathcal{T}$. Further, let $D \sim \widetilde{\mu}^\sharp$ and set $c'' = \frac{c'_0}{2}$. Note that H\"older's inequality implies $\E e^{aX} \leq (\E e^{X})^a$ for $0\leq a\leq 1$. This gives that
 	\begin{equation*}
 	\begin{split}
 	\E\left[ \exp\left( \left(\frac{c''}{3d}\right)^hY_h  \right) \right]
 	&= \E\left[ \E_{D}\left[ \exp\left(\left(\frac{c''}{3d}\right)^hD \right) \right]^{Y_{h-1}} \right]\\
 	&\leq
 	\E \left[ \exp\left\{ 3d \cdot \frac{(c'')^{h-1}}{(3d)^h} Y_{h-1} \right\}  \right]
 	=
 	\E\left[\exp\left(\left(\frac{c''}{3d} \right)^{h-1} Y_h \right) \right].
 	\end{split}
 	\end{equation*}
 	Hence, we obtain that
 	\begin{equation}\label{eq:Yhexpmoment}
 	\E \left[\exp \left( \left(\frac{c''}{3d}\right)^{L_n} \sum_{h=0}^{L_n} Y_h \right) \right]
 	\leq
 	\sum_{h=0}^{L_n} \left(\frac{c''}{3d}\right)^h \leq 2,
 	\end{equation}
 	if $d$ is larger than some constant depending on $\mathfrak{c}$. (Note that (\ref{eq:Dsharp exp moment}) holds for $\mu^\sharp$ as well.) Therefore, combining (\ref{eq:Yhexpmoment}) with (\ref{eq:dpower}), we deduce that
 	\begin{equation*}
 	\begin{split}
 	\P \left(\sum_{h=0}^{L_n} Y_h \geq n^{\frac{1}{5}} \right)
 	&\leq
 	\exp\left(-\left(\frac{c''}{3d}\right)^{L_n}n^{\frac{1}{5}}  \right)
 	\times
 		\E \left[\exp \left( \left(\frac{c''}{3d}\right)^{L_n} \sum_{h=0}^{L_n} Y_h \right) \right]\\
 		&\leq
 		\exp\left(-n^{\frac{1}{20}} \right).
 	\end{split}
 	\end{equation*}
 \end{proof}
 
 \begin{proof}[Proof of Lemma \ref{lem:couplinglocalnbd}]
 In Lemma 4.5, \cite{BNNASurvival},	its proof reduces to obtaining tail probability bounds of $\sum_{h=0}^{L_n} Y_h$ for $Y_h$ as above.  Having Lemma \ref{lem:gw size} in hand, following the proof of Lemma 4.5, \cite{BNNASurvival} deduces Lemma \ref{lem:couplinglocalnbd}.
 \end{proof}

 \end{document}